%Coincides with HLM_Bergmanwithboundary5.tex up to
%the header comment
%Sumitted to arXiv on the 25.12.2023
\documentclass[a4paper,centertags,oneside,12pt]{amsart}
\usepackage{mathrsfs}
\usepackage{appendix}
\usepackage{amssymb}
\usepackage{fancyhdr}
\usepackage{charter}
\usepackage{typearea}
\usepackage{pdfsync}
\usepackage{mathrsfs}
\usepackage{dsfont}
\usepackage{appendix}
\usepackage{amssymb}
\usepackage{fancyhdr}
\usepackage{charter}
\usepackage{typearea}
\usepackage{color}
\usepackage{hyperref}
\usepackage{amsmath,amstext,amsthm,amscd}
\usepackage{mathrsfs}
\usepackage[a4paper,top=2.5cm,bottom=2.5cm,left=2.5cm,
right=2.5cm]{geometry}

\usepackage{enumitem}
\setlist[1]{itemsep=5pt}
\newcommand{\comment}[1]{}

\makeatletter
      \def\@setcopyright{}
      \def\serieslogo@{}
      \makeatother

% MATH -------------------------------------------------------------------
\newcommand{\C}{\mathbb{C}}
\newcommand{\R}{\mathbb{R}}
\newcommand{\N}{\mathbb N}
\newcommand{\Z}{\mathbb Z}
\newcommand{\T}{\mathbb T}
\newcommand{\ddbar}{\overline\partial}
\newcommand{\pr}{\partial}
\newcommand{\ol}{\overline}
\newcommand{\Td}{\widetilde}
\newcommand{\norm}[1]{\left\Vert#1\right\Vert}
\newcommand{\abs}[1]{\left\vert#1\right\vert}

\newcommand{\set}[1]{\left\{#1\right\}}
\newcommand{\To}{\rightarrow}
\DeclareMathOperator{\Ker}{Ker}
\DeclareMathOperator{\supp}{supp}
\DeclareMathOperator{\Dom}{Dom}

\newcommand{\bb}{\boldsymbol{b}}
%------------------------------Mathscr------------------
\newcommand{\cC}{\mathscr{C}}

\newtheorem{theorem}{Theorem}[section]
\newtheorem{lemma}[theorem]{Lemma}
\newtheorem{corollary}[theorem]{Corollary}
\newtheorem{proposition}[theorem]{Proposition}

\newtheorem{question}[theorem]{Question}
\theoremstyle{definition}
\newtheorem{definition}[theorem]{Definition}

\newtheorem{remark}[theorem]{Remark}
\newtheorem{assumption}[theorem]{Assumption}
\numberwithin{equation}{section}
%%%
\hyphenation{de-ge-ne-rate}
\hyphenation{diffe-ren-tial}\hyphenation{mani-fold}\hyphenation{mani-folds}

%\setlength{\headheight}{14pt}
%\pagestyle{fancy}
%\lhead{\itshape{Hsiao \ Li}}
%\chead{}
%\rhead{\itshape{}}
%\cfoot{\thepage}

%%%
\begin{document}
\title[Semi-classical asymptotics of partial Bergman kernels]
{Semi-classical asymptotics of partial Bergman kernels
on $\R$-symmetric complex manifolds with boundary}
\author[Chin-Yu Hsiao]{Chin-Yu Hsiao}
\address{Institute of Mathematics, Academia Sinica, 
Astronomy-Mathematics Building, No. 1, Sec. 4, Roosevelt Road, Taipei 10617, Taiwan}
\thanks{Chin-Yu Hsiao was partially supported by Taiwan Ministry of Science 
and Technology projects  
108-2115-M-001-012-MY5, 109-2923-M-001-010-MY4 and Academia Sinica investigator
Award. }
\email{chsiao@math.sinica.edu.tw or chinyu.hsiao@gmail.com}
\author[Xiaoshan Li]{Xiaoshan Li}
\address{School of Mathematics
and Statistics, Wuhan University, Wuhan 430072, Hubei, China}
\thanks{Xiaoshan Li was supported by NSFC 12271411 and NSFC 11871380.}
\email{xiaoshanli@whu.edu.cn}
\author[George Marinescu]{George Marinescu}
\address{Department of Mathematics and Computer Science, 
Universit{\"a}t zu K{\"o}ln\\ 
Weyertal 86-90, \newline
    \mbox{\quad}\,50931 K{\"o}ln, Germany\\
    \newline
\mbox{\quad}\,Institute of Mathematics `Simion Stoilow', Romanian Academy,
Bucharest, Romania}
\thanks{George Marinescu was partially supported by the 
DFG funded project SFB TRR 191 `Symplectic Structures in 
Geometry, Algebra and Dynamics' (Project-ID 281071066 TRR 191) 
and ANR-DFG funded project `Quantisierung, Singularit\"aten und Holomorphe Dynamik' (Project-ID 490843120 MA 2469/6-1)}
\email{gmarines@math.uni-koeln.de}
%===
\begin{abstract}
Let $M$ be a relatively compact connected open subset 
with smooth connected boundary of a complex manifold $M'$. 
Let $(L, h^L)\rightarrow M'$ be a positive line bundle over $M'$. 
Suppose that $M'$ admits a holomorphic $\R$-action which 
preserves the boundary of $M$ and 
lifts to $L$. We establish 
the asymptotic expansion of a partial Bergman kernel 
associated to a package of Fourier modes of high frequency
with respect to the $\R$-action
in the high powers of $L$.
As an application, we establish 
an $\R$-equivariant analogue of Fefferman's and Bell-Ligocka's
result about smooth extension up to the boundary of biholomorphic maps 
between weakly pseudoconvex domains in $\C^n$. 
Another application concerns the embedding of pseudoconcave 
manifolds.
\end{abstract} 
\maketitle \tableofcontents
\section{ Introduction}\label{sec1}
Let $M$ be a relatively compact connected open 
subset with a smooth connected boundary $X=\partial{M}$ of a 
complex manifold $M'$ of complex dimension $n$, with complex structure $J$. 
Let $(L,h^L)\To M'$ be a holomorphic line bundle over $M'$. Let 
\[H^0(\ol M,L):=\set{u\in\cC^\infty(\ol M,L);\, \ddbar u=0},\]
where $\ddbar$ is the Cauchy-Riemann operator acting on sections of $L$. 
Let $H^0_{(2)}(M,L)\subset L^2(M,L)$ be the $L^2$-closure 
of $H^0(\ol M,L)$ and let 
\[\Pi: L^2(M,L)\To H^0_{(2)}(M,L)\]
be the orthogonal projection (Bergman projection). 
The study of boundary behavior of $\Pi$ is a classical subject 
in several complex variables. 
When $M$ is strictly pseudoconvex and $L$ is trivial, 
Fefferman~\cite{Fer74} obtained an asymptotic expansion of 
$\Pi$ on the diagonal of the boundary. 
Subsequently, Boutet de Monvel-Sj\"{o}strand~\cite{BS76} 
described the singularities of the distribution kernel of $\Pi$ 
by showing that it is a complex Fourier integral operator (see also~\cite{Hsiao18},~\cite{HM19}). 
When $M$ is not strictly pseudoconvex, for example, if $M$ is strictly 
pseudoconcave or weakly pseudoconvex, 
it is difficult to study $\Pi$. When $M$ is a weakly pseudocovex 
finite type domain in $\mathbb C^2$, the first author and Savale~\cite{HS22} 
obtained a pointwise Fefferman type asymptotic expansion for $\Pi$. 

In general, it is very difficult to study $\Pi$ when $M$ 
is not strictly pseudoconvex, for example for the weakly pseudoconvex
domain
\[M:=\set{(z_1,z_2)\in\mathbb C^2;\, 
\abs{z_1}^2+e^{-\frac{1}{\abs{z_2}^2}}<1}.
\]  
which is not of finite type. 
However, $M$ admits a holomorphic $S^1$-action: 
$e^{i\theta}\circ(z_1,z_2)=(e^{i\theta}z_1,e^{i\theta}z_2)$ 
and admits an $S^1$-invariant strictly plurisubharmonic weight function 
$\abs{z}^2$ (so that the trivial line bundle $M\times\C$ 
endowed with the metric $e^{-2\abs{z}^2}$ is a positive line bundle). 
%It is difficult to study Bergman kernel $\Pi$ for $M$. 
It is quite interesting to 
study $\R$-equivariant (or $S^1$-equivariant) 
semi-classical boundary behavior of Bergman kernel asymptotics 
with respect to $e^{-2k\abs{z}^2}$ as $k\To+\infty$ 
(high power of a positive line bundle). This is the motivation of this work. 
In this paper, we will study semi-classical boundary behavior of 
$\mathbb R$-equivariant Bergman kernel with respect to a high 
power of positive line bundle. The following general question 
is closely related to the problem of extending 
the Kodaira embedding theorem to complex manifolds with boundary and 
are fundamental questions in the study of complex manifolds with boundary.

%it is 

\begin{question}\label{q-gue220512yyd}
Let $M$ be a relatively compact connected open subset with smooth 
boundary of a complex manifold $M'$ of complex dimension $n$. 
Suppose that there is a positive holomorphic line bundle 
$(L, h^L)\rightarrow M'$ over $M'$. Let 
\(\Pi_k: L^2(M,L^k)\To H^0_{(2)}(M,L^k)\)
be the Bergman projection associated to the tensor power $L^k$ of $L$. 
Can we understand the large $k$ behavior of the Bergman kernel 
or partial Bergman kernels near the boundary of $M$? 
\end{question}

%If $M$ pseudoconcave in the sense of Andreotti
%(for example strictly pseudoconcave), 
%we have by a 
%well-known result of Andreotti \cite[Proposition 11]{A63}
%(see also \cite[Proposition 5.7]{M96})
%also the upper bound
%$\limsup_{k\rightarrow\infty}k^{-n}\dim H^0(\ol M, L^k)<\infty$.
%The upper bound is well-known according to a classical result of 
%Andreotti \cite{A63} (see also Marinescu~\cite{M96}). 
Question~\ref{q-gue220512yyd} is related to
problem of compactification of pseudoconcave
manifolds to compact projective or Moishezon manifolds
and was studied by many authors \cite{A63,AS,AT,Be05,DM,EH,M96,M16,NT,ST82}.
In \cite[p.\,29]{M96} and in discussions with Gennadi Henkin 
\cite{He} it was conjectured
that a characterization of Moishezon 1-concave manifolds
in terms of positive sheaves analogous to the 
Grauert-Riemenschneider characterization is possible. 
The boundary behavior of $\Pi_k$ is closely related to the problem 
of extending the Kodaira embedding theorem to complex  manifolds 
with boundary and are fundamental questions in the study of 
complex manifolds with boundary. The difficulty of 
Question~\ref{q-gue220512yyd} comes form the fact 
that we do not know if the associated $\ddbar$-Neumann Laplacian 
has a spectral gap. The spectral gap property plays an important 
role in the study of Bergman kernel asymptotic expansions on 
complex manifolds without boundary (see~\cite{MM07}). 
The boundary $X$ is a compact CR manifold and $L$ is a CR line bundle 
over $X$. In~\cite{HLM},~\cite{HHL17}, it was shown that if $X$ admits a 
transversal and CR $\R$-action and $(L,h^L)$ is a $\R$-equivariant 
positive CR line bundle, then the associated Kohn Laplacian has 
\emph{partial spectral gap} and the 
associated Szeg\H{o} kernel admits a full asymptotic expansion. 
Therefore, it is natural to study Question~\ref{q-gue220512yyd} 
when $M'$ admits a holomorphic $\R$-action. 
A significant difference between the CR case and the case of complex manifolds 
with boundary is that even when $M'$ admits a holomorphic $\R$-action, 
it is still very difficult to see if the associated $\ddbar$-Neumann Laplacian 
has a spectral gap or \emph{partial spectral gap}. 
From the Szeg\H{o} kernel asymptotic expansion obtained 
in~\cite{HHL17,HLM}and by carefully studying the semi-classical Poisson 
operator and using a reduction to the boundary technique, 
we establish an asymptotic expansion for a partial Bergman kernel of 
on $M$ with respect to high powers 
of a positive line bundle $L$ under certain natural assumptions.

We now formulate the main results. 
We refer to Section~\ref{sec2} for some notations and terminology used here.
Let $(M',J)$ be a complex manifold of dimension $n$ with complex structure $J$. 
Let $M$ be a relatively compact domain with smooth connected boundary 
of $M'$. We denote by $X=\partial M$ 
the boundary of $M$. We 
assume that $M'$ admits a holomorphic $\R$-action $\eta$ 
(see Definition~\ref{d-gue220512yyd}), $\eta\in\R$: $\eta: M'\to M'$, 
$x\mapsto\eta\circ x$.  Let $T\in\cC^\infty(M', TM')$ 
be the infinitesimal generator of the $\R$-action 
(see \eqref{e-gue150808}).  We 
work under the following assumption.

\begin{assumption}\label{a-gue170929I}
(a) The $\R$-action is holomorphic, preserves the boundary of $M$ and
is CR-transversal to the boundary, that is,
$$\C T(x)\oplus T^{1,0}_xX\oplus T^{0,1}_xX=
\C T_xX,\ \ \mbox{for all $x\in X$},$$
where $T^{1,0}X:=\mathbb CTX\cap T^{1,0}M'$, $T^{0,1}X:=
{\C}TX\cap T^{0,1}M'$.

\noindent
(b) There exists an $\R$-equivariant holomorphic 
line bundle $L\rightarrow M'$ endowed with an $\R$-invariant Hermitian fiber 
metric $h^L$ such that $iR^L$ is positive on $M'$, where 
$R^L$ is the Chern curvature of $L$ induced by $h^L$. 
\end{assumption}

Let us consider an $\R$-invariant defining function
$\rho\in\cC^\infty(M')$ of $M$.
Without loss of generality, we assume that 
\begin{equation}\label{e-gue220711yyd}
d\rho(JT)<0\:\:\text{on $X$}.
\end{equation}
If $d\rho(JT)>0$ on $X$, 
we can modify the $\R$-action so that $T$ becomes to $-T$. 

Let $s$ be a local $\R$-equivariant holomorphic trivializing 
section of $L$, $\abs{s}^2_{h^L}=e^{-2\phi}$. 
Since $h^L$ is $\R$-equivariant, it is not difficult to see that 
$(JT)(\phi)$ is globally defined (see Proposition \ref{2023-8-11-p1}), 
where $J$ denotes the complex structure map on $TM'$. 
We work under the following assumption.

\begin{assumption}\label{a-gue220517yyd}
There is a constant $C_0>0$ so that 
\begin{equation}\label{e-gue220710yyd}
\begin{split}
&\mbox{$R^L_x+(C_0+2(JT)(\phi)(x))\mathcal{L}_x$ 
is positive definite on $T^{1,0}_xX$ at every $x\in X$},\\
&C_0+2(JT)(\phi)(x)>0,\ \ \mbox{for all $x\in X$},
\end{split}
\end{equation}
where $\mathcal{L}_x$ denotes the Levi form of $X$ 
at $x\in X$ (see Definition~\ref{d-gue220507yyd} below).
\end{assumption}  

\begin{remark}\label{r-gue220710yyd}
(i) Let $R^L_X$ be the CR curvature of $L$ on the boundary $X$ (see Definition~\ref{d-gue220507yydI}). By Lemma \ref{L:RX} we have 
\begin{equation}\label{e-gue220711yydI}
R^L_{X,x}=R^L|_{T^{1,0}_xX}+2(JT)(\phi)(x)\mathcal{L}_x,\ \ x\in X.
\end{equation}
Thus, Assumption \ref{a-gue220517yyd} requires
that the CR curvature $R^L_{X}$ dominates the Levi form 
$\mathcal{L}$ in the negative directions of $\mathcal{L}$. 
Even if $R^L$ is positive, the CR curvature $R^L_X$ might not be positive on $X$. 
It follows from \cite{HHL17,HLM} that if there exists $\delta\in\R$ such that
$R^L_X+\delta\mathcal{L}$ is everywhere positive, the  
asymptotic expansion of the Szeg\H{o} kernel holds. 
To obtain the Bergman kernel asymptotics, 
we need the conditions \eqref{e-gue220710yyd}.
 
(ii) Assumption \ref{a-gue220517yyd} always holds if $\mathcal{L}_x$
is semi-positive at every $x\in X$, that is, if $M$ is weakly pseudoconvex. 
    
(iii) In Section \ref{S:psconc} we give examples of pseudoconcave manifolds
which satisfy Assumption \ref{a-gue220517yyd}. 
\end{remark}
%===

We denote by $T^{1,0}M'$ and the holomorphic and anti-holomorphic
tangent bundles of $M'$, and by $T^{*p,q}M'$ the bundle
of $(p,q)$-forms. 
We fix a $J$-invariant and $\R$-invariant Riemannian metric $g^{TM'}$ on $TM'$
such that $|T|=1$.
%We let $dv_{M'}$ be the volume form of $g^{TM'}$.
We denote by $\langle\,\cdot\,|\,\cdot\,\rangle$ the
pointwise Hermitian product induced by $g^{TM'}$
on the fibers of $\C TM'$ and by duality on 
$\C T^{*}M'$, hence on $T^{*p,q}M'$.
We also fix a $\R$-invariant Hermitian 
fiber metric $h^L$ such that $R^L$ is positive on $T^{1,0}M'$.
 
Let $\rho\in\cC^\infty(M',\R)$ be a defining function of $X$, 
that is, $\rho=0$ on $X$, $\rho<0$ on $M$ and $d\rho\neq0$ near $X$. 
From now on, we take a defining function $\rho$ 
so that $\rho$ is $\R$-invariant and $|d\rho|=1$ on $X$. 

We will consider $(L^k,h^{L^k})$ the $k$-th power of $(L,h^L)$. 
Let $dv_{M'}$ be the volume form on $M'$ induced by the Hermitian 
metric $\langle\,\cdot\,|\,\cdot\,\rangle$ on $\C TM'$ and 
let $(\,\cdot\,|\,\cdot\,)_{\ol M}$  be the inner product on 
$\cC^\infty(\ol M)$ induced by $\langle\,\cdot\,|\,\cdot\,\rangle$.  
Similarly, let $(\,\cdot\,|\,\cdot\,)_{\ol M,k}$  be the inner product on $\cC^\infty(\ol M,L^k)$ induced by $h^{L^k}$ and $\langle\,\cdot\,|\,\cdot\,\rangle$.  
Let $L^2(M,L^k)$ be the completion of $\cC^\infty(\ol M,L^k)$ with respect to $(\,\cdot\,|\,\cdot\,)_{\ol M,k}$ and we extend $(\,\cdot\,|\,\cdot\,)_{\ol M,k}$ to $L^2(M,L^k)$
in the standard way. Let $\norm{\cdot}_{\ol M}$ and $\norm{\cdot}_{\ol M,k}$ be the corresponding norms with respect to $(\,\cdot\,|\,\cdot\,)_{\ol M}$ and $(\,\cdot\,|\,\cdot\,)_{\ol M,k}$ respectively.  Let 
\[\ddbar_k: \cC^\infty(\ol M,L^k)\To\Omega^{0,1}(\ol M,L^k)\]
be the Cauchy-Riemann operator with values in $L^k$.
We shall also use the notation $\ddbar_k$ for the closure in $L^2$ 
of the $\ddbar_k$ operator, initially defined on $\cC^\infty(\ol M,L^k)$. 
Note that for this extension we have $\Ker\ddbar_k=H^0_{(2)}(M,L^k)$.
%Then, 
%\[\ddbar_k: \Dom\ddbar_k\subset L^2(M,L^k)\To L^2_{(0,1)}(M,L^k),\]
%where $u\in\Dom\ddbar_k$ if we can find $\set{u_j}^{+\infty}_{j=1}
%\subset\cC^\infty(\ol M,L^k)$, $\lim_{j\To+\infty}\norm{u_j-u}_{\ol M,k}=0$, 
%$\lim_{j\To+\infty}\norm{\ddbar_ku_j-v}_{\ol M,k}=0$, for some 
%$v\in L^2_{(0,1)}(M,L^k)$. We let $\ddbar_ku:=v$. 

We shall use the notation $-iT_M$ for the closure in $L^2$ of the $-iT$ 
operator, initially defined on
$\cC^\infty(\ol M,L^k)$. Since $T\rho=0$, we can check that 
$$-iT_M: \Dom(-iT_M)\subset L^2(M,L^k)\To L^2(M,L^k)$$ 
is self-adjoint and its spectrum consists of a countable set of
eigenvalues of finite multiplicity (cf.\ Theorem \ref{t-gue220317yyd}).
The Fourier (spectral) decomposition \eqref{e:FD} of $L^2(M,L^k)$
determined by $T$, will provide us with a convenient
space of Fourier modes (eigensections of $T$), whose weighted projection
turns out to have an asymptotic expansion.

We fix $\delta>0$ so that:
\begin{equation}\label{eq:deltadef}
\begin{split}
&R^L_{X,x}+2t\mathcal{L}_x \:\:\text{is positive definite at every $x\in X$, 
for every $t\in[\delta/4,2\delta]$},\\
&t+(JT)(\phi)(x)>0\:\:\text{for every $t\in(\delta/4,2\delta)$ 
and every $x\in X$},
\end{split}
\end{equation}
%$R^L_{X,x}+2t\mathcal{L}_x$ is positive definite at every $x\in X$, 
%for every $t\in[\frac{\delta}{4},2\delta]$ and $t+(JT)(\phi)(x)>0$, 
%for every $t\in(\delta/4,2\delta)$ and every $x\in X$, 
where 
$R^L_{X,x}$ and $\mathcal{L}_x$ denote the CR curvature of $L$ 
and the Levi form of $X$ at $x\in X$ respectively 
(see Definitions~\ref{d-gue220507yyd} and \ref{d-gue220507yydI} ). 
From \eqref{e-gue220710yyd}, this is always possible. 
We fix a function 
%choose $\tau_\delta(t)\in\cC^\infty_c(]\frac{\delta}{4}, 2\delta[)$ 
%such that
\begin{equation}\label{e-21-5-16-a2z}
\tau_\delta\in\cC^\infty_c((\delta/4, 2\delta)), \:\:
0\leq\tau_{\delta}\leq 1, \:\:\tau_{\delta}\equiv1~\text{on}~[\delta/2, \delta].
\end{equation}
We set
\begin{equation}\label{e-gue220322ycdsz}
\tau_{k\delta}\in\cC^\infty_c(\R),\:\:
\tau_{k\delta}(t):=\tau_\delta(t/k),\ \ t\in\R.
\end{equation}
We consider the following spectral operator which is a 
weighted projection on the Fourier modes (i.\,e.\ eigenfunctions of $T$)
\begin{equation}\label{eq:FProjk}
F^{(0)}_{\tau_{k\delta},M}:=
\tau_{k\delta}(-iT_M): L^2(M,L^k)\To L^2(M,L^k),
\end{equation}
where $\tau_{k\delta}(-iT_M)$ denotes the functional calculus of 
$-iT_M$ with respect to $\tau_{k\delta}$ 
(see \eqref{e-gue220317yyda}).  
Let now
\begin{equation}\label{eq:BergProjk}
\Pi_k: L^2(M,L^k)\To H^0_{(2)}(M,L^k)
\end{equation}
be the orthogonal projection with respect to $(\,\cdot\,|\,\cdot\,)_{\ol M,k}$. 
One of our main objets of study is the weighted \emph{partial Bergman projection} 
\begin{equation}\label{e-gue220512yydb}
\Pi_{\tau_{k\delta}}:=\Pi_k\circ F^{(0)}_{\tau_{k\delta},M}: 
L^2(M,L^k)\To H^0_{(2)}(M,L^k).
\end{equation}

Let $s$ be an $\R$-equivariant holomorphic frame of $L$ 
defined on an open set $U$ of $M'$, $\abs{s}^2_{h^L}=e^{-2\phi}$. 
The localization of $\Pi_{\tau_{k\delta}}$ with respect to $s$ is given by 
\[
\begin{split}
&\Pi_{\tau_{k\delta},s}:
\cC^\infty_c(U\cap\ol M)\To L^2(U\cap M)\\
&\Pi_{\tau_{k\delta}}(f e^{k\phi} s^{k})=
\Pi_{\tau_{k\delta},s}(f)e^{k\phi}s^{k},\:\: f\in\cC^\infty_c(U\cap\ol M).
\end{split}
\]
Let $\Pi_{\tau_{k\delta},s}(x,y)\in\mathscr D'((U\times U)\cap(M\times M))$ 
be the distribution kernel of $\Pi_{\tau_{k\delta},s}$. 
The main result of this work is the following.

\begin{theorem}\label{t-gue220512yyd}
Let $M$ be a relatively compact domain with smooth connected boundary 
of a complex manifold $M'$ of complex dimension $n$
and let $L\rightarrow M'$ be an $\R$-equivariant holomorphic 
line bundle such that Assumptions 
\ref{a-gue170929I} and \ref{a-gue220517yyd} are
satisfied. Let $\tilde\chi, \hat\chi\in\cC^\infty(\ol M)$ 
with $\supp\tilde\chi\cap\supp\hat\chi=\emptyset$. Then, 
\begin{equation}\label{e-gue220510yydsz}
\mbox{$\tilde\chi\Pi_{\tau_{k\delta}}\hat\chi\equiv0\mod 
O(k^{-\infty})$ on $\ol M\times\ol M$}. 
\end{equation} 
Let $s$ be a $\R$-equivariant holomorphic frame of $L$ on an open set 
$U$ of $M'$, $\abs{s}^2_{h^L}=e^{-2\phi}$. If $U\cap X=\emptyset$
we have 
\[\mbox{$\Pi_{\tau_{k\delta}}\equiv0\mod O(k^{-\infty})$ on 
$(U\times U)\cap(\ol M\times\ol M)$}.\]
Suppose that $D:=U\cap X\neq\emptyset$. 
Let $\chi, \chi_1\in\cC^\infty_c(U\cap\ol M)$, $\chi_1\equiv1$ near $\supp\chi$. Then, 
\begin{equation}\label{e-gue220510yydtz}
(\chi_1\Pi_{\tau_{k\delta},s}\chi)(x,y)\equiv\int_{\R}e^{ik\Psi(x,y,t)}b(x,y,t,k)dt
\mod O(k^{-\infty}),
\end{equation}
on $(U\times U)\cap(\ol M\times\ol M)$,
where $(\chi_1\Pi_{\tau_{k\delta},s}\chi)(x,y)$ 
is the distribution kernel of $\chi_1\Pi_{\tau_{k\delta},s}\chi$, 
$\Psi(x,y, t)\in\cC^\infty(((U\times U)\cap(\ol M\times\ol M))
\times(\delta/4,2\delta))$, and for all $(x,y,t)\in((U\times U)\cap(\ol M\times\ol M))
\times(\delta/4,2\delta)$, 
\begin{equation}\label{e-gue220505yyduz}
\begin{split}
&{\rm Im\,}\Psi\geq0,\quad
\mbox{${\rm Im\,}\Psi>0$ if $(x,y)\notin (U\times U)\cap(X\times X)$},\\
&\Psi(x,x,t)=0,\ \ x\in U\cap X,\ \ \Psi(x,y,t)\neq0\ \ \mbox{if $(x,y)\notin{\rm diag\,}((U\times U)\cap(X\times X))$},\\
&d_1\Psi(x,x,t)\neq0,\ \ d_2\Psi(x,x,t)\neq0,\ \ x\in U\cap\ol M,
\end{split}
\end{equation} 
\begin{equation}\label{e-gue220510vz}
\begin{split}
&b(x,y,t,k)\in (S^{n+1}(1)\cap \cC^\infty_c)(((U\times U)\cap
(\ol M\times\ol M))\times(\delta/4,2\delta))
%\cap\cC^\infty_c(((U\times U)\cap(\ol M\times\ol M))\times(\delta/4,2\delta))
,\\
&\mbox{$b(x,y,t,k)\sim\sum^{+\infty}_{j=0}b_j(x,y,t)k^{n+1-j}$ 
in $S^{n+1}(1;((U\times U)\cap(\ol M\times\ol M))\times(\delta/4,2\delta))$},\\
 &b_j(x,y,t)\in (S^0(1)\cap\cC^\infty_c)(((U\times U)\cap(\ol M\times\ol M))
 \times(\delta/4,2\delta)),
 %\cap\cC^\infty_c(((U\times U)\cap(\ol M\times\ol M))\times(\delta/4,2\delta))
 \:\:j=0,1,\ldots,
\\&b_0(x,x,t)>0. 
 \end{split}\end{equation}
%(The precise formula of $b_0(x,x,t)$ is given by \eqref{e-gue220510ez} below.)
Moreover,
\begin{equation}\label{e-gue220510wz}
 b_0(x,x,t)=(2\pi)^{-n}\chi(x)\abs{\det(R^L_{X,x}+2t\mathcal{L}_x)}
 \abs{\tau_\delta(t)}^2(2i)\varphi_1(x,d_x\Phi(x,x,t)),
\end{equation} 
for all $(x,x,t)\in D\times D\times(\delta/4,2\delta)$, where 
$\varphi_1$ is given in \eqref{e-gue220426yydz} below.
\end{theorem}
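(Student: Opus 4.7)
The strategy is to reduce the Bergman kernel on $\ol M$ to the Szeg\H{o} kernel on the CR boundary $X$ by means of a semi-classical Poisson extension operator, and then to compose by the method of stationary phase with complex phase. First I would construct a semi-classical Poisson operator $P_k:\cC^\infty(X,L^k)\to\cC^\infty(\ol M,L^k)$ satisfying $\ddbar_k(P_k u)\equiv 0\mod O(k^{-\infty})$ with boundary restriction equal to $u$; its kernel is a complex Fourier integral operator that can be described microlocally in each spectral window of $-iT_M$. Since the $\R$-action is holomorphic and preserves $X$, the operator $-iT_M$ commutes with $\ddbar_k$ and with $P_k$ at leading order; combined with the CR-transversality of $T$, this makes the composition of the boundary restriction with $P_k$ invertible modulo $O(k^{-\infty})$ on the spectral window $I_k$, yielding an identity
\begin{equation*}
\Pi_{\tau_{k\delta}}\equiv P_k\circ S_{X,k,\tau}\circ P_k^{*}\mod O(k^{-\infty}),
\end{equation*}
where $S_{X,k,\tau}$ is the CR Szeg\H{o} projector on $X$ composed with the Fourier cutoff $\tau_{k\delta}(-iT_X)$.

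With this reduction, the off-diagonal decay \eqref{e-gue220510yydsz} follows from two inputs: interior elliptic regularity for $\ddbar_k$ (handling the portion where $\tilde\chi$ or $\hat\chi$ is supported away from $X$, exploiting the a.e.\ nonvanishing of $T$ and $d\rho$) and the off-diagonal decay of the Szeg\H{o} kernel on $X$ in the window $I_k$ from \cite{HLM,HHL17} (handling the boundary part via the identity above). For the oscillatory representation \eqref{e-gue220510yydtz}, I would substitute the Szeg\H{o} kernel expansion of \cite{HHL17} — an integral $\int e^{ik\Phi(x,y,t)}a(x,y,t,k)\,dt$ with $t\in(\delta/4,2\delta)$ — into $P_k\circ S_{X,k,\tau}\circ P_k^{*}$. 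Performing stationary phase in the tangential boundary variables produces the phase $\Psi(x,y,t)$ extending the CR Szeg\H{o} phase $\Phi$ off $X\times X$, and the amplitude $b(x,y,t,k)$ in the class $S^{n+1}(1)$. The properties \eqref{e-gue220505yyduz} of $\Psi$ (nonnegativity with strict positivity off $X\times X$, nondegeneracy of the differentials) are inherited from the Szeg\H{o} phase together with the non-characteristicity of $P_k$ at $X$; the extension off the boundary is a Kuranishi-type construction whose feasibility and positivity rest precisely on the spectral window condition \eqref{eq:deltadef}.

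The leading coefficient \eqref{e-gue220510wz} is then computed as a product of two principal symbols: the principal Szeg\H{o} symbol from \cite{HHL17}, giving $(2\pi)^{-n}|\det(R^L_{X,x}+2t\mathcal{L}_x)||\tau_\delta(t)|^2$, and the principal Poisson symbol evaluated along its generating direction at $X$, producing the factor $(2i)\varphi_1(x,d_x\Phi(x,x,t))$; the cutoff $\chi(x)$ is explicit since the composition has been localized by $\chi,\chi_1$.

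The principal obstacle is the symbolic analysis of $P_k$ within the spectral window $[k\delta/2,k\delta]$. The absence of a global spectral gap for the $\ddbar$-Neumann Laplacian forces every inversion — in particular of the boundary restriction of $P_k$ — to be performed only inside this window, with the $\R$-action supplying the missing microlocal gap. Moreover, extending the complex phase from $X\times X$ to $\ol M\times\ol M$ while keeping $\mathrm{Im}\,\Psi>0$ off the boundary diagonal requires a Kuranishi-type analysis of the characteristic variety of $\ddbar_k$ near $X$, and it is precisely the conditions \eqref{eq:deltadef} on the parameter $t$, themselves forced by Assumption~\ref{a-gue220517yyd}, that guarantee this construction works throughout $I_k$.
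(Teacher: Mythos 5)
Your overall architecture---reduce the Bergman kernel to the Szeg\H{o} kernel on $X$ via a Poisson operator, compose, and run stationary phase---matches the paper's strategy, but there are two substantive gaps in how the reduction is actually set up.

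First, you describe $P_k$ as an operator with $\ddbar_k(P_ku)\equiv 0\bmod O(k^{-\infty})$ for all boundary data $u$. The paper's Poisson operator $P^{(0)}_k$ is the exact Dirichlet Poisson operator for $\Box^{(0)}_{f,k}$, which does \emph{not} approximately annihilate $\ddbar_k$ on arbitrary $u$. The crucial and non-obvious point---Theorem~\ref{t-gue220503ycd} in the paper---is that $\ddbar_kP^{(0)}_kS_{\tau^2_{k\delta}}=0$ \emph{exactly} for $k$ large, i.e.\ the Dirichlet-harmonic extension of Szeg\H{o}-projected data in the spectral window is genuinely holomorphic, not merely approximately so. This is a theorem requiring the a priori estimate for $\mathcal{N}^{(1)}_{\tau^2_{k\delta}}$ (Theorem~\ref{t-gue220411yydz} and Lemma~\ref{l-gue220415ycd}), which in turn hinges on the positivity $i\varphi_1-|\partial\phi/\partial\rho|>0$ produced by Assumption~\ref{a-gue220517yyd}. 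Your sketch would leave uncontrolled errors in $\ddbar_k$ that propagate into the Neumann-series inversion and cannot obviously be absorbed into $O(k^{-\infty})$, because the $\ddbar$-Neumann Laplacian has no spectral gap.

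Second, your displayed identity $\Pi_{\tau_{k\delta}}\equiv P_k\circ S_{X,k,\tau}\circ P_k^*\bmod O(k^{-\infty})$ omits the middle factor $((P^{(0)}_k)^*P^{(0)}_k)^{-1}$; the correct kernel is $B_{\tau^2_{k\delta}}=P^{(0)}_kS_{\tau^2_{k\delta}}((P^{(0)}_k)^*P^{(0)}_k)^{-1}(P^{(0)}_k)^*$. You acknowledge an inversion in words, but this factor is not cosmetic: its principal symbol is precisely where the factor $(2i)\varphi_1(x,d_x\Phi(x,x,t))$ in \eqref{e-gue220510wz} originates (cf.\ \eqref{e-gue220423yyd} and Lemma~\ref{l-gue220425yydI}), not from ``the principal Poisson symbol'' as you say; $P^{(0)}_kP^{(0)*}_k$ and its inverse are the objects carrying this weight. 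Moreover, $B_{\tau^2_{k\delta}}$ is not self-adjoint, and $\Pi_{\tau^\dagger_{k\delta}}$ is recovered only after the Neumann-series iteration with the remainder $R_{\tau^2_{k\delta}}=B^*_{\tau^2_{k\delta}}-B_{\tau^2_{k\delta}}$ (whose principal symbol vanishes on the diagonal, cf.\ Lemma~\ref{l-gue220502yyd}); without this iteration and the diagonal-vanishing lemma, you do not get the kernel of the Bergman projection but only of $B_{\tau^2_{k\delta}}$.

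Finally, you invoke the a.e.\ nonvanishing of $T$ and $d\rho$ as enabling interior regularity, but its actual role is in Lemma~\ref{l-gue220321yyd}, which establishes the sub-elliptic estimate $(\Box^{(0)}_{f,k}u|u)\geq ck^2\|u\|^2$ on the spectral window for Dirichlet data, and hence the boundedness of $N^{(0)}_k F^{(0)}_{[k\delta/4,2k\delta],M}$ and $P^{(0)}_kF^{(0)}_{[k\delta/4,2k\delta]}$ (Lemma~\ref{l-gue220314yydz}). That estimate is the microlocal substitute for the missing spectral gap and uses an integration by parts with $\ol{L}_n=\tfrac{1}{\sqrt2}(T+iJT)$ together with condition~\eqref{e-gue220712yyd}; this should be stated as the mechanism, not just the hypothesis.
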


We refer the reader to \eqref{e-gue190813yyd} and the discussion in the 
end of Section~\ref{s-gue150808} for the notations used in 
Theorem~\ref{t-gue220512yyd}.
%(the notations $A\equiv B \mod O(k^{-\infty})$ on 
%$\ol M\times\ol M$, $S^{n+1}(1;((U\times U)\cap(\ol M\times\ol M))$, i.e.). 
%\begin{remark}\label{r-gue220711yydb}
%Let $s$ be a local rigid holomorphic frame of $L$ on an open set 
%$U$ of $M'$, with $D:=U\cap X\neq\emptyset$. 
%Let $x=(x_1,\ldots,x_{2n-1})$ be canonical coordinates 
%of $X$ on $D$ with $T=\frac{\pr}{\pr x_{2n-1}}$ on 
%$D$ and $\tilde x=(x_1,\ldots,x_{2n-1},x_{2n})$, $x_{2n}=\rho$, 
%are local coordinates of $U$ (see the discussion after 
%Definition~\ref{d-gue150508dII} for the meaning of canonical coordinates). 
%Let $\set{L_j}^n_{j=1}$ be an orthonormal frame of 
%$T^{1,0}M'$ and its dual frame $\set{e^j}^n_{j=1}$ 
%with $e^n=\frac{\pr\rho}{\abs{\pr\rho}}$. 
%Write $L_j=\sum^{2n}_{\alpha=1}a_{j,\alpha}\frac{\pr}{\pr x_\alpha}$, 
%$a_{j,\alpha}\in\cC^\infty(U,\mathbb C)$, for every 
%$j=1,\ldots,n$, $\alpha=1,\ldots,2n$. 
%Put $C_{\alpha,\ol\beta}:=\sum^n_{j=1}a_{j,\alpha}\ol{a_{j,\beta}}$, 
%$\alpha, \beta=1,\ldots,2n$. 
%Write 
%\[\phi(\tilde x)=\phi_0(x)+\rho\phi_1(x)+O(\abs{\rho}^2).\]
%Put for $(x,\eta)\in D\times\R^{2n-1}$,
%\begin{equation}\label{e-gue220426yydz}
%\varphi_1(x,\eta)=\frac1i\left(\phi_1^2(x)+2\sum_{\alpha, \beta=1}^{2n-1}
%C_{\alpha,\ol\beta}\big(\eta_\alpha\eta_\beta+\frac{\partial\phi_0}{\partial x_\alpha}
%\frac{\partial\phi_0}{\partial x_\beta}+i \frac{\partial\phi_0}{\partial x_\alpha}\eta_\beta
%-i \eta_\alpha\frac{\partial\phi_0}{\partial x_\beta}\big)
%-2\phi_1(x)\eta_{2n-1}\right)^{\!\!1/2}\,.
%\end{equation}
%We will use the same notations as in Theorem~\ref{t-gue220512yyd}. 
The phase function $\Phi:=\Psi|_{D\times D}$ is the same as the 
phase function appearing in the description of the Szeg\H{o} kernels 
in~\cite[Theorem 1.1]{HLM},~\cite[Theorem 1.1]{HHL17} 
(see also \eqref{e-gue220430yyd}). Moreover, we have 
\begin{equation}\label{e-gue220505yyduzz}
\begin{split}
&\Psi(\tilde x,\tilde y, t)=\Phi(x,y,t)+x_{2n}\varphi_1(x,d_x\Phi(x,y,t))-
y_{2n}\ol{\varphi_1(y,-d_y\ol\Phi(x,y,t))}\\
&\quad+O(\abs{(x_{2n},y_{2n})}^2),\\
&{\rm Im\,}\Psi(\tilde x,\tilde y, t)\geq C(\abs{x_{2n}}+\abs{y_{2n}}),
\end{split}
\end{equation}
for every $(\tilde x, \tilde y,t)\in((U\times U)\cap(\ol M\times\ol M))
\times(\delta/4,2\delta)$, where $C>0$ is a constant and we 
take the branch of the functions 
$$\varphi_1(x,d_1\Phi(x,y,t)), \:\:-\ol{\varphi_1(y,-d_2\ol\Phi(x,y,t))}$$
so that $i\varphi_1(x,d_1\Phi(x,x,t))>0$, $-i\ol{\varphi_1(y,-d_2\ol\Phi(y,y,t))}>0$, 
for every $(x,x,t)\in D\times D\times(\delta/4,2\delta)$.

%(ii) For $b_0$ in \eqref{e-gue220510vz}, we have 
%\begin{equation}\label{e-gue220510ez}
% b_0(x,x,t)=(2\pi)^{-n}\chi(x)\abs{\det(R^L_{X,x}+2t\mathcal{L}_x)}\abs{\tau_\delta(t)}^2(2i\varphi_1(x,d_x\Phi(x,x,t)),
%\end{equation} 
%for all $(x,x,t)\in D\times D\times(\delta/4,2\delta)$, where $\varphi_1$ is as in \eqref{e-gue220426yydz} above and 
%\[\abs{\det(R^L_{X,x}+2t\mathcal{L}_x)}=\abs{\lambda_1(x,t)}\ldots\abs{\lambda_{n-1}(x,t)},\] 
%$\lambda_j(x,t)$, $j=1,\ldots,n-1$, are the eigenvalues of the Hermitian quadratic form $R^L_{X,x}+2t\mathcal{L}_x$ with respect to $\langle\,\cdot\,|\,\cdot\,\rangle$.
%\end{remark} 

%\begin{remark}\label{r-gue220517yyd}
%The reason why in Theorem~\ref{t-gue220512yyd}, 
%we need the assumption that $T(x)\neq0$, $(d\rho)(x)\neq0$, 
%for almost every $x\in\ol M$, is that in general we do not know 
%if the estimate \eqref{e-gue220321yydII} below is true. 
%To have the estimate \eqref{e-gue220321yydII}, 
%we need the assumption that $T(x)\neq0$, $(d\rho)(x)\neq0$, 
%for almost every $x\in\ol M$. The estimate \eqref{e-gue220321yydII} 
%is important to have Bergman kernel asymptotic expansion. 
%\end{remark}

\subsection{Applications}\label{s-gue230718yyd}
One of the main applications of the asymptotic 
expansion of the Bergman kernel at the boundary
is the following fundamental theorem of Fefferman \cite[Theorem 1]{Fer74}:
\emph{A biholomorphic map $f:M_1\to M_2$ of smoothly bounded strictly
pseudoconvex domains in $\C^n$ is smooth up to the boundary.}
It is a several complex variables generalization of the classical 
theorem of Painlev\'e \cite{Pain}
for smooth domains in the complex plane. 
Fefferman's theorem was further generalized by Bell and Ligocka \cite{BeLi80}. 
They proved that if the Bergman projections associated to 
smoothly bounded weakly pseudoconvex domains $M_1$
and $M_2$ preserve the space of functions which are smooth up to the
boundary, then $f$ extends to be a diffeomorphism of $\overline{M}_1$
onto $\overline{M}_2$.

As an application of Theorem~\ref{t-gue220512yyd}, 
we have the following $\R$-equivariant analogue of Fefferman's  and Bell-Ligocka's
extension result for weakly pseudoconvex domains in $\C^n$. 

\begin{theorem}\label{t-gue230718yyd}
Let $M_j$, $j=1,2$, be bounded weakly pseudoconvex domains of $\mathbb C^n$. 
Let $\eta_j$, $j=1,2$, be holomorphic $\mathbb R$-actions on $\mathbb C^n$
such that that $\eta_j$ acts on $M_j$ and is transversal to the CR 
structure of the boundary of $M_j$.
Suppose that there is a $\eta_j$-invariant strictly plurisubharmonic 
function $\phi\in\cC^\infty(\mathbb C^n)$, $j=1,2$.
Let $F: M_1\To M_2$ be an $\mathbb R$-equivariant 
biholomorphic map. 
Then $F$ extends smoothly to the boundary of $M_1$.
\end{theorem}

We say that the action $\eta_j$ is transversal to the CR structure 
of the boundary of $M_j$ if $\C T X_j=T^{1,0}X_j\oplus T^{0,1}X_j\oplus\C T_j$, 
where $X_j=\pr M_j$, $T_j$ is the vector field on $X_j$ 
induced by $\eta_j$, $T^{1,0}X_j:=T^{1,0}\C^n\cap\C TX_j$,  $T^{0,1}X_j:=T^{0,1}\C^n\cap\C TX_j$. 
We say that $F$ is $\R$-equivariant if
$F(\eta_1\circ x)=\eta_2\circ F(x)$, for every $x\in M_1$. 
We will prove Theorem~\ref{t-gue230718yyd} in Section~\ref{s-gue230718yydI}.

Let $I_k:=[\frac{k\delta}{2},k\delta]$. Put 
\begin{equation}\label{e-gue220513yyd}
H^0_{I_k}(\ol M,L^k):=\set{u\in H^0(\ol M,L^k);\, 
\mathds{1}_{[k\delta/2,k\delta]}(-iT_M)u=u}.
\end{equation}
This is the space spanned by eigensections of $-iT_M$
on $H^0(\ol M,L^k)$ whose eigenvalues lie in the interval $[k\delta/2,k\delta]$. 
By using Theorem~\ref{t-gue220512yyd} we will prove in 
Section~\ref{s-gue220503yyd} the following.
%===
\begin{theorem}\label{t-gue220513yyd}
Let $M$ and $L$ be as Theorem \ref{t-gue220512yyd}.
%Let $M$ be a relatively compact domain with smooth connected boundary 
%of a complex manifold $M'$ of complex dimension $n$
%and let $L\rightarrow M'$ be an $\R$-equivariant holomorphic 
%line bundle such that Assumptions 
%\ref{a-gue170929I} and \ref{a-gue220517yyd} are satisfied. 
We fix $\delta>0$ such that \eqref{eq:deltadef} is satisfied.
Then as $k\To+\infty$, 
\begin{equation}\label{e-gue220513yydI}
\dim H^0_{I_k}(\ol M,L^k)=k^n(2\pi)^{-n}\int_X\int^{\delta}_{\frac{\delta}{2}}
\det(R^L_{X,x}+2t\mathcal{L}_x)\,dt\,dv_X(x)+o(k^n),
\end{equation}
where %$I_k=[\frac{k\delta}{2},k\delta]$, 
$dv_X(x)$ is the volume form on $X$ induced by the Hermitian 
metric $\langle\,\cdot\,|\,\cdot\,\rangle$ on $\mathbb CTM'$.
\end{theorem}
%===
Note that the integral in \eqref{e-gue220513yydI} is positive, 
so \eqref{e-gue220513yydI}
shows that $\dim H^0_{I_k}(\ol M,L^k)$ grows as $k^n$ as
$k\to+\infty$. 
We obtain from Theorem \ref{t-gue220513yyd}
the holomorphic Morse inequality
\begin{equation}\label{e-gue220513yydII}
\dim H^0(M,L^k)\geq k^n(2\pi)^{-n}\int_X\int^{\delta}_{\frac{\delta}{2}}
\det(R^L_{X,x}+2t\mathcal{L}_x)\,dt\,dv_X(x)+o(k^n),\:\:k\to+\infty.
\end{equation}
Holomorphic Morse inequalities for
manifolds with boundary were studied in \cite{Be05,M96,M16},
see \cite{MM07} for a comprehensive study. 

We further enhance the dimension estimate \eqref{e-gue220513yydI}
by showing the asymptotic expansion of the partial Bergman 
kernel $\Pi_{\tau_{k\delta}}(\cdot,\cdot)$ on the diagonal.
This implies that the sections of $H^0_{I_k}(\ol M,L^k)$ holomorphically
embed a neighborhood of the boundary, and the embedding
is smooth up to boundary.
%===
\begin{corollary}
Let $M$ and $L$ be as Theorem \ref{t-gue220512yyd}.
There exists a neighborhood $V$ of $X=\partial M$
such that we have the following asymptotics in any $\cC^{\ell}$-topology 
on $\overline{V\cap M}$,
\begin{equation}\label{eq:dgexp}
\Pi_{\tau_{k\delta}}(x,x)=
\sum_{j=0}^\infty \bb_j(x)k^{n-j}+O(k^{-\infty}),\quad \bb_0(x)>0, \:\:
x\in\overline{V\cap M}.
\end{equation}
Moreover, the Kodaira map 
$\overline{V\cap M}\to\mathbb{P}(H^0_{I_k}(\ol M,L^k)^*)$ 
associated to $H^0_{I_k}(\ol M,L^k)^*$ 
is a smooth embedding, holomorphic on $V\cap M$.
\end{corollary}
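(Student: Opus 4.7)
The plan is to extract the diagonal asymptotic expansion \eqref{eq:dgexp} from the oscillatory integral representation of Theorem~\ref{t-gue220512yyd}, and then use peak sections built from the Bergman kernel to verify that the Kodaira map is a smooth embedding, holomorphic on $V\cap M$.

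First I would choose a one-sided tubular neighborhood $V$ of $X$ in $M'$ covered by finitely many charts $(U_\ell, s_\ell)$ on each of which $L$ has a rigid trivializing holomorphic frame $s_\ell$, together with a subordinate $\R$-invariant partition of unity $\{\chi_\ell\}$ of $\ol{V\cap M}$. On each chart, Theorem~\ref{t-gue220512yyd} gives with $y=x$
\[
(\chi_\ell\,\Pi_{\tau_{k\delta},s_\ell}\chi_\ell)(x,x)\equiv\int_{\delta/4}^{2\delta}e^{ik\Psi(x,x,t)}\,b_\ell(x,x,t,k)\,dt\mod O(k^{-\infty}),
\]
and reassembling via the partition of unity represents $\Pi_{\tau_{k\delta}}(x,x)$ as a finite sum of such oscillatory integrals modulo $O(k^{-\infty})$. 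The analysis then splits into two regimes. On $X$, $\Psi(x,x,t)\equiv 0$ by the second line of \eqref{e-gue220505yyduz}, so the integral reduces to the $t$-integral of the classical symbol $b_\ell$ and the $k$-expansion of $b_\ell$ transfers directly. For $x\in V\cap M$ close to $X$, \eqref{e-gue220505yyduzz} gives $\mathrm{Im}\,\Psi(x,x,t)\gtrsim |x_{2n}|$; expanding $\Psi$ in Taylor series in the normal coordinate $x_{2n}$ and combining with the symbol expansion $b_\ell\sim\sum_j b_{\ell,j}\,k^{n+1-j}$, integration in $t$ produces, after assembling contributions from all charts, the smooth expansion $\Pi_{\tau_{k\delta}}(x,x)=\sum_j \bb_j(x)k^{n-j}+O(k^{-\infty})$ uniformly on $\ol{V\cap M}$ in any $\cC^r$-norm.

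The positivity $\bb_0(x)>0$ reads directly off \eqref{e-gue220510wz}: on $X$, $b_0(x,x,t)=(2\pi)^{-n}\chi(x)\,|\det(R^L_{X,x}+2t\mathcal{L}_x)|\,|\tau_\delta(t)|^2\cdot 2i\varphi_1(x,d_x\Phi(x,x,t))$, which is strictly positive on $[\delta/2,\delta]$ by $\tau_\delta|_{[\delta/2,\delta]}=1$, the positivity of $\det(R^L_{X,x}+2t\mathcal{L}_x)$ from \eqref{eq:deltadef}, and the branch condition $i\varphi_1>0$; continuity then extends positivity to a one-sided neighborhood of $X$, which I take to be $V$. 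For the Kodaira map, construct peak sections at each $x_0\in\ol{V\cap M}$ by
\[
\sigma_{x_0,k}(y):=\frac{\Pi_{\tau_{k\delta}}(y,x_0)}{\sqrt{\Pi_{\tau_{k\delta}}(x_0,x_0)}}.
\]
Since $\tau_{k\delta}\equiv 1$ on $[k\delta/2,k\delta]$ and $\Pi_k$ commutes with the self-adjoint operator $-iT_M$, each $\sigma_{x_0,k}$ differs from an element of $H^0_{I_k}(\ol M,L^k)$ by $O(k^{-\infty})$. The condition $\mathrm{Im}\,\Psi>0$ off the diagonal in \eqref{e-gue220505yyduz} gives $|\sigma_{x_0,k}(y)|=O(k^{-\infty})$ uniformly once $y$ stays bounded away from $x_0$, so the Kodaira map is well defined and injective for $k$ large. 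The non-degeneracy $d_1\Psi(x,x,t), d_2\Psi(x,x,t)\neq 0$ from the last line of \eqref{e-gue220505yyduz} ensures that the derivatives $\pr_v \sigma_{x_0,k}(x_0)$ for $v$ in a basis of $T_{x_0}M'$ are linearly independent after appropriate rescaling by powers of $k$, whence the differential is also injective. Holomorphy on $V\cap M$ is immediate since $\sigma_{x_0,k}\in\mathrm{Ker}\,\ddbar_k$.

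The main obstacle is to handle the degeneracy of $\Psi$ on the boundary CR diagonal simultaneously with its normal-direction growth, uniformly up to $X$. The phase is degenerate in the sense that both its real and imaginary parts vanish on the diagonal of $X\times X$, so standard real stationary phase is unavailable; one must rely on the explicit boundary-layer structure \eqref{e-gue220505yyduzz} to match the Szeg\H{o}-type pattern on $X$ with the exponentially damped boundary-layer pattern inside $V\cap M$ and to obtain a single smooth expansion with uniform $\cC^r$ control of the remainder across $X$.
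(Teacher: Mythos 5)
The paper does not actually give a proof of this corollary, so I cannot compare your argument with the authors' intended route; I can only examine it on its own terms. On that basis there are three genuine gaps.

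\textbf{The diagonal expansion.} Your derivation of \eqref{eq:dgexp} does not go through as written. First, the orders do not match: the symbol in Theorem~\ref{t-gue220512yyd} satisfies $b\sim\sum_j b_j k^{n+1-j}$, and on the diagonal over $X$ the phase vanishes identically in $t$ (by \eqref{e-gue220505yyduz}), so the $t$-integral contributes no extra power of $k^{-1}$; the leading term is $k^{n+1}\int b_0(x,x,t)\,dt$, not $k^n$. Second, and more fundamentally, for $x\in V\cap M$ at a fixed distance from $X$ the bound $\mathrm{Im}\,\Psi(\tilde x,\tilde x,t)\geq 2C|x_{2n}|$ in \eqref{e-gue220505yyduzz} forces $|e^{ik\Psi}|\leq e^{-2Ck|x_{2n}|}$, so the oscillatory integral is $O(k^{-\infty})$ for each fixed interior $x$. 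Your proposal to ``expand $\Psi$ in Taylor series in the normal coordinate'' produces a boundary-layer profile in the scaled variable $kx_{2n}$, not a polynomial asymptotic series in $1/k$ with $k$-independent coefficients $\bb_j(x)$, and in particular it cannot produce a positive $\bb_0(x)$ for $x$ in the interior of $V\cap M$. So the step ``integration in $t$ produces \dots the smooth expansion $\sum_j \bb_j(x)k^{n-j}$ uniformly on $\overline{V\cap M}$'' is not justified; it needs either a restriction to $X$ or a rescaling of the normal variable, and in either case the power of $k$ must be revisited.

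\textbf{Peak sections and $H^0_{I_k}$.} Your claim that each $\sigma_{x_0,k}=\Pi_{\tau_{k\delta}}(\cdot,x_0)/\sqrt{\Pi_{\tau_{k\delta}}(x_0,x_0)}$ differs from an element of $H^0_{I_k}(\ol M,L^k)$ by $O(k^{-\infty})$ is not correct. Since $-iT_M$ commutes with $\Pi_k$, one has $\Pi_{\tau_{k\delta}}=\sum_\alpha\tau_{k\delta}(\alpha)\Pi_\alpha$, and ${\rm supp\,}\tau_{k\delta}\subset(k\delta/4,2k\delta)$ is strictly larger than $I_k=[k\delta/2,k\delta]$. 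The spectral components with $\alpha\in{\rm supp\,}\tau_{k\delta}\setminus I_k$ enter at the \emph{same} order as those in $I_k$ (the expansion of Theorem~\ref{t-gue220430yyd} does not distinguish them), so they are not $O(k^{-\infty})$. To obtain genuine elements of $H^0_{I_k}$ one should work with a cutoff $\tau^\dagger_\delta$ with ${\rm supp\,}\tau^\dagger_\delta\subset[\delta/2,\delta]$, as the paper does in the proof of Theorem~\ref{t-gue220513yyd} and in Section~\ref{s=gue220422yyd}, so that $F^{(0)}_{\tau^\dagger_{k\delta},M}$ lands in the $I_k$-spectral subspace; the positivity of the corresponding kernel then yields non-vanishing of the evaluation functional on $H^0_{I_k}$.

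\textbf{Immersion.} The non-vanishing $d_1\Psi(x,x,t)\neq0$, $d_2\Psi(x,x,t)\neq0$ is a single covector condition; it does not by itself imply the full differential of the Kodaira map is injective. To show the differential has rank $2n$ one must exhibit $2n$ sections whose (suitably rescaled) differentials at $x_0$ form a frame, which requires the quantitative near-diagonal structure of $\Phi$ and $\varphi_1$ (in particular the non-degeneracy of the Levi and curvature forms that enter $b_0$), not merely $d\Psi\neq0$.
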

%===

The notions of Moishezon manifold and big line bundle, well known
for compact manifolds, make also sense for
pseudoconcave manifolds. 
A connected complex manifold $M$ is called pseudoconcave in the sense of
Andreotti \cite{A63}
if there exists a non-empty open set $D\subset M$ with smooth boundary
$\partial D$ such that the Levi form of $D$ restricted to the analytic 
tangent space of $\partial D$
has at least one negative eigenvalue at each point.
Examples are provided by $q$-concave manifolds ($q<\dim M$)
in the sense of Andreotti-Grauert \cite{AG}.
It is well known \cite{A63} that for  
a pseudoconcave manifold $M$ the transcendence degree  
$\operatorname{deg\,tr}\mathcal{K}(X)$ of the meromorphic function 
field $\mathcal{K}(M)$ is at most the complex dimension 
$n$ of $M$. 
In analogy to the corresponding notion for compact manifolds, we say that 
a pseudoconcave manifold is {\it Moishezon} if 
$\operatorname{deg\,tr}\mathcal K(X)=\dim M$. 

For any pseudoconcave manifold $M$ and
any holomorphic vector bundle $E\to M$, the space
of holomorphic sections $H^0(M,E)$ is finite dimensional.
A holomorphic line bundle  $L\to M$ is called big if 
$\liminf_{k\rightarrow\infty}k^{-n}\dim H^0(M, L^k)>0$.
If $L$ is big, then for $k$ big enough the sections
of $H^0(M, L^k)$ give local coordinates
on a Zariski open set of $M$.
 
%$\mathcal{A}(M,L)=\oplus_{k\geq0}H^0(X,L^k)$ be
%the graded ring associated to $L$.

%===
\begin{corollary}\label{C:Moi}
Let $M$ and $L$ be as in Theorem \ref{t-gue220513yyd}.

(a) Assume that $M$ is pseudoconcave in the sense of Andreotti. 
Then $L$ is big and $M$ is Moishezon.

(b) Assume that
$\dim M\geq3$ and $M$ is strictly pseudoconcave, that is, the Levi form 
of $\partial M$ is negative definite on $\partial M$. 
Then there exists a compact
Moishezon manifold $\widetilde{M}$ and an open set $M_0\subset\widetilde{M}$
such that $M$ is biholomorphic to $M_0$.
\end{corollary}
%===
\noindent
In case (b) we say that $M$ can be compactified to a compact
Moishezon manifold $\widetilde{M}$.

%\begin{corollary}
%Let $M$ and $L$ be as Theorem \ref{t-gue220512yyd} and %assume that
%$\dim M\geq2$ and $M$ is strictly pseudoconcave.
%Then $X$ is CR embeddable into an Euclidean space and
%$M$ can be compactified to a compact
%Moishezon manifold $\widetilde{M}$. 
%\end{corollary}
%===

\section{Set up}\label{sec2}
\subsection{Some standard notations}\label{s-gue150508b}
We use the following notations: $\mathbb Z$ 
is the set of integers, $\mathbb N=\{1, 2, \ldots, \}$, $\mathbb N_{0}=\mathbb N\cup\{0\}$. 
$\R$ 
is the set of real numbers, $\ol\R_+:=\set{x\in\R;\, x\geq0}$. 
For a multiindex $\alpha=(\alpha_1,\ldots,\alpha_n)\in\mathbb N_0^n$
we denote by $\abs{\alpha}=\alpha_1+\ldots+\alpha_n$ its norm and by $l(\alpha)=n$ its length.
For $m\in\mathbb N$, write $\alpha\in\set{1,\ldots,m}^n$ if $\alpha_j\in\set{1,\ldots,m}$, 
$j=1,\ldots,n$. $\alpha$ is strictly increasing if $\alpha_1<\alpha_2<\ldots<\alpha_n$. For $x=(x_1,\ldots,x_n)$ we write
\[
\begin{split}
&x^\alpha=x_1^{\alpha_1}\ldots x^{\alpha_n}_n,\\
& \pr_{x_j}=\frac{\pr}{\pr x_j}\,,\quad
\pr^\alpha_x=\pr^{\alpha_1}_{x_1}\ldots\pr^{\alpha_n}_{x_n}=\frac{\pr^{\abs{\alpha}}}{\pr x^\alpha}\,,\\
&D_{x_j}=\frac{1}{i}\pr_{x_j}\,,\quad D^\alpha_x=D^{\alpha_1}_{x_1}\ldots D^{\alpha_n}_{x_n}\,,
\quad D_x=\frac{1}{i}\pr_x\,.
\end{split}
\]
Let $z=(z_1,\ldots,z_n)$, $z_j=x_{2j-1}+ix_{2j}$, $j=1,\ldots,n$, be coordinates of $\C^n$.
We write
\[
\begin{split}
&z^\alpha=z_1^{\alpha_1}\ldots z^{\alpha_n}_n\,,\quad\ol z^\alpha=\ol z_1^{\alpha_1}\ldots\ol z^{\alpha_n}_n\,,\\
&\pr_{z_j}=\frac{\pr}{\pr z_j}=
\frac{1}{2}\Big(\frac{\pr}{\pr x_{2j-1}}-i\frac{\pr}{\pr x_{2j}}\Big)\,,\quad\pr_{\ol z_j}=
\frac{\pr}{\pr\ol z_j}=\frac{1}{2}\Big(\frac{\pr}{\pr x_{2j-1}}+i\frac{\pr}{\pr x_{2j}}\Big),\\
&\pr^\alpha_z=\pr^{\alpha_1}_{z_1}\ldots\pr^{\alpha_n}_{z_n}=\frac{\pr^{\abs{\alpha}}}{\pr z^\alpha}\,,\quad
\pr^\alpha_{\ol z}=\pr^{\alpha_1}_{\ol z_1}\ldots\pr^{\alpha_n}_{\ol z_n}=
\frac{\pr^{\abs{\alpha}}}{\pr\ol z^\alpha}\,.
\end{split}
\]
For $j, s\in\mathbb Z$, set $\delta_{j,s}=1$ if $j=s$, $\delta_{j,s}=0$ if $j\neq s$.
Let $X$ be a $\cC^\infty$ orientable paracompact manifold.
We let $TX$ and $T^*X$ denote the tangent bundle of $X$ and the cotangent bundle of $X$ respectively.
The complexified tangent bundle of $X$ and the complexified cotangent bundle of $X$
will be denoted by $\C TX$ and $\C T^*X$ respectively. We write $\langle\,\cdot\,,\cdot\,\rangle$
to denote the pointwise duality between $TX$ and $T^*X$.
We extend $\langle\,\cdot\,,\cdot\,\rangle$ bilinearly to $\C TX\times\C T^*X$.

Let $E$ be a $\cC^\infty$ vector bundle over $X$. The fiber of $E$ at $x\in X$ will be denoted by $E_x$.
Let $F$ be another vector bundle over $X$. We write
$F\boxtimes E^*$ to denote the vector bundle over $X\times X$ with fiber over $(x, y)\in X\times X$
consisting of the linear maps from $E_y$ to $F_x$.

The spaces of
smooth sections of $E$ over $X$ and distribution sections of $E$ over $X$ will be denoted by $\cC^\infty(X, E)$ and $\mathscr D'(X, E)$ respectively.
Let $\mathscr E'(X, E)$ be the subspace of $\mathscr D'(X, E)$ whose elements have compact support in $X$. 
Put $\cC^\infty_c(X,E):=\cC^\infty(X,E)\bigcap\mathscr E'(X, E)$.
For $m\in\R$, we let $W^m(X,E)$ denote the Sobolev space of order $m$ of sections of $E$ and let $W_{\rm comp}^m(X, E)$ denote the Sobolev space
of order $m$ of sections of $E$ with compact support of $X$. Let $Y\Subset X$ be a relatively compact open set of $X$. Put
\begin{gather*}
W^m_{\rm loc\,}(Y, E)=\big\{u\in\mathscr D'(Y, E):\, \varphi u\in W_{\rm comp}^m(Y, E),
      \,\forall\varphi\in\cC^\infty_c(Y)\big\}\,,\\
      W^m(\overline Y, E)=W^m_{\rm comp}(X, E)|_{Y}\,.
\end{gather*}
Let $W_{1}, W_{2}$ be bounded open subsets of $\R^{n_{1}}$ and $\R^{n_{2}}$, respectively. Let $E$ and $F$ be complex or real vector bundles over $W_{1}$ and $W_{2}$, respectively. Let $s_{1}, s_{2}\in\R$ and $n_{0}\in\mathbb Z.$ For a $k$-dependent continuous function $F_{k}: W^{s_{1}}_{\rm comp}(W_{1}, E)\rightarrow W^{s_{2}}_{\rm loc}(W_{2}, F)$, we write 
\[F_k=O(k^{n_0}):  W^{s_{1}}_{\rm comp}(W_{1}, E)\rightarrow W^{s_{2}}_{\rm loc}(W_{2}, F)\]
if for any $\chi_j\in\cC_{c}^{\infty}(W_{j})$, $j=1, 2$, $\Td\chi_1\in\cC^\infty_c(W_1)$, $\Td\chi_1\equiv1$ on $\supp\chi_1$, there is a positive constant $c>0$ independent of $k$ such that 
$$\|\chi_2F_{k}\chi_1u\|_{s_{2}}\leq ck^{n_{0}}\|\Td\chi_1u\|_{s_{1}}, \forall u\in W^{s_{1}}_{\rm loc}(W_{1}, E).$$
We write $$F_{k}=O(k^{-\infty}): W^{s_1}_{\rm comp}(W_{1}, E)\rightarrow W^{s_{2}}_{{\rm loc}}(W_{2}, F)$$ if $F_{k}=O(k^{-N}): W^{s_{1}}_{\rm comp}(W_{1}, E)\rightarrow W^{s_{2}}_{\rm loc}(W_{2}, F)$ for any $N\in\mathbb N_{0}$. Let $G_k: \cC^\infty_c(W_1,E)\To\mathscr D'(W_2, F)$ be a $k$-dependent continuous operator. We write 
$G_k=O(k^{-\infty})$ if 
\[G_{k}=O(k^{-\infty}): W^{s_1}_{\rm comp}(W_{1}, E)\rightarrow W^{s_{2}}_{{\rm loc}}(W_{2}, F)\]
for every $s_1, s_2\in\mathbb\R$.  
We recall the definition of the semi-classical symbol spaces. 

\begin{definition} \label{d-gue140826}
Let $W$ be an open set in $\R^N$. 
We say that $f: W\To[0,+\infty[$ is an order function if there are constants $C_0>0$, $N_0>0$, 
such that $f(x)\leq C_0\abs{x-y}^{N_0}f(y)$, for every $x, y\in W$. We let $S(f;W)$ be the set of $a\in\cC^\infty(W)$ such that for every $\alpha\in\mathbb N^N_0$, there exists $C_\alpha>0$ such that 
\[\abs{\pr^\alpha a(x)}\leq C_\alpha f(x),\] for every $x\in W$.
%Let
%$S(1;W)=S(1)$ be the set of
%$a\in \cC^\infty(W)$ such that for every $\alpha\in\mathbb N^N_0$, there
%exists $C_\alpha>0$, such that $\abs{\pr^\alpha_xa(x)}\leq
%C_\alpha$ on $W$.
%\begin{gather*}
%S(m;W):=\Big\{a\in\cC^\infty(W)\,|\, \text {\rm for all}\alpha\in\mathbb N^N_0:
%\sup_{x\in W}\abs{\pr^\alpha a(x)}<\infty\Big\},\\
%S^0_{{\rm loc\,}}(f;W):=\Big\{(a(\cdot,k))_{k\in\R}\,|\, \text {\rm for all}~\alpha\in\mathbb N^N_0,
%\text {\rm for all}~ \chi\in\cC^\infty_0(W)\,:\:\chi a\in S(f;W)\Big\}\,.
%\end{gather*}
%The space $S_{{\rm loc\,}}(1;W)$ is the set of sequences $a=(a(\cdot,k))$ in $\cC^\infty(W)$
%with the property that for any $\chi\in\cC^\infty_0(W)$ we have
%%\[
%$\sup_{k\in\N}\sup_{x\in W}\abs{\pr^\alpha a(x,k)}<\infty$\,.
%%\]
%If $a=a(x,k)$ depends on $k\in]1,\infty[$, we say that
%$a(x,k)\in S_{{\rm loc\,}}(1;W)=S_{{\rm loc\,}}(1)$ if $\chi(x)a(x,k)$ is uniformly bounded
%in $S(1)$ when $k$ varies in $]1,\infty[$, for any $\chi\in\cC^\infty_0(W)$.
For $m\in\R$, let
\[S^m(f)=S^m(f;W):=\Big\{(a(\cdot,k))_{k\in\R}\,|\,(k^{-m}a(\cdot,k))\in S^0(f;W)\Big\}.
%&S^m_{{\rm loc}}(f):=S^m_{{\rm loc}}(f;W)=\Big\{(a(\cdot,k))_{k\in\R}\,|\,(k^{-m}a(\cdot,k))\in S^0_{{\rm loc\,}}(f;W)\Big\}\,.
\]
%For $m\in\R$, we put $S^m_{{\rm loc}}(1;W)=\{(a(\cdot,k)):(k^{-m}a(\cdot,k))\in S_{{\rm loc\,}}(1;W)\}$.
%For $m\in\R$, we put $S^m_{{\rm loc}}(1;W)=S^m_{{\rm loc}}(1)=k^mS_{{\rm loc\,}}(1)$.
Hence $a(\cdot,k)\in S^m(f;W)$ if for every $\alpha\in\mathbb N^N_0$, there
exists $C_\alpha>0$ independent of $k$, such that $\abs{\pr^\alpha_xa(x,k))}\leq C_\alpha k^{m}f(x)$ on $W$.

Consider a sequence $a_j\in S^{m_j}(f)$, $j\in\N_0$, where $m_j\searrow-\infty$,
and let $a\in S^{m_0}(f)$. We say that
\[
a(\cdot,k)\sim
\sum\limits^\infty_{j=0}a_j(\cdot,k)\:\:\text{in $S^{m_0}(f)$},
\]
if for every
$\ell\in\N_0$ we have $a-\sum^{\ell}_{j=0}a_j\in S^{m_{\ell+1}}(f)$. 
For a given sequence $a_j$ as above, we can always find such an asymptotic sum
$a$, which is unique up to an element in $S^{-\infty}(f)=S^{-\infty}(f;W):=\cap _mS^m(f)$.

We say that $a(\cdot,k)\in S^{m}(f)$ is a classical symbol on $W$ of order $m$ if
\begin{equation} \label{e-gue13628I}
a(\cdot,k)\sim\sum\limits^\infty_{j=0}k^{m-j}a_j\: \text{in $S^{m}(f)$},\ \ a_j(x)\in
S^0(f),\ j=0,1\ldots.
\end{equation}
The set of all classical symbols on $W$ of order $m$ is denoted by
$S^{m}_{{\rm cl\,}}(f)=S^{m}_{{\rm cl\,}}(f;W)$. 

Similarly, we define $S^m(f;Y,E)$, $S^{m}_{{\rm cl\,}}(f;Y,E)$ in the standard way, where $Y$ is a smooth manifold and $E$ is a vector bundle over $Y$ and $f$ is an order function on $Y$. 
\end{definition}
%--------------
\begin{definition} \label{d-gue13628I}
Let $W$ be an open set in $\R^N$ and let $E$ be a vector bundle over $W$. A semi-classical pseudodifferential
operator on $W$ of order $m$ from sections of $E$ to sections of $E$ with classical symbol is a $k$-dependent continuous operator
$A_k: \cC^\infty_c(W,E)\To\cC^\infty(W,E)$ such that the distribution kernel $A_k(x,y)$ of $A_k$
is given by the oscillatory integral
\begin{equation}\label{psk-def}
\begin{split}
A_k(x,y)=&\frac{k^N}{(2\pi)^N}\int e^{ik\langle x-y,\xi\rangle}a(x,y,\xi,k)d\xi
+O(k^{-\infty}),\\
&a(x,y,\xi,k)\in S^m_{{\rm cl\,}}((1+\abs{\xi})^\ell;W\times W\times\R^N,E\boxtimes E^*),\end{split}
\end{equation}
where $\ell\in\mathbb Z$. 
We shall identify $A_k$ with $A_k(x,y)$. %It is clear that $A_k$
%has a unique continuous extension $A_k:\mathscr E'(W,E)\To\mathscr D'(W,E)$. We have
%\begin{equation}\label{ps-def}
%A_k(x,y)=\frac{k^N}{(2\pi)^N}\int e^{ik\langle x-y,\xi\rangle}\alpha(x,\xi,k)d\xi
%+O(k^{-\infty})
%\end{equation}
%with symbol
%\begin{equation}\label{ps-symb}
%\alpha(x,\xi,k)\in S^m_{{\rm cl\,}}((1+\abs{\xi})^\ell;W\times\R^N,E\boxtimes E^*)=
%S^m_{{\rm cl\,}}((1+\abs{\xi});T^*W,E\boxtimes E^*).
%\end{equation}
We write $A={\rm Op\,}_k(a)$. We let $\sigma^0_a$ to denote the leading term of $a$. 
\end{definition}

\subsection{Complex manifolds with boundary}\label{s-gue220215yyd} 

Let $M$ be a relatively compact connected open subset with $\cC^\infty$ boundary $X$ of a
complex manifold $M'$ of complex dimension $n$. Let $T^{1,0}M'$ and 
$T^{0,1}M'$ be the holomorphic tangent bundle of $M'$ and the anti-holomorphic 
tangent bundle of $M'$ respectively. 
For $p, q\in\mathbb N$, let $T^{*p,q}M'$ 
be the vector bundle of $(p,q)$ forms on $M'$. The Hermitian metric 
$\langle\,\cdot\,|\,\cdot\,\rangle$ on $\C TM'$ induces by duality, a Hermitian metric 
$\langle\,\cdot\,|\,\cdot\,\rangle$ on 
$\oplus^{p,q=n}_{p,q=0}T^{*p,q}M'$. Put $T^{1,0}X:=\mathbb CTX\cap T^{1,0}M'$. 
Then, $(X,T^{1,0}X)$ is a CR manifold of real dimension $2n-1$. For $p, q=0,\ldots,n-1$, the bundle of $(p,q)$ forms of $X$ is given by 
$T^{*p,q}X:=\Bigr(\Lambda^p(\mathbb CT^*X)\bigcap T^{*p,0}M\Bigr)\wedge\Bigr(\Lambda^q(\mathbb CT^*X)\bigcap T^{*0,q}M'\Bigr)$. 

Let $E$ be a $\cC^\infty$ vector bundle over $M'$. 
Let $U$ be an open set in $M'$. Let 
\[
\begin{split}
&\cC^\infty(U\cap \ol M,E),\ \ \mathscr D'(U\cap \ol M,E),\ \ \cC^\infty_c(U\cap \ol M,E),\ \ 
\mathscr E'(U\cap \ol M,E),\\ 
W^s&(U\cap \ol M,E),\ \ W^s_{{\rm comp\,}}(U\cap \ol M,E),\ \ 
W^s_{{\rm loc\,}}(U\cap \ol M,E),
\end{split}
\]
(where $\ s\in\R$)
denote the spaces of restrictions to $U\cap\ol M$ of elements in 
\[
\begin{split}
\cC^\infty&(U,E),\ \ \mathscr D'(U,E),\ \ \cC^\infty_c(U,E),\ \ 
\mathscr E'(U,E),\\  
&W^s(U,E),\ \  W^s_{{\rm comp\,}}(U,E),\ \  
W^s_{{\rm loc\,}}(U,E),
\end{split}
\] 
respectively. Write 
\[
\begin{split}
L^2(U\cap\ol M,E):=&W^0(U\cap \ol M,E),\ \ 
L^2_{{\rm comp\,}}(U\cap\ol M,E):=W^0_{{\rm comp\,}}(U\cap \ol M,E),\\ 
&L^2_{{\rm loc\,}}(U\cap\ol M,E):=W^0_{{\rm loc\,}}(U\cap \ol M,E).
\end{split}
\] 

For $s\in\R$, let $\norm{\cdot}_{s,M'}$ and $\norm{\cdot}_{s,X}$ denote the Sobolev norm of $W^s_{{\rm comp\,}}(U,E)$ and $W^s_{{\rm comp\,}}(U\cap X,E)$ respectively.
For $s\in\R$  and let $u\in W^s_{{\rm comp\,}}(U\cap \ol M,E)$. We let 
\begin{equation}\label{e-gue220301yyd}
\norm{u}_{s,\ol M}:=\inf\{\norm{\Td u}_{s,M'};\, \Td u\in W^s_{{\rm comp\,}}(U,E), \Td u|_M=u\}. 
\end{equation}

For every $p, q=1,\ldots,n$, we denote 
\[
\begin{split}
\Omega^{p,q}&(U\cap\ol M):=\cC^\infty(U\cap\ol M,T^{*p,q}M'),\ \ 
\Omega^{p,q}(M'):=\cC^\infty(M',T^{*p,q}M'),\\ 
\Omega^{p,q}_c&(U\cap\ol M):=\cC^\infty_c(U\cap\ol M,T^{*p,q}M'),\\ 
&\Omega^{p,q}_c(M'):=\cC^\infty_c(M',T^{*p,q}M'),\ \ 
\Omega^{p,q}_c(M):=\cC^\infty_c(M,T^{*p,q}M'),\\
\Omega^{p,q}&(U\cap\ol M,E):=\cC^\infty(U\cap\ol M,E\otimes T^{*p,q}M'),\ \ 
\Omega^{p,q}(M',E):=\cC^\infty(M',E\otimes T^{*p,q}M'),\\ 
\Omega^{p,q}_c&(U\cap\ol M,E):=\cC^\infty_c(U\cap\ol M,E\otimes T^{*p,q}M'),\\
&\Omega^{p,q}_c(M',E):=\cC^\infty_c(M',E\otimes T^{*p,q}M'),\ \ 
\Omega^{p,q}_c(M,E):=\cC^\infty_c(M,E\otimes T^{*p,q}M').
\end{split}
\]
Similarly, let $D$ be an open set of $X$. Let $\Omega^{p,q}(D):=\cC^\infty(D,T^{*p,q}X)$, $\Omega^{p,q}_c(D):=\cC^\infty_c(D,T^{*p,q}X)$, $\Omega^{p,q}(D,E):=\cC^\infty(D,T^{*p,q}X\otimes E)$, $\Omega^{p,q}_c(D,E):=\cC^\infty_c(D,T^{*p,q}X\otimes E)$.

Let $A$ and $B$ be $\cC^\infty$ vector bundles over $M'$. 
Let $U$ be an open set in $M'$. Let 
$$F_1, F_2: \cC^\infty_c(U\cap M,A)\To\mathscr D'(U\cap M,B)$$ 
be continuous operators. Let 
$F_1(x,y), F_2(x,y)\in\mathscr D'((U\times U)\cap(M\times M), A\boxtimes B^*)$ 
be the distribution kernels of $F_1$ and $F_2$ respectively. 
We write 
$$F_1\equiv F_2\!\!\mod\cC^\infty((U\times U)\cap(\ol M\times\ol M))$$ 
or $F_1(x,y)\equiv F_2(x,y)\!\!\mod\cC^\infty((U\times U)\cap(\ol M\times\ol M))$ 
if $F_1(x,y)=F_2(x,y)+r(x,y)$, where 
$r(x,y)\in\cC^\infty((U\times U)\cap(\ol M\times\ol M),A\boxtimes B^*)$. Similarly, 
%Let 
%\begin{equation}\label{e-gue190322myc}
% \cC^\infty_0(U\cap\ol M,A):=\set{u|_{U\cap\ol M};\, u\in\cC^\infty(U,A)}.
%\end{equation}
let $\hat F_1, \hat F_2: \cC^\infty_c(U\cap M,A)\To\mathscr D'(U\cap X,B)$ be continuous operators. Let 
$\hat F_1(x,y), \hat F_2(x,y)\in\mathscr D'((U\times U)\cap(X\times M), A\boxtimes B^*)$ be the distribution kernels of $\hat F_1$ and $\hat F_2$ respectively. We write 
$\hat F_1\equiv\hat F_2\!\!\mod\cC^\infty((U\times U)\cap(X\times\ol M))$ or $\hat F_1(x,y)\equiv\hat F_2(x,y)\!\!\mod\cC^\infty((U\times U)\cap(X\times\ol M))$ if $\hat F_1(x,y)=\hat F_2(x,y)+\hat r(x,y)$, where 
$\hat r(x,y)\in\cC^\infty((U\times U)\cap(X\times\ol M),A\boxtimes B^*)$. Similarly, let $\tilde F_1, \tilde F_2: \cC^\infty_c(U\cap X,A)\To\mathscr D'(U\cap M,B)$ be continuous operators. Let 
\[\tilde F_1(x,y), \tilde F_2(x,y)\in\mathscr D'((U\times U)\cap(M\times X), A\boxtimes B^*)\] 
be the distribution kernels of $\tilde F_1$ and $\tilde F_2$ respectively. We write 
$\tilde F_1\equiv\tilde F_2\!\!\mod\cC^\infty((U\times U)\cap(\ol M\times X))$ or $\tilde F_1(x,y)\equiv\tilde F_2(x,y)\!\!\mod\cC^\infty((U\times U)\cap(\ol M\times X))$ if $\tilde F_1(x,y)=\tilde F_2(x,y)+\tilde r(x,y)$, where 
$\tilde r(x,y)\in\cC^\infty((U\times U)\cap(\ol M\times X),A\boxtimes B^*)$. 

Let $F_k, G_k: \cC^\infty_c(U\cap M,A)\To\mathscr D'(U\cap M,B)$ be $k$-dependent continuous operators. Let $F_k(x,y), G_k(x,y)\in\mathscr D'((U\times U)\cap(M\times M), A\boxtimes B^*)$ be the distribution kernels of $F_k$ and $G_k$ respectively. We write 
\begin{equation}\label{e-gue190813yyd}
F_k\equiv G_k\!\!\mod O(k^{-\infty})\ \ \mbox{ on $(U\times U)\cap(\ol M\times\ol M)$} 
\end{equation}
or 
\begin{equation}\label{e-gue190813yydz}
F_k(x,y)\equiv G_k(x,y)\!\!\mod O(k^{-\infty})\ \ \mbox{ on $(U\times U)\cap(\ol M\times\ol M)$} 
\end{equation}
if there is a $r_k(x,y)\in \cC^\infty(U\times U, A\boxtimes B^*)$ with $r_k(x,y)=O(k^{-\infty})$ on $U\times U$ such that  $r_k(x,y)|_{(U\times U)\cap(\ol M\times\ol M)}=F_k(x,y)-G_k(x,y)$, for $k\gg1$.  Similarly, 
%Let 
%\begin{equation}\label{e-gue190322myc}
% \cC^\infty_0(U\cap\ol M,A):=\set{u|_{U\cap\ol M};\, u\in\cC^\infty(U,A)}.
%\end{equation}
let $\hat F_k, \hat G_k: \cC^\infty_c(U\cap M,A)\To\mathscr D'(U\cap X,B)$ be $k$-dependent continuous operators. Let 
$\hat F_k(x,y), \hat G_k(x,y)\in\mathscr D'((U\times U)\cap(X\times M), A\boxtimes B^*)$ be the distribution kernels of $\hat F_k$ and $\hat G_k$ respectively. We write 
$\hat F_k\equiv\hat G_k\mod O(k^{-\infty})$ on $(U\times U)\cap(X\times\ol M)$ or 
$\hat F_k(x,y)\equiv\hat G_k(x,y)\mod O(k^{-\infty})$ on $(U\times U)\cap(X\times\ol M)$
if there is a $\hat r_k(x,y)\in \cC^\infty((U\times U)\cap(X\times\ol M), A\boxtimes B^*)$ with $\hat r_k(x,y)=O(k^{-\infty})$ on $(U\cap X)\times U$ such that 
$\hat F_k(x,y)-\hat G_k(x,y)=\hat r_k(x,y)|_{(U\times U)\cap(X\times\ol M)}$, for $k\gg1$. 
 Similarly, let $\tilde F_k, \tilde G_k: \cC^\infty_c(U\cap X,A)\To\mathscr D'(U\cap M,B)$ be $k$-dependent continuous operators. Let 
\[\tilde F_k(x,y), \tilde G_k(x,y)\in\mathscr D'((U\times U)\cap(M\times X), A\boxtimes B^*)\] 
be the distribution kernels of $\tilde F_k$ and $\tilde G_k$ respectively. We write 
$\tilde F_k\equiv\tilde G_k\mod O(k^{-\infty})$ on $(U\times U)\cap(\ol M\times X)$ or $\tilde F_k(x,y)\equiv\tilde G_k(x,y)\mod O(k^{-\infty})$ on $(U\times U)\cap(\ol M\times X)$ 
if there is a $\tilde r_k(x,y)\in \cC^\infty((U\times U)\cap(\ol M\times X), A\boxtimes B^*)$ with $\tilde r_k(x,y)=O(k^{-\infty})$ on $U\times(U\cap X)$ such that 
$\tilde F_k(x,y)-\tilde G_k(x,y)=\tilde r_k(x,y)|_{(U\times U)\cap(\ol M\times X)}$, for $k\gg1$. 

Let $s_{1}, s_{2}\in\R$ and $n_{0}\in\mathbb Z$. For $k$-dependent continuous functions 
\[\begin{split}
&F_{k}: W^{s_{1}}_{\rm comp}(U\cap\ol M, A)\rightarrow W^{s_{2}}_{\rm loc}(U\cap\ol M,B),\\
&\hat F_{k}: W^{s_{1}}_{\rm comp}(U\cap X, A)\rightarrow W^{s_{2}}_{\rm loc}(U\cap\ol M,B),\\
&\tilde F_{k}: W^{s_{1}}_{\rm comp}(U\cap\ol M, A)\rightarrow W^{s_{2}}_{\rm loc}(U\cap X,B),
\end{split}\]
 we write 
\begin{equation}\label{e-gue220301yyda}
\begin{split}
&F_k=O(k^{n_0}):  W^{s_{1}}_{\rm comp}(U\cap\ol M, A)\rightarrow W^{s_{2}}_{\rm loc}(U\cap\ol M, B),\\
&\hat F_k=O(k^{n_0}):  W^{s_{1}}_{\rm comp}(U\cap X, A)\rightarrow W^{s_{2}}_{\rm loc}(U\cap\ol M, B),\\
&\tilde F_k=O(k^{n_0}):  W^{s_{1}}_{\rm comp}(U\cap\ol M, A)\rightarrow W^{s_{2}}_{\rm loc}(U\cap X, B),
\end{split}
\end{equation}
if for any $\chi_j\in\cC_{c}^{\infty}(U\cap\ol M)$, $j=1, 2$, $\Td\chi_1\in\cC^\infty_c(U\cap\ol M)$, $\Td\chi_1\equiv1$ on $\supp\chi_1$, $\tau_1, \tilde\tau_1\in\cC^\infty_c(U\cap X)$, $\tilde\tau_1\equiv1$ on $\supp\tau_1$, there is a positive constant $c>0$ independent of $k$ such that
\[\begin{split}
&\|\chi_2F_{k}\chi_1u\|_{s_{2},\ol M}\leq ck^{n_{0}}\|\Td\chi_1u\|_{s_{1},\ol M}, \forall u\in W^{s_{1}}_{\rm loc}(U\cap\ol M, A),\\
&\|\chi_2\hat F_{k}\tau_1u\|_{s_{2},\ol M}\leq ck^{n_{0}}\|\Td\tau_1u\|_{s_{1},X}, \forall u\in W^{s_{1}}_{\rm loc}(U\cap X, A),\\
&\|\tau_1\tilde F_{k}\chi_1u\|_{s_{2},X}\leq ck^{n_{0}}\|\Td\chi_1u\|_{s_{1},\ol M}, \forall u\in W^{s_{1}}_{\rm loc}(U\cap\ol M, A).
\end{split}\]

\subsection{Complex manifolds with $\R$-action} \label{s-gue150808}

Let $M$ be a relatively compact connected open subset with
smooth connected boundary $X$ of a
complex manifold $M'$ of complex dimension $n$.  
We will use the same notations as in Section~\ref{s-gue220215yyd}. 
From now on, we 
assume that $M'$ admits a $\R$-action $\eta:\R\times M'\to M'$.
$(\eta,x)\mapsto\eta\circ x$.  
Let $T\in\cC^\infty(M', TM')$ be the infinitesimal generator of
the $\R$-action which  is given by
\begin{equation}\label{e-gue150808}
(Tu)(x)=\frac{\partial}{\partial \eta}
\left(u(\eta\circ x)\right)\big|_{\eta=0},\ \ u\in\cC^\infty(M').
\end{equation}
A smooth function $u$ defined on an open set of 
$M'$ is called $\R$-invariant if $Tu=0$.

\begin{definition}\label{d-gue220512yyd}
We say that the $\R$-action $\eta$ is holomorphic if
$$[T,\cC^\infty(M', T^{1,0}M')]\subset\cC^\infty(M', T^{1,0}M').$$ 
\end{definition}
Recall that, in this work, we work with Assumption~\ref{a-gue170929I}. 
\begin{definition} \label{d-gue150508dII}
Let $M'$ be a complex manifold endowed with a holomorphic $\R$-action,
Let $F$ be complex (resp.\ holomorphic) vector bundle on $M'$.
If the $\R$-action lifts to a smooth (resp.\ holomorphic) action on $F$,
we say that the bundle $F$ is $\R$-equivariant. 
A similar definition applies for CR bundles over the CR manifold $X$
(cf.\ \cite[Definition 2.6]{HLM}).
\end{definition} 

If $F$ is an $\R$-equivariant complex (resp.\ holomorphic) vector bundle
of rank $r$, then there exists an open cover $(U_j)_j$ of $M'$ 
and frames $\{f^1_j,f^2_j,\dots,f^r_j\}$ on each $U_j$
such that the corresponding transition matrices are $\R$-invariant
(resp.\ holomorphic and $\R$-invariant).
For a section $s\in\cC^\infty(M',F)$,
by using the fixed frames $\{f^1_j,f^2_j,\dots,f^r_j\}$, 
we can define $Ts$ in the standard way. We say that $s$ is 
$\R$-equivariant if $Ts=0$. 

%We will fix such an open cover.

If $F$ is an $\R$-equivariant CR vector bundle over $X$, then there
exists an open cover and CR frames such that the transition matrices
are CR and $\R$-invariant \cite[Proposition 2.7]{HLM} and we can define
as above the notion of $\R$-equivariant CR section.

\begin{proposition}\label{2023-8-11-p1}
Let $(L, h^L)\rightarrow M'$ be an $\R$-equivariant holomorphic
line bundle and $s$ an $\R$-equivariant holomorphic frame with norm 
$|s|^2_{h^L}=e^{-2\phi}$. 
Then $JT(\phi)=-JT(\log|s|_{h^L})$ does not depend on the
choice of the frame $s$, so it defines
a global function on $M'$.
%where $J$ is the complex structure on $M'$.
\end{proposition}
\begin{proof}
Let $s, \tilde s$ be $\mathbb R$-invariant holomorphic frames defined 
on an open set $U\subset M'$. Then $s=\tilde s g$ 
for some $\mathbb R$-invariant holomorphic function 
$g$ on $U$. The norm of $s$ and $\tilde s$ are
$e^{-2\phi}$ and $e^{-2\tilde\phi}$, respectively. It follows that 
$\tilde\phi-\phi=\frac12\log|g|^2.$
Since $T+iJT\in T^{0, 1}M'$ we have $(T+iJT)g=0$ and 
thus $JT(g)=0$. It follows that 
$$JT(\tilde\phi)-JT(\phi)=\frac12 JT(\log|g|^2)=0.$$
\end{proof}
%===
On $X$, we will use the coordinates introduced by
Baouendi-Rothschild-Treves~\cite{BRT85}. 
\begin{definition}\label{D:BRT}
For each point $p\in X$
there exist a coordinate neighborhood $V$ with coordinates $(x_1,\ldots,x_{2n-1})$,
centered at $p=0$, and $\varepsilon>0$, $\varepsilon_0>0$,
such that, by setting $$z_j=x_{2j-1}+ix_{2j}, j=1,\ldots,n-1, x_{2n-1}=\eta$$
and $$D=\{(z, \eta)\in V: \abs{z}<\varepsilon, |\theta|<\varepsilon_0\},$$ we have
\begin{equation}\label{e-can1}
T=\frac{\partial}{\partial\eta}\:\:\text{on $U$},\\
\end{equation}
and the vector fields
\begin{equation}\label{e-can2}
Z_j=\frac{\partial}{\partial z_j}-i\frac{\partial\psi}{\partial z_j}(z)
\frac{\partial}{\partial\eta},\:\:j=1,\ldots,n-1,
\end{equation}
form a basis of $T_x^{1,0}X$ for each $x\in D$, where $\psi\in\cC^\infty(D,\R)$
is independent of $\eta$.
We call $(x_1,\ldots,x_{2n-1})$ canonical coordinates, 
$D$ canonical coordinate patch and
$(D,(z,\eta),\psi)$ a BRT trivialization. 
The frames \eqref{e-can2} are called BRT frames. 
We can also define BRT frames on the bundle $T^{\ast 0, q}X$. 
The dual frames of $\{Z_j\}_{j=1}^{n-1}$ are $\{dz_j\}_{j=1}^{n-1}$.  
We sometime write $(D,x=(x_1,\ldots,x_{2n-1}))$ 
to denote canonical coordinates. 
We will always fix a BRT covering of $T^{*0,q}X$ with BRT frames. 
Then, $T^{*0,q}X$ is a $\R$-equivariant vector bundle over $X$.
\end{definition}
%===
Let $L\rightarrow M'$ be an $\R$-equivariant holomorphic line bundle. Recall that
we work under Assumption~\ref{a-gue170929I}.
From now on, we fix a $\R$-invariant Hermitian fiber metric 
$h^L$ such that $R^L$ is positive on $M'$,
and we fix a Hermitian metric $\langle\,\cdot\,|\,\cdot\,\rangle$ on 
$\C TM'$ so that $T^{1,0}M'\perp T^{0,1}M'$ and 
$|T|=1$ on $X$.
We can always find such a $\R$-invariant Hermitian metric 
on $M'$. For example, the curvature of $L$ induces a $\R$-invariant Hermitian metric 
on $M'$. The Hermitian metric 
$\langle\,\cdot\,|\,\cdot\,\rangle$ on $\C TM'$ induces by duality, a Hermitian metric 
$\langle\,\cdot\,|\,\cdot\,\rangle$ on 
$\oplus^{p,q=n}_{p,q=0}T^{*p,q}M'$. Let $\abs{\cdot}$ 
be the corresponding pointwise norm with respect to $\langle\,\cdot\,|\,\cdot\,\rangle$. 
For $v\in T^{*0,q}M'$ and for any $p=0,1,2,\ldots,n$, let $(v\wedge)^*:T^{*0,q+p}M'\To T^{*0,p}M'$ be the adjoint of $v\wedge:T^{*0,p}M'\To T^{*0,p+q}M'$ with respect to $\langle\,\cdot\,|\,\cdot\,\rangle$. That is, $\langle\,v\wedge u\,|\,g\,\rangle=\langle\,u\,|\,(v\wedge)^*g\,\rangle$, $\forall u\in T^{*0,p}M'$, $g\in T^{*0,p+q}M'$. 
Let $\rho\in\cC^\infty(M',\R)$ be a defining function of $X$, 
that is, $\rho=0$ on $X$, $\rho<0$ on $M$ and $d\rho\neq0$ near $X$. 
From now on, we take a defining function $\rho$ so that $\rho$
is $\R$-invariant and $|d\rho|=1$ on $X$. 
The Hermitian metric $\langle\,\cdot\,|\,\cdot\,\rangle$ on 
$\mathbb CTM'$ induces a Hermitian metric 
$\langle\,\cdot\,|\,\cdot\,\rangle$ on $\mathbb CTX$.  

%From~\cite[Theorem 3.5, Corollary 3.9]{HHL17}, we see that the $\R$-action $\eta$ comes from a torus action $T^d=(e^{i\theta_1},\ldots,e^{i\theta_d})$ on $X$. From~\cite[Theorem 3.12, Lemma 3.14 3.9]{HHL17}, we see that $L$, $h^L$, $R^L$ and $\rho$ are torus invariant and hence the Hermitian metric $\langle\,\cdot\,|\,\cdot\,\rangle$ and the $L^2$ inner product $(\,\cdot\,|\,\cdot\,)_k$ are torus invariant. 

Let $dv_{M'}$ be the volume form on $M'$ induced by the Hermitian metric $\langle\,\cdot\,|\,\cdot\,\rangle$ on $\C TM'$ and 
and let $(\,\cdot\,|\,\cdot\,)_{\ol M}$ and $(\,\cdot\,|\,\cdot\,)_{M'}$ be the inner products on $\Omega^{0,q}(\ol M)$ and $\Omega^{0,q}_c(M')$
defined by
\begin{equation} \label{e-gue190312}
\begin{split}
&(\,f\,|\,h\,)_{\ol M}=\int_M\langle\,f\,|\,h\,\rangle dv_{M'},\ \ f, h\in\Omega^{0,q}(\ol M),\\
&(\,f\,|\,h\,)_{M'}=\int_{M'}\langle\,f\,|\,h\,\rangle dv_{M'},\ \ f, h\in\Omega^{0,q}_c(M').
\end{split}
\end{equation}  
Let $\norm{\cdot}_{\ol M}$ and $\norm{\cdot}_{M'}$ be the corresponding norms with respect to $(\,\cdot\,|\,\cdot\,)_{\ol M}$ and $(\,\cdot\,|\,\cdot\,)_{M'}$ respectively.  
Let $L^2_{(0,q)}(M)$ be the completion of $\Omega^{0,q}(\ol M)$ with respect to $(\,\cdot\,|\,\cdot\,)_{\ol M}$. We extend $(\,\cdot\,|\,\cdot\,)_{\ol M}$ to $L^2_{(0,q)}(M)$
in the standard way. We write $L^2(M):=L^2_{(0,0)}(M)$.

Let $dv_{X}$ be the volume form on $X$ induced by the Hermitian metric $\langle\,\cdot\,|\,\cdot\,\rangle$ on $\C TX$  
and let $(\,\cdot\,|\,\cdot\,)_{X}$ be the inner product on $\Omega^{0,q}(X)$ induced by $dv_{X}$ and $\langle\,\cdot\,|\,\cdot\,\rangle$. 
Let $\norm{\cdot}_{X}$  the corresponding norm with respect to $(\,\cdot\,|\,\cdot\,)_{X}$. Let $L^2_{(0,q)}(X)$ be the completion of $\Omega^{0,q}(X)$ with respect to 
$(\,\cdot\,|\,\cdot\,)_{X}$. We extend $(\,\cdot\,|\,\cdot\,)_{X}$ to $L^2_{(0,q)}(X)$
in the standard way. We write $L^2(X):=L^2_{(0,0)}(X)$.

For $k\in\mathbb N$, let $(L^k,h^{L^k})$ be the $k$-th power of $(L,h^L)$. Let $\langle\,\cdot\,|\,\cdot\,\rangle_{h^{L^k}}$ be the Hermitian metric of $L^k\otimes T^{*0,q}M'$ induced by $h^{L^k}$ and  
$\langle\,\cdot\,|\,\cdot\,\rangle$. 
Let $(\,\cdot\,|\,\cdot\,)_{\ol M,k}$ and $(\,\cdot\,|\,\cdot\,)_{M',k}$ be the inner products on $\Omega^{0,q}(\ol M,L^k)$ and $\Omega^{0,q}_c(M',L^k)$
defined by
\begin{equation} \label{e-gue190312z}
\begin{split}
&(\,f\,|\,h\,)_{\ol M,k}=\int_M\langle\,f\,|\,h\,\rangle_{h^{L^k}}dv_{M'},\ \ f, h\in\Omega^{0,q}(\ol M,L^k),\\
&(\,f\,|\,h\,)_{M',k}=\int_{M'}\langle\,f\,|\,h\,\rangle_{h^{L^k}}dv_{M'},\ \ f, h\in\Omega^{0,q}_c(M',L^k).
\end{split}
\end{equation}
Let $\norm{\cdot}_{\ol M,k}$ and $\norm{\cdot}_{M',k}$ be the corresponding norms with respect to $(\,\cdot\,|\,\cdot\,)_{\ol M,k}$ and $(\,\cdot\,|\,\cdot\,)_{M',k}$ respectively. 
Let $L^2_{(0,q)}(M,L^k)$ be the completion of $\Omega^{0,q}(\ol M,L^k)$ with respect to $(\,\cdot\,|\,\cdot\,)_{\ol M,k}$. We extend $(\,\cdot\,|\,\cdot\,)_{\ol M,k}$ to $L^2_{(0,q)}(M,L^k)$
in the standard way. We write $L^2(M,L^k):=L^2_{(0,0)}(M,L^k)$.  Similarly,  let $(\,\cdot\,|\,\cdot\,)_{X,k}$ be the inner product on $\Omega^{0,q}(X,L^k)$ induced by $dv_{X}$ and $\langle\,\cdot\,|\,\cdot\,\rangle_{h^{L^k}}$. 
Let $\norm{\cdot}_{X,k}$  the corresponding norm with respect to $(\,\cdot\,|\,\cdot\,)_{X,k}$. Let $L^2_{(0,q)}(X,L^k)$ be the completion of $\Omega^{0,q}(X,L^k)$ with respect to $(\,\cdot\,|\,\cdot\,)_{X,k}$. We extend $(\,\cdot\,|\,\cdot\,)_{X,k}$ to $L^2_{(0,q)}(X,L^k)$
in the standard way. We write $L^2(X,L^k):=L^2_{(0,0)}(X,L^k)$. 

From~\cite[Theorem 3.5, Corollary 3.9]{HHL17}, we see that the 
$\R$-action $\eta$ comes from a torus action 
$\T^d=(e^{i\theta_1},\ldots,e^{i\theta_d})$ on $X$. 
In view of~\cite[Theorem 3.12, Lemma 3.14]{HHL17}, 
we see that $L$, $h^L$, $R^L$ and $\rho$ are torus invariant 
and hence the Hermitian metric $\langle\,\cdot\,|\,\cdot\,\rangle$ 
and the $L^2$ inner products $(\,\cdot\,|\,\cdot\,)_{\ol M,k}$, 
$(\,\cdot\,|\,\cdot\,)_{X,k}$ are torus invariant. 
For every $j=1,\ldots,d$, let $T_j$ be the operator on 
$\Omega^{0,q}(M',L^k)$, resp.\ $\cC^\infty(X,T^{*0,q}M'\otimes L^k)$, 
given by 
\[
(T_ju)(x)=\frac{\partial}{\partial\theta_j}
u((1,\ldots,1,e^{i\theta_j},1,\ldots,1)\circ x)\big|_{\theta_j=0},
\]
for all $u\in\Omega^{0,q}(M',L^k)$, resp.\ 
$u\in\cC^\infty(X,T^{*0,q}M'\otimes L^k)$.
Since $L$ is torus invariant, we can also define $T_ju$ in the standard way, 
for every $u\in\Omega^{0,q}(M',L^k)$ 
($u\in\cC^\infty(X,T^{*0,q}M'\otimes L^k$), $j=1,\ldots,d$. 
Note that $T_j$ can be zero at some point of $X$. 
Since the $\R$-action $\eta$ comes from $T^d$, 
there exist real numbers $\beta_j\in\R$, $j=1,\ldots,d$, such that 
\begin{equation}\label{e-gue220330ycd}
T=\beta_1T_1+\ldots+\beta_dT_d. 
\end{equation}

Let $\ddbar_k: \Omega^{0,q}(M',L^k)\To\Omega^{0,q+1}(M',L^k)$
be the part of the exterior differential operator which maps forms of type $(0,q)$ to forms of
type $(0,q+1)$ and we denote by
$\ol{\pr}^*_{f,k}: \Omega^{0,q+1}(M',L^k)\To\Omega^{0,q}(M',L^k)$
the formal adjoint of $\ddbar_k$. That is
\[(\,\ddbar_kf\,|\,h\,)_{M',k}=(f\,|\,\ol{\pr}^*_{f,k}h\,)_{M',k},\]
$f\in\Omega^{0,q}_c(M',L^k)$, $h\in\Omega^{0,q+1}(M',L^k)$.
We shall also use the notation $\ddbar_k$ for the closure in $L^2$ of the $\ddbar_k$ operator, initially defined on
$\Omega^{0,q}(\ol M,L^k)$ and $\ddbar^*_k$  for the Hilbert space adjoint 
of $\ddbar_k$. Recall that for $u\in L^2_{(0,q}(M)$, we say that 
$u\in\Dom\ddbar_k$ if we can find a sequence 
$u_j\in\Omega^{0,q}(\ol M,L^k)$, $j=1,2,\ldots$\,, 
with $\lim_{j\To\infty}\norm{u_j-u}_{\ol M,k}=0$ such that 
$\lim_{j\To\infty}\norm{\ddbar_ku_j-v}_{\ol M,k}=0$, for some 
$v\in L^2_{(0,q+1)}(M,L^k)$.  
 
Let $h$ be the two form induced by the Hermitian metric $\langle\,\cdot\,|\,\cdot\,\rangle$. Let $p\in X$.  We can take coordinates $x=(x_1, \ldots, x_{2n})$ defined on an open set  $U$ of $p$ in $M'$ such that $(x_1, \ldots, x_{2n-1})$ are the BRT coordinates on $D:=U\cap X$, $x_{2n}=\rho+O(\abs{\rho}^2)$, $x(p)=0$ and 
\begin{equation}\label{e-gue220301ycd}
\begin{split}
&h=\sum_{j, k=1}^{2n}h_{jk}dx_jdx_k,\ \ h_{jk}=h_{kj},\ \ j,k=1,\ldots,2n,\\
&h_{2n, 2n}=1,\ \ h_{2n, j}|_X=0, \ \ j=1,\ldots,2n-1. 
\end{split}
  \end{equation}
For a given point $z\in M'$, let $A$ be the anti-linear 
map $$A: \mathbb CT_{z}M'\rightarrow \mathbb CT^{\ast}_{z}M',
\quad Av(u)=\langle\,u\,|\,v\,\rangle,\ \  u, v\in \mathbb CTM'.$$
$A$ is an isomorphism for all $z\in M'$ and 
$A(T^{1, 0}M')=T^{\ast 1, 0}M'$ and $A(T^{0, 1}M')=T^{\ast 0, 1}M'$.
Let $(U, z=(z_{1}, \ldots, z_{n}))$ be any local holomorphic chart on $M'$.  
Write 
\begin{equation}\label{e-gue220302yyd}
\begin{split}
&\left\langle\,\frac{\pr}{\pr z_j}\,|\,\frac{\pr}{\pr z_s}\,\right\rangle
=g_{js},\ \ j, s=1,\ldots,n,\\
&g:=\left(g_{js}\right)^n_{j,s=1},\ \ (g^{-1})^t=\left(g^{js}\right)^n_{j,s=1},
\end{split}
\end{equation}
where $(g^{-1})^t$ denotes the transpose of $g^{-1}$. 
Let $A^{-1}$ be the inverse of $A$. Then by definition, 
we can check that for every $j=1,\ldots,n$, 
$$A\left(\frac{\partial}{\partial z_{j}}\right)=
\sum_{j=1}^{n}g_{sj}dz_{s}, ~~A^{-1}(dz_{j})=
\sum_{k=1}^{n}g^{sj}\frac{\pr}{\pr z_s}.$$
 Define \begin{equation}
 \frac{\partial}{\partial\rho}:=A^{-1}(d\rho).
 \end{equation}
 It is easy to check that $\frac{\partial}{\partial\rho} (\rho)|_{X}\equiv1$. 
 It is easy to see that on $X$ we have $T=J(\frac{\pr}{\pr\rho})$
 or $T=-J(\frac{\pr}{\pr\rho})$, where $J$ 
 is the complex structure map on $TM'$. Without loss of generality, 
 from now on, we assume that 
 \begin{equation}\label{e-gue220311yyd}
 \mbox{$T=J(\frac{\pr}{\pr\rho})$ on $X$}. 
 \end{equation}
 
 Let $U_1$, $U_2$ be open sets of $M'$ with $U_1\cap X\neq\emptyset$, $U_2\cap X\neq\emptyset$. Let $s_1$, $s_2$ be local $\R$-equivariant holomorphic frames of $L$ over $U_1$ and $U_2$ respectively. 
 For $q\geq 0$, let 
 \begin{equation}\label{e-gue220302yyds}
 \begin{split}
 &A_{k}: \cC^\infty_c(U_1\cap M, T^{\ast 0, q}M'\otimes L^{k})\rightarrow\mathscr D'(U_2\cap M, T^{\ast 0, q}M'\otimes L^{k}),\\
 &\hat A_{k}: \cC^\infty_c(U_1\cap X, T^{\ast 0, q}M'\otimes L^{k})\rightarrow\mathscr D'(U_2\cap M, T^{\ast 0, q}M'\otimes L^{k}),\\
  &\tilde A_{k}: \cC^\infty_c(U_1\cap M, T^{\ast 0, q}M'\otimes L^{k})\rightarrow\mathscr D'(U_2\cap X, T^{\ast 0, q}M'\otimes L^{k}),\\
  &A^\dagger_{k}: \cC^\infty_c(U_1\cap X, T^{\ast 0, q}M'\otimes L^{k})\rightarrow\mathscr D'(U_2\cap X, T^{\ast 0, q}M'\otimes L^{k}),
  \end{split}
  \end{equation}
 be continuous operators. 
 Let  $|s_j|_{h}^{2}=e^{-2\phi_j}$, $j=1, 2$. The localization of $A_{k}$, $\hat A_k$, $\tilde A_k$ and $A^\dagger_k$ with respect to $s_1$ and $s_2$ are given by
\begin{equation*}
\begin{split}
&A_{k, s_1,s_2}: \cC^\infty_c(U_1\cap M, T^{\ast 0, q}M')\rightarrow\mathscr D'(U_2\cap M, T^{\ast 0, q}M'),\\
&A_{k, s_1,s_2}(u):=e^{-k\phi_2}s^{-k}_2A_{k}(s^{k}_1e^{k\phi_1}u), \forall u\in\cC^\infty_c(U_1\cap M, T^{\ast 0, q}M'),
\end{split}
\end{equation*}
\begin{equation*}
\begin{split}
&\hat A_{k, s_1,s_2}: \cC^\infty_c(U_1\cap X, T^{\ast 0, q}M')\rightarrow\mathscr D'(U_2\cap\ol M, T^{\ast 0, q}M'),\\
&\hat A_{k, s_1,s_2}(u):=e^{-k\phi_2}s^{-k}_2\hat A_{k}(s^{k}_1e^{k\phi_1}u), \forall u\in\cC^\infty_c(U_1\cap X, T^{\ast 0, q}M'),
\end{split}
\end{equation*}
\begin{equation*}
\begin{split}
&\tilde A_{k, s_1,s_2}: \cC^\infty_c(U_1\cap M, T^{\ast 0, q}M')
\rightarrow\mathscr D'(U_2\cap X, T^{\ast 0, q}M'),\\
&\tilde A_{k, s_1,s_2}(u):=e^{-k\phi_2}s^{-k}_2
\tilde A_{k}(s^{k}_1e^{k\phi_1}u), \forall 
u\in\cC^\infty_c(U_1\cap M, T^{\ast 0, q}M'), \\
%\end{split}
%\end{equation*}
%\begin{equation*}
%\begin{split}
&A^\dagger_{k, s_1,s_2}: \cC^\infty_c(U_1\cap X, 
T^{\ast 0, q}M')\rightarrow\mathscr D'(U_2\cap X, T^{\ast 0, q}M'),\\
&A^\dagger_{k, s_1,s_2}(u):=
e^{-k\phi_2}s^{-k}_2A^\dagger_{k}(s^{k}_1e^{k\phi_1}u), 
\forall u\in\cC^\infty_c(U_1\cap X, T^{\ast 0, q}M').
\end{split}
\end{equation*}
When $U_1=U_2$ and $s=s_1=s_2$, we write $A_{k,s}:=
A_{k, s_1,s_2}$, $\hat A_{k,s}:=\hat A_{k,s_1,s_2}$, 
$\tilde A_{k,s}:=\tilde A_{k,s_1,s_2}$, 
$A^\dagger_{k,s}:=A^\dagger_{k,s_1,s_2}$.

Let $u\in W^{\ell}_{{\rm comp\,}}(U_1\cap\ol M, 
T^{\ast 0, q}M'\otimes L^{k})$, 
for some $\ell\in\R$. Write $u=s^k_1\otimes\tilde u$, where
$\tilde u\in W^{\ell}_{{\rm comp\,}}(U\cap\ol M, T^{\ast 0, q}M')$. 
Let $\hat u:=e^{-k\phi}\tilde u$. 
We define $\norm{u}_{\ell,\ol M,k,s_1}:=\norm{\hat u}_{\ell,\ol M}$. 
We will sometimes write $\norm{u}_{\ell,\ol M,k}$
to denote $\norm{u}_{\ell,\ol M,k,s_1}$. We will fix an open
trivialization covering $\set{U_j}^N_{j=1}$ of $\ol M$, for $N\in\mathbb N$, 
and for each $j$, we will fix an $\R$-equivariant
holomorphic frame $s_j$ on $U_j$.
For $u\in W^{\ell}(\ol M, T^{\ast 0, q}M'\otimes L^{k})$, 
we define $\norm{u}_{\ell,\ol M,k}$ by using partition of unity. 
Similarly, for $u\in W^{\ell}_{{\rm comp\,}}(U\cap X, 
T^{\ast 0, q}M'\otimes L^{k})$, we define 
$\norm{u}_{\ell,X,k,s_1}$ in the same way. 
We will also sometimes write $\norm{u}_{\ell,X,k}$ to denote 
$\norm{u}_{\ell,X,k,s_1}$. 
We will fix an open  trivialization covering $\set{D_j}^N_{j=1}$ of $X$, 
$N\in\mathbb N$, and for each $j$, we will fix a $\R$-equivariant
CR frame $s_j$ on $D_j$.
For $u\in W^{\ell}(X, T^{\ast 0, q}M'\otimes L^{k})$, 
we define $\norm{u}_{\ell,X,k}$ by using partition of unity. 

Let $\ell_{1}, \ell_{2}\in\R, n_{0}\in\mathbb Z\bigcup\{-\infty\}$. 
We write
\begin{equation*}
\begin{split}
&A_{k}=O(k^{n_{0}}): W^{\ell_{1}}_{{\rm comp\,}}
(U_1\cap\ol M, T^{*0,q}M'\otimes L^{k})\rightarrow 
W^{\ell_{2}}_{{\rm loc\,}}(U_2\cap \ol M, 
T^{\ast 0, q}M'\otimes L^{k}),\\
&\hat A_{k}=O(k^{n_{0}}): W^{\ell_{1}}_{{\rm comp\,}}
(U_1\cap X, T^{*0,q}M'\otimes L^{k})\rightarrow
W^{\ell_{2}}_{{\rm loc\,}}(U_2\cap \ol M, T^{\ast 0, q}M'\otimes L^{k}),\\
&\tilde A_{k}=O(k^{n_{0}}): W^{\ell_{1}}_{{\rm comp\,}}
(U_1\cap\ol M, T^{\ast 0, q}M'\otimes L^{k})\rightarrow 
W^{\ell_{2}}_{{\rm loc\,}}(U_2\cap X, T^{\ast 0, q}M'\otimes L^{k}),\\
&A^\dagger_{k}=O(k^{n_{0}}): W^{\ell_{1}}_{{\rm comp\,}}
(U_1\cap X, T^{\ast 0, q}M'\otimes L^{k})\rightarrow 
W^{\ell_{2}}_{{\rm loc\,}}(U_2\cap X, T^{\ast 0, q}M'\otimes L^{k}),
\end{split}
\end{equation*}
if $A_k$, $\hat A_k$, $\tilde A_k$ and $A^\dagger_k$ can be extended to continuous operators: 
\begin{equation*}
\begin{split}
&A_{k}: W^{\ell_{1}}_{{\rm comp\,}}(U_1\cap\ol M, T^{\ast 0, q}M'\otimes L^{k})\rightarrow W^{\ell_{2}}_{{\rm loc\,}}(U_2\cap \ol M, T^{\ast 0, q}M'\otimes L^{k}),\\
&\hat A_{k}: W^{\ell_{1}}_{{\rm comp\,}}(U_1\cap X, T^{\ast 0, q}M'\otimes L^{k})\rightarrow W^{\ell_{2}}_{{\rm loc\,}}(U_2\cap \ol M, T^{\ast 0, q}M'\otimes L^{k}),\\
&\tilde A_{k}: W^{\ell_{1}}_{{\rm comp\,}}(U\cap\ol M, T^{\ast 0, q}M'\otimes L^{k})\rightarrow W^{\ell_{2}}_{{\rm loc\,}}(U\cap X, T^{\ast 0, q}M'\otimes L^{k}),\\
&A^\dagger_{k}: W^{\ell_{1}}_{{\rm comp\,}}(U\cap X, T^{\ast 0, q}M'\otimes L^{k})\rightarrow W^{\ell_{2}}_{{\rm loc\,}}(U\cap X, T^{\ast 0, q}M'\otimes L^{k})
\end{split}
\end{equation*}
and 
\begin{equation*}
\begin{split}
&A_{k, s_1,s_2}=O(k^{n_{0}}): W^{\ell_{1}}_{\rm comp}(U_1\cap\ol M, T^{\ast 0, q}M')\rightarrow W^{\ell_{2}}_{\rm loc}(U_2\cap\ol M, T^{\ast 0, q}M'),\\
&\hat A_{k,s_1,s_2}=O(k^{n_{0}}): W^{\ell_{1}}_{{\rm comp\,}}(U_1\cap X, T^{\ast 0, q}M')\rightarrow W^{\ell_{2}}_{{\rm loc\,}}(U_2\cap \ol M, T^{\ast 0, q}M'),\\
&\tilde A_{k,s_1,s_2}=O(k^{n_{0}}): W^{\ell_{1}}_{{\rm comp\,}}(U_1\cap\ol M, T^{\ast 0, q}M')\rightarrow W^{\ell_{2}}_{{\rm loc\,}}(U_2\cap X, T^{\ast 0, q}M'),\\
&A^\dagger_{k,s_1,s_2}=O(k^{n_{0}}): W^{\ell_{1}}_{{\rm comp\,}}(U_1\cap X, T^{\ast 0, q}M')\rightarrow W^{\ell_{2}}_{{\rm loc\,}}(U_2\cap X, T^{\ast 0, q}M').
\end{split}
\end{equation*}
We write 
\begin{equation*}
\begin{split}
&\mbox{$A_{k}\equiv0\mod O(k^{-\infty})$ on $(U_1\times U_2)\cap(\ol M\times\ol M)$},\\
&\mbox{$\hat A_{k}\equiv0\mod O(k^{-\infty})$ on $(U_1\times U_2)\cap(X\times\ol M)$},\\
&\mbox{$\tilde A_{k}\equiv0\mod O(k^{-\infty})$ on $(U_1\times U_2)\cap(\ol M\times X)$},\\
&\mbox{$A^\dagger_{k}\equiv0\mod O(k^{-\infty})$ on $(U_1\times U_2)\cap(X\times X)$}
\end{split}
\end{equation*}
if 
\begin{equation*}
\begin{split}
&\mbox{$A_{k,s_1,s_2}\equiv0
\mod O(k^{-\infty})$ on $(U_1\times U_2)\cap(\ol M\times\ol M)$},\\
&\mbox{$\hat A_{k,s_1,s_2}\equiv0
\mod O(k^{-\infty})$ on $(U_1\times U_2)\cap(X\times\ol M)$},\\
&\mbox{$\tilde A_{k,s_1,s_2}\equiv0
\mod O(k^{-\infty})$ on $(U_1\times U_2)\cap(\ol M\times X)$},\\
&\mbox{$A^\dagger_{k,s_1,s_2}\equiv0
\mod O(k^{-\infty})$ on $(U_1\times U_2)\cap(X\times X)$}.
\end{split}
\end{equation*}
Let 
 \begin{equation}\label{e-gue220302yydsz}
 \begin{split}
 &B_{k}: \cC^\infty(M, T^{\ast 0, q}M'\otimes L^{k})\rightarrow\mathscr D'(M, T^{\ast 0, q}M'\otimes L^{k}),\\
 &\hat B_{k}: \cC^\infty(X, T^{\ast 0, q}M'\otimes L^{k})\rightarrow\mathscr D'(M, T^{\ast 0, q}M'\otimes L^{k}),\\
  &\tilde B_{k}: \cC^\infty(M, T^{\ast 0, q}M'\otimes L^{k})\rightarrow\mathscr D'(X, T^{\ast 0, q}M'\otimes L^{k}),\\
  &B^\dagger_{k}: \cC^\infty(X, T^{\ast 0, q}M'\otimes L^{k})\rightarrow\mathscr D'(X, T^{\ast 0, q}M'\otimes L^{k}),
  \end{split}
  \end{equation}
 be continuous operators. Let $\ell_{1}, \ell_{2}\in\R, n_{0}\in\mathbb Z\bigcup\{-\infty\}$. We write 
 \begin{equation*}
\begin{split}
&B_{k}=O(k^{n_{0}}): W^{\ell_{1}}(\ol M, T^{\ast 0, q}M'\otimes L^{k})\rightarrow W^{\ell_{2}}(\ol M, T^{\ast 0, q}M'\otimes L^{k}),\\
&\hat B_{k}=O(k^{n_{0}}): W^{\ell_{1}}(X, T^{\ast 0, q}M'\otimes L^{k})\rightarrow W^{\ell_{2}}(\ol M, T^{\ast 0, q}M'\otimes L^{k}),\\
&\tilde B_{k}=O(k^{n_{0}}): W^{\ell_{1}}(\ol M, T^{\ast 0, q}M'\otimes L^{k})\rightarrow W^{\ell_{2}}(X, T^{\ast 0, q}M'\otimes L^{k}),\\
&B^\dagger_{k}=O(k^{n_{0}}): W^{\ell_{1}}(X, T^{\ast 0, q}M'\otimes L^{k})\rightarrow W^{\ell_{2}}(X, T^{\ast 0, q}M'\otimes L^{k}),
\end{split}
\end{equation*} 
if $B_k$, $\hat B_k$, $\tilde B_k$ and $B^\dagger_k$ can be extended to continuous operators: 
\begin{equation*}
\begin{split}
&B_{k}: W^{\ell_{1}}(\ol M, T^{\ast 0, q}M'\otimes L^{k})\rightarrow 
W^{\ell_{2}}(\ol M, T^{\ast 0, q}M'\otimes L^{k}),\\
&\hat B_{k}: W^{\ell_{1}}(X, T^{\ast 0, q}M'\otimes L^{k})\rightarrow 
W^{\ell_{2}}(\ol M, T^{\ast 0, q}M'\otimes L^{k}),\\
&\tilde B_{k}: W^{\ell_{1}}(\ol M, T^{\ast 0, q}M'\otimes L^{k})\rightarrow 
W^{\ell_{2}}(X, T^{\ast 0, q}M'\otimes L^{k}),\\
&B^\dagger_{k}: W^{\ell_{1}}(X, T^{\ast 0, q}M'\otimes L^{k})\rightarrow 
W^{\ell_{2}}(X, T^{\ast 0, q}M'\otimes L^{k})
\end{split}
\end{equation*}
 and for all holomorphic frames $s_1$, $s_2$ 
 defined on open sets $U_1$, $U_2$ of $M'$, respectively, we have
 \begin{equation*}
\begin{split}
&B_{k, s_1,s_2}=O(k^{n_{0}}): 
W^{\ell_{1}}_{\rm comp}(U_1\cap\ol M, 
T^{\ast 0, q}M')\rightarrow W^{\ell_{2}}_{\rm loc}
(U_2\cap\ol M, T^{\ast 0, q}M'),\\
&\hat B_{k,s_1,s_2}=O(k^{n_{0}}):
W^{\ell_{1}}_{{\rm comp\,}}(U_1\cap X, T^{\ast 0, q}M')
\rightarrow W^{\ell_{2}}_{{\rm loc\,}}(U_2\cap \ol M, T^{\ast 0, q}M'),\\
&\tilde B_{k,s_1,s_2}=O(k^{n_{0}}):
W^{\ell_{1}}_{{\rm comp\,}}(U_1\cap\ol M, T^{\ast 0, q}M')
\rightarrow W^{\ell_{2}}_{{\rm loc\,}}(U_2\cap X, T^{\ast 0, q}M'),\\
&B^\dagger_{k,s_1,s_2}=O(k^{n_{0}}):
W^{\ell_{1}}_{{\rm comp\,}}(U_1\cap X, T^{\ast 0, q}M')
\rightarrow W^{\ell_{2}}_{{\rm loc\,}}(U_2\cap X, T^{\ast 0, q}M').
\end{split}
\end{equation*}
 We write 
\begin{equation*}
\begin{split}
&B_{k}\equiv0\mod O(k^{-\infty})\:\: \text{on $\ol M\times\ol M$},\\
&\mbox{$\hat B_{k}\equiv0\mod O(k^{-\infty})$ on $X\times\ol M$},\\
&\mbox{$\tilde B_{k}\equiv0\mod O(k^{-\infty})$ on $\ol M\times X$},\\
&\mbox{$B^\dagger_{k}\equiv0\mod O(k^{-\infty})$ on $X\times X$}
\end{split}
\end{equation*}
 if for all holomorphic frames $s_1$, $s_2$ 
 defined on open sets $U_1$, $U_2$ of $M'$ respectively, we have
 \begin{equation*}
\begin{split}
&\mbox{$B_{k,s_1,s_2}\equiv0
\mod O(k^{-\infty})$ on $(U_1\times U_2)\cap(\ol M\times\ol M)$},\\
&\mbox{$\hat B_{k,s_1,s_2}\equiv0
\mod O(k^{-\infty})$ on $(U_1\times U_2)\cap(X\times\ol M)$},\\
&\mbox{$\tilde B_{k,s_1,s_2}\equiv0
\mod O(k^{-\infty})$ on $(U_1\times U_2)\cap(\ol M\times X)$},\\
&\mbox{$B^\dagger_{k,s_1,s_2}\equiv0
\mod O(k^{-\infty})$ on $(U_1\times U_2)\cap(X\times X)$}.
\end{split}
\end{equation*} 
Let $k\in\R$. Let $U$ be an open set in $M'$ and let $E$ be a vector bundle over $M'\times M'$. Let $f: U\To[0,+\infty[$ be an order function. Let
\begin{equation}\label{e-gue190813yydI}
S^k(f;(U\times U)\cap(\ol M\times\ol M),E)
\end{equation}
denote the space of restrictions to $(U\times U)\cap(\ol M\times\ol M)$ of elements in 
$S^k(f;U\times U,E)$.
Let
\[a_j\in S^{k_j}(f;(U\times U)\cap(\ol M\times\ol M),E),\ \ j=0,1,2,\dots,\] 
with $k_j\searrow -\infty$, $j\To \infty$.
Then there exists
$a\in S^{k_0}(f;(U\times U)\cap(\ol M\times\ol M),E)$
such that for every $\ell\in\N$,
\[a-\sum^{\ell-1}_{j=0}a_j\in S^{k_\ell}(f;(U\times U)\cap
(\ol M\times\ol M),E).\]
If $a$ and $a_j$ have the properties above, we write
\[a\sim\sum^\infty_{j=0}a_j \text{ in }
S^{k_0}(f;(U\times U)\cap(\ol M\times\ol M),E).\] 
We define $S^k_{{\rm cl\,}}(f;(U\times U)\cap(\ol M\times\ol M),E)$, $S^k(f;(U\times U)\cap(\ol M\times X),E)$, $S^k_{{\rm cl\,}}(f;(U\times U)\cap(\ol M\times X),E)$
in the similar way. If $E$ is trivial, we will omit $E$ in the notations. %Let $P_k:\Omega^{0,q}(X,L^k)\To\mathscr D'(X,L^k)$ be a continuous operator. We say that $P_k$ is a semi-classical pseudodifferential
%operator on $X$ of order $m$ from sections of $T^{*0,q}X\boxtimes L^k$ to sections of $T^{*0,q}X\boxtimes L^k$ with classical symbol if for every local holomorphic frame $s$ defined on an open set $D$ of $X$, we have 
%\begin{equation}\label{e-gue220323yyd}
%P_{k,s}={\rm Op\,}_k(a),
%\end{equation} 
%for some $a\in S^m_{{\rm loc\,},{\rm cl\,}}(1;D\times D\times\R^{2n-1},T^{*0,q}X\boxtimes(T^{*0,q}X)^*)$. We write $\Psi_m(X,T^{*0,q}\otimes L^k)$ to denote the set of all  semi-classical pseudodifferential
%operators on $X$ of order $m$ from sections of $T^{*0,q}X\boxtimes L^k$ to sections of $T^{*0,q}X\boxtimes L^k$ with classical symbol. We refer the reader to Definition~\ref{d-gue13628I} and Definition~\ref{d-gue140826} for the meaning of ${\rm Op\,}_k(a)$ and  $S^m_{{\rm loc\,},{\rm cl\,}}(1;D\times D\times\R^{2n-1},T^{*0,q}X\boxtimes(T^{*0,q}X)^*)$. 

\section{Local expression of the Kodaira Laplacian near the boundary}
%{Localization of $\Box^{(q)}_{f,k}$}\label{s-gue220314yyd}

For $q=0,1,\ldots,n$, let 
\[\Box^{(q)}_{f,k}:=\ol{\pr}^*_{f,k}\,\ddbar_k+\ddbar_k\,\ol{\pr}^*_{f,k}: \Omega^{0,q}(M',L^k)\To\Omega^{0,q}(M',L^k),\]
be the (formal) Kodaira Laplacian on $M'$. Our purpose is to give
an expression of the Kodaira Laplacian near the boundary
in terms of an adapted frame of $TM'$ and an
$\R$-equivariant holomorphic frame 
of $L$. 

%Fix $p\in X$. There exist an open set $U$ of $p$ in $M'$ and a local orthonormal frame $\{\omega^j\}_{j=1}^n$ of $T^{*1,0}M'$ on $U$ with $\omega^n=\frac{1}{\langle\partial\rho|\partial\rho\rangle^{\frac12}}\partial\rho$. Let $\{L_j\}_{j=1}^n\subset T^{1,0}M'$ be the dual frame of $\{\omega^j\}_{j=1}^n$ with respect to the given Hermitian metric on $M'$.
%We denote by $\Box^{(1)}_f=\overline\partial_f^\ast\overline\partial+\overline\partial\overline\partial_f^\ast$ where $\overline\partial_f^\ast$ is the formal adjoint of $\overline\partial$.
Let $s$ be a $\R$-equivariant holomorphic frame 
of $L$ over an open subset $U\Subset M'$ and $\phi\in\cC^\infty(U)$ 
be the local weight of the metric $h^L$ defined by $|s|^2_{h^L}=e^{-2\phi}.$ 
We define the operators $\overline\partial_{k,s}$, 
$\overline\partial^\ast_{k,s}$, $\Box^{(q)}_{k,s}$, which are the 
localizations of the operators $\overline\partial_k$, 
$\overline\partial^\ast_{f,k}$, $\Box^{(q)}_{f,k}$, 
with respect to the local frame $s$ on $U$. 
We set 
\begin{equation}\label{localization operators}
\begin{split}
&\overline\partial_{k,s}:=\overline\partial+
k\overline\partial\phi\wedge: \Omega^{0, q}(U)\rightarrow\Omega^{0, q+1}(U),\\
&\overline\partial^\ast_{k,s}:=\overline\partial^\ast+k(\overline\partial\phi\wedge)^\ast:
\Omega^{0, q+1}(U)\rightarrow\Omega^{0, q}(U),\\
&\Box_{k,s}^{(q)}:=\overline\partial_{k,s}\overline\partial^\ast_{k,s}+\overline\partial^\ast_{k,s}\overline\partial_{k,s}: \Omega^{0, q}(U)\rightarrow\Omega^{0, q}(U),\end{split}
\end{equation}
where $\ddbar: \Omega^{0,q}(U)\To\Omega^{0,q+1}(U)$ is the Cauchy-Riemann operator and $\overline\partial^\ast, \overline\partial_{k,s}^\ast$ are formal adjoint of $\overline\partial$ and $\overline\partial_{k,s}$ with respect to $(\,\cdot\,|\,\cdot\,)_{M'}$ respectively. We have the following unitary identifications:
\begin{equation}\label{e-gue220313ycd}
\begin{split}
\Omega^{0, q}(U, L^k)&\leftrightarrow\Omega^{0, q}(U),\\
g=s^k \tilde g&\leftrightarrow \hat g(z)=e^{-k\phi}s^{-k}g=\tilde g(z)e^{-k\phi}, \tilde g\in\Omega^{0, q}(U),\\
\overline\partial_k&\leftrightarrow\overline\partial_{k,s}, \overline\partial g=s^k e^{k\phi}\overline\partial_{k,s}\hat g,\\
\overline\partial^\ast_{f,k}&\leftrightarrow\overline\partial^\ast_{k,s}, \overline\partial^\ast g=s^k e^{k\phi} \overline\partial^\ast_{k,s}\hat g,\\
\Box^{(q)}_{f,k}&\leftrightarrow\Box^{(q)}_{k,s}, \Box^{(q)}_{f,k}g=s^k e^{k\phi}\Box^{(q)}_{k,s}\hat g.
\end{split}
\end{equation}
On $U$, let  $\{L_j\}_{j=1}^n$ be an orthonormal frame of $T^{1, 0}M'$ and its dual frame $\{e^j\}_{j=1}^n$ with $e^n=\frac{\partial\rho}{|\partial\rho|}$. We denote by $e^{\overline j}=\overline{e^j}$ and $L_{\overline j}=\overline{L_j}, j=1, \ldots, n.$
We can check that
\begin{equation}\label{e-gue220309yyd}
\begin{split}
&\overline\partial_{k,s}=\sum_{j=1}^n(e^{\overline j}\wedge)\circ(L_{\overline j}+k L_{\overline j}\phi)+\sum_{j=1}^n((\overline\partial e^{\overline j})\wedge)\circ (e^{\overline j}\wedge)^\ast, \\
&\overline\partial^\ast_{k,s}=\sum_{j=1}^n(e^{\overline j}\wedge)^\ast\circ(L_{\overline j}^\ast+kL_j\phi)+\sum_{j=1}^n(e^{\overline j}\wedge)\circ((\overline\partial e^{\overline j})\wedge)^\ast,
\end{split}
\end{equation} 
where $L^*_{\ol j}$ is the formal adjoint of $L_{\ol j}$ with respect to $(\,\cdot\,|\,\cdot\,)_{M'}$. 
From \eqref{e-gue220309yyd}, we can repeat the proof of~\cite[Prop. 4.1]{Hsiao18} with minor change and deduce that 

\begin{proposition} \label{s2-pmsmilkI} 
With the notations used before, we have 
\begin{equation} \label{e-gue220309yydI} 
\begin{split}
\Box^{(q)}_{k,s}&=\ddbar_{k,s}\,\ol{\pr}^*_{k,s}+
\ol{\pr}^*_{k,s}\,\ddbar_{k,s} \\
       &=\sum^{n}_{j=1}(-L_j+kL_{j}(\phi))(L_{\ol j}+kL_{\ol j}(\phi))\\
       &+\sum^{n}_{j,t=1}(e^{\ol j})\wedge\circ(e^{\ol t}\wedge)^*\circ[L_{\ol j}+kL_{\ol j}(\phi), -L_{t}+kL_{t}(\phi)]\\
&+\varepsilon(\ol L+k\ol L(\phi))+\varepsilon(-L+kL(\phi))+g,
\end{split}
\end{equation}
where $\varepsilon(\ol L+k\ol L(\phi))$ denotes remainder terms of the form $\sum a_j(L_{\ol j}+kL_{\ol j}(\phi))$ with $a_j$ smooth, matrix-valued and independent of $k$, for all $j$, and similarly for $\varepsilon(-L+kL(\phi))$ and $g$ is a smooth function independent of $k$. 
\end{proposition}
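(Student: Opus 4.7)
The plan is to expand $\Box^{(q)}_s = \ddbar_s\,\ol\pr^*_s + \ol\pr^*_s\,\ddbar_s$ term-by-term from \eqref{e-gue220309yyd} and sort the pieces into the four categories appearing in \eqref{e-gue220309yydI}. To organize the bookkeeping I would introduce shorthands $M_j := L_{\ol j}+kL_{\ol j}(\phi)$ and $N_j := L^*_{\ol j}+kL_j(\phi)$, and split
\begin{equation*}
\ddbar_s = A_1 + B_1,\ \ A_1 = \sum_j (e^{\ol j}\wedge)M_j,\ \ B_1 = \sum_j ((\ddbar e^{\ol j})\wedge)(e^{\ol j}\wedge)^*,
\end{equation*}
\begin{equation*}
\ol\pr^*_s = A_2 + B_2,\ \ A_2 = \sum_j (e^{\ol j}\wedge)^* N_j,\ \ B_2 = \sum_j (e^{\ol j}\wedge)((\ddbar e^{\ol j})\wedge)^*,
\end{equation*}
so that $B_1,B_2$ have smooth, $k$-independent coefficients.

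The cross products $A_1B_2$, $B_1A_2$, $A_2B_1$, $B_2A_1$ all have the schematic form (smooth, $k$-independent)$\,\cdot\, M_j$ or (smooth, $k$-independent)$\,\cdot\, N_j$ after one commutation of a wedge factor past a first-order operator, so they fall into $\varepsilon(\ol L + k\ol L(\phi)) + \varepsilon(-L+kL(\phi))$, while $B_1B_2+B_2B_1$ contributes to $g$. The main work is therefore $A_1A_2+A_2A_1$. I would first push each $M_j$ across the wedge operator $(e^{\ol t}\wedge)^*$ standing to its right, collecting the commutator $[L_{\ol j},(e^{\ol t}\wedge)^*]$, which is a zero-order bundle endomorphism with smooth, $k$-independent coefficients; the correction it produces sits naturally in $\varepsilon(-L+kL(\phi))$, and symmetrically for $A_2A_1$. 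The remainder collapses by the fermionic identity $(e^{\ol j}\wedge)(e^{\ol t}\wedge)^* + (e^{\ol t}\wedge)^*(e^{\ol j}\wedge) = \delta_{jt}$ to
\begin{equation*}
\sum_j N_j M_j + \sum_{j,t}(e^{\ol j}\wedge)(e^{\ol t}\wedge)^*[M_j,N_t].
\end{equation*}

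Two final reductions finish the calculation. Since the $L_j$ form an orthonormal frame, integration by parts yields $L^*_{\ol j} = -L_j + a_j$ with $a_j$ smooth and $k$-independent; hence $N_j = (-L_j+kL_j(\phi)) + a_j$ and $N_j M_j$ equals the desired principal term $(-L_j+kL_j(\phi))M_j$ plus $a_j M_j$, which lies in $\varepsilon(\ol L + k\ol L(\phi))$. For the commutator we write
\begin{equation*}
[M_j,N_t] = [L_{\ol j}+kL_{\ol j}(\phi),\, -L_t+kL_t(\phi)] + [L_{\ol j}+kL_{\ol j}(\phi),\, a_t],
\end{equation*}
and since $a_t$ is a function, the last bracket reduces to the smooth, $k$-independent function $L_{\ol j}(a_t)$, absorbed into $g$. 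Assembling the pieces recovers \eqref{e-gue220309yydI}.

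The main obstacle is purely bookkeeping discipline: every commutator between a vector field $L_j$ (or $L_{\ol j}$) and a wedge operator built from the dual frame generates a zero-order correction with smooth, $k$-independent coefficients, and one must consistently recognize that such a correction, once paired with the surviving $M_j$ or $N_j$, belongs to one of $\varepsilon(\ol L+k\ol L(\phi))$, $\varepsilon(-L+kL(\phi))$, or $g$. Because the $k$-dependent parts of $M_j$ and $N_j$ are multiplication operators (and hence commute with the wedges), no higher powers of $k$ are produced during the commutator process, so the argument runs exactly parallel to \cite[Prop.~4.1]{Hsiao18}, the "minor change" being that here we keep the $\R$-invariant (rather than plurisubharmonic) weight $\phi$ throughout and only retain the structural form of $\Box^{(q)}_s$ rather than a coercive estimate.
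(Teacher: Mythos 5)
Your computation is correct and, since the paper simply refers to \cite[Prop.~4.1]{Hsiao18} without giving details, it amounts to spelling out the same algebraic expansion that the cited proof is based on; the decomposition into $A_1+B_1$ and $A_2+B_2$, the fermionic identity $(e^{\ol j}\wedge)(e^{\ol t}\wedge)^*+(e^{\ol t}\wedge)^*(e^{\ol j}\wedge)=\delta_{jt}$, and the substitution $L^*_{\ol j}=-L_j+a_j$ are precisely the steps one would use. Two small bookkeeping points worth tightening: the cross products $A_1B_2$, $A_2B_1$, $B_1A_2$, $B_2A_1$ produce, besides the $\varepsilon(\ol L+k\ol L(\phi))$ and $\varepsilon(-L+kL(\phi))$ pieces, also a $k$-independent zero-order piece (the wedge/vector-field commutators and the $a_j$-shift in $N_j$) that should explicitly be swept into $g$; and it is worth noting that the $k$-dependent parts of $M_j$, $N_j$ are multiplication operators, so none of the commutations past wedge factors introduce new powers of $k$ — a point you do address implicitly but which is the crucial reason the error terms land in the allowed classes.
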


We choose real coordinates $x=(x_1, \ldots, x_{2n})$ on $U$. Write $$L_j=\sum_{\alpha=1}^{2n}a_{j,\alpha}\frac{\partial}{\partial x_{\alpha}}, a_{j,\alpha}\in\cC^\infty(U, \mathbb C),\ \ j=1,\ldots,n,\ \ \alpha=1,\ldots,2n.$$ Define $$C_{\alpha,\overline\beta}=\sum_{j=1}^na_{j,\alpha}\overline{a_{j,\beta}},\ \  \alpha, \beta=1, \ldots, 2n.$$ Then the matrix $[C_{\alpha,\overline\beta}]=[C_{\alpha,\overline\beta}]_{2n\times 2n}$ is  Hermitian self-adjoint. Moreover, we have 

\begin{lemma}
${\rm Re\,}[C_{\alpha,\overline\beta}]$ is positive, where ${\rm Re\,}[C_{\alpha,\overline\beta}]$ is the real part of $[C_{\alpha,\overline\beta}]$.
\end{lemma}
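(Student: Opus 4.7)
The plan is to exhibit the quadratic form $\xi^{T}C\xi$ associated with $[C_{\alpha,\overline{\beta}}]$ as a sum of squares of Hermitian linear functionals of $\xi$, and then argue that the vanishing of those linear functionals forces $\xi=0$ by a conjugation argument. Since $[C_{\alpha,\overline{\beta}}]$ is Hermitian, its imaginary part is skew-symmetric, so $\xi^{T}(\mathrm{Im}\,C)\xi=0$ for every real $\xi$. Thus the statement is equivalent to $\xi^{T}C\xi>0$ for $0\neq\xi\in\R^{2n}$, and positivity of $\mathrm{Re}\,C$ follows once we prove this.

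First, I would write $A=(a_{j,\alpha})_{1\leq j\leq n,\,1\leq\alpha\leq 2n}$ so that $C=A^{T}\overline{A}$. A direct substitution shows
\[
\xi^{T}C\xi=\sum_{\alpha,\beta}\Bigl(\sum_{j=1}^{n}a_{j,\alpha}\overline{a_{j,\beta}}\Bigr)\xi_{\alpha}\xi_{\beta}=\sum_{j=1}^{n}\Bigl|\sum_{\alpha=1}^{2n}a_{j,\alpha}\xi_{\alpha}\Bigr|^{2}\geq 0,
\]
which immediately gives non-negativity of $\mathrm{Re}\,C$ on $\R^{2n}$.

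The only substantial step is strict positivity. Suppose $\xi\in\R^{2n}$ satisfies the above sum-of-squares equal to $0$, so that $\sum_{\alpha}a_{j,\alpha}\xi_{\alpha}=0$ for every $j=1,\ldots,n$. Interpret $\eta:=\sum_{\alpha}\xi_{\alpha}\,dx^{\alpha}\in T^{*}_{x}M'$ as a real $1$-form. Since $L_{j}=\sum_{\alpha}a_{j,\alpha}\partial_{x_{\alpha}}$, the condition just derived is exactly $\langle\eta,L_{j}\rangle=0$ for all $j$, i.e.\ $\eta$ annihilates $T^{1,0}_{x}M'$. Because $\eta$ is real one has $\eta=\overline{\eta}$, and conjugating the annihilation condition yields $\langle\eta,\overline{L_{j}}\rangle=0$ for all $j$, so $\eta$ also annihilates $T^{0,1}_{x}M'$. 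Together, $\eta$ annihilates $T^{1,0}_{x}M'\oplus T^{0,1}_{x}M'=\C T_{x}M'$, forcing $\eta=0$ and hence $\xi=0$.

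The only place where care is needed is the translation between ``$A\xi=0$'' and ``$\eta$ annihilates $T^{1,0}_{x}M'$''; this is a direct consequence of the definition $L_{j}=\sum_{\alpha}a_{j,\alpha}\partial/\partial x_{\alpha}$ together with the canonical pairing $\langle dx^{\alpha},\partial/\partial x_{\beta}\rangle=\delta_{\alpha\beta}$. No deep geometry is used; the positivity is in fact a general linear-algebra fact about a complex structure on a real vector space. The argument is pointwise on $U$, and nothing about orthonormality of the frame $\{L_{j}\}$ is actually required beyond the fact that they span $T^{1,0}_{x}M'$, so the conclusion holds at every $x\in U$.
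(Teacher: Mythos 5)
Your proof is correct and uses essentially the same argument as the paper: you write $\xi^T C\xi=\sum_j\bigl|\sum_\alpha a_{j,\alpha}\xi_\alpha\bigr|^2$, and show that vanishing forces a real covector to annihilate both $T^{1,0}_xM'$ and (by conjugation) $T^{0,1}_xM'$, hence to be zero. The paper phrases the last step as invertibility of the stacked matrix $\left[\begin{smallmatrix}\mathcal{A}\\\overline{\mathcal{A}}\end{smallmatrix}\right]$ and organizes the argument around non-degeneracy of $\mathrm{Re}\,C$ rather than directly bounding the quadratic form below, but these are the same observation in different language.
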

\begin{proof}
We denote by $C=[C_{\alpha,\overline\beta}]$ and write $C=A+\sqrt {-1}B$, where $A={\rm Re\,}C$ and $B$ is a real $2n\times 2n$ matrix. Since $C$ is Hermitian, one has $A^T=A$ and $B^T=-B$, where $A^T$ and $B^T$ denote the transpose of $A$ and $B$ respectively. Since $B^T=-B$, we can check that for any $v=(x_1, \ldots, x_{2n})^T\in\R^{2n}$, we have $v^TBv=0$
and hence $v^TCv=v^TAv$. 
Let $v=(x_1, \ldots, x_{2n})^T\in\R^{2n}$ with $Av=0$. We are going to prove that $v=0$. Write ${\mathcal A}=[a_{j,\alpha}]_{n\times 2n}$. Then the matrix 
$$\left[\begin{array}{c}
\mathcal{A} \\
\overline{\mathcal{A}}
\end{array}\right]_{2n\times 2n}$$ 
is invertible because $\{L_j, L_{\overline j}\}_{j=1}^n$ 
is a basis of $\mathbb CTM'$ over $D$. 
Since $C={\mathcal A}^T\overline {\mathcal A}$ and $v^TCv=0$, 
one has $\overline{\mathcal A}v=0$ and thus $\mathcal Av=0$. 
Hence $v=0$ which implies that $A$ is non-degenerate. 
Since $v^TCv=v^TAv\geq0$, for every 
$v=(x_1, \ldots, x_{2n})^T\in\R^{2n}$, we conclude that $A$ is positive. 
\end{proof} 

Now, we express $\Box_s^{(1)}$ with respect to the real coordinates $(x_1, \ldots, x_{2n})$. As before, write $L_j=\sum_{\alpha=1}^{2n}a_{j,\alpha}\frac{\partial }{\partial x_{\alpha}}, j=1, \ldots, n$. From \eqref{e-gue220309yydI}, one has
\begin{equation}\label{e-gue220311ycd}
\begin{split}
\Box_{k,s}^{(1)}&=\sum_{\alpha, \beta=1}^{2n}
\sum_{j=1}^na_{j,\alpha}\overline{a_{j,\beta}}
\left(-\frac{\partial}{\partial x_\alpha}+
k\frac{\partial\phi}{\partial x_\alpha}\right)
\left(\frac{\partial }{\partial x_\beta}
+k\frac{\partial\phi}{\partial x_\beta}\right)+{\mathcal L}_1+
k{\mathcal L}_{0}\\
&=\sum_{\alpha, \beta=1}^{2n} C_{\alpha,\overline\beta}
\left(-\frac{\partial}{\partial x_\alpha}+
k\frac{\partial\phi}{\partial x_\alpha}\right)
\left(\frac{\partial }{\partial x_\beta}+k\frac{\partial\phi}{\partial x_\beta}\right)+
{\mathcal L}_1+k{\mathcal L}_{0},
\end{split}
\end{equation}
where $\mathcal L_1$ and $\mathcal L_0$ are $k$-independent differential operators of order $1$ and $0$ respectively.  

For $p\in X$, we can choose coordinates $(x_1, \ldots, x_{2n})$ on an open set $U$ of $M'$ such that $(x_1, \ldots, x_{2n-1})$ are the BRT coordinates on $U\cap X$, $T=\frac{\pr}{\pr x_{2n-1}}$ on $U\cap X$, $x_{2n}=\rho$ and 
\begin{equation}\label{e-gue220311ycdI}
\frac{\pr}{\pr x_{2n}}=\frac{\pr}{\pr\rho}+O(\abs{\rho}).
\end{equation}
Since $e^n=\partial\rho/|\partial\rho|$, we have 
\[L_n=\frac{1}{\abs{\frac{\pr}{\pr\rho}-iJ(\frac{\pr}{\pr\rho})}}
\left(\frac{\pr}{\pr\rho}-iJ\Big(\frac{\pr}{\pr\rho}\Big)\right).\]
From this observation and \eqref{e-gue220311yyd}, we have 
\begin{equation}\label{e-gue2203121yyd}
\begin{split}
&L_n=\frac{1}{\sqrt{2}}\left(-i\frac{\pr}{\pr x_{2n-1}}+\frac{\pr}{\pr x_{2n}}\right)+O(\abs{\rho}),\\[2pt]
&L_j=\sum^{2n-1}_{\alpha=1}a_{j\alpha}\frac{\pr}{\pr x_\alpha}+O(\abs{\rho}),\ \ j=1,\ldots,n-1. 
\end{split}
\end{equation}
Thus, 
\begin{equation}\label{e-gue220312yydI}
\begin{split}
&a_{j,2n}|_X=0,\ \ j=1,\ldots,n-1,\\
&a_{n,\alpha}|_X=0,\ \ \alpha=1,\ldots,2n-2,\\
&a_{n,2n-1}|_X=-\frac{i}{\sqrt{2}},\ \ a_{n,2n}|_X=\frac{1}{\sqrt{2}}.
\end{split}
\end{equation}
From \eqref{e-gue220312yydI}, we can check that 
\begin{equation}\label{e-gue220312yydII}
\begin{split}
&C_{\alpha,\ol 2n}|_X=0,\ \ \alpha=1,\ldots,2n-2,\\
&C_{2n-1,\ol{2n}}|_X=-\frac{i}{2},\ \ C_{2n,\ol{2n}}|_X=\frac{1}{2},\\
&C_{2n,\ol{2n-1}}|_X=\frac{i}{2}.
\end{split}
\end{equation}
%Then with respect to these coordinates,
%\begin{equation}
%\begin{split}
%&L_j=\sum_{\alpha=1}^{2n-1}a_{j\alpha}\frac{\partial}{\partial x_{\alpha}}, ~~~L_n=\sum_{\alpha=1}^{2n-1}a_{n\alpha}\frac{\partial}{\partial x_{\alpha}}+\frac{\partial}{\partial\rho}, j=1, \ldots, n-1;\\
%&a_{n\alpha}|_X=0, 1\leq \alpha\leq 2n-1.
%\end{split}
%\end{equation} 
From \eqref{e-gue220311ycd}, \eqref{e-gue220311ycdI} and \eqref{e-gue220312yydII}, it is straightforward to check that 
\begin{equation}\label{21-10-6-a1}
\begin{split}
\Box_{k,s}^{(1)}=&\frac{1}{2}\Big(-\frac{\partial}{\partial \rho}+
k\frac{\partial\phi}{\partial \rho}\Big)\Big(\frac{\partial}{\partial \rho}
+k\frac{\partial\phi}{\partial \rho}\Big)+
\sum_{\alpha, \beta=1}^{2n-1}C_{\alpha,\overline\beta}
\Big(-\frac{\partial}{\partial x_{\alpha}}+
k\frac{\partial\phi}{\partial x_\beta}\Big)\Big(\frac{\partial}{\partial x_\beta}+k\frac{\partial\phi}{\partial x_\beta}\Big)\\ 
&+\frac{i}{2}\Big(-\frac{\partial}{\partial \rho}+
k\frac{\partial\phi}{\partial \rho}\Big)\Big(\frac{\partial}{\partial x_{2n-1}}+k\frac{\partial\phi}{\partial x_{2n-1}}\Big)
-\frac{i}{2}\Big(-\frac{\partial}{\partial x_{2n-1}}+
k\frac{\partial\phi}{\partial x_{2n-1}}\Big)\Big(\frac{\partial}{\partial\rho}
+k\frac{\partial\phi}{\partial\rho}\Big)\\
&+\sum_{\alpha=1}^{2n-1}r_{n,\alpha}
\Big(-\frac{\partial }{\partial x_{\alpha}}
+k\frac{\partial\phi}{\partial x_{\alpha}}\Big)
\Big(\frac{\partial}{\partial\rho}+
k\frac{\partial\phi}{\partial\rho}\Big)+
\sum_{\beta=1}^{2n-1}\hat r_{n,\ol\beta}
\Big(-\frac{\partial}{\partial\rho}+
k\frac{\partial\phi}{\partial\rho}\Big)
\Big(\frac{\partial}{\partial x_\beta}+
k\frac{\partial\phi}{\partial x_\beta}\Big)\\
&+{\mathcal L_1}(f)+k\mathcal L_0(f), 
\end{split}
\end{equation}
where $r_{n,\ol\alpha}$, $\hat r_{n,\ol\beta}$ are 
$k$-independent smooth functions with 
$r_{n,\ol\alpha}=O(\abs{\rho})$, $\hat r_{n,\ol\beta}=O(\abs{\rho})$, 
for every $\alpha, \beta=1,\ldots,2n-1$,  $\mathcal L_1$ 
and $\mathcal L_0$ are $k$-independent differential operators 
of order $1$ and $0$ respectively. 

At this point we can also define the function $\phi_1$
appearing in the formula \eqref{e-gue220510wz} of the coefficient $b_0$.
Let $s$ be a $\R$-equivariant holomorphic frame of $L$ on an open set 
$U$ of $M'$, with $D:=U\cap X\neq\emptyset$. 
Let $x=(x_1,\ldots,x_{2n-1})$ be canonical coordinates of $X$ on $D$ 
with $T=\frac{\pr}{\pr x_{2n-1}}$ on $D$ (cf.\ Definition \ref{D:BRT})
and $\tilde x=(x_1,\ldots,x_{2n-1},x_{2n})$, $x_{2n}=\rho$, 
are local coordinates of $U$ . 
%Let $\set{L_j}^n_{j=1}$ be an orthonormal frame 
%of $T^{1,0}M'$ and its dual frame $\set{e^j}^n_{j=1}$ with 
%$e^n=\frac{\pr\rho}{\abs{\pr\rho}}$. 
%Write $L_j=\sum^{2n}_{\alpha=1}a_{j,\alpha}\frac{\pr}{\pr x_\alpha}$, 
%$a_{j,\alpha}\in\cC^\infty(U,\mathbb C)$, 
%for every $j=1,\ldots,n$, $\alpha=1,\ldots,2n$. 
%Put $C_{\alpha,\ol\beta}:=\sum^n_{j=1}a_{j,\alpha}\ol{a_{j,\beta}}$, $\alpha, \beta=1,\ldots,2n$. 
Write 
\[\phi(\tilde x)=\phi_0(x)+\rho\phi_1(x)+O(\abs{\rho}^2).\]
Put for $(x,\eta)\in D\times\R^{2n-1}$,
\begin{equation}\label{e-gue220426yydz}
\varphi_1(x,\eta)=\frac1i\left(\phi_1^2(x)+2\sum_{\alpha, \beta=1}^{2n-1}
C_{\alpha,\ol\beta}\left(\eta_\alpha\eta_\beta+\frac{\partial\phi_0}{\partial x_\alpha}
\frac{\partial\phi_0}{\partial x_\beta}+i \frac{\partial\phi_0}{\partial x_\alpha}\eta_\beta
-i \eta_\alpha\frac{\partial\phi_0}{\partial x_\beta}\right)
-2\phi_1(x)\eta_{2n-1}\right)^{\!\!1/2}\,.
\end{equation}

%Let $x=(x_1, \ldots, x_{2n})$ be the real coordinates as chosen above on $D$ and  we write $\sum_{k=1}^n f_{\overline k}e^{\overline k}=\sum_{A=1}^{2n}f_A dx_A$. Then $f_{\overline k}=\sum_{A=1}^{2n} f_Adx_A(L_{\overline k})=\sum_{A=1}^{2n} f_A a_{A \overline k},$ where $a_{A\overline k}=dx_{A}(L_{\overline k}).$ Write $e^{\overline k}=\sum_{B=1}^{2n} b_{\overline k B}dx_{B}$ and $d_{AB}=\sum_{k=1}^{n}a_{A\overline k}b_{\overline k B}.$ Then
%\begin{equation}\begin{split}\Box_s^{(1)}f=&\sum_{A, B=1}^{2n}\{[(-\frac{\partial}{\partial \rho}+k\frac{\partial\phi}{\partial \rho})(\frac{\partial}{\partial \rho}+k\frac{\partial\phi}{\partial \rho})+\sum_{\alpha, \beta=1}^{2n-1}C_{\alpha\overline\beta}(-\frac{\partial}{\partial x_{\alpha}}+k\frac{\partial\phi}{\partial x_\beta})(\frac{\partial}{\partial x_\beta}+k\frac{\partial\phi}{\partial x_\beta})\\&+\sum_{\alpha=1}^{2n-1}a_{n\alpha}(-\frac{\partial }{\partial x_{\alpha}}+k\frac{\partial\phi}{\partial x_{\alpha}})(\frac{\partial}{\partial\rho}+k\frac{\partial\phi}{\partial\rho})\\&+\sum_{\beta=1}^{2n-1}\overline{a_{n\beta}}(-\frac{\partial}{\partial\rho}+k\frac{\partial\phi}{\partial\rho})(\frac{\partial}{\partial x_\beta}+k\frac{\partial\phi}{\partial x_\beta})]f_A\}\cdot d_{AB}\cdot dx_B\\&+{\mathcal L_1}(f)+k\mathcal L_0(f)+\mathcal L_{00}(f)\end{split}\end{equation}where $\mathcal L_{00}$ is an order zero operator. 

\section{Semi-classical Poisson operators}\label{s-gue220313yyd}

\subsection{The Poisson operator for $(0,1)$-forms}\label{s-gue220313yydI}

We first consider $(0,1)$-forms. We will use the same notations and assumptions 
as before. The next Lemma gives a spectral gap for the 
Kodaira Laplacian on $(0,1)$-forms with Dirichlet boundary conditions. 
We use here the fact that the line bundle is 
positive on the whole domain $\overline{M}$.

\begin{lemma}\label{l-gue220313yyd}
There exist positive constants $c$ and $k_0$ such that
\begin{equation}\label{e-gue220313ycdIII}
(\,\Box^{(1)}_{f,k}\,u\,|\,u\,)_{\ol M,k}\geq c k\|u\|^2_{\ol M,k}
\end{equation}
when $k\geq k_{0}$ for all $u\in\Omega^{0, 1}(\overline M, L^k)$ with $u|_{X}=0$.
\end{lemma}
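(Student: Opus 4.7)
The plan is to apply the Bochner--Kodaira identity on $\overline M$. Because $u|_X = 0$, every integration by parts in the localization \eqref{e-gue220309yydI} is boundary-free, and the pointwise positivity of $iR^L$ on $\overline M$ from Assumption \ref{a-gue170929I}(b) yields a lower bound of order $k$.

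First I would cover $\overline M$ by finitely many rigid trivializing charts with local frames $s$ and a subordinate partition of unity. On each chart, \eqref{e-gue220313ycd} reduces the estimate to controlling $(\Box^{(1)}_s \hat u\,|\,\hat u)$, where $\hat u = e^{-k\phi} s^{-k} u$ still satisfies $\hat u|_X = 0$. Pairing \eqref{e-gue220309yydI} with $\hat u$ and integrating $-L_j + kL_j(\phi)$ by parts against $L_{\bar j} + kL_{\bar j}(\phi)$ --- which produces no boundary contribution, thanks to the Dirichlet condition --- gives
\begin{equation*}
(\Box^{(1)}_s \hat u\,|\,\hat u) = \sum_{j=1}^{n}\bigl\|(L_{\bar j}+kL_{\bar j}(\phi))\hat u\bigr\|^2 + Q_k(\hat u) + R_k(\hat u),
\end{equation*}
where $Q_k$ is the quadratic form coming from the commutator sum in \eqref{e-gue220309yydI} and $R_k$ collects the contributions of the subleading operators $\varepsilon(\overline L + k\overline L\phi)$, $\varepsilon(-L + kL\phi)$ and $g$.

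Next I would analyze $Q_k$. Writing $\hat u = \sum_j \hat u_j e^{\bar j}$ in the orthonormal frame dual to $\{L_j\}$, the commutator $[L_{\bar j}+kL_{\bar j}\phi,\,-L_t+kL_t\phi]$ equals $kR^L(L_j, L_{\bar t})$ modulo a $k$-independent zeroth-order operator; contracting through $(e^{\bar j}\wedge)(e^{\bar t}\wedge)^*$, which on a $(0,1)$-form acts as the obvious quadratic pairing, yields $Q_k(\hat u) = k\int\sum_{j,t} R^L(L_j, L_{\bar t})\hat u_t \overline{\hat u_j}\,dv + O(\|\hat u\|^2)$. By the positivity of $iR^L$ on the compact set $\overline M$ this is bounded below by $c_0 k\|\hat u\|^2$ for some $c_0 > 0$ independent of $k$. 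For $R_k$, another boundary-free integration by parts in the $\varepsilon(-L+kL\phi)$ piece together with Cauchy--Schwarz reduces its contribution to $\sum_j \|(L_{\bar j}+kL_{\bar j}\phi)\hat u\|\cdot\|\hat u\| + O(\|\hat u\|^2)$, which is absorbed by the positive first term via a small-constant/large-constant splitting at the cost of a further $O(\|\hat u\|^2)$ error. Assembling the local estimates via the partition of unity yields
\begin{equation*}
(\Box^{(1)}_{f,k} u\,|\,u)_{\overline M,k} \geq c_0 k \|u\|^2_{\overline M,k} - C\|u\|^2_{\overline M,k},
\end{equation*}
and the lemma follows with $c = c_0/2$ for $k \geq k_0$ sufficiently large.

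The main obstacle is ensuring every integration by parts is boundary-free. This is where the Dirichlet hypothesis $u|_X = 0$ is decisive: without it one would pick up Levi-form contributions on $X$ which, in the absence of pseudoconvexity, could have the wrong sign and destroy a positive estimate. With $u|_X = 0$ (equivalently $\hat u|_X = 0$ in every trivialization) those boundary integrals vanish identically, so only the interior positivity of $iR^L$ on $\overline M$ is needed, and no spectral-gap property of the $\overline\partial$-Neumann Laplacian is required at this stage.
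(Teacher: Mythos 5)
Your proposal is correct and follows essentially the same route as the paper's proof: localize via rigid trivializations, integrate by parts (boundary-free thanks to the Dirichlet condition $u|_X=0$), extract the $O(k)$ curvature term from the commutator $[L_{\bar j}+kL_{\bar j}\phi,\,-L_t+kL_t\phi]$, use the positivity of $R^L$ on $\ol M$, absorb the subleading covariant-derivative terms by Cauchy--Schwarz and a small/large constant split, and conclude by a partition of unity. The only slight imprecision is your claim that the commutator equals $kR^L(L_j,L_{\bar t})$ modulo a $k$-independent zeroth-order operator: as in \eqref{e-gue220314yyd} it actually equals $k\langle L_t\wedge L_{\bar j},2\pr\ddbar\phi\rangle$ modulo $k$-dependent first-order terms of the form $\varepsilon(\ol L+k\ol L\phi)$ and $\varepsilon(-L+kL\phi)$ --- but since your $R_k$-absorption step is designed to handle exactly such terms, the argument remains sound.
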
 

\begin{proof}
Let $p\in\ol M$. One can choose a neighborhood $U$ of $p$ in $M'$ 
such that we can find an $\R$-equivariant holomorphic frame
$s$ of $L$ on $U$ with $|s|^2=e^{-2\phi}$ and an orthonormal frame 
$\{L_{\overline j}\}_{j=1}^n$ of $T^{0, 1} M'$ over $U$ as in
Proposition~\ref{s2-pmsmilkI} 
and as before, let $\{e^{\overline j}\}_{j=1}^n$ be its dual frame. 
Let $u\in\Omega^{0,1}_c(U\cap\ol M,L^k)$ with $u|_X=0$. 
On $U$, write $u=\tilde u\otimes s^k$, $\tilde u\in \Omega^{0, 1}_c(U\cap\ol M)$ 
with $\tilde u|_X=0$. Let $\hat u:=\tilde ue^{-k\phi}$. From \eqref{e-gue220313ycd}, 
\eqref{e-gue220309yydI} and by using integration by parts, we get 
\begin{equation}\label{e-gue220313ycdI}
\begin{split}
&(\,\Box^{(1)}_{f,k}u\,|\,u\,)_{\ol M,k}=(\,\Box^{(1)}_s\hat u\,|\,\hat u\,)_{\ol M}\\
&=\sum_{j=1}^n\|(L_{\ol j}+kL_{\ol j}(\phi))\hat u\|_{\ol M}^2+(\,\sum^{n}_{j,t=1}(e^{\ol j})\wedge\circ(e^{\ol t}\wedge)^*\circ[L_{\ol j}+kL_{\ol j}(\phi), -L_{t}+kL_{t}(\phi)]\hat u\,|\,\hat u\,)_{\ol M}\\
&+(\,(\varepsilon(\ol L+k\ol L(\phi))+\varepsilon(-L+kL(\phi))+g)\hat u\,|\hat u\,)_{\ol M},
\end{split}
\end{equation}
where $\varepsilon(\ol L+k\ol L(\phi))$, $\varepsilon(-L+kL(\phi))$ and $g$ are as in \eqref{e-gue220309yydI}. We have 
\begin{equation}\label{e-gue220314yyd}
\begin{split}
&[L_{\ol j}+kL_{\ol j}(\phi), -L_{t}+kL_{t}(\phi)]\\
&=[L_{\ol j},-L_t]+k(L_{\ol j}L_t+L_tL_{\ol j})\phi\\
&=\sum^n_{\ell=1}\alpha_\ell(-L_{\ell})+\sum^n_{\ell=1}\beta_\ell L_{\ol\ell}+k(L_{\ol j}L_t+L_tL_{\ol j})\phi\\
&=\sum^n_{\ell=1}\alpha_\ell(-L_{\ell}+kL_{\ell}\phi)+\sum^n_{\ell=1}\beta_\ell(L_{\ol\ell}+kL_{\ol\ell}\phi)\\
&\quad-k\bigr(\sum_{\ell=1}\alpha_\ell L_{\ell}\phi+\sum^n_{\ell=1}\beta_\ell L_{\ol\ell}\phi\bigr)+k(L_{\ol j}L_t+L_tL_{\ol j})\phi\\
&=\varepsilon(\ol L+k\ol L(\phi))+\varepsilon(-L+kL(\phi))
+k\langle\,[L_{\ol j},-L_t], -\ddbar\phi+\pr\phi\,\rangle+k(L_{\ol j}L_t+L_tL_{\ol j})\phi,
\end{split}
\end{equation}
where $\alpha_\ell$, $\beta_\ell$, $\ell=1,\ldots,n$, are $k$-independent smooth functions. 
We can repeat the proof of~\cite[Lemma 4.1]{HM12} and deduce that for every $j, t=1,\ldots,n$, 
\begin{equation}\label{e-gue220314yydI}
\langle\,[L_{\ol j},-L_t], -\ddbar\phi+\pr\phi\,\rangle+k(L_{\ol j}L_t+L_tL_{\ol j})\phi=\langle\,L_t\wedge L_{\ol j},2\pr\ddbar\phi\,\rangle.
\end{equation}
From  \eqref{e-gue220314yyd}, \eqref{e-gue220314yydI} and notice that $R^L$ is positive, there is a constant $c_0>0$ such that 
\begin{equation}\label{e-gue220313ycdII}
\begin{split}
&(\,\sum^{n}_{j,t=1}(e^{\ol j})\wedge\circ(e^{\ol t}\wedge)^*\circ[L_{\ol j}+kL_{\ol j}(\phi), -L_{t}+kL_{t}(\phi)]\hat u\,|\,\hat u\,)_{\ol M}\\
&\geq c_0k\norm{\hat u}^2_{\ol M}-
\abs{(\,(\varepsilon(\ol L+k\ol L(\phi))+\varepsilon(-L+kL(\phi)))\hat u\,|\,\hat u\,)_{\ol M}}.
\end{split}
\end{equation}
From \eqref{e-gue220313ycdI}, \eqref{e-gue220313ycdII} and by using integration by parts again, we deduce that \eqref{e-gue220313ycdIII} holds for $u\in\Omega^{0,1}_c(U\cap\ol M,L^k)$. By using 
partition of unity, we conclude that \eqref{e-gue220313ycdIII} holds for $u\in\Omega^{0,1}(\ol M,L^k)$ with $u|_X=0$. 
\end{proof} 

From Lemma~\ref{e-gue220313ycdIII}, we see that the map
\begin{equation}\label{e-gue220313ycds}
\begin{split}
\Omega^{0, 1}(\overline M, L^k)&\To \Omega^{0, 1}(\overline M, L^k)\oplus\cC^\infty(X,T^{*0,q}M'\otimes L^k),\\
u&\mapsto(\Box^{(1)}_{f,k}u,\gamma u)
\end{split}
\end{equation}
is injective, for $k$ large, where  $\gamma$ denotes the restriction operator to the boundary $X$. From now on, we assume that $k$ is large enough so that the map \eqref{e-gue220313ycds} is injective. Hence, the Poisson operator 
 $$P_k^{(1)}: \cC^\infty(X, T^{\ast 0, q}M'\otimes L^k)\rightarrow\Omega^{0,1}(\ol M,L^k)$$
of $\Box^{(1)}_{f,k}$ is well-defined. That is, if
$u\in\cC^\infty(X,T^{*0,q}M'\otimes L^k)$,
then
\[P^{(1)}_ku\in\Omega^{0,1}(\ol M,L^k),\ \ \Box^{(1)}_{f,k}P^{(1)}_ku=0,\ \
\gamma u=u.\]
Moreover, if $v\in\Omega^{0,1}(\ol M,L^k)$ and $\Box^{(1)}_{f,k}v=0$, then
$v=P^{(1)}_k\gamma v$. The boundary problem $(\Box^{(1)}_{f,k},\gamma)$ on $\overline{M}$
is the Dirichlet boundary problem, which is a regular elliptic boundary problem. 
By Boutet de Monvel~\cite[p.\,29]{B71} the operator $P^{(1)}_k$ 
extends continuously
\begin{equation}\label{e-gue190313ad}
P^{(1)}_k: W^\ell(X, T^{*0,1}M'\otimes L^k)\rightarrow W^{\ell+\frac{1}{2}}(\overline M, T^{*0,1}M'\otimes L^k),\ \ \forall\ell\in\R,
\end{equation}
and there is a continuous operator 
\begin{equation}\label{e-gue190429yyd}
N^{(1)}_k: W^\ell(\ol M, T^{*0,1}M'\otimes L^k)\rightarrow W^{\ell+2}(\overline M, T^{*0,1}M'\otimes L^k),\ \ 
\forall\ell\in\R,
\end{equation}
such that 
\begin{equation}\label{e-gue190418yyda}
\begin{split}
&N^{(1)}_k\Box^{(1)}_{f,k}+P^{(1)}_k\gamma=I\ \ \mbox{on $\Omega^{0,1}(\ol M,L^k)$},\\
&\gamma N^{(1)}_k=0\ \ \mbox{on $\Omega^{0,1}(\ol M,L^k)$},\\
&\Box^{(1)}_{f,k}N^{(1)}_k=I\ \ \mbox{on $\Omega^{0,1}(\ol M,L^k)$}.
\end{split}
\end{equation} 
%===
\begin{lemma}\label{l-gue220314yyd}
For every $\ell\in\R$, there are $n_\ell, \hat n_\ell\in\mathbb N$ such that 
\begin{equation}\label{e-gue220314yydII}
N^{(1)}_k:=O(k^{n_\ell}): W^\ell(\ol M, T^{*0,1}M'\otimes L^k)\rightarrow W^{\ell+2}(\overline M, T^{*0,1}M'\otimes L^k)
\end{equation}
and 
\begin{equation}\label{e-gue220314yydIIa}
P^{(1)}_k:=O(k^{\hat n_\ell}): W^\ell(X, T^{*0,1}M'\otimes L^k)\rightarrow W^{\ell+\frac{1}{2}}(\overline M, T^{*0,1}M'\otimes L^k).
\end{equation}
\end{lemma}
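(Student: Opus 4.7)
The plan is to combine the spectral gap from Lemma \ref{l-gue220313yyd} with standard elliptic regularity for the regular elliptic Dirichlet problem $(\Box^{(1)}_{f,k},\gamma)$, carefully tracking how the lower-order terms of $\Box^{(1)}_{f,k}$ contribute polynomial powers of $k$. The crucial structural fact, visible in \eqref{e-gue220311ycd} and Proposition \ref{s2-pmsmilkI}, is that the localization $\Box^{(1)}_s$ of $\Box^{(1)}_{f,k}$ has $k$-independent principal symbol and may be written as $P_0+kP_1+k^2P_2$, where $P_j$ is a $k$-independent differential operator of order $2-j$. All $k$-dependence in the estimates will come only from these lower-order terms.

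First I would prove the $L^2$ bound for $N^{(1)}_k$. Given $f\in L^2_{(0,1)}(M,L^k)$, set $v:=N^{(1)}_kf$, so $\Box^{(1)}_{f,k}v=f$ and $\gamma v=0$. Applying Lemma \ref{l-gue220313yyd} and Cauchy--Schwarz,
\[
ck\|v\|^2_{\ol M,k}\leq(\Box^{(1)}_{f,k}v\,|\,v)_{\ol M,k}=(f\,|\,v)_{\ol M,k}\leq\|f\|_{\ol M,k}\|v\|_{\ol M,k},
\]
which yields $N^{(1)}_k=O(k^{-1}):L^2\to L^2$. Next, passing via the isometry $u\leftrightarrow\hat u=e^{-k\phi}s^{-k}u$ to the localized operator $\Box^{(1)}_s$ (which is an operator with $k$-independent principal symbol acting on ordinary sections in the fixed frames), I would use the standard a priori estimate for the regular elliptic Dirichlet problem associated with $P_0$,
\[
\|w\|_{\ell+2,\ol M}\leq C_\ell\bigl(\|P_0w\|_{\ell,\ol M}+\|w\|_{0,\ol M}\bigr),\quad w\in\Omega^{0,1}(\ol M),\ \gamma w=0.
\]
Writing $P_0w=\Box^{(1)}_sw-kP_1w-k^2P_2w$ and substituting gives
\[
\|w\|_{\ell+2,\ol M}\leq C_\ell\bigl(\|\Box^{(1)}_sw\|_{\ell,\ol M}+k\|w\|_{\ell+1,\ol M}+k^2\|w\|_{\ell,\ol M}+\|w\|_{0,\ol M}\bigr).
\]
The term $k\|w\|_{\ell+1,\ol M}$ is absorbed by choosing $\varepsilon=1/(2kC_\ell)$ in the interpolation $\|w\|_{\ell+1}\leq\varepsilon\|w\|_{\ell+2}+C_\varepsilon\|w\|_\ell$, which costs an extra factor of $k^2$ on $\|w\|_\ell$. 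Thus
\[
\|w\|_{\ell+2,\ol M}\leq C'_\ell\bigl(\|\Box^{(1)}_sw\|_{\ell,\ol M}+k^2\|w\|_{\ell,\ol M}+\|w\|_{0,\ol M}\bigr).
\]

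Combining the above with the $L^2$ bound by induction on $\ell$ gives the desired polynomial-in-$k$ bound for $N^{(1)}_k$: indeed, with $w=\hat v$, the inequality above together with the inductive hypothesis at level $\ell-2$ and interpolation gives $\|\hat v\|_{\ell+2}\leq Ck^{n_\ell}\|\hat f\|_\ell$ for some integer $n_\ell$. Using a fixed partition of unity subordinate to the rigid trivializing cover $\{U_j\}$ and the definition \eqref{e-gue220301yyda} of the norm $\|\cdot\|_{\ell,\ol M,k}$ yields \eqref{e-gue220314yydII}. For the Poisson operator, I would pick a bounded extension operator $E:W^{\ell}(X,T^{*0,1}M'\otimes L^k)\to W^{\ell+1/2}(\ol M,T^{*0,1}M'\otimes L^k)$ with $\gamma\circ E=I$ and operator norm independent of $k$ (such an $E$ exists by a standard construction that commutes with the localization), and apply \eqref{e-gue190418yyda}:
\[
P^{(1)}_ku=(I-N^{(1)}_k\Box^{(1)}_{f,k})Eu.
\]
Bounding the two summands separately, using that $\Box^{(1)}_{f,k}$ has order $2$ with coefficients of size at most $k^2$, hence $\Box^{(1)}_{f,k}=O(k^2):W^{\ell+1/2}(\ol M)\to W^{\ell-3/2}(\ol M)$, and feeding the result into $N^{(1)}_k:W^{\ell-3/2}\to W^{\ell+1/2}$ from the first part, gives \eqref{e-gue220314yydIIa} for a suitable $\hat n_\ell$.

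The main obstacle is the polynomial tracking of $k$ in the a priori estimate: one has to ensure that the constants arising from the regular elliptic theory are independent of $k$, with $k$ entering only through the lower-order part $kP_1+k^2P_2$ of $\Box^{(1)}_{f,k}$. This is where the fact that $\Box^{(1)}_{f,k}$ has a $k$-independent principal symbol (second-order real Laplacian in the principal part, in the local real coordinates of \eqref{e-gue220301ycd}) is decisive; it allows the use of classical (non-semi-classical) elliptic boundary regularity, with the $k$-growth then propagated in a controlled way through the interpolation absorption argument.
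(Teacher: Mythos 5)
Your argument is sound and follows the same overall logic as the paper's proof: for \eqref{e-gue220314yydII}, start from the $L^2$ bound $\norm{N^{(1)}_ku}_{\ol M,k}\leq (ck)^{-1}\norm{u}_{\ol M,k}$ provided by Lemma~\ref{l-gue220313yyd}, then bootstrap with elliptic a priori estimates in which $k$ enters only polynomially; and for \eqref{e-gue220314yydIIa}, express $P^{(1)}_k$ through the identity \eqref{e-gue190418yyda} using a known approximate Poisson operator, then feed the error through the already-established bounds for $N^{(1)}_k$. There are two implementation differences worth noting. First, where the paper invokes the work of Boutet de Monvel to assert directly that
\[
\norm{N^{(1)}_ku}_{\ell+2,\ol M,k}\leq C k^{\tilde n_\ell}\bigl(\norm{u}_{\ell,\ol M,k}+\norm{N^{(1)}_ku}_{\ol M,k}\bigr),
\]
you instead derive this from the $k$-independence of the principal part $P_0$ of $\Box^{(1)}_s$, writing $\Box^{(1)}_s=P_0+kP_1+k^2P_2$ and absorbing the $k\norm{w}_{\ell+1}$ term via interpolation; this is a legitimate and more self-contained re-derivation of the same estimate, though strictly speaking the interpolation argument gives the induction only over integer steps of $\ell$ and one still needs interpolation/duality to pass to all real $\ell$, a point neither you nor the paper dwells on. Second, for $P^{(1)}_k$ you replace the paper's parametrix $\tilde P^{(1)}_k$ (which satisfies $\gamma\tilde P^{(1)}_k=I$ and has $\Box^{(1)}_{f,k}\tilde P^{(1)}_k$ smoothing, with polynomial $k$-bounds) by a bare extension operator $E$ with $\gamma E=I$ and $k$-independent norm, exploiting the identity $P^{(1)}_k=(I-N^{(1)}_k\Box^{(1)}_{f,k})E$ and the crude bound $\Box^{(1)}_{f,k}=O(k^2):W^{\ell+1/2}\to W^{\ell-3/2}$. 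This is slightly more elementary than the paper's route and is correct, since you do not need the smoothing property of $\Box^{(1)}_{f,k}\tilde P^{(1)}_k$ — only its polynomial boundedness, which a plain extension operator already supplies. The one place you gloss over is the assertion that $E$ can be chosen with operator norm independent of $k$ in the weighted norms $\norm{\cdot}_{\ell,\ol M,k}$; this is true because the norms are defined via the rigid localization $u\leftrightarrow\hat u=e^{-k\phi}s^{-k}u$ and an ordinary Seeley–Stein extension in the local frames does the job, but it deserves a sentence of justification.
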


\begin{proof}
Fix $\ell\in\R$. If we go through the work of Boutet de Monvel~\cite{B71}, there are $\tilde n_\ell\in\mathbb N$ and $C_\ell>0$ such that 
\begin{equation}\label{e-gue220314yydm}
\norm{N^{(1)}_ku}_{\ell+2,\ol M,k}\leq C_sk^{\tilde n_\ell}\Bigr(\norm{u}_{\ell,\ol M,k}+\norm{N^{(1)}_ku}_{\ol M,k}\Bigr), 
\end{equation}
for every $u\in\Omega^{0,1}(\ol M,L^k)$. From Lemma~\ref{l-gue220313yyd} and \eqref{e-gue190418yyda}, we have 
\begin{equation}\label{e-gue220314yydn}
ck\norm{N^{(1)}_ku}^2_{\ol M,k}\leq(\,\Box^{(1)}_{f,k}N^{(1)}_ku\,|\,N^{(1)}_ku\,)_{\ol M,k}\leq\norm{N^{(1)}_ku}_{\ol M,k}\norm{u}_{\ol M,k},
\end{equation}
for all  $u\in\Omega^{0,1}(\ol M,L^k)$, where $c>0$ is as in \eqref{e-gue220313ycdIII}. From \eqref{e-gue220314yydm} and \eqref{e-gue220314yydn}, 
we get \eqref{e-gue220314yydII}.  

From the classical construction of elliptic boundary value problems~\cite{B71}, 
there is a continuous operator 
\[\tilde P^{(1)}_k: \cC^\infty(X,{T^{*0,1}}M'\otimes L^k)\To{\Omega^{0,1}}(\ol M,L^k)\]
with $\gamma\tilde P^{(1)}_k=I$ on $\cC^\infty(X,{T^{*0,1}}M'\otimes L^k)$, 
$\Box^{(1)}_{f,k}\tilde P^{(1)}_k$ is smoothing (not $O(k^{-\infty})$) such that $\tilde P^{(1)}_k$ 
can be continuously extended to 
\[\tilde P^{(1)}_k: W^\ell( {X}, 
T^{*0,1}M'\otimes L^k)\rightarrow W^{\ell+\frac{1}{2}}(\overline M, T^{*0,1}M'\otimes L^k),\]
for every $\ell\in\R$ and for every $\ell_1, \ell_2\in\R$, there are 
$n_{\ell_1}, n_{\ell_1,\ell_2}\in\mathbb N$ such that 
\begin{equation}\label{e-gue220315yyd}
\tilde P^{(1)}_k:=O(k^{n_{\ell_1}}): W^{\ell_1}({X}, 
T^{*0,1}M'\otimes L^k)\rightarrow W^{\ell_1+\frac{1}{2}}(\overline M, T^{*0,1}M'\otimes L^k)
\end{equation}
and 
\begin{equation}\label{e-gue220315yydI}
\Box^{(1)}_{f,k}\tilde P^{(1)}_k:=O(k^{n_{\ell_1,\ell_2}}): W^{\ell_1}({X}, T^{*0,1}M'\otimes L^k)\rightarrow W^{\ell_2}(\overline M, T^{*0,1}M'\otimes L^k).
\end{equation}
From \eqref{e-gue190418yyda}, we get 
\begin{equation}\label{e-gue220315yydII}
\tilde P^{(1)}_k-P^{(1)}_k=N^{(1)}_k\Box^{(1)}_{f,k}\tilde P^{(1)}_k. 
\end{equation}
From \eqref{e-gue220314yydII}, \eqref{e-gue220315yyd}, \eqref{e-gue220315yydI} and \eqref{e-gue220315yydII}, we get \eqref{e-gue220314yydIIa}.
\end{proof}

\subsection{The Poisson operator for $F^{(0)}_{[k\delta/4, 2k\delta]}$}
\label{s-gue220315yyd} 

We denote for convenience by $T$ the Lie derivative along the direction $T$. 
Then, $\mathcal L_T: \Omega^{0, q}(\ol M,L^k)
\rightarrow\Omega^{0, q}(\ol M,L^k)$. 
We shall also use the notation $-iT_M$ for the closure in $L^2$ of the $-iT$ operator, initially defined on
$\Omega^{0,q}(\ol M,L^k)$. 
Recall that for $u\in L^2_{(0,q)}(\ol M,L^k)$, we say that 
$u\in\Dom(-iT_M)$ if we can find a sequence 
$u_j\in\Omega^{0,q}(M,L^k)$, $j=1,2,\ldots$\,, 
with $\lim_{j\To\infty}\norm{u_j-u}_{\ol M,k}=0$ such that 
$\lim_{j\To\infty}\norm{-iTu_j-v}_{\ol M,k}=0$, for some 
$v\in L^2_{(0,q)}(M,L^k)$. We can repeat the proofs of~\cite[Theorem 4.1, Theorem 4.5]{HHL17} and deduce 
 
\begin{theorem}\label{t-gue220317yyd}
The operator $-iT_M: \Dom(-iT_M)\subset 
L^2_{(0,q)}(M,L^k)\To L^2_{(0,q)}(M,L^k)$ is self-adjoint, 
and its spectrum consists of a countable set of isolated eigenvalues
of finite multiplicity.
\end{theorem}

We now consider the boundary version of Theorem~\ref{t-gue220317yyd}. 
When restricted on $X$, $T: \cC^\infty(X,T^{*0,q}M'\otimes L^k)\rightarrow \cC^\infty(X,T^{*0,q}M'\otimes L^k)$. 
We extend $-iT$ to $L^2(X,T^{*0,q}M'\otimes L^k)$ in the sense of distribution: 
\[-iT: \Dom(-iT)\subset L^2(X,T^{*0,q}M'\otimes L^k)\To L^2(X,T^{*0,q}M'\otimes L^k),\]
where $\Dom(-iT)=\set{u\in L^2(X,T^{*0,q}M'\otimes L^k);\, -iTu\in L^2(X,T^{*0,q}M'\otimes L^k)}$.  It was established in~\cite[Theorem 4.1, Theorem 4.5]{HHL17} that

\begin{theorem}\label{t-gue220315yyd}
The operator $-iT: \Dom(-iT)\subset L^2(X,T^{*0,q}M'\otimes L^k)\To L^2(X,T^{*0,q}M'\otimes L^k)$ is self-adjoint, 
${\rm Spec\,}(-iT)$ is countable
and every element in ${\rm Spec\,}(-iT)$ is an eigenvalue of $-iT$, where ${\rm Spec\,}(-iT)$ denotes the spectrum of $-iT$.
\end{theorem}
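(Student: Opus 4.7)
The plan is to reduce the spectral analysis of $-iT$ on the boundary to the simultaneous spectral analysis of the commuting generators $-iT_1,\ldots,-iT_d$ of the torus action $T^d$ introduced in the discussion of \eqref{e-gue220330ycd}, exploiting the identity $T=\beta_1T_1+\ldots+\beta_dT_d$. The recollection from \cite[Thm.~3.5, 3.9, 3.12, Lemma~3.14]{HHL17} tells us that the $T^d$-action preserves the CR structure of $X$, the defining function $\rho$, the line bundle $L$ and its metric $h^L$, and the chosen Hermitian metric $\langle\,\cdot\,|\,\cdot\,\rangle$; consequently it lifts to a strongly continuous unitary representation of $T^d$ on $L^2(X,T^{*0,q}M'\otimes L^k)$.

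First, I would check that each $-iT_j$, defined a priori on $\cC^\infty(X,T^{*0,q}M'\otimes L^k)$, is essentially self-adjoint with closure equal to the distributional operator appearing in the statement. This is immediate from Stone's theorem applied to the one-parameter unitary subgroup $\theta_j\mapsto (1,\ldots,e^{i\theta_j},\ldots,1)^*$, using that $\cC^\infty(X,T^{*0,q}M'\otimes L^k)$ consists of smooth vectors for the representation and is therefore a core. The operators $-iT_1,\ldots,-iT_d$ pairwise commute because $T^d$ is abelian.

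Second, by Peter--Weyl (equivalently, Fourier expansion on $T^d$) the unitary representation decomposes the Hilbert space as an orthogonal Hilbert sum of the joint eigenspaces,
\begin{equation*}
L^2(X,T^{*0,q}M'\otimes L^k)=\widehat{\bigoplus}_{\alpha\in\mathbb Z^d}\mathcal H_\alpha,
\end{equation*}
where $-iT_j$ acts on $\mathcal H_\alpha$ as multiplication by the integer $\alpha_j$. Plugging into $T=\sum_j\beta_jT_j$, the operator $-iT$ restricted to $\mathcal H_\alpha$ is multiplication by the real scalar $\lambda_\alpha:=\sum_{j=1}^d\beta_j\alpha_j$. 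Regrouping isotypic components that share the same $\lambda_\alpha$ yields closed eigenspaces whose orthogonal direct sum is dense in $L^2$. From this decomposition self-adjointness follows from the spectral theorem for jointly diagonalizable operators, ${\rm Spec\,}(-iT)=\set{\lambda_\alpha;\,\alpha\in\mathbb Z^d,\ \mathcal H_\alpha\neq 0}$ is automatically countable, and every spectral value is attained on a nonzero joint eigenspace, hence is an eigenvalue.

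The main obstacle is the first step: one must argue that the distributional $-iT$ with the domain stated in the theorem really coincides with the closure of $-i\sum_j\beta_jT_j$ built from the unitary representation, and in particular that the smooth domain is a core. Once the $T_j$'s are realized via Stone's theorem this is routine --- smooth vectors of a unitary Lie-group representation are a core for every infinitesimal generator --- but it does use in an essential way that the non-closed $\R$-orbits of $\eta$ can be replaced by the closed $T^d$-action; otherwise the spectrum of $-iT$ would be a genuine continuous invariant-measure spectrum and no eigenvalue statement would hold. With that subtlety settled, the rest is just the spectral theorem for a finite family of commuting self-adjoint operators.
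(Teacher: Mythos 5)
Your proposal is correct and follows essentially the same route as the cited source: the paper delegates this statement to \cite[Theorem 4.1, Theorem 4.5]{HHL17}, and the argument there is precisely the one you outline — use the fact that the compact CR manifold $X$ forces the $\R$-action to extend to a $T^d$-action, realize the commuting $-iT_j$ as generators of a strongly continuous unitary $T^d$-representation via Stone's theorem, Fourier-decompose $L^2(X,T^{*0,q}M'\otimes L^k)$ into isotypic components, and read off that $-iT=\sum_j\beta_j(-iT_j)$ acts on each component by a real scalar, giving self-adjointness, a countable pure point spectrum, and the eigenvalue property. You also correctly flag the one genuinely delicate point, namely that without the closed torus orbits (i.e.\ using only the non-compact $\R$-orbits) the conclusion would fail; that reduction is exactly what the cited \cite[Theorem 3.5, Corollary 3.9]{HHL17} supplies.
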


For $\alpha\in{\rm Spec}(-i T_M)$, $\beta\in{\rm Spec\,}(-iT)$, we define 
\[\begin{split}
&\Omega^{0, q}_{\alpha}(\ol M, L^k)=\{u\in \Omega^{0, q}(\ol M, L^k);\, -iT_Mu=\alpha u\},\\
&\Omega^{0, q}_{\beta}(X, L^k)=\{u\in \Omega^{0, q}(X, L^k);\,-iTu=\beta u\},\\
&\cC^\infty_{\beta}(X, T^{*0,q}M'\otimes L^k)=\{u\in\cC^\infty(X, T^{*0,q}M'\otimes L^k);\, -iTu=\beta u\}.
\end{split}\]
We write $\cC^\infty_\alpha(\ol M,L^k):=\Omega^{0, 0}_{\alpha}(\ol M, L^k)$, $\cC^\infty_\beta(X,L^k):=\Omega^{0, 0}_{\beta}(X, L^k)$. 
Let $L^2_{(0, q), \alpha}(M, L^k)$, $L^2_{(0,q),\beta}(X,L^k)$ and $L^2_\beta(X,T^{*0,q}M'\otimes L^k)$ 
be the completions of $\Omega^{0, q}_{\alpha}(\ol M, L^k)$, 
$\Omega^{0, q}_{\beta}(X, L^k)$ and 
$\cC^\infty_{\beta}(X, T^{*0,q}M'\otimes L^k)$ 
with respect to $(\,\cdot\,|\,\cdot\,)_{\ol M,k}$, 
$(\,\cdot\,|\,\cdot\,)_{X,k}$ and $(\,\cdot\,|\,\cdot\,)_{X,k}$ respectively. Let 
\[\begin{split}
&Q^{(q)}_{\alpha,M}: L^2_{(0, q)}(M, L^k)\rightarrow 
L^2_{(0, q), \alpha}(M, L^k),\\
&Q^{(q)}_{\beta}: L^2(X, T^{*0,q}M'\otimes L^k)\rightarrow 
L^2_{\beta}(X, T^{*0,q}M'\otimes L^k)\end{split}\] 
be the orthogonal projections with respect to 
$(\,\cdot\,|\,\cdot\,)_{\ol M,k}$ and 
$(\,\cdot\,|\,\cdot\,)_{X,k}$ respectively. We have the Fourier decomposition
\begin{equation}\label{e:FD}
\begin{split}
L^2_{(0, q)}(M, L^k)=
&\bigoplus \big\{L^2_{(0, q), \alpha}(M, L^k):
\alpha\in{\rm Spec}(-iT_M)\big\},\\
L^2(X, T^{*0,q}M'\otimes L^k)=
&\bigoplus\big\{L^2_{\beta}(X, T^{*0,q}M'\otimes L^k):
\beta\in{\rm Spec}(-iT)\big\}.
\end{split}
\end{equation}
Let us fix $\delta>0$ and consider the function 
$\tau_\delta\in\cC^\infty_c((\delta/4, 2\delta))$ as in
\eqref{e-21-5-16-a2z} and set $\tau_{k\delta}(t):=\tau_\delta(t/k)$
for $k\in\N$
as in \eqref{e-gue220322ycdsz}.
We define 
\begin{equation}\label{e-gue220317yyda}
\begin{split}
&F^{(q)}_{\tau_{k\delta},M}: L^2_{(0, q)}(M, L^k)\rightarrow L^2_{(0, q)}(M, L^k),\\
&F^{(q)}_{\tau_{k\delta},M}(u)=\sum_{\alpha\in{\rm Spec}(-iT_M)}\tau_{k\delta}(\alpha)Q^{(q)}_{\alpha,M}u, \forall u\in L^2_{(0, q)}(M, L^k),
\end{split}
\end{equation}
\begin{equation}\label{e-gue220317yydb}
\begin{split}
&F^{(q)}_{\tau_{k\delta}}: L^2(X, T^{*0,q}M'\otimes L^k)\rightarrow L^2(X, T^{*0,q}M'\otimes L^k),\\
&F^{(q)}_{\tau_{k\delta}}(u)=\sum_{\alpha\in{\rm Spec}(-iT)}\tau_{k\delta}(\alpha)Q^{(q)}_{\alpha, k}u, \forall u\in L^2(X, T^{*0,q}M'\otimes L^k),
\end{split}
\end{equation}
\begin{equation}\label{e-gue220321yyd}
\begin{split}
&F^{(q)}_{[\frac{k\delta}{4}, 2k\delta],M}: L^2_{(0, q)}(M, L^k)\rightarrow L^2_{(0, q)}(M, L^k),\\
&F^{(q)}_{[\frac{k\delta}{4}, 2k\delta],M}(u)=\sum_{\alpha\in{\rm Spec}(-iT_M), \frac{k\delta}{4}\leq\alpha\leq 2k\delta}Q^{(q)}_{\alpha,M}u, \forall u\in L^2_{(0, q)}(M, L^k),
\end{split}
\end{equation}
\begin{equation}\label{e-gue220321yydI}
\begin{split}
&F^{(q)}_{[\frac{k\delta}{4}, 2k\delta]}: L^2(X, T^{*0,q}M'\otimes L^k)\rightarrow L^2(X, T^{*0,q}M'\otimes L^k),\\
&F^{(q)}_{[\frac{k\delta}{4}, 2k\delta]}(u)=
\sum_{\alpha\in{\rm Spec}(-iT),  \frac{k\delta}{4}
\leq\alpha\leq 2k\delta}Q^{(q)}_{\alpha}u,\ \ 
\forall u\in L^2(X, T^{*0,q}M'\otimes L^k).
\end{split}
\end{equation}
We now consider the case $q=0$. 
Recall that Assumption~\ref{a-gue220517yyd} holds. Note that 
\begin{equation}\label{e-gue220517yyd}
C_0+2(JT)(\phi)>0\ \ \mbox{near $X$}. 
\end{equation}

\begin{lemma}\label{l-gue220530yyd}
We have 
\begin{equation}\label{e-gue220517yydzz}
C_0+2(JT)(\phi)\geq0\ \ \mbox{on $\ol M$}. 
\end{equation}
\end{lemma}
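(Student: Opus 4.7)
The plan is to interpret $\tfrac12(JT)(\phi)$ as a globally defined Hamiltonian for the $T$-action on the K\"ahler manifold $(M',\omega)$ with $\omega=i\pr\ddbar\phi$, and then to show by locating the critical points of this moment map that the minimum of $\psi:=C_0+2(JT)(\phi)$ on the compact set $\ol M$ is in fact strictly positive.

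First I would record two invariance statements. A change of local $\R$-invariant holomorphic trivialization of $L$ alters $\phi$ by $\log|g|$ with $g$ rigid and holomorphic, and $\log|g|$ is pluriharmonic with $(JT)\log|g|=0$; hence both $\omega=i\pr\ddbar\phi$ and $(JT)(\phi)$ are globally well defined on $M'$. By Assumption~\ref{a-gue170929I}(b), $\omega$ is K\"ahler and hence non-degenerate on $M'$. Moreover, rigidity of $s$ together with $\R$-invariance of $h^L$ force $T\phi=0$, so decomposing $T=Z+\ol Z$ with $Z\in T^{1,0}M'$ the holomorphic $(1,0)$-part of $T$, one gets $(JT)(\phi)=2iZ\phi$. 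A short computation in local holomorphic coordinates, using that the coefficients of $Z$ are holomorphic so that $\ddbar(Z\phi)=\iota_Z(\pr\ddbar\phi)$, then yields the global moment map identity
\begin{equation*}
\iota_T\omega=d\mu,\qquad \mu:=\tfrac12(JT)(\phi).
\end{equation*}

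Next I would locate the minimum of the continuous function $\psi=C_0+4\mu$ on $\ol M$. If the minimum is attained at some $p\in X$, Assumption~\ref{a-gue220517yyd} immediately gives $\psi(p)>0$. If instead it is attained at an interior point $p\in M$, then $p$ is a critical point of $\psi$, so $d\mu(p)=0$, whence $\iota_{T(p)}\omega_p=0$; non-degeneracy of $\omega$ at $p$ forces $T(p)=0$, and therefore $(JT)(p)=J(T(p))=0$ as well, so that $(JT)(\phi)(p)=0$ and $\psi(p)=C_0>0$. Either way $\psi>0$ on $\ol M$, which is strictly stronger than the asserted $\psi\geq0$. The approach is symplectic-geometric rather than PDE-theoretic, and the one point that genuinely needs checking is the global moment map identity above; I do not expect this to be the main obstacle, since the heart of the argument is the elementary observation that $(JT)(\phi)$ vanishes automatically at every zero of $T$, which rules out negative interior critical values of $\psi$ without any estimate.
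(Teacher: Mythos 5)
Your proof is correct and runs in parallel with the paper's: both locate a minimum of $(JT)(\phi)$ on the compact set $\ol M$, dispatch the boundary case by Assumption~\ref{a-gue220517yyd}, and then show that the vector field degenerates at an interior critical point, whence $(JT)(\phi)=0$ there. What differs is the interior step. The paper splits it according to whether $(JT)(x_0)$ vanishes: when $(JT)(x_0)\neq0$ it observes that $d((JT)\phi)(x_0)=0$ forces $((JT)^2\phi)(x_0)=0$, while the identity $((JT)^2\phi)=(T-iJT)(T+iJT)\phi=4Z\ol Z\phi$ and positivity of $\pr\ddbar\phi$ force it to be strictly positive, a contradiction. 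You instead package the relation between $d((JT)\phi)$ and $T$ into the moment-map identity $\iota_T\omega=d\mu$, $\mu=\tfrac12(JT)\phi$, $\omega=i\pr\ddbar\phi$, so that $d\mu(p)=0$ plus non-degeneracy of $\omega$ give $T(p)=0$ in one stroke, merging the two interior subcases. Your verification of the identity is sound: rigidity gives $T\phi=0$, hence $Z\phi$ purely imaginary and $\mu=iZ\phi$ real; holomorphicity of the $(1,0)$-part $Z$ (equivalent to Definition~\ref{d-gue220512yyd}) gives $\ddbar(Z\phi)=\iota_Z\pr\ddbar\phi$, so $(\iota_T\omega)^{0,1}=\ddbar\mu$ and, by reality, $\iota_T\omega=d\mu$. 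The moment-map formulation is cleaner and uses only non-degeneracy of $\omega$ rather than full positivity at the interior step; both routes in fact deliver the slightly stronger conclusion $C_0+2(JT)(\phi)>0$ on $\ol M$.
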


\begin{proof}
Let $x_0\in\ol M$ be the point so that $(JT)(\phi))(x_0)={\rm min\,}\set{((JT)(\phi))(x);\, x\in\ol M}$. 
If $x_0\in X$. From \eqref{e-gue220517yyd}, we see that $(C_0+(JT)(\phi))(x_0)>0$ and hence \eqref{e-gue220517yydzz} holds. 
Assume that $x_0\notin X$. If $(JT)(x_0)=0$. We have $C_0+(JT)(\phi)(x_0)>0$ and hence \eqref{e-gue220517yydzz} holds. 
Assume that $(JT)(x_0)\neq0$.
Since $(JT)(\phi)(x_0)={\rm min\,}\set{((JT)(\phi))(x);\, x\in\ol M}$, we have $d_x((JT)(\phi))(x_0)=0$. Thus, $((JT)^2(\phi))(x_0)=0$. 
Since $\pr\ddbar\phi$ is positive definite, we have $((JT)^2(\phi))(x_0)=((T-iJT)(T+iJT)(\phi)))(x_0)>0$, we get a contradiction. Thus, 
$(JT)(x_0)=0$. The lemma follows. 
\end{proof}

From now on, we take $\delta>0$ so that 
\begin{equation}\label{e-gue220712yyd}
\mbox{$t+(JT)(\phi)(x)>0$, for every $x\in\ol M$ and every $t\in(\delta/4,2\delta)$}. 
\end{equation}
From Lemma~\ref{l-gue220530yyd}, this is always possible. 
We need 

\begin{lemma}\label{l-gue220321yyd}
With the notations used above,  
there exist positive constants $c$ and $k_0$ such that
\begin{equation}\label{e-gue220321yydII}
(\,\Box^{(0)}_{f,k}\,u\,|\,u\,)_{\ol M,k}\geq c k^2\|u\|^2_{\ol M,k}
\end{equation}
when $k\geq k_{0}$ for all $u\in\bigoplus_{\frac{k\delta}{4}\leq\alpha\leq 2k\delta}\cC^\infty_{\alpha}(\overline M, L^k)$ with $\gamma u=0$.
\end{lemma}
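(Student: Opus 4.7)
The plan is to combine a Bochner--Kodaira reduction with the spectral constraint $-iT_Mu=\alpha u$ in order to extract a factor of $k^2$ from the spectral range $\alpha\in[k\delta/4,2k\delta]$. First I would note that $\ddbar_k$ and $\ddbar^*_{f,k}$ commute with the Lie derivative $T$ (the $\R$-action is holomorphic and the metrics are $\R$-invariant), so $\Box^{(0)}_{f,k}$ preserves each eigenspace $\cC^\infty_\alpha(\ol M,L^k)$; moreover, the tangential eigenspaces of $-iT$ on $X$ are mutually $L^2(X)$-orthogonal, so $\gamma u=0$ descends componentwise under the spectral decomposition. This reduces the problem to a single smooth eigensection $u\in\cC^\infty_\alpha(\ol M,L^k)$ with $\gamma u=0$. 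On $(0,0)$-forms $\Box^{(0)}_{f,k}=\ddbar^*_{f,k}\ddbar_k$, and Dirichlet integration by parts kills the boundary contribution to give $(\Box^{(0)}_{f,k}u|u)_{\ol M,k}=\|\ddbar_k u\|^2_{\ol M,k}$, so the task reduces to $\|\ddbar_k u\|^2_{\ol M,k}\geq ck^2\|u\|^2_{\ol M,k}$.

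Next I would extract an operator identity from the decomposition of $T$ into $(1,0)$ and $(0,1)$ parts. In a local trivialization $u=s^ke^{k\phi}\hat u$ with $|s|^2_{h^L}=e^{-2\phi}$ and an orthonormal frame $\{L_j\}$ of $T^{1,0}M'$, setting $Y_j:=L_{\ol j}+kL_{\ol j}\phi$ (whose formal adjoint is $Y_j^*=-L_j+kL_j\phi$ up to zeroth order), one has $\|\ddbar_k u\|^2_{\ol M,k}=\sum_j\|Y_j\hat u\|^2_{\ol M}$, and since $\phi$ is rigid, the unitary identification gives $T\hat u=i\alpha\hat u$. Writing $T=\sum_j(a_jL_j+\bar a_jL_{\ol j})$ and using $(JT)(\phi)=i\sum_j(a_jL_j\phi-\bar a_jL_{\ol j}\phi)$, a direct computation yields the operator identity
\[
T+ik(JT)(\phi)=\sum_j\bar a_jY_j-\sum_j a_jY_j^*+(\text{zeroth order in }k),
\]
so applied to the eigensection it becomes
\[
i\bigl(\alpha+k(JT)(\phi)\bigr)\hat u=\sum_j\bar a_jY_j\hat u-\sum_j a_jY_j^*\hat u+O(1)\hat u.
\]

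Finally I would pair this identity with $\hat u$ in $L^2$. The $Y_j^*$ terms are handled by Dirichlet integration by parts, giving $(a_jY_j^*\hat u|\hat u)_{\ol M}=(a_j\hat u|Y_j\hat u)_{\ol M}+O(\|\hat u\|^2_{\ol M})$, and Cauchy--Schwarz produces
\[
\int_{\ol M}\bigl(\alpha+k(JT)(\phi)\bigr)|\hat u|^2\,dv\leq C\sum_j\|Y_j\hat u\|_{\ol M}\|\hat u\|_{\ol M}+C\|\hat u\|^2_{\ol M}.
\]
By the choice of $\delta$, Lemma \ref{l-gue220530yyd} and \eqref{e-gue220712yyd} deliver the uniform pointwise bound $\alpha+k(JT)(\phi)(x)\geq ck$ on $\ol M$ for $\alpha\in[k\delta/4,2k\delta]$; a second application of Cauchy--Schwarz then upgrades this to $\sum_j\|Y_j\hat u\|^2_{\ol M}\geq c'k^2\|\hat u\|^2_{\ol M}$ for $k$ large, and globalization via a partition of unity (using $\sum_j|Y_j\hat u|^2=|\ddbar_k u|^2_{h^{L^k}}$ and $|\hat u|^2=|u|^2_{h^{L^k}}$ pointwise) yields \eqref{e-gue220321yydII}. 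The hard part will be securing the uniform strict positivity of $\alpha+k(JT)(\phi)$ on all of $\ol M$: on $X$ this is immediate from \eqref{eq:deltadef}, but extending it to the interior relies on Lemma \ref{l-gue220530yyd} together with the calibration of $\delta$ encoded in \eqref{e-gue220712yyd}, and the almost-everywhere nondegeneracy conditions $T(x)\neq 0$, $d\rho(x)\neq 0$ enter precisely to rule out a positive-measure locus on which the key identity collapses.
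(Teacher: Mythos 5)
Your proposal is correct, and it reaches the bound by the same essential mechanism as the paper: Dirichlet boundary conditions kill the boundary term in $(\Box^{(0)}_{f,k}u|u)_{\ol M,k}=\|\ddbar_k u\|^2$, the eigenvalue equation $-iT_Mu=\alpha u$ with $\alpha\sim k$ is combined with the positivity of $t+(JT)(\phi)$ on $\ol M$ (Lemma~\ref{l-gue220530yyd} and \eqref{e-gue220712yyd}), and the almost-everywhere nondegeneracy of $T$ and $d\rho$ guarantees the relevant $(0,1)$-direction is available a.e. The algebraic packaging differs slightly. The paper singles out $\ol L_n=\tfrac1{\sqrt2}(T+iJT)$, expands $\|\ol L_n(\chi u)\|^2$ quadratically using $[T,JT]=0$ and the adjoint formula $(JT)^*=-JT+2k(JT)(\phi)+g$, and in effect completes the square to obtain $\tfrac{k^2}{2}\int(t+(JT)(\phi))^2|\chi\hat u|^2+O(k)\|u\|^2$, from which the conclusion follows since $\|Tu\|^2\geq(k\delta/4)^2\|u\|^2$. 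You instead derive the first-order operator identity $T+ik(JT)(\phi)=\sum_j(\bar a_jY_j-a_jY_j^*)+O(1)$ in an arbitrary orthonormal frame, pair it against the eigensection, and upgrade via Cauchy--Schwarz. The two are morally the same computation --- your identity specialized to the frame in which $L_n=\tfrac1{\sqrt2}(T-iJT)$ recovers $T+ik(JT)(\phi)=\tfrac1{\sqrt2}(Y_n-Y_n^*)+O(1)$, and squaring the resulting inequality reproduces the paper's quadratic estimate. Your version is slightly more streamlined in that it avoids explicitly invoking $[T,JT]=0$; and both arguments share the same minor endpoint technicality, namely that \eqref{e-gue220712yyd} is stated on the open interval $(\delta/4,2\delta)$ while the eigenvalues range over the closed interval $[k\delta/4,2k\delta]$, which one handles by a slight enlargement of the interval before invoking compactness.
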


\begin{proof}
Let $u\in\cC^\infty(\ol M,L^k)$ with $\gamma u=0$ and $Tu=imu$, $m\in[\frac{k\delta}{4},2\delta]$. Fix $p\in\ol M$ and 
let $s$ be an $\R$-equivariant holomorphic frame 
of $L$ over an open subset $U\Subset M'$, $p\in U$, $|s|^2_{h^L}=e^{-2\phi}$. 
Let $L_n=\frac{1}{\sqrt{2}}(T-iJT)\in\cC^\infty(\ol M,T^{0,1}M')$. Let $\chi\in\cC^\infty_c(U\cap\ol M)$. Since $\gamma u=0$, we can integrate by parts and deduce that 
\begin{equation}\label{e-gue220321yydIIIa}
(\,\Box^{(0)}_{f,k}\,(\chi u)\,|\,\chi u\,)_{\ol M,k}\geq\norm{\ol L_n(\chi u)}^2_{\ol M,k}. 
\end{equation}
Now, 
\begin{equation}\label{e-gue220321yydIV}
\begin{split}
&\norm{\ol L_n(\chi u)}^2_{\ol M,k}=\frac{1}{2}(\,(T+iJT)(\chi u)\,|\,(T+iJT)(\chi u)\,)_{\ol M,k}\\
&=\frac{1}{2}\Bigr(\norm{T(\chi u)}^2_{\ol M,k}+\norm{(JT)(\chi u)}^2_{\ol M,k}\\
&\quad-i(\,T(\chi u)\,|\,(JT)(\chi u)\,)_{\ol M,k}
+i(\,(JT)(\chi u)\,|\,T(\chi u)\,)_{\ol M,k}\Bigr).
\end{split}
\end{equation}
Note that $[T,T-iJT]\in\cC^\infty(M',T^{1,0}M')$, $[T,T+iJT]\in\cC^\infty(M',T^{0,1}M')$. From this observation, we can check that 
\begin{equation}\label{e-gue220321yydV}
[T,JT]=0.
\end{equation}
Let $(JT)^*$ be the formal adjoint of $JT$ with respect to $(\,\cdot\,|\,\cdot\,)_{\ol M,k}$, We have 
\begin{equation}\label{e-gue220321yydVI}
(JT)^*=-JT+2k(JT)(\phi)+g,
\end{equation}
where $g$ is a $k$-independent smooth function. 
From \eqref{e-gue220321yydIV}, \eqref{e-gue220321yydV}, \eqref{e-gue220321yydVI} and by using integration by parts, we get 
\begin{equation}\label{e-gue220321yydVII}
\begin{split}
&\norm{\ol L_n({\color{red}\chi}u)}^2_{\ol M,k}\\
&=\frac{1}{2}\Bigr(\norm{T(\chi u)}^2_{\ol M,k}+\norm{(JT)(\chi u)}^2_{\ol M,k}
+i(\,(\chi u)\,|\,(2k(JT)(\phi)+g)T(\chi u)\,)_{\ol M,k}\Bigr)\\
&=\frac{1}{2}\Bigr(\norm{T(\chi u)}^2_{\ol M,k}+\norm{(JT)(\chi u)}^2_{\ol M,k}
+(\,(\chi u)\,|\,2k(JT)(\phi)\chi mu\,)_{\ol M,k}+O(k)\norm{u}^2_{\ol M,k}\Bigr). 
\end{split}
\end{equation} 
From \eqref{e-gue220712yyd} and  \eqref{e-gue220321yydVII}, we conclude that 
\begin{equation}\label{e-gue220517ycda}
\norm{\ol L_n(\chi u)}^2_{\ol M,k}\geq\frac{\delta_0}{2}\Bigr(\norm{T(\chi u)}^2_{\ol M,k}+O(k)\norm{u}^2_{\ol M,k}\Bigr),
\end{equation} 
where $\delta_0>0$ is a $k$-independent universal constant. 
From \eqref{e-gue220321yydIIIa} and \eqref{e-gue220517ycda}, we deduce that 
\begin{equation}\label{e-gue220321ycd}
(\,\Box^{(0)}_{f,k}(\chi u)\,|\,(\chi u)\,)_{\ol M,k}\geq\norm{\ol L_n(\chi u)}^2_{\ol M,k}\geq\frac{\delta_0}{2}\Bigr(\norm{T(\chi u)}^2_{\ol M,k}-C_Uk\norm{(\chi u)}^2_{\ol M,k}\Bigr),
\end{equation}
where $C_U>0$ is a constant independent of $k$, $u$. 

Now, assume that $\ol M\subset\bigcup^N_{j=1}U_j$, $N\in\mathbb N$, $U_j\Subset M'$ is an open small trivialization of $L$, for each $j=1,\ldots,N$. Let $\chi_j\in\cC^\infty_c(U_j\cap\ol M)$, $j=1,\ldots,N$, with $\sum^N_{j=1}\chi_j^2=1$ on $\ol M$. For each $j=1,\ldots,N$, we have 
\begin{equation}\label{e-gue220322yyd}
\begin{split}
\norm{\chi_jTu}^2_{\ol M,k}-C_j\norm{u}_{\ol M,k}\leq \norm{T(\chi_ju)}^2\leq 2\norm{\chi_jTu}^2_{\ol M,k}+C_j\norm{u}_{\ol M,k},\\
\norm{\chi_j\ol L_nu}^2_{\ol M,k}-C_j\norm{u}_{\ol M,k}\leq \norm{\ol L_n(\chi_ju)}^2\leq 2\norm{\chi_j\ol L_nu}^2_{\ol M,k}+C_j\norm{u}_{\ol M,k},
\end{split}
\end{equation}
where $C_j>1$ is a constant independent of $k$ and $u$. 

From \eqref{e-gue220321ycd} and \eqref{e-gue220322yyd}, it is straightforward to see that for each $j=1,\ldots,N$, 
\begin{equation}\label{e-gue220321ycdI}
\begin{split}
\norm{\chi_j\ol L_nu}^2_{\ol M,k}\geq\hat C_j\Bigr(\norm{\chi_jTu}^2_{\ol M,k}-\tilde C_jk\norm{u}^2_{\ol M,k}\Bigr),
\end{split}
\end{equation}
where $\hat C_j>0$, $\tilde C_j>0$ are constants independent of $k$ and $u$. From  \eqref{e-gue220321ycdI}, we get 
\begin{equation}\label{e-gue220322yydI}
\begin{split}
(\,\Box^{(0)}_{f,k}\,u\,|\,u\,)_{\ol M,k}\geq\sum^N_{j=1}\norm{\chi_j\ol L_nu}^2_{\ol M,k}\geq\hat C\Bigr(\norm{Tu}^2_{\ol M,k}-\tilde Ck\norm{u}^2_{\ol M,k}\Bigr),
\end{split}
\end{equation} 
where $\hat C>0$, $\tilde C>0$ are constants independent of $k$ and $u$. Since $u$ is in the space $\oplus_{\frac{k\delta}{4}\leq\alpha\leq 2k\delta}\cC^\infty_{\alpha}(\overline M, L^k)$, we have for $k\gg1$, 
\[\norm{Tu}^2_{\ol M,k}\geq \frac{k^2\delta^2}{16}\norm{u}^2_{\ol M,k}.\]
From this observation and \eqref{e-gue220322yydI}, we get \eqref{e-gue220321yydII}. 
\end{proof} 

In the rest of this section, we assume that $T(x)\neq0$, $(d\rho)(x)\neq0$, for almost every $x\in\ol M$. 
It is clear that the map
\begin{equation}\label{e-gue220313ycdsz}
\begin{split}
\cC^\infty(\overline M, L^k)&\To\cC^\infty(\overline M, L^k)\oplus\cC^\infty(X,L^k),\\
u&\To(\Box^{(0)}_{f,k}u,\gamma u)
\end{split}
\end{equation}
is injective. Hence, the Poisson operator 
 $$P_k^{(0)}: \cC^\infty(X,L^k)\rightarrow\cC^\infty(\ol M,L^k)$$
of $\Box^{(0)}_{f,k}$ is well-defined. The operator $P^{(0)}_k$ 
extends continuously
\begin{equation}\label{e-gue190313adz}
P^{(0)}_k: W^\ell(X,  L^k)\rightarrow W^{\ell+\frac{1}{2}}(\overline M, L^k),\ \ \forall\ell\in\R,
\end{equation}
and there is a continuous operator 
\begin{equation}\label{e-gue190429yydz}
N^{(0)}_k: W^\ell(\ol M, L^k)\rightarrow W^{\ell+2}(\overline M, L^k),\ \ 
\forall\ell\in\R,
\end{equation}
such that 
\begin{equation}\label{e-gue190418yydaz}
\begin{split}
&N^{(0)}_k\Box^{(0)}_{f,k}+P^{(0)}_k\gamma=I\ \ \mbox{on $\cC^\infty(\ol M,L^k)$},\\
&\gamma N^{(0)}_k=0\ \ \mbox{on $\cC^\infty(\ol M,L^k)$},\\
&\Box^{(0)}_{f,k}N^{(0)}_k=I\ \ \mbox{on $\cC^\infty(\ol M,L^k)$}.
\end{split}
\end{equation}
From Lemma~\ref{l-gue220321yyd}, we can repeat the proof of Lemma~\ref{l-gue220314yyd} with minor change and deduce 

\begin{lemma}\label{l-gue220314yydz}
Fix $\delta>0$. For every $\ell\in\R$, there are $n_\ell, \hat n_\ell\in\mathbb N$ such that 
\begin{equation}\label{e-gue220314yydIIz}
N^{(0)}_k\circ F^{(0)}_{[\frac{k\delta}{4}, 2k\delta],M}=O(k^{n_\ell}): W^\ell(\ol M, L^k)\rightarrow W^{\ell+2}(\overline M, L^k)
\end{equation}
and 
\begin{equation}\label{e-gue220314yydIIaz}
P^{(0)}_k\circ F^{(0)}_{[\frac{k\delta}{4}, 2k\delta]}=O(k^{\hat n_\ell}): W^\ell(X, L^k)\rightarrow W^{\ell+\frac{1}{2}}(\overline M, L^k).
\end{equation}
\end{lemma}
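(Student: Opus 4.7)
The plan is to mimic the proof of Lemma~\ref{l-gue220314yyd}, replacing the $O(k)$ spectral gap of Lemma~\ref{l-gue220313yyd} by the $O(k^2)$ spectral gap of Lemma~\ref{l-gue220321yyd}, which applies precisely on the spectral window $[\tfrac{k\delta}{4}, 2k\delta]$ for $-iT_M$. The crucial preliminary fact I would verify is that $N^{(0)}_k$ commutes with $-iT_M$. This follows because the $\R$-action is holomorphic and $h^L$, $\rho$, $\langle\,\cdot\,|\,\cdot\,\rangle$ are $\R$-invariant, so $[\Box^{(0)}_{f,k}, T]=0$; together with the fact that $T$ is tangent to $X$ (so the Dirichlet condition $\gamma u=0$ is $T$-invariant), one gets
\begin{equation*}
N^{(0)}_k \circ F^{(0)}_{[\tfrac{k\delta}{4},2k\delta],M} = F^{(0)}_{[\tfrac{k\delta}{4},2k\delta],M} \circ N^{(0)}_k,
\end{equation*}
and analogously $P^{(0)}_k$ intertwines the boundary projector $F^{(0)}_{[\tfrac{k\delta}{4},2k\delta]}$ on $X$ with the interior projector on $\overline M$.

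Given $u \in W^\ell(\overline M, L^k)$, I would set $v := N^{(0)}_k F^{(0)}_{[\tfrac{k\delta}{4},2k\delta],M} u$. By the commutation, $v$ lies in the spectral window, satisfies $\gamma v = 0$, and obeys $\Box^{(0)}_{f,k} v = F^{(0)}_{[\tfrac{k\delta}{4},2k\delta],M} u$. Lemma~\ref{l-gue220321yyd} then yields
\begin{equation*}
c k^2 \|v\|^2_{\overline M,k} \leq (\,\Box^{(0)}_{f,k} v \,|\, v\,)_{\overline M,k} \leq \|u\|_{\overline M,k}\,\|v\|_{\overline M,k},
\end{equation*}
so $\|v\|_{\overline M,k} \leq (ck^2)^{-1}\|u\|_{\overline M,k}$. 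Feeding this into the semi-classical elliptic estimate for the Dirichlet problem used in the proof of Lemma~\ref{l-gue220314yyd},
\begin{equation*}
\|v\|_{\ell+2,\overline M,k} \leq C_\ell k^{\tilde n_\ell}\bigl(\|\Box^{(0)}_{f,k} v\|_{\ell,\overline M,k} + \|v\|_{\overline M,k}\bigr),
\end{equation*}
yields \eqref{e-gue220314yydIIz} provided $F^{(0)}_{[\tfrac{k\delta}{4},2k\delta],M}$ is bounded on $W^\ell(\overline M, L^k)$ with polynomial-in-$k$ loss, which I would verify using the torus realization \eqref{e-gue220330ycd} of $T$ and the uniform Sobolev boundedness of the joint spectral projectors of the commuting operators $T_j$.

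For \eqref{e-gue220314yydIIaz}, I would follow the second half of the proof of Lemma~\ref{l-gue220314yyd}: construct via classical parametrices a rough Poisson operator $\tilde P^{(0)}_k$ satisfying $\gamma \tilde P^{(0)}_k = I$ with $\Box^{(0)}_{f,k}\tilde P^{(0)}_k$ smoothing, together with the analogues of \eqref{e-gue220315yyd} and \eqref{e-gue220315yydI}. The identity
\begin{equation*}
\tilde P^{(0)}_k - P^{(0)}_k = N^{(0)}_k \Box^{(0)}_{f,k} \tilde P^{(0)}_k
\end{equation*}
then reduces the estimate, after composing with $F^{(0)}_{[\tfrac{k\delta}{4},2k\delta]}$, to the bound on $N^{(0)}_k \circ F^{(0)}_{[\tfrac{k\delta}{4},2k\delta],M}$ already established. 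The main obstacle I expect is the careful tracking of $k$-dependences: one must control the semi-classical $k$-loss inside Boutet de Monvel's elliptic theory and the Sobolev continuity of the spectral cutoff simultaneously, and check that the $k^2$ gain from Lemma~\ref{l-gue220321yyd} genuinely suffices to absorb the polynomial losses accumulated in both steps. Since only polynomial-in-$k$ bounds are required in the conclusion, this absorption is more a matter of bookkeeping than of a new idea.
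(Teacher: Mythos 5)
Your proof follows exactly the route the paper intends: the paper's own proof is one sentence, "From Lemma~\ref{l-gue220321yyd}, we can repeat the proof of Lemma~\ref{l-gue220314yyd} with minor change and deduce" the statement, and your proposal spells out precisely that repetition, correctly identifying the two ingredients that make it work — the $T$-equivariance of the Dirichlet problem (hence of $N^{(0)}_k$ and $P^{(0)}_k$), which lets the spectral cutoffs $F^{(0)}_{[k\delta/4,2k\delta],M}$ and $F^{(0)}_{[k\delta/4,2k\delta]}$ pass through, and the replacement of the $O(k)$ gap of Lemma~\ref{l-gue220313yyd} by the $O(k^2)$ gap of Lemma~\ref{l-gue220321yyd} on the spectral window. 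Your flagging of the polynomial-in-$k$ Sobolev boundedness of the cutoff and the need for a $T$-equivariant rough parametrix $\tilde P^{(0)}_k$ are exactly the "minor changes" the paper implicitly leaves to the reader, so the argument is complete.
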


\subsection{Semi-classical behaviour of $P^{(1)}_kF^{(1)}_{\tau_{k\delta}}$}\label{s-gue220322yyd} 

As before, fix $\delta>0$ and let 
$\tau_\delta(t)\in\cC^\infty_c(]\frac{\delta}{4}, 2\delta[)$ be as in \eqref{e-21-5-16-a2z} and let $\tau_{k\delta}$ be as in 
\eqref{e-gue220322ycds}.  We will use the same notations as before. We will study semi-classical behaviour of $P^{(q)}_kF^{(q)}_{\tau_{k\delta}}$, where $q=0, 1$. We first consider $q=1$ case. Let $s$ be a $\R$-equivariant holomorphic frame of $L$ over an open subset $U\Subset M'$, $U\cap X\neq\emptyset$, and let $\phi\in\cC^\infty(U)$ be the local weight of the metric $h^L$ defined by $|s|^2_{h^L}=e^{-2\phi}$. Let $D:=U\cap X$. 
We assume $D$ is a canonical coordinate patch and let $x=(x_1,\ldots,x_{2n-1})$ be the BRT coordinates on $D$. Write $z_j=x_{2j-1}+\sqrt{-1}x_{2j}, 1\leq j\leq n-1$. Then $\{dz_j\}_{j=1}^{n-1}$ are smooth frames of $T^{\ast 1, 0}X$ over $D$. Let $\xi=(\xi_1,\ldots,\xi_{2n-1})$ be the dual coordinates of $x$. Let $\eta^{\ol j}$, $j=1,\ldots,n$, be a $T$-invariant orthonormal basis for $T^{*0,1}M'$ on $U$ with $\eta^{\ol n}=\frac{\ddbar\rho}{\abs{\ddbar\rho}}$. 
Put
\begin{equation}\label{e-gue131209}
B^{(1)}_{\tau_{k\delta}, s}(x, y):=\frac{k^{2n-1}}{(2\pi)^{2n-1}}\int_{\R^{2n-1}} e^{ik\langle x-y,~\xi\rangle}\tau_\delta(\xi_{2n-1})\sum_{j=1}^{n}\eta^{\overline j}|_x\otimes (\eta^{\overline j}|_y)^\ast d\xi,
\end{equation}
where for each $j=1,\ldots,n$, $\eta^{\overline j}|_x\otimes (\eta^{\overline j}|_y)^\ast$ denotes the linear map given by 
$(\eta^{\overline j}|_x\otimes (\eta^{\overline j}|_y)^\ast)(\eta^{\overline s}|_y)=\delta_{s,j}\eta^{\overline j}|_x$, $s=1,\ldots,n$. Thus, for $f\in\cC^\infty_c(D,T^{*0,1}M')$, we write $f=\sum_{j=1}^{n}f_{\overline j}(y)\eta^{\overline j}|_y$. Then
\begin{equation}\label{e-gue220322yydp}
B^{(1)}_{\tau_{k\delta}, s}f(x)=\sum_{j=1}^{n}\left[\frac{k^{2n-1}}{(2\pi)^{2n-1}}\int_X\int_{\R^{2n-1}} e^{ik\langle\,x-y\,,\,\xi\,\rangle}\tau_\delta(\xi_{2n-1})f_j(y)d\xi dy\right]\eta^{\overline j}|_x.
\end{equation}
Let $F^{(q)}_{\tau_{k\delta}, s}$ be the localization of $F^{(q)}_{\tau_{k\delta}}$ with respect to $s$ on $U$. The following lemma has been obtained in \cite[Lemma 5.4]{HHL17}. 

\begin{lemma}\label{l-gue131209}
We have
\[F^{(1)}_{\tau_{k\delta},s}-B^{(1)}_{\tau_{k\delta}, s}=O(k^{-\infty}):W^\ell_{{\rm comp\,}}(D,T^{*0,1}M')\To W^\ell_{{\rm loc\,}}(D, T^{*0,1}M'),\ \ \forall\ell\in\mathbb Z.\]
\end{lemma}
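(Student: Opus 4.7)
The plan is to reduce the computation of $F^{(1)}_{\tau_{k\delta},s}$ in the BRT chart to an explicit Fourier multiplier in the $x_{2n-1}$-variable and then to match it with $B^{(1)}_{\tau_{k\delta}, s}$ modulo a smoothing, rapidly decaying error. Since this statement is already proven in \cite[Lemma 5.4]{HHL17}, I would only outline the main steps.

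First I would observe that on the BRT patch $D$ the trivializing section $s$, the local weight $\phi$, and the frame $\{\eta^{\ol j}\}$ are all $T$-invariant: $Ts=0$ because $s$ is rigid, $T|s|^2_{h^L}=0$ because $h^L$ is $\R$-invariant (whence $T\phi=0$), and the frame $\{\eta^{\ol j}\}$ is $T$-invariant by choice. Consequently the unitary identification $u\leftrightarrow \hat u=e^{-k\phi}s^{-k}u$ and the component decomposition in the frame $\{\eta^{\ol j}\}$ both commute with $-iT$, so $F^{(1)}_{\tau_{k\delta},s}$ acts diagonally on the frame, acting on each scalar coefficient by $\tau_{k\delta}(-iT)$. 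In BRT coordinates $T=\partial/\partial x_{2n-1}$, so for a scalar test function $f\in\cC^\infty_c(D)$ the claim reduces to showing that $\tau_{k\delta}(-iT)\chi f$ coincides, modulo $O(k^{-\infty})$, with the Fourier multiplier $\tau_{k\delta}(-i\partial/\partial x_{2n-1})\chi f$ for suitable cutoffs $\chi$. Granting this, Fourier inversion gives the kernel
\begin{equation*}
\frac{k}{2\pi}\int_{\R} e^{ik(x_{2n-1}-y_{2n-1})\xi_{2n-1}}\,\tau_\delta(\xi_{2n-1})\,d\xi_{2n-1},
\end{equation*}
and representing the transverse delta as $\delta(x'-y')=\frac{k^{2n-2}}{(2\pi)^{2n-2}}\int e^{ik\langle x'-y',\xi'\rangle}\,d\xi'$ and restoring the frame tensor $\sum_j\eta^{\ol j}|_x\otimes(\eta^{\ol j}|_y)^*$ reproduces the formula \eqref{e-gue131209} for $B^{(1)}_{\tau_{k\delta},s}$ on the nose.

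The main obstacle is the reduction from the \emph{global} spectral calculus of $-iT$ on $X$ to a \emph{local} Fourier multiplier in $x_{2n-1}$, since $D$ is not $\R$-invariant and functions supported in $D$ are not pure-frequency with respect to $-iT$. The key input is that the $\R$-action on $X$ comes from a torus action $T^d$ (cf.\ \cite[Theorem 3.5]{HHL17}), so $-iT=\beta_1(-iT_1)+\cdots+\beta_d(-iT_d)$ with commuting self-adjoint generators; the joint spectral projection onto frequencies in $[k\delta/4,2k\delta]$ can then be represented via a Fourier series along the torus, and after passing to BRT coordinates this Fourier series differs from the Fourier integral representation above only by contributions whose phase has no critical point in the transverse $(x',\xi')$ variables. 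Since $\tau_\delta$ is supported in the compact set $(\delta/4,2\delta)\Subset(0,\infty)$ and hence bounded away from zero, repeated integration by parts in $\xi'$ (using cutoffs $\chi_1\prec\chi_2\in\cC^\infty_c(D)$ with $\chi_2\equiv 1$ near $\mathrm{supp}\,\chi_1$) absorbs any polynomial growth in $k$ and produces arbitrary negative powers of $k$ in every Sobolev norm. Combining this with the diagonal action in the $\eta^{\ol j}$ frame established in the first step yields the claimed asymptotic equivalence $F^{(1)}_{\tau_{k\delta},s}-B^{(1)}_{\tau_{k\delta}, s}=O(k^{-\infty}):W^\ell_{\mathrm{comp}}(D,T^{*0,1}M')\to W^\ell_{\mathrm{loc}}(D,T^{*0,1}M')$ for every $\ell\in\mathbb Z$.
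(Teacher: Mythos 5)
Your overall architecture is the right one and does mirror what the paper's Lemma~\ref{l-gue220330yyd} (and, presumably, the cited \cite[Lemma~5.4]{HHL17}, since the present paper records Lemma~\ref{l-gue131209} as a citation rather than reproving it) actually does: the rigidity of $s$, the $\R$-invariance of $h^L$ (hence $T\phi=0$), and the $T$-invariance of the frame $\{\eta^{\ol j}\}$ imply that the localization and the frame decomposition both commute with $-iT$, so one is reduced to comparing the scalar functional calculus $\tau_{k\delta}(-iT)$ with the explicit Fourier multiplier in BRT coordinates, and the torus-action Fourier series is precisely the bridge (cf.\ \eqref{e-gue131217}--\eqref{e-gue131217VI}).

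However, the step where you produce the $O(k^{-\infty})$ bound is described incorrectly. You assert that the discrepancy between the torus Fourier series and the local Fourier integral has ``no critical point in the transverse $(x',\xi')$ variables'' and that one kills it by ``repeated integration by parts in $\xi'$.'' In the overlapping-support (near-diagonal) situation this is false: the phase $\langle x'-y',\xi'\rangle$ is exactly stationary at $x'=y'$, so integration by parts in $\xi'$ yields nothing there. The argument that actually works --- and that the paper runs in \eqref{e-gue131217I}--\eqref{e-gue170831I} --- is to insert a cutoff $\gamma(y_{2n-1})$ equal to $1$ on the $y_{2n-1}$-slices of a smaller patch $D'$, split the kernel into a main piece (with $\gamma$) and a remainder $R_k$ (with $1-\gamma$). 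On the support of $1-\gamma$ one has $|x_{2n-1}-y_{2n-1}|\ge c>0$, so integration by parts in the scalar variable $\xi_{2n-1}$ against the phase $e^{i(x_{2n-1}-y_{2n-1})\xi_{2n-1}}$ converts each differentiation of $\tau_\delta(\xi_{2n-1}/k)$ into a factor $k^{-1}$, giving $R_k=O(k^{-\infty})$; the main piece is then identified with $B^{(1)}_{\tau_{k\delta},s}$ by Fourier inversion in $\xi_{2n-1}$ and the Dirac-measure identity \eqref{dm}. So the smallness does not come from the transverse variables at all; it comes from the one-dimensional $T$-direction, and the non-stationarity is manufactured by the cutoff $1-\gamma(y_{2n-1})$, not inherited from the geometry of the $T^d$-orbit. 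Your $\chi_1\prec\chi_2$ cutoffs, with $\chi_2\equiv1$ near $\operatorname{supp}\chi_1$, provide no such separation and cannot substitute for the $\gamma$-splitting. Replace the $\xi'$-integration-by-parts paragraph with this $\gamma$-decomposition and $\xi_{2n-1}$-integration-by-parts mechanism and the proof is sound.
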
 

We extend $x_1,\ldots,x_{2n-1}$ to smooth functions on $U$ so that $\tilde x:=(x_1,\ldots,x_{2n-1},\rho)=(x,\rho)$ are local coordinates on $D$ and 
$\frac{\pr}{\pr x_{2n-1}}=T$ on $U$.
%Let $\eta^{\ol j}$, $j=1,\ldots,n$, be an orthonormal basis for $T^{*0,1}M'$ on $U$.  
%We define $$\eta^j=d\tilde x_{2j-1}-\sqrt{-1}\mathcal J(d\tilde x_{2j-1}), j=1\ldots n-1; \eta^n=\partial\rho.$$ Here, $\mathcal J$ is the complex structure of $M'$. Then $\{\eta^{\overline j}\}_{j=1}^n$ are smooth frame of $T^{\ast0, 1}M'$ over $U$. Moreover, $$\eta^j|_X=dz_j, j=1, \ldots, n-1.$$
Write
\begin{equation}\label{e-gue220323yydp}
\begin{split}
&\varphi_0(x, y, \xi):=\langle x-y, \xi\rangle,\\
&g_0(x, y, \xi, k):=\frac{k^{2n-1}}{(2\pi)^{2n-1}}\tau_\delta(\xi_{2n-1})\sum_{j=1}^{n}\eta^{\overline j}|_x\otimes (\eta^{\overline j}|_y)^\ast.
\end{split}
\end{equation} 
We will extend $\varphi(x, y, \xi)$ and $g(x, y, \xi, k)$ to $U\times D\times \R^{2n-1}$ and we denote the extensions by $\varphi(\tilde x, y, \xi)$ and $g(\tilde x, y, \xi, k)$, respectively. We construct the extensions by the Taylor expansions and using Borel's lemma. Precisely, we will construct $\varphi(\tilde x, y, \xi)\in\cC^{\infty}(U\times D\times \R^{2n-1})$, 
\[g(\tilde x, y, \xi, k)\in S^{2n-1}_{{\rm cl\,}}(1; ((U\times U)\cap(\ol M\times X))\times \R^{2n-1},T^{*0,1}M'\boxtimes(T^{*0,1}M')^*)\] such that 
\begin{equation}\label{21-6-15-a1}
%\begin{split}
\varphi(\tilde x, y, \xi)\sim\varphi_0(x,y, \xi)+\rho\varphi_1(x,y, \xi)+\rho^2\varphi_2(x, y, \xi)+\ldots\\
%&g(\tilde x, y, \xi, k)\sim g_0(x, y, \xi, k)+\rho g_1(x, y, \xi, k)+\rho^2 g_2(x, y, \xi, k)+\ldots,\\
%&\rho^\ell g_\ell( x, y, \xi, k)\in S^{2n-1}_{{\rm cl\,}}(1; U\times D\times \R^{2n-1},T^{*0,1}M'\boxtimes(T^{*0,1}M')^*),\ \ \ell=0,1,\ldots,
%\end{split}
\end{equation}
%where for each $(\tilde x, y)\in U\times D$, $g(\tilde x, y, \xi, k), \rho^\ell g_\ell( x, y, \xi, k): T^{*0,1}_yM'\To T^{*0,1}_{\tilde x}M'$, $\tilde x=(x,\rho)$, $\ell=0,1,\ldots$, 
and
\begin{equation}\label{e-gue220323yydq}
\begin{split}
&g(\tilde x, y, \xi ,k)\sim k^{2n-1}g^0(\tilde x, y, \xi)+k^{2n-2}g^1(\tilde x, y, \xi)+\ldots\\
&\mbox{in $S^{2n-1}_{{\rm cl\,}}(1,((U\times U)\cap(\ol M\times X))\times\R^{2n-1},T^{*0,1}M'\boxtimes(T^{*0,1}M')^*)$},\\
%&g^j(\tilde x, y, \xi)\sim g^j_0(x, y, \xi)+\rho g^j_1(x, y, \xi)+\rho^2 g^j_2(x, y, \xi)+\ldots, \ \ j=0,1,\ldots,\\
&\mbox{$g^j(\tilde x, y, \xi)\in S^{0}(1;((U\times U)\cap(\ol M\times X))\times\R^{2n-1},T^{*0,1}M'\boxtimes(T^{*0,1}M')^*)$, $j=0,1,\ldots$},\\
&g^j(\tilde x, y, \xi)\sim g^j_0(x, y, \xi)+\rho g^j_1(x, y, \xi)+\rho^2 g^j_2(x, y, \xi)+\ldots, \ \ j=0,1,\ldots,\\
&\rho^\ell g^j_\ell( x, y, \xi)\in S^{0}(1; ((U\times U)\cap(\ol M\times X))\times \R^{2n-1},T^{*0,1}M'\boxtimes(T^{*0,1}M')^*),\ \ \ell=0,1,\ldots,
%\cC^{\infty}(U\times D\times \R^{2n-1},T^{*0,1}M'\boxtimes(T^{*0,1}M')^*),\ \ \ell=0,1,\ldots,
\end{split}
\end{equation}
where for each $(\tilde x, y)\in U\times D$, $g(\tilde x, y, \xi, k), g^j(\tilde x,y,\xi), \rho^\ell g^j_\ell( x, y, \xi, k): T^{*0,1}_yM'\To T^{*0,1}_{\tilde x}M'$, $\tilde x=(x,\rho)$, $j, \ell=0,1,\ldots$,
$\varphi(\tilde x, y, \xi)\sim\varphi_0(x,y, \xi)+\rho\varphi_1(x,y, \xi)+\rho^2\varphi_2(x, y, \xi)+\ldots$ means that for every $N\in\mathbb N$, $\ell\in\mathbb N_0$ and every compact set $K$ of $U\times D\times\R^{2n-1}$,  there is a constant $C_{N,\ell,K}>0$ such that
\[\norm{\rho^{-N-1}\Bigr(\tilde \varphi(\tilde x, y, \xi)-\sum^N_{\ell=0}\rho^\ell\varphi_\ell(x,y, \xi)\Bigr)}_{\cC^\ell(K)}\leq C_{N,\ell,K},\]
and similarly for $g^j(\tilde x, y, \xi)\sim g^j_0(x, y, \xi)+\rho g^j_1(x, y, \xi)+\rho^2 g^j_2(x, y, \xi)+\ldots$ and 
$\{\varphi_\ell(x, y, \xi)\}_{\ell=1}^{\infty}\subset S^0((1+\abs{\xi});D\times D\times\R^{2n-1})$,
\[\{g^j_\ell(x, y, \xi)\}_{\ell=1}^{\infty}\subset S^{0}(1; D\times D\times \R^{2n-1},T^{*0,1}M'\boxtimes(T^{*0,1}M')^*),\ \ j\in\mathbb N_0,\] 
%$\{g_\ell(\tilde x, y, \xi, k)\}_{\ell=1}^{\infty}\subset\cC^\infty(U\times D\times\R^{2n-1},T^{*0,1}M'\boxtimes(T^{*0,1}M')^*)$ 
will be determined later. 
It is clear that $$\varphi(\tilde x, y, \xi)|_X=\varphi_0(x, y, \xi), ~~g(\tilde x, y, \xi, k)|_X=g_0(x, y, \xi, k).$$ Then we define
 \begin{equation}\label{e-gue220323yydr}
 \tilde B_{\tau_{k\delta}, s}^{(1)}(\tilde x,y):=\int_{\R^{2n-1}}e^{ik\varphi(\tilde x, y, \xi)}g(\tilde x, y, \xi, k)d\xi
 \end{equation}
 and let $\tilde B^{(1)}_{\tau_{k\delta},s}: \cC^\infty_c(D,T^{*0,1}M')\To\Omega^{0,1}(U\cap\ol M)$ be the continuous map given by 
 \begin{equation}\label{e-gue220521yyd}
 \begin{split}
 \tilde B^{(1)}_{\tau_{k\delta},s}: \cC^\infty_c(D,T^{*0,1}M')&\To\Omega^{0,1}(U\cap\ol M),\\
 f&\To\int_{\R^{2n-1}}e^{ik\varphi(\tilde x, y, \xi)}g(\tilde x, y, \xi, k)f(y)dyd\xi.
 \end{split}\end{equation}

\begin{theorem}\label{21-5-19-thm1}
There exists $\varphi(\tilde x, y, \xi)\in S^0((1+\abs{\xi}); ((U\times U)\cap(\ol M\times X))\times \R^{2n-1})$ and $g(\tilde x, y, \xi, k)\in S^{2n-1}_{{\rm cl\,}}(1; ((U\times U)\cap(\ol M\times X)\times \R^{2n-1},T^{*0,1}M'\boxtimes(T^{*0,1}M')^*)$  such that $i\varphi_1(x,y,\xi)$ is real, 
\begin{equation}\label{e-gue220328yyd}
\mbox{$i\varphi_1(x,y,\xi)-\abs{\frac{\pr\phi}{\pr\rho}(x)}>C\abs{\xi}$ on $D\times D\times[\frac{\delta}{4},2\delta]$},
\end{equation}
$C>0$ is a constant, 
\begin{equation}\label{e-gue220328yydIz}
\begin{split}
&\mbox{$g^j_\ell( x, y, \xi)=0$ if $\xi_{2n-1}\notin[\frac{\delta}{4},2\delta]$, $j, \ell=0,1,2,\ldots$},\\
&\mbox{$g(\tilde x, y, \xi,k)=g^j(\tilde x, y, \xi)=0$ if $\xi_{2n-1}\notin[\frac{\delta}{4},2\delta]$, $j=0,1,2,\ldots$},
\end{split}
\end{equation}
and the operator $\tilde B^{(1)}_{\tau_{k\delta}, s}: \cC^\infty_c(D,T^{*0,1}M')\To\Omega^{0, 1}(U\cap\ol M)$ defined  in \eqref{e-gue220521yyd} satisfies
\begin{equation}\label{e-gue220328yydI}
\begin{split}
&\Box^{(1)}_{s}\tilde B^{(1)}_{\tau_{k\delta}, s}\equiv0\mod O(k^{-\infty})\ \ \mbox{on $(U\times U)\bigcap(\ol M\times X)$},\\
&\tilde B^{(1)}_{\tau_{k\delta}, s}|_X=B^{(1)}_{\tau_{k\delta}, s},
\end{split}
\end{equation}
where $\varphi(\tilde x, y, \xi)$, $g(\tilde x, y, \xi, k)$,  $\varphi_1(\tilde x, y, \xi)$, $g^j_\ell( x, y, \xi)$, $g^j(\tilde x, y, \xi)$, $j, \ell\in\mathbb N_0$, are as in \eqref{21-6-15-a1} and \eqref{e-gue220323yydq}. 
\end{theorem}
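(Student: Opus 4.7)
I would prove this via a semiclassical WKB/geometric-optics construction. Making the ansatz
\[
\tilde B^{(1)}_{\tau_{k\delta},s}(\tilde x,y)=\int_{\R^{2n-1}}e^{ik\varphi(\tilde x,y,\xi)}g(\tilde x,y,\xi,k)\,d\xi,
\]
with $\varphi$ and $g$ prescribed to admit the $\rho$-Taylor expansions in \eqref{21-6-15-a1} and \eqref{e-gue220323yydq}, the plan is to apply the localized Laplacian $\Box^{(1)}_s$ using the explicit expression \eqref{21-10-6-a1} and to balance powers of $k$. The leading $k^2$-coefficient of $e^{-ik\varphi}\Box^{(1)}_s(e^{ik\varphi}g)$ produces an eikonal equation for $\varphi$; the subsequent $k^{2n-1-j}$-coefficients produce linear transport equations that successively determine $g^0,g^1,\ldots$. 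Since the requirement is only $\Box^{(1)}_s\tilde B^{(1)}_{\tau_{k\delta},s}\equiv0\bmod O(k^{-\infty})$ on $(U\times U)\cap(\ol M\times X)$, it suffices to kill these coefficients to infinite order in $\rho$ at $\rho=0$, then apply Borel summation to produce genuine smooth $\varphi$ and $g$.

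For the eikonal, the boundary condition $\varphi|_{\rho=0}=\varphi_0(x,y,\xi)=\langle x-y,\xi\rangle$ fixes all tangential derivatives at $\rho=0$. Using \eqref{21-10-6-a1} together with $T\phi\equiv\partial\phi/\partial x_{2n-1}\equiv0$ (rigidity of $\phi$), a direct computation shows that vanishing of the $k^2$-symbol at $\rho=0$ reduces to the quadratic equation
\[
\varphi_1^{\,2}+\phi_1^{\,2}+2\sum_{\alpha,\beta=1}^{2n-1}C_{\alpha\overline\beta}\Bigl(-i\xi_\alpha+\tfrac{\partial\phi_0}{\partial x_\alpha}\Bigr)\Bigl(i\xi_\beta+\tfrac{\partial\phi_0}{\partial x_\beta}\Bigr)-2\phi_1\xi_{2n-1}=0
\]
for the leading $\rho$-coefficient $\varphi_1(x,y,\xi)$. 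I would select the root for which $i\varphi_1>0$, which recovers exactly the function in \eqref{e-gue220426yydz}. Higher coefficients $\varphi_\ell$, $\ell\geq2$, are then determined recursively by differentiating the eikonal $\ell$ times in $\rho$ and evaluating at $\rho=0$: each step yields a linear equation for $\varphi_\ell$ whose leading factor $2\varphi_1$ is non-vanishing, hence uniquely solvable.

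For the amplitude, the boundary condition $\tilde B^{(1)}_{\tau_{k\delta},s}|_X=B^{(1)}_{\tau_{k\delta},s}$ forces $g|_{\rho=0}=g_0$ from \eqref{e-gue220323yydp}, in particular giving the factor $\tau_\delta(\xi_{2n-1})$. After the eikonal is solved, vanishing of the $k^{2n-1}$-coefficient of $\Box^{(1)}_s(e^{ik\varphi}g)$ becomes a first-order ODE in $\rho$ for $g^0$ of the form $2\varphi_1\partial_\rho g^0+(\text{zeroth-order matrix})g^0=0$; successive $\rho$-differentiations at $\rho=0$ then yield linear equations for the Taylor coefficients $g^0_\ell$ with non-vanishing leading factor $2\varphi_1$, solvable algebraically. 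The $g^j$ for $j\geq1$ are constructed identically, with source terms built from previously determined data. The support condition $g^j_\ell(\cdot,\cdot,\xi)=0$ for $\xi_{2n-1}\notin[\delta/4,2\delta]$ is preserved along the recursion since every step is pointwise in $(x,y,\xi)$. Borel summation in $\rho$ then yields $\varphi\in S^0((1+\abs{\xi}))$ and a classical symbol $g\in S^{2n-1}_{{\rm cl\,}}(1)$ with the declared properties, and the resulting $\Box^{(1)}_s\tilde B^{(1)}_{\tau_{k\delta},s}$ has a kernel which is $O(k^{-\infty})$ on $(U\times U)\cap(\ol M\times X)$.

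The main obstacle is the eikonal analysis: one must show that the right-hand side of the quadratic for $\varphi_1^{\,2}$ is strictly negative and satisfies the strict estimate \eqref{e-gue220328yyd} uniformly on $D\times D\times[\delta/4,2\delta]$, so that $i\varphi_1$ is real and bounded below by $\abs{\partial\phi/\partial\rho}$ plus a positive multiple of $\abs{\xi}$. This is where the positivity hypotheses enter: the positive-definiteness of $\mathrm{Re}\,[C_{\alpha\overline\beta}]$ established in the preceding Lemma, combined with positivity of $R^L$ (Assumption \ref{a-gue170929I}(b)), provides the required quadratic lower bound after completing the square in $(\xi,\partial\phi_0/\partial x)$; the linear term $-2\phi_1\xi_{2n-1}$ is dominated on the compact support $\xi_{2n-1}\in[\delta/4,2\delta]$ using \eqref{e-gue220712yyd}. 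Once this eikonal analysis is in hand, the higher-order recursion and the Borel summation are standard.
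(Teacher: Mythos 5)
Your proposal is correct and follows essentially the same WKB/transport construction the paper uses: expand $\Box^{(1)}_s$ via \eqref{21-10-6-a1}, kill the $k^{2n+1}$-coefficient to infinite order in $\rho$ by the eikonal equation (whose $\rho$-linear term is exactly the quadratic you display, giving the explicit root $\varphi_1=-i\sqrt{\cdots}$ of \eqref{e-gue220426yyd}–\eqref{e-gue220426yydz}), then peel off the remaining $k$-coefficients recursively for the amplitude Taylor coefficients, Borel-sum, and invoke the positivity of $\operatorname{Re}[C_{\alpha\ol\beta}]$ together with the choice of a rigid frame with $\pr\phi_0/\pr x_\alpha(p)=0$ and the condition $t+(JT)(\phi)>0$ to get \eqref{e-gue220328yyd}. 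The only cosmetic differences are that you package the amplitude recursion as a transport ODE in $\rho$ rather than equation-by-equation in Taylor coefficients, and you cite $R^L>0$ where the paper leans on \eqref{e-gue220517yyd} via a choice of trivialization — but the substance of the positivity argument is the same in both versions.
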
 

\begin{proof}
Suppose we have constructed $\tilde B_{\tau_{k\delta}, s}^{(1)}$ such that on $D\times U$ one has
\begin{equation}\label{e-gue220328ycd}
\tilde B_{\tau_{k\delta}, s}^{(1)}(\tilde x, y)=\int_{\R^{2n-1}} e^{ik\varphi(\tilde x,y,\xi)}g(\tilde x, y, \xi, k)d\xi.
\end{equation}
Then
$$\Box_{s}^{(1)}\tilde B_{\tau_{k\delta}, s}^{(1)}(\tilde x, y)=\int_{\R^{2n-1}}\Box_s^{(1)}\left[e^{ik\varphi(\tilde x, y, \xi)}g(\tilde x, y, \xi, k)\right]d\xi,$$
where the Laplacian acts on variables $\tilde x$.
We write $$\Box_{s}^{(1)}\left[e^{ik\varphi(\tilde x, y, \xi)}g(\tilde x, y, \xi, k)\right]=e^{ik\varphi(\tilde x, y, \xi)}h(\tilde x, y, \xi, k)$$
and 
\[h(\tilde x, y, \xi, k)\sim k^{2n+1}h^0(\tilde x, y, \xi)+k^{2n}h^1(\tilde x, y, \xi)+\ldots\] in $S^{2n+1}_{{\rm cl\,}}((1+\abs{\xi})^2; ((U\times U)\cap(\ol M\times X))\times\R^{2n-1},T^{*0,1}M'\boxtimes(T^{*0,1}M')^*)$.
We will construct $g(\tilde x, y, \xi, k)$ such that $h^{j}(\tilde x, y, \xi)=O(\rho^{\infty})$, for all $j$. %Recall
%$$
%g_\ast(\tilde x, y, \xi, k)=\sum_{l=1}^n\sum_{j=1}^{n-1}\tilde g^j_l(\tilde x, y, \xi, k)(d\overline z_j)^\ast|_y\otimes e^{\overline l}|_{\tilde x}
%$$ where $\tilde g_l^j=\tilde g a^j_l$ and formula of $\Box_{s}^{(1)}$ in (\ref{21-10-6-a1}),   we write,
%$$
%\left(-\frac{\partial}{\partial x_\alpha}+k\frac{\partial\phi}{\partial x_\alpha}\right)\left(\frac{\partial}{\partial x_\beta}+k\frac{\partial\phi}{\partial x_\beta}\right)[e^{ik\tilde\varphi}\tilde g^j_l]=e^{ik\tilde\phi} h^j_l(\tilde x, y, \xi, k).$$
By direct calculation,
\begin{equation}\label{e-gue220329yyd}
\begin{split}
&\left(-\frac{\partial}{\partial x_\alpha}+k\frac{\partial\phi}{\partial x_\alpha}\right)\left(\frac{\partial}{\partial x_\beta}+k\frac{\partial\phi}{\partial x_\beta}\right)[e^{ik\varphi}g]\\
&=e^{ik\varphi}\{k^2(\frac{\partial\varphi}{\partial x_\alpha}\frac{\partial\varphi}{\partial x_\beta}+\frac{\partial\phi}{\partial x_\alpha}\frac{\partial\phi}{\partial x_\beta}+i\frac{\partial\phi}{\partial x_\alpha}\frac{\partial\varphi}{\partial x_\beta}-i\frac{\partial\varphi}{\partial x_\alpha}\frac{\partial\phi}{\partial x_\beta})g^0+O(k)\}.
%&k(-i\frac{\partial\tilde\varphi}{\partial x_\alpha}\frac{\partial \tilde g^j_l}{\partial x_\beta}-i\frac{\partial^2\tilde\varphi}{\partial x_\alpha\partial x_\beta}g^j_\ell-i\frac{\partial\tilde\varphi}{\partial x_\beta}
%\frac{\partial\tilde g^j_\ell}{\partial x_\alpha}-\frac{\partial^2\phi}{\partial x_\alpha\partial x_\beta}\tilde g^j_\ell-\frac{\partial\phi}{\partial x_\beta}\frac{\partial\tilde g^j_\ell}{\partial x_\alpha}+\frac{\partial\phi}{\partial x_\alpha}\frac{\partial\tilde g^j_\ell}{\partial x_\beta})\\
%&-\frac{\partial^2\tilde g^j_\ell}{\partial x_\alpha\partial x_\beta}\}.
\end{split}
\end{equation}
From \eqref{21-10-6-a1}, \eqref{e-gue220329yyd} and noticing that 
$\frac{\pr\phi}{\pr x_{2n-1}}=0$,  it is straightforward to check that 
\begin{equation}\label{21-6-15-a2}
\begin{split}
&h_0(\tilde x,y,\xi)=g^0\{\frac{1}{2}(\frac{\partial\varphi}{\partial\rho})^2
+\frac{1}{2}(\frac{\partial\phi}{\partial\rho})^2-\frac{\pr\phi}{\pr\rho}\frac{\pr\varphi}{\pr x_{2n-1}}\\
&+\sum_{\alpha, \beta=1}^{2n-1}C_{\alpha,\ol\beta}(\frac{\partial\varphi}{\partial x_\alpha}\frac{\partial\varphi}{\partial x_\beta}+\frac{\partial\phi}{\partial x_\alpha}\frac{\partial\phi}{\partial x_\beta}+i\frac{\partial\phi}{\partial x_\alpha}\frac{\partial\varphi}{\partial x_\beta}-i\frac{\partial\varphi}{\partial x_\alpha}\frac{\partial\phi}{\partial x_\beta})\\
&+\sum_{\alpha=1}^{2n-1}r_{n,\alpha}(\frac{\partial\varphi}{\partial \rho}\frac{\partial\varphi}{\partial x_\alpha}+\frac{\partial\phi}{\partial \rho}\frac{\partial\phi}{\partial x_\alpha}+
i\frac{\partial\phi}{\partial x_\alpha}\frac{\partial\varphi}{\partial\rho}-i\frac{\partial\varphi}{\partial x_\alpha}\frac{\partial\phi}{\partial\rho})\\
&+\sum_{\beta=1}^{2n-1}\hat r_{n,\beta}(\frac{\partial\varphi}{\partial x_\beta}\frac{\partial\varphi}{\partial \rho}+\frac{\partial\phi}{\partial x_\beta}\frac{\partial\phi}{\partial \rho}+
i\frac{\partial\phi}{\partial\rho}\frac{\partial\varphi}{\partial x_\beta}-i\frac{\partial\varphi}{\partial\rho}\frac{\partial\phi}{\partial x_\beta})\}.
\end{split}
\end{equation}
Claim that we can find $\varphi$ such that for all $\xi\in(\delta/4,2\delta)$, 
\begin{equation}\label{e-gue220330yyd}
\begin{split}
&r(\tilde x,y,\xi):=\{\frac{1}{2}(\frac{\partial\varphi}{\partial\rho})^2
+\frac{1}{2}(\frac{\partial\phi}{\partial\rho})^2-\frac{\pr\phi}{\pr\rho}\frac{\pr\varphi}{\pr x_{2n-1}}\\
&+\sum_{\alpha, \beta=1}^{2n-1}C_{\alpha,\ol\beta}(\frac{\partial\varphi}{\partial x_\alpha}\frac{\partial\varphi}{\partial x_\beta}+\frac{\partial\phi}{\partial x_\alpha}\frac{\partial\phi}{\partial x_\beta}+i\frac{\partial\phi}{\partial x_\alpha}\frac{\partial\varphi}{\partial x_\beta}-i\frac{\partial\varphi}{\partial x_\alpha}\frac{\partial\phi}{\partial x_\beta})\\
&+\sum_{\alpha=1}^{2n-1}r_{n,\alpha}(\frac{\partial\varphi}{\partial \rho}\frac{\partial\varphi}{\partial x_\alpha}+\frac{\partial\phi}{\partial \rho}\frac{\partial\phi}{\partial x_\alpha}+
i\frac{\partial\phi}{\partial x_\alpha}\frac{\partial\varphi}{\partial\rho}-i\frac{\partial\varphi}{\partial x_\alpha}\frac{\partial\phi}{\partial\rho})\\
&+\sum_{\beta=1}^{2n-1}\hat r_{n,\beta}(\frac{\partial\varphi}{\partial x_\beta}\frac{\partial\varphi}{\partial \rho}+\frac{\partial\phi}{\partial x_\beta}\frac{\partial\phi}{\partial \rho}+
i\frac{\partial\phi}{\partial\rho}\frac{\partial\varphi}{\partial x_\beta}-i\frac{\partial\varphi}{\partial\rho}\frac{\partial\phi}{\partial x_\beta})\}\\
&=O(\abs{\rho}^N),\ \ \mbox{for every $N\in\mathbb N$}. 
\end{split}
\end{equation}
Until further notice, we will assume that $\xi\in(\delta/4,2\delta)$.
By Taylor expansion with respect to the variable $\rho$, write 
\[\varphi\sim\varphi_0+\rho\varphi_1+\rho^2\phi_2+\ldots,~~\phi\sim\phi_0+\rho\phi_1+\rho^2\phi_2+\ldots,\ \ r(\tilde x,y,\xi)\sim r_0+\rho r_1+\rho^2r_2+\ldots,\]
where $\varphi_j$, $\phi_j$ and $r_j$ are independent of $\rho$, for every $j\in\mathbb N_0$. 
We can check that 
\begin{equation}\label{e-gue220329yyda}
\begin{split}
r_0=(&\frac{1}{2}\varphi_1^2+\frac{1}{2}\phi_1^2+\sum_{\alpha, \beta=1}^{2n-1}C_{\alpha,\ol\beta}\frac{\partial\varphi_0}{\partial x_\alpha}\frac{\partial\varphi_0}{\partial x_\beta}+\sum_{\alpha, \beta=1}^{2n-1} C_{\alpha,\ol\beta}\frac{\partial\phi_0}{\partial x_\alpha}\frac{\partial\phi_0}{\partial x_\beta}-\phi_1\frac{\pr\varphi_0}{\pr x_{2n-1}}\\
&+i\sum_{\alpha, \beta=1}^{2n-1}C_{\alpha,\ol\beta}\frac{\partial\phi_0}{\partial x_\alpha}\frac{\partial\varphi_0}{\partial x_\beta}
-i\sum_{\alpha, \beta=1}^{2n-1}C_{\alpha,\ol\beta}\frac{\partial\varphi_0}{\partial x_\alpha}\frac{\partial\phi_0}{\partial x_\beta}).
\end{split}
\end{equation}
Notice that ${\rm Re\,}[C_{\alpha,\overline\beta}]>0$
and $d_x\varphi_0$ is non-zero vector on $X$ for $d_x\varphi_0=\xi$ 
with $\xi_{2n-1}\in(\frac{\delta}{4}, 2\delta)$. 
Thus, we have $$\sum_{\alpha, \beta=1}^{2n-1}C_{\alpha,\ol\beta}
\frac{\partial\varphi}{\partial x_\alpha}\frac{\partial\varphi}{\partial x_\beta}>0.$$
On the other hand, we can choose an $\R$-equivariant holomorphic frame
$s$ of $L$ such that $\frac{\pr\phi_0}{\pr x_\alpha}(p)=0$, $\alpha=1,\ldots,2n-1$.
From this observation and \eqref{e-gue220517yyd}, we have 
%the local weight $\phi$ does not have first order terms in the Taylor expansion. Thus, when $U$ is sufficiently small one has $|D\phi|_{L^\infty(U)}\ll1.$  Then we have
\begin{equation}
\sum_{\alpha, \beta=1}^{2n-1}C_{\alpha,\ol\beta}(\frac{\partial\varphi_0}{\partial x_\alpha}\frac{\partial\varphi_0}{\partial x_\beta}+\frac{\partial\phi_0}{\partial x_\alpha}\frac{\partial\phi_0}{\partial x_\beta}+i \frac{\partial\phi_0}{\partial x_\alpha}\frac{\partial\varphi_0}{\partial x_\beta}
-i \frac{\partial\varphi_0}{\partial x_\alpha}\frac{\partial\phi_0}{\partial x_\beta})-\phi_1\frac{\pr\varphi_0}{\pr x_{2n-1}}>C\abs{\xi}^2
\end{equation}
when $U$ is sufficiently small, $C>0$ is a constant. 
We choose
\begin{equation}\label{e-gue220426yyd}
\varphi_1=-i\sqrt{\phi_1^2+2\sum_{\alpha, \beta=1}^{2n-1}C_{\alpha,\ol\beta}(\frac{\partial\varphi}{\partial x_\alpha}\frac{\partial\varphi}{\partial x_\beta}+\frac{\partial\phi_0}{\partial x_\alpha}\frac{\partial\phi_0}{\partial x_\beta}+i \frac{\partial\phi_0}{\partial x_\alpha}\frac{\partial\varphi}{\partial x_\beta}
-i \frac{\partial\varphi}{\partial x_\alpha}\frac{\partial\phi_0}{\partial x_\beta})-2\phi_1\frac{\pr\varphi_0}{\pr x_{2n-1}}}
\end{equation}
and hence $r_0(x,y,\xi)=0$. It is not difficult to check that there is a smooth function $a(x,y,\xi)\in\cC^\infty(D\times D\times\R^{2n-1})$ independent of $\rho$ and $\varphi_j$, $j\geq2$, such that
\[r_1=2\varphi_1\varphi_2+a.\]
From the vanishing of this term and since $\varphi_1\neq 0$, we can solve $\varphi_2$. Thus, by the same argument, $\{\varphi_j\}_{j\geq 1}$ are solved and so is $\varphi$.

From \eqref{21-6-15-a2}, we see that $h^0=O(\abs{\rho}^{+\infty})$. By Taylor expansion with respect to the variable $\rho$, for every $j\in\mathbb N_0$, write 
\begin{equation}\label{e-gue220329yydb}
h^{j}(\tilde x, y, \xi)=h^j_0(x,y,\xi)+\rho h^j_1(x,y,\xi)+\rho^2h^j_0(x,y,\xi)+\ldots.
\end{equation}
From the expansion of $\Box_s^{(1)}(e^{ik\varphi} g)$, it is not difficult to see that there is a smooth function $b(x,y,\xi)\in\cC^\infty(D\times D\times\R^{2n-1},T^{*0,1}M'\boxtimes(T^{*0,1}M')^*)$ independent of $\rho$ and $g^0_j$, $j\geq1$, such that
\[h^1_0=-2i\varphi_1g^0_1+b.\]
From the vanishing of this term and since $\varphi_1\neq 0$, we can solve $g^0_1$. Thus, by the same argument, we  can find $\{g^0_j\}_{j\geq 1}$ and $g^0$ such that 
$h^1=O(\abs{\rho}^{+\infty})$. Continuing in this way, we get the theorem.
\end{proof}
%=== 
\begin{lemma}\label{l-gue220330yyd}
Let $q\in\set{0,1,\ldots.n-1}$ and
$\chi, \hat\chi\in\cC^\infty(X)$ with 
\(\supp\chi\cap\supp\hat\chi=\emptyset.\) Then 
\[\hat\chi F^{(q)}_{\tau_{k\delta}}\chi=O(k^{-\infty}): 
W^\ell(X,T^{*0,q}M'\otimes L^k)\To W^\ell(X,T^{*0,q}M'\otimes L^k),\]
for all $\ell\in\mathbb Z$. 
\end{lemma}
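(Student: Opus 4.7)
The plan is to use functional calculus to write $F^{(q)}_{\tau_{k\delta}}$ as an average of the unitary group generated by $iT$, and to exploit the compact-torus nature of the $\R$-action to reduce the problem to the Schwartz decay of $\hat{\tau}_\delta$. Concretely, since $-iT$ is self-adjoint on $L^2(X,T^{*0,q}M'\otimes L^k)$ by Theorem~\ref{t-gue220315yyd}, Stone's theorem and the Fourier inversion formula for functional calculus give
\begin{equation*}
F^{(q)}_{\tau_{k\delta}}=\tau_{k\delta}(-iT)=\frac{1}{2\pi}\int_{\R}\hat{\tau}_{k\delta}(s)\,e^{sT}\,ds,
\end{equation*}
where $e^{sT}$ is the unitary lift to $T^{*0,q}M'\otimes L^{k}$ of the flow $\Phi_s$ of the vector field $T$ on $X$ (well defined because both the bundle and the Hermitian metric are $\R$-invariant). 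Since $\tau_\delta\in\cC^\infty_c(\R)$, $\hat{\tau}_\delta\in\mathcal{S}(\R)$, and the rescaling $\tau_{k\delta}(t)=\tau_\delta(t/k)$ yields $\hat{\tau}_{k\delta}(s)=k\hat{\tau}_\delta(ks)$.

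Next I would invoke the disjoint-support hypothesis. Because ${\rm supp\,}\chi$ and ${\rm supp\,}\hat{\chi}$ are compact and disjoint, and $\Phi_0=\mathrm{Id}$ with $\Phi_s$ jointly continuous in $s$ and $x$, there exists $s_0>0$ so that $\Phi_s({\rm supp\,}\hat{\chi})\cap{\rm supp\,}\chi=\emptyset$ whenever $|s|<s_0$. Since ${\rm supp\,}(e^{sT}\chi u)\subset\Phi_{-s}({\rm supp\,}\chi)$, we conclude $\hat{\chi}\cdot e^{sT}\chi u\equiv 0$ for $|s|<s_0$, so
\begin{equation*}
\hat{\chi}F^{(q)}_{\tau_{k\delta}}\chi u=\frac{1}{2\pi}\int_{|s|\geq s_0}\hat{\tau}_{k\delta}(s)\,\hat{\chi}\,e^{sT}\chi u\,ds.
\end{equation*}
By Schwartz decay, for every $N\in\N$ there is $C_N>0$ with
\begin{equation*}
\int_{|s|\geq s_0}|\hat{\tau}_{k\delta}(s)|\,ds=\int_{|\sigma|\geq ks_0}|\hat{\tau}_\delta(\sigma)|\,d\sigma\leq C_N(ks_0)^{-N}.
\end{equation*}

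Finally I would show that $e^{sT}$ acts uniformly boundedly on $W^\ell(X,T^{*0,q}M'\otimes L^k)$, with constants independent of both $s\in\R$ and $k$. In a rigid trivialization $s$ of $L$ (which exists by Definition~\ref{d-gue150508dII}) the local weight $\phi$ of $h^L$ satisfies $T\phi=0$ by $\R$-invariance, and the BRT frames of $T^{*0,q}M'$ are $T$-invariant by construction. Consequently, the passage to the hatted representation $\hat{u}=e^{-k\phi}\tilde{u}$ used to define $\|\cdot\|_{\ell,X,k}$ commutes with $e^{sT}$, which in these frames simply becomes pullback by the diffeomorphism $\Phi_s$. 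By \cite[Theorem~3.5]{HHL17} the $\R$-action factors through a compact torus $T^d$, so $\{\Phi_s\}_{s\in\R}$ lies in a compact subset of $\mathrm{Diff}(X)$; hence $\Phi_s^*$ is bounded on $W^\ell(X,\cdot)$ uniformly in $s$. Combining with the integrated tail estimate gives
\begin{equation*}
\|\hat{\chi}F^{(q)}_{\tau_{k\delta}}\chi u\|_{\ell,X,k}\leq C'_{N,\ell}\,k^{-N}\|u\|_{\ell,X,k}
\end{equation*}
for every $N$, which is exactly the $O(k^{-\infty})$ claim.

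The only real subtlety is the book-keeping in the last paragraph: one must verify that the $k$-dependent weights $e^{-k\phi}$ and the bundle transition data do not introduce $k$-growth when conjugating $e^{sT}$. The $T$-invariance of rigid frames and of $\phi$, together with the torus factorisation of the action, are precisely what make the operator norm of $e^{sT}$ on $W^\ell(X,T^{*0,q}M'\otimes L^k)$ bounded independently of $k$. Once this is in hand, the argument is just Fourier decay plus a finite-propagation-speed-type cutoff.
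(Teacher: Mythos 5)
Your proof is correct and takes a genuinely different route from the paper. You express $F^{(q)}_{\tau_{k\delta}}$ via functional calculus as an integral $\frac{1}{2\pi}\int_{\R}\hat\tau_{k\delta}(s)\,e^{sT}\,ds$ over the unitary group generated by $-iT$ (Stone's theorem), cut out a neighborhood of $s=0$ using disjoint supports and finite propagation speed, and then use the Schwartz decay of $\hat\tau_{k\delta}(s)=k\hat\tau_\delta(ks)$ to kill the tail. The paper instead works explicitly in BRT coordinates, writes $F^{(q)}_{\tau_{k\delta},s,s_1}$ as a Fourier series over the torus lattice $(m_1,\dots,m_d)\in\mathbb Z^d$, and splits it into a ``local'' piece $B_k$ killed by the Fourier inversion formula and a ``remote'' piece $R_k$ estimated by repeated integration by parts in $\xi_{2n-1}$. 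Both arguments ultimately rest on the same structural inputs (self-adjointness of $-iT$, the compact torus factorisation from \cite[Theorem 3.5]{HHL17}, and the rapid decay of the rescaled cutoff), but yours is coordinate-free and uses the spectral theorem globally, while the paper's is a hands-on computation in a BRT chart; yours is arguably cleaner, the paper's has the advantage of producing the local formula \eqref{e-gue131217} which is reused elsewhere.

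One imprecision worth flagging: your statement that the operator norm of $e^{sT}$ on $W^\ell(X,T^{*0,q}M'\otimes L^k)$ is bounded \emph{independently of $k$} is slightly too strong. The $k$-dependent Sobolev norm is defined via the hatted representations $\hat u=e^{-k\phi}\tilde u$ relative to a \emph{fixed} covering by rigid trivializing frames, and when $\Phi_s$ carries the support of $\hat\chi$ across a patch boundary the change of rigid frame introduces a unitary phase $e^{ik\arg g}$ (with $g$ the relevant transition function). Each $x$-derivative of that phase brings down a factor of $k$, so for $\ell\geq1$ the localized operator norm of $e^{sT}$ is only $O\bigl((1+k)^{\ell}\bigr)$, uniformly in $s$ by compactness of the torus. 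This does not damage the argument---the polynomial factor is absorbed by the $O(k^{-N})$ tail for arbitrary $N$, so the final estimate is still $O(k^{-\infty})$---but the statement as written is not literally true, and it is precisely the book-keeping that you correctly identified as the delicate point. (If one insists on a genuinely $k$-uniform bound, one can cover $X$ by $T$-flow-invariant tubular neighborhoods, on which the rigid frames extend globally and the phase factor disappears; then $(e^{sT})_{s,s}=\Phi_s^*$ is uniformly bounded. But the paper's definition of $O(k^{-\infty})$ requires the estimate for \emph{arbitrary} pairs of trivializations, so the polynomial-growth formulation is the one that applies directly.)
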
 

\begin{proof} 
We may assume that $\chi\in\cC^\infty_c(D)$, $\hat\chi\in\cC^\infty_c(D_1)$, $D, D_1$ are open canonical coordinate patches of $X$, $D\bigcap D_1=\emptyset$, 
and there are $\R$-equivariant CR frames $s$ and $s_1$ defined on $D$ and $D_1$ respectively. 
As before, let 
\[F^{(q)}_{\tau_{k\delta},s,s_1}: \cC^\infty_c(D,T^{*0,q}M')\To\cC^\infty(D_1,T^{*0,q}M')\]
be the localization of $F^{(q)}_{\tau_{k\delta}}$ with respect to $s_1$ and $s$. We write $x=(x_1,\ldots,x_{2n-1})$ to denote canonical local coordinates of $D$ and $D_1$. 
Let $u\in\cC^\infty_c(D,T^{*0,q}M')$. 
It is easy to see that on $D_1$,
\begin{equation}\label{e-gue131217}
\begin{split}
&F^{(q)}_{\tau_{k\delta},s,s_1}u(y)\\
&=\sum_{(m_1,\ldots,m_d)\in\mathbb Z^d}\tau_\delta\Bigr(\frac{\sum^d_{j=1}m_j\beta_j}{k}\Bigr)e^{i(\sum^d_{j=1}m_j\beta_j) y_{2n-1}}\\
&\quad\times\int_{T^d}e^{-(im_1\theta_1+\ldots+im_d\theta_d)}
u((e^{i\theta_1},\ldots,e^{i\theta_d})\circ y')dT_d,
\end{split}
\end{equation}
where $y'=(y_1,\ldots,y_{2n-2},0)$, $dT_d=(2\pi)^{-d}d\theta_1\ldots d\theta_d$ 
and $\beta_1\in\R,\ldots,\beta_d\in\R$ are as in \eqref{e-gue220330ycd}. 
Fix $D'\Subset D_1$ and let $\gamma(y_{2n-1})\in
\cC^\infty_c((-\varepsilon,\varepsilon))$ such that 
$\gamma(y_{2n-1})=1$ for every $(y',y_{2n-1})\in D'$. 
Let $R_{k}:\cC^\infty(X,T^{*0,q}M')\To\cC^\infty(D',T^{*0,q}M')$ 
be the continuous operator given by
\begin{equation}\label{e-gue131217I}
\begin{split}
%R_{k}:C^\infty_0(D')&\To C^\infty(D'),\\
&(R_kv)(x)=\\
&\frac{1}{2\pi}\sum_{(m_1,\ldots,m_d)\in\mathbb Z^d}\:
\int\limits_{T^d}e^{i\langle x_{2n-1}-y_{2n-1},\xi_{2n-1}\rangle
+i(\sum^d_{j=1}m_j\beta_j)y_{2n-1}-im_1\theta_1
-\ldots-im_d\theta_d}\\
&\times\tau_\delta\Bigr(\frac{\xi_{2n-1}}{k}\Big)
(1-\gamma(y_{2n-1}))v((e^{i\theta_1},\ldots,e^{i\theta_d})\circ x')
dT_dd\xi_{2n-1}dy_{2n-1},
\end{split}
\end{equation}
where $v\in\cC^\infty(X,T^{*0,q}M')$. 
We claim that 
\begin{equation}\label{e-gue170831}
R_k=O(k^{-\infty}): W^s(X,T^{*0,q}M')\To 
W^s_{{\rm loc\,}}(D',T^{*0,q}M'),\ \ \forall s\in\mathbb N_0.
\end{equation}
We only prove the claim \eqref{e-gue170831} for $s=0$. 
For any $s\in\mathbb N$, the proof is similar. 
Let $v\in\cC^\infty(X,T^{*0,q}M')$. 
By using integration by parts with respect to $\xi_{2n-1}$ several times, 
it is straightforward to check that for every $N\in\mathbb N$, 
there is a constant $C_N>0$ independent of $k$ such that 
\begin{equation}\label{e-gue170831I}
\begin{split}
&\int_{D'}\abs{R_kv}^2(x)\\
&\leq C_Nk^{-N}\sum_{(m_1,\ldots,m_d)\in\mathbb Z^d}
\int_{D'}\abs{\int_{T_d}v((e^{i\theta_1},\ldots,e^{i\theta_d})\circ x)
e^{-im_1\theta_1-\ldots-im_d\theta_d}dT_d}^2dv_X(x)\\
&\leq C_Nk^{-N}\norm{v}^2_X.
\end{split}
\end{equation}
From \eqref{e-gue170831I}, we get \eqref{e-gue170831} for $s=0$. 
For any $s\in\N$, the proof is similar. 
Now, we claim that
\begin{equation}\label{e-gue131217III}
\hat\chi R_k=\hat\chi F_{k, \delta,s,s_1}\ \ \mbox{on $\cC^\infty_c(D,T^{*0,q}M')$}.
\end{equation} 
Let $u\in\cC^\infty_c(D,T^{*0,q}M')$. On $D_1$, let 
\begin{equation}\label{e-gue131217IVz}
\begin{split}
&B_{k}u(x)\\
&:=\frac{1}{2\pi}\sum_{(m_1,\ldots,m_d)\in\mathbb Z^d}
\int e^{i\langle x_{2n-1}-y_{2n-1},\xi_{2n-1}\rangle}
\tau_\delta\Big(\frac{\xi_{2n-1}}{k}\Big)\gamma(y_{2n-1})\\
&\quad\times e^{i(\sum^d_{j=1}m_j\beta_j)y_{2n-1}-im_1\theta_1-\ldots-im_d\theta_d}u((e^{i\theta_1},\ldots,e^{i\theta_d})\circ x')dT_ddy_{2n-1}d\xi_{2n-1}.
\end{split}
\end{equation}
From Fourier inversion formula and notice that $D\bigcap D_1=\emptyset$, it is straightforward to see that
\begin{equation}\label{e-gue131217IV}
\begin{split}
&\hat\chi(x)B_{k}u(x)\\
&=\hat\chi(x)\frac{1}{2\pi}\int e^{i\langle x_{2n-1}-y_{2n-1},\xi_{2n-1}\rangle}
\tau_\delta\Big(\frac{\xi_{2n-1}}{k}\Big)\gamma(y_{2n-1})u(x',y_{2n-1}))dT_ddy_{2n-1}d\xi_{2n-1}\\
&=0.
\end{split}
\end{equation}
From \eqref{e-gue131217IVz} and \eqref{e-gue131217I}, we have
\begin{equation}\label{e-gue131217V}
\begin{split}
&(B_{k}+R_{k})u(x)\\
&=\frac{1}{2\pi}\sum_{(m_1,\ldots,m_d)\in\mathbb Z^d}
\int e^{i\langle x_{2n-1}-y_{2n-1},\xi_{2n-1}\rangle}
\tau_\delta\Big(\frac{\xi_{2n-1}}{k}\Big)\\
&\quad\times e^{i(\sum^d_{j=1}m_j\beta_j)y_{2n-1}-im_1\theta_1-\ldots-im_d\theta_d}u((e^{i\theta_1},\ldots,e^{i\theta_d})\circ x')dT_ddy_{2n-1}d\xi_{2n-1}.
\end{split}\end{equation}
Note that the following formula holds for every ${t\in\R}$,
\begin{equation}\label{dm}
\int e^{i{t} y_{2n-1}}e^{-iy_{2n-1}\xi_{2n-1}}dy_{2n-1}=2\pi\delta_{t}(\xi_{2n-1}),
\end{equation}
where the integral is defined as an oscillatory integral and $\delta_{t}$ is the Dirac measure at ${t}$.
Using \eqref{e-gue131217}, \eqref{dm} and the Fourier inversion formula,
 \eqref{e-gue131217V} becomes
\begin{equation}\label{e-gue131217VI}
\begin{split}
&(B_{k}+R_k)u(x)\\
&=\sum_{(m_1,\ldots,m_d)\in\mathbb Z^d}\tau_\delta\Big(\frac{\sum^d_{j=1}m_j\beta_j}{k}\Big)e^{i(\sum^d_{j=1}m_j\beta_j)x_{2n-1}}
\int_{T_d}e^{-im_1\theta_1-\ldots-im_d\theta_d}u((e^{i\theta_1},\ldots,e^{i\theta_d})\circ x')dT_d\\
&=F_{k, \delta,s,s_1}u(x).
\end{split}\end{equation}
From \eqref{e-gue131217IV} and \eqref{e-gue131217VI}, the claim \eqref{e-gue131217III} follows.
From \eqref{e-gue170831}, \eqref{e-gue131217III} and taking adjoint, the lemma follows.
\end{proof}
 
 Recall $U$ is an open subset of $M'$ and $D=U\cap X$. We define 
\begin{equation}\label{22-1-12-a1}
\begin{split}
&B_{\tau_{k\delta}}^{(1)}=e^{k\phi}s^k B_{\tau_{k\delta}, s}^{(1)}(s^{-k}e^{-k\phi}): \cC^\infty_c(D,T^{*0,1}M'\otimes L^k)\rightarrow \cC^\infty(D,T^{*0,1}M'\otimes L^k),\\
&\tilde{B}_{\tau_k\delta}^{(1)}=e^{k\phi}s^k \tilde B_{\tau_{k\delta}, s}^{(1)}(s^{-k}e^{-k\phi}):  \cC^\infty_c(D,T^{*0,1}M'\otimes L^k)\rightarrow \Omega^{0, 1}(U\cap\ol M, L^k).
\end{split}
\end{equation}
Note that $B^{(1)}_{\tau_{k\delta}, s}$ and $\tilde B^{(1)}_{\tau_{k\delta}, s}$ are actually the localizations of $B^{(1)}_{\tau_{k\delta}}$ and $\tilde B^{(1)}_{\tau_{k\delta}}$ with respect to $s$ over $D$ and $U$ respectively. 

\begin{theorem}\label{t-gue220406yyd}
Let $\chi\in\cC^\infty_c(D)$, $\chi_1\in\cC^\infty_c(U\cap\ol M)$ with $\chi_1=1$ near $\supp\chi$. We have 
\begin{equation}\label{e-gue220406yyd}
P_{k}^{(1)}F_{\tau_{k\delta}}^{(1)}\chi-\chi_1\tilde{B}_{\tau_{k\delta}}^{(1)}\chi=O(k^{-\infty}): W^{t}(X, T^{\ast 0, 1}M'\otimes L^{k}) \rightarrow W^{t+\frac{1}{2}}(\overline M, T^{\ast 0, 1}M'\otimes L^{k}), \forall t\in\mathbb Z. 
\end{equation}
\end{theorem}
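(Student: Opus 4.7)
The strategy is to apply the reproducing identity $P^{(1)}_k\gamma + N^{(1)}_k\Box^{(1)}_{f,k} = I$ from \eqref{e-gue190418yyda} to the section $\chi_1\tilde B^{(1)}_{\tau_{k\delta}}\chi$. This decomposes it as a Poisson term (the trace of the interior parametrix) plus a Neumann term (measuring the failure of $\chi_1\tilde B^{(1)}_{\tau_{k\delta}}\chi$ to be harmonic), and the goal is to show that the trace matches $F^{(1)}_{\tau_{k\delta}}\chi$ modulo $O(k^{-\infty})$ while the Neumann term is itself $O(k^{-\infty})$.

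For the boundary trace, the second identity in \eqref{e-gue220328yydI} gives, via \eqref{22-1-12-a1} transported to the $L^k$ picture, that $\gamma(\chi_1\tilde B^{(1)}_{\tau_{k\delta}}\chi) = \chi_1|_X B^{(1)}_{\tau_{k\delta}}\chi$. I would then combine Lemma~\ref{l-gue131209}, which says $F^{(1)}_{\tau_{k\delta},s} - B^{(1)}_{\tau_{k\delta},s} = O(k^{-\infty})$ locally on $D$, with Lemma~\ref{l-gue220330yyd} applied to the disjoint-support pair $1-\chi_1|_X$ and $\chi$: the first yields $\chi_1|_X B^{(1)}_{\tau_{k\delta}}\chi = \chi_1|_X F^{(1)}_{\tau_{k\delta}}\chi + O(k^{-\infty})$, and the second yields $(1-\chi_1|_X)F^{(1)}_{\tau_{k\delta}}\chi = O(k^{-\infty})$. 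Together they produce $\chi_1|_X B^{(1)}_{\tau_{k\delta}}\chi = F^{(1)}_{\tau_{k\delta}}\chi + O(k^{-\infty})$. Composing with $P^{(1)}_k$, whose polynomial bound in $k$ is provided by Lemma~\ref{l-gue220314yyd}, preserves the $O(k^{-\infty})$ error and produces the desired main term $P^{(1)}_k F^{(1)}_{\tau_{k\delta}}\chi$.

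For the Neumann term, the polynomial estimate for $N^{(1)}_k$ in Lemma~\ref{l-gue220314yyd} reduces matters to showing $\Box^{(1)}_{f,k}(\chi_1\tilde B^{(1)}_{\tau_{k\delta}}\chi) = O(k^{-\infty})$. Expand via the Leibniz rule,
\[
\Box^{(1)}_{f,k}(\chi_1\tilde B^{(1)}_{\tau_{k\delta}}\chi) = \chi_1\,\Box^{(1)}_{f,k}\tilde B^{(1)}_{\tau_{k\delta}}\chi + [\Box^{(1)}_{f,k},\chi_1]\tilde B^{(1)}_{\tau_{k\delta}}\chi.
\]
The first term is $O(k^{-\infty})$ directly from the first line of \eqref{e-gue220328yydI}. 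For the commutator term, $[\Box^{(1)}_{f,k},\chi_1]$ is a first-order differential operator of size $O(k)$ supported in $\set{d\chi_1\neq 0}$, which is disjoint from ${\rm supp\,}\chi$ because $\chi_1\equiv 1$ near ${\rm supp\,}\chi$. So the remaining task is to show that the oscillatory kernel $\tilde B^{(1)}_{\tau_{k\delta},s}(\tilde x,y)$ is $O(k^{-\infty})$ when $\tilde x\in{\rm supp\,}d\chi_1$ and $y\in{\rm supp\,}\chi$, i.e.\ outside a neighborhood of the boundary diagonal $\set{\tilde x=y}\cap(X\times X)$.

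The main obstacle is precisely this last step, for which two decay mechanisms must be combined to yield a uniform $O(k^{-\infty})$ estimate. Integration by parts in $\xi$ gives arbitrary negative powers of $k$ wherever $d_\xi\varphi = (x-y) + O(\rho)$ is bounded away from zero, handling separation in the $X$-directions; the positivity $i\varphi_1 - \abs{\frac{\pr\phi}{\pr\rho}(x)} > C\abs{\xi}$ from \eqref{e-gue220328yyd} provides an exponential factor $e^{-ck|\rho||\xi|}$ away from $X$; and the compact $\xi_{2n-1}$-support from \eqref{e-gue220328yydIz} together with the compact supports of $\chi,\chi_1$ lets one restrict the $\xi$-integral to a finite-measure region and then apply either mechanism pointwise, according to whether the separation is in the $X$-diagonal direction or in the $\rho$-direction. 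Once this kernel estimate is in hand, everything assembles via the polynomial bounds of Lemma~\ref{l-gue220314yyd} to give \eqref{e-gue220406yyd}.
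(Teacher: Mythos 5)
Your proposal follows exactly the same route as the paper: apply the reproducing identity $N^{(1)}_k\Box^{(1)}_{f,k}+P^{(1)}_k\gamma=I$ to $\chi_1\tilde B^{(1)}_{\tau_{k\delta}}\chi$, identify the boundary trace with $B^{(1)}_{\tau_{k\delta}}$ and then with $F^{(1)}_{\tau_{k\delta}}$ via Lemma~\ref{l-gue131209} and Lemma~\ref{l-gue220330yyd}, and kill the Neumann term by showing $\Box^{(1)}_{f,k}(\chi_1\tilde B^{(1)}_{\tau_{k\delta}}\chi)=O(k^{-\infty})$ (Leibniz split into $\chi_1\Box^{(1)}_{f,k}\tilde B^{(1)}_{\tau_{k\delta}}\chi$ plus commutator), all absorbed through the polynomial-in-$k$ bounds of Lemma~\ref{l-gue220314yyd}. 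The paper compresses the two off-diagonal $O(k^{-\infty})$ claims into the single display \eqref{e-gue220409yyd} ``from the construction''; you helpfully spell out the underlying mechanism (non-stationary phase in $\xi$ for separation along $X$, plus the damping $e^{-ck|\rho||\xi|}$ coming from $i\varphi_1>C|\xi|$ for separation in $\rho$), which is the correct picture. One small inaccuracy in that last paragraph: the $\xi$-integral is \emph{not} over a finite-measure region --- only $\xi_{2n-1}$ has compact support (in $[\delta/4,2\delta]$), while $\xi'=(\xi_1,\dots,\xi_{2n-2})$ ranges over $\R^{2n-2}$. Convergence and the $O(k^{-\infty})$ gain in the $\xi'$-directions come from the oscillation/damping itself (the kernel is an oscillatory integral; for $\rho<0$ the exponential factor provides absolute convergence), not from restricting to a compact $\xi$-set. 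The conclusion is unaffected, but the phrasing should be corrected.
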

 
 \begin{proof}
 From \eqref{e-gue190418yyda}, we have 
 \begin{equation}\label{e-gue220406yydI}
 N^{(1)}_k\Box^{(1)}_{f,k}\chi_1\tilde B^{(1)}_{\tau_{k\delta}}\chi+P^{(1)}_k\gamma\chi_1\tilde B^{(1)}_{\tau_{k\delta}}\chi=\chi_1\tilde B^{(1)}_{\tau_{k\delta}}\chi.
 \end{equation} 
 From \eqref{e-gue220328yydI} and the construction of $\tilde B^{(1)}_{\tau_{k\delta}}$, we see that 
 \begin{equation}\label{e-gue220409yyd}
 \begin{split}
 &\mbox{$[\Box^{(1)}_{f,k},\chi_1]\tilde B^{(1)}_{\tau_{k\delta}}\chi\equiv0\mod O(k^{-\infty})$ on $(U\times U)\bigcap(\ol M\times X)$},\\
 &\mbox{$\Box^{(1)}_{f,k}\tilde B^{(1)}_{\tau_{k\delta}}\chi\equiv0\mod O(k^{-\infty})$ on $(U\times U)\bigcap(\ol M\times X)$}.
 \end{split}
 \end{equation}
 Let $\hat\chi_1:=\gamma\chi_1$. From  \eqref{e-gue220314yydII}, \eqref{e-gue220314yydIIa}, Lemma~\ref{l-gue131209}, \eqref{e-gue220406yydI} and \eqref{e-gue220409yyd}, we get 
 \begin{equation}\label{e-gue220409yydII}
 P^{(1)}_k\hat\chi_1F^{(1)}_{\tau_{k\delta}}\chi-\chi_1\tilde B^{(1)}_{\tau_{k\delta}}\chi=O(k^{-\infty}):  W^\ell(X,T^{*0,1}M'\otimes L^k)\To W^{\ell+\frac{1}{2}}(\ol M,T^{*0,1}M'\otimes L^k),
 \end{equation}
 for every $\ell\in\R$. From \eqref{e-gue220314yydIIa}, Lemma~\ref{l-gue220330yyd} and \eqref{e-gue220409yydII}, we get \eqref{e-gue220406yyd}.  
 \end{proof}
 
 We now introduce the operator $R^{(q)}$ acting on $(0,q)$-forms. 

\begin{definition}\label{d-gue220410yyd}
The operator $R^{(q)}$ acting on $(0,q)$-forms is given by 
\begin{equation}\label{e-gue220410yyd}
\begin{split}
R^{(q)}: \Omega^{0,q}(M',L^k)&\To\Omega^{0,q}(M',L^k),\\
u&\mapsto\Bigr(2((\ddbar\rho)\wedge)^*\circ\ddbar_k+2
\ddbar_k\circ((\ddbar\rho)\wedge)^*-iT\Bigr)u.
\end{split}
\end{equation}
\end{definition}

Let 
\[(R^{(q)})^*: \cC^\infty(M',T^{*0,1}M'\otimes L^k)
\To\cC^\infty(M',T^{*0,1}M'\otimes L^k)\]
be the adjoint of $R^{(q)}$ with respect to $(\,\cdot\,|\,\cdot\,)_{M',k}$. 
As before, let $s$ be a $\R$-equivariant holomorphic frame of 
$L$ defined on an open set $U$ of $M'$ with $D:=U\cap X\neq\emptyset$. 
Let $\{e^{\ol j}\}^n_{j=1}$ be an orthonormal basis for 
$T^{*0,1}M'$ with $e^{\ol n}=\frac{\ddbar\rho}{|\ddbar\rho|}$. 
Let $u\in\Omega^{0,1}_c(U,L^k)$. 
Write $u=s^k\otimes\sum^n_j u_je^{\ol j}$. It is straightforward to check that 
\begin{equation}\label{e-gue220410yydI}
\begin{split}
(R^{(1)})^*u&=s^k\otimes\sum^n_{j=1}((iJ(\frac{\pr}{\pr\rho})-\frac{\pr}{\pr\rho}-iT)u_j)e^{\ol j}\\
&+2ks^k\otimes\sum^n_{j=1}\bigr((iJ(\frac{\pr}{\pr\rho})+\frac{\pr}{\pr\rho}-iT)\phi\bigr)u_je^{\ol j}+
s^k\otimes Z_0\Bigr(\sum^n_{j=1}u_je^{\ol j}\Bigr),
\end{split}
\end{equation}
where $Z_0$ is a zero order differential operator, $Z_0$ is independent of $k$. Put 
\begin{equation}\label{e-gue220410yydII}
\mathcal{N}^{(1)}_{\tau_{k\delta}}:=\gamma(R^{(1)})^*P^{(1)}_kF^{(1)}_{\tau_{k\delta}}: \cC^\infty(X,T^{*0,1}M'\otimes L^k)\To\Omega^{0,1}(X,T^{*0,1}M'\otimes L^k).
\end{equation}
Let $\mathcal{N}^{(1)}_{\tau_{k\delta},s}$ be the localization of $\mathcal{N}^{(1)}_{\tau_{k\delta}}$ with respect to $s$. 

\begin{lemma}\label{l-gue220410yyd}
Let $\chi\in\cC^\infty_c(D)$. We have 
\begin{equation}\label{e-gue220412yyd}
\mathcal{N}^{(1)}_{\tau_{k\delta}}\chi=\tilde\chi_1\widehat{\mathcal{N}}^{(1)}_{\tau_{k\delta}}\chi+\varepsilon^{(1)}_k,
\end{equation}
where $\tilde\chi_1\in\cC^\infty_c(D)$, $\tilde\chi_1\equiv1$ near $\supp\chi$, $\widehat{\mathcal{N}}^{(1)}_{\tau_{k\delta}}$ is a continuous operator 
\[\widehat{\mathcal{N}}^{(1)}_{\tau_{k\delta}}: \cC^\infty_c(D,T^{*0,1}M'\otimes L^k)\To{\cC^\infty}(D,T^{*0,1}M'\otimes L^k)\]
 with 
 \begin{equation}\label{e-gue220410yydIII}
 \begin{split}
& \widehat{\mathcal{N}}^{(1)}_{\tau_{k\delta},s}={\rm Op\,}_k(a),\\
 &a\in S^{2n}_{{\rm cl\,}}((1+\abs{\xi});D\times D\times\R^{2n-1},T^{*0,1}M'\boxtimes(T^{*0,1}M')^*),\\
 &\sigma^0_a(x,y,\xi)=(-i\varphi_1(x,\xi)+\frac{\pr\phi}{\pr\rho}(x))\tau_\delta(\xi_{2n-1}),
 \end{split}
 \end{equation}
 where $\varphi_1$ is as in \eqref{e-gue220328yyd}, $\widehat{\mathcal{N}}^{(1)}_{\tau_{k\delta},s}$ is the localization of $\widehat{\mathcal{N}}^{(1)}_{\tau_{k\delta}}$ with respect to $s$ and $\varepsilon^{(1)}_k$ is continuous: 
 \begin{equation}\label{e-gue220411yyd}
 \varepsilon^{(1)}_k=O(k^{-\infty}): W^{\ell}(X,T^{*0,1}M'\otimes L^k)\To W^{\ell-1}(X,T^{*0,1}M'\otimes L^k),\ \ \forall\ell\in\R. 
 \end{equation}
 \end{lemma}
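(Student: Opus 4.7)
My plan is to reduce the analysis of $\mathcal{N}^{(1)}_{\tau_{k\delta}}\chi$ to the boundary trace of the explicit oscillatory integral constructed in Theorem~\ref{21-5-19-thm1}, and then read off the semi-classical pseudodifferential structure. First, I would apply Theorem~\ref{t-gue220406yyd} to replace $P^{(1)}_k F^{(1)}_{\tau_{k\delta}}\chi$ by $\chi_1 \tilde B^{(1)}_{\tau_{k\delta}}\chi$ modulo an $O(k^{-\infty})$ smoothing term, with $\chi_1\in\cC^\infty_c(U\cap\ol M)$ equal to $1$ near ${\rm supp\,}\chi$. Since $(R^{(1)})^*$ is a first-order differential operator whose coefficients grow at most linearly in $k$, the composition $\gamma(R^{(1)})^*(\cdot)$ maps $W^{\ell+1/2}(\ol M)$ continuously to $W^{\ell-1}(X)$ with norm $O(k)$, so any $O(k^{-\infty})$ residue from Theorem~\ref{t-gue220406yyd} remains $O(k^{-\infty})$ after this composition and is absorbed into $\varepsilon^{(1)}_k$. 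Moreover $(R^{(1)})^*$ is local, hence ${\rm supp\,}(\gamma(R^{(1)})^*\chi_1\tilde B^{(1)}_{\tau_{k\delta}}\chi)$ lies in $\gamma({\rm supp\,}\chi_1)\cap X$, which I enclose by a cut-off $\tilde\chi_1\in\cC^\infty_c(D)$ equal to $1$ on it, yielding the displayed form in \eqref{e-gue220412yyd}.

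Next, working in the trivialization $s$ with BRT coordinates $(x_1,\ldots,x_{2n-1})$ on $D$ and $(x_1,\ldots,x_{2n-1},\rho)$ on $U$, I would apply $(R^{(1)})^*_s$ to the oscillatory kernel $\tilde B^{(1)}_{\tau_{k\delta},s}(\tilde x, y) = \int_{\R^{2n-1}} e^{ik\varphi(\tilde x,y,\xi)} g(\tilde x, y, \xi, k)\,d\xi$ via formula \eqref{e-gue220410yydI}. The derivative $\pr/\pr\rho$ applied to $e^{ik\varphi}$ produces $ik\frac{\pr\varphi}{\pr\rho}\cdot e^{ik\varphi}$, which evaluates to $ik\varphi_1(x,y,\xi)\cdot e^{ik\langle x-y,\xi\rangle}$ at $\rho=0$ by the very choice of $\varphi_1$ in Theorem~\ref{21-5-19-thm1}. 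Using $T = J(\pr/\pr\rho)$ on $X$ from \eqref{e-gue220311yyd} and $T\phi = 0$ on $X$ (a consequence of the rigidity of $s$ and the $T$-invariance of $h^L$), the first-order part $iJ(\pr/\pr\rho) - \pr/\pr\rho - iT$ of $(R^{(1)})^*_s$ reduces on $X$ to $-\pr/\pr\rho$, and the $k$-linear multiplicative factor $2k\bigl(iJ(\pr/\pr\rho) + \pr/\pr\rho - iT\bigr)\phi$ reduces on $X$ to a universal multiple of $k\frac{\pr\phi}{\pr\rho}$. Collecting the top-order-in-$k$ contributions using the leading term of the symbol expansion \eqref{e-gue220323yydq}, the oscillatory kernel on $X$ has phase $\langle x-y,\xi\rangle$ and leading amplitude proportional to $\bigl(-i\varphi_1(x,y,\xi) + \frac{\pr\phi}{\pr\rho}(x)\bigr)\tau_\delta(\xi_{2n-1})$ times the identity on $T^{*0,1}M'$.

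Finally, the resulting kernel takes the form ${\rm Op}_k(a)$ in the sense of Definition~\ref{d-gue13628I}, with $a\in S^{2n}_{\rm cl\,}((1+\abs{\xi}); D\times D\times\R^{2n-1}, T^{*0,1}M'\boxtimes(T^{*0,1}M')^*)$: membership in this symbol class follows from the compact-support properties of $\tau_\delta$ on $[\delta/4,2\delta]$ together with \eqref{e-gue220328yydIz}, which forces $g$, and hence $a$, to vanish outside $\xi_{2n-1}\in[\delta/4,2\delta]$, while the factor $(1+\abs{\xi})$ in the order function reflects the degree-one behaviour in $\xi$ of $\varphi_1$ from \eqref{e-gue220426yyd}. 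The principal symbol $\sigma^0_a$ is then identified as $\bigl(-i\varphi_1 + \frac{\pr\phi}{\pr\rho}\bigr)\tau_\delta(\xi_{2n-1})$. The main obstacle I anticipate is the exact bookkeeping of the order-$k$ contributions when $(R^{(1)})^*$ hits the Poisson-type integral at the boundary: both the differential part (producing $ik\varphi_1$ at $\rho=0$) and the $k$-linear multiplicative part of $(R^{(1)})^*$ contribute at top order, and isolating the claimed combined leading symbol requires careful use of the identities $T = J(\pr/\pr\rho)$ and $T\phi = 0$ on $X$ in tandem with the construction of $\varphi$ and $g$ in Theorem~\ref{21-5-19-thm1}.
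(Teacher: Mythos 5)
Your overall strategy coincides with the paper's: replace $P^{(1)}_kF^{(1)}_{\tau_{k\delta}}\chi$ by $\chi_1\tilde B^{(1)}_{\tau_{k\delta}}\chi$ via Theorem~\ref{t-gue220406yyd}, use the polynomial bound \eqref{e-gue220411yydI} to absorb the $O(k^{-\infty})$ remainder into $\varepsilon^{(1)}_k$, and then read off the principal symbol of $\gamma(R^{(1)})^*\chi_1\tilde B^{(1)}_{\tau_{k\delta}}\chi$ from Theorem~\ref{21-5-19-thm1} together with $J(\pr/\pr\rho)=T+O(\rho)$ and $T\phi=0$. That part is fine.

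There is, however, a computational gap in your symbol calculation. You read the decomposition in \eqref{e-gue220410yydI} --- first-order part $iJ(\tfrac{\pr}{\pr\rho})-\tfrac{\pr}{\pr\rho}-iT$ and $k$-linear multiplicative part $2k\bigl(iJ(\tfrac{\pr}{\pr\rho})+\tfrac{\pr}{\pr\rho}-iT\bigr)\phi$ --- as though these were the pieces of the localized operator $(R^{(1)})^*_s$, but \eqref{e-gue220410yydI} describes $(R^{(1)})^*$ acting on $u=s^k\otimes\sum u_je^{\ol j}$, not its localization. Passing to the localization means conjugating by $e^{k\phi}$, and the derivative part then picks up the extra $k$-linear multiplicative contribution
\[
k\bigl(iJ(\tfrac{\pr}{\pr\rho})-\tfrac{\pr}{\pr\rho}-iT\bigr)(\phi)
\]
from the identity $e^{-k\phi}D\bigl(e^{k\phi}\cdot\bigr)=D+k(D\phi)$. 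On $X$, using $J(\tfrac{\pr}{\pr\rho})=T$ and $T\phi=0$, this correction equals $-k\tfrac{\pr\phi}{\pr\rho}$. Combined with the $2k\tfrac{\pr\phi}{\pr\rho}$ you obtain from the explicit $2k(\cdots)\phi$ term, the total multiplicative coefficient in $(R^{(1)})^*_s$ at the boundary is $k\tfrac{\pr\phi}{\pr\rho}$, not $2k\tfrac{\pr\phi}{\pr\rho}$.

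Without this correction your route yields the leading amplitude $\bigl(-i\varphi_1+2\tfrac{\pr\phi}{\pr\rho}\bigr)\tau_\delta$, which is not proportional to the claimed $\bigl(-i\varphi_1+\tfrac{\pr\phi}{\pr\rho}\bigr)\tau_\delta$. You state the correct final formula, but the intermediate coefficients you write down do not produce it --- the missing $e^{-k\phi}D(e^{k\phi}\cdot)$ conjugation term is precisely what restores the required $1:1$ ratio between $-i\varphi_1$ (from $\tfrac{\pr}{\pr\rho}$ hitting the phase) and $\tfrac{\pr\phi}{\pr\rho}$.
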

 
 \begin{proof}
 Note that the operator $\gamma(R^{(1)})^*P^{(1)}_k$ has the following property: for every $\ell\in\R$, there is a $n_\ell\in\mathbb N$ such that 
 \begin{equation}\label{e-gue220411yydI}
 \gamma(R^{(1)})^*P^{(1)}_k=O(k^{n_\ell}):  W^\ell(X,T^{*0,1}M'\otimes L^k)\To W^{\ell-1}(X,T^{*0,1}M'\otimes L^k).
 \end{equation}
 From \eqref{e-gue220406yyd}, \eqref{e-gue220410yydI} and \eqref{e-gue220411yydI}, we conclude that 
 $\varepsilon^{(1)}_k:=\widehat{\mathcal{N}}^{(1)}_{\tau_{k\delta}}\chi-\gamma(R^{(1)})^*\chi_1\tilde B^{(1)}_k\chi$ satisfies \eqref{e-gue220411yyd}. 
 From this observation, Theorem~\ref{21-5-19-thm1} and notice that 
 $J(\frac{\pr}{\pr\rho})=T+O(\rho)$, the lemma follows. 
 \end{proof} 
 
 We need 
 
 \begin{lemma}\label{l-gue220411yyd}
 Let $A_k={\rm Op\,}_k(a):\cC^\infty_c(D,T^{*0,1}M')\To\cC^\infty(D,T^{*0,1}M')$ with $a\in S^{2n}_{{\rm cl\,}}((1+\abs{\xi});D\times D\times\R^{2n-1},T^{*0,1}M'\boxtimes(T^{*0,1}M')^*)$. Let $\chi, \tilde\chi, \chi_1\in\cC^\infty_c(D)$, $\tilde\chi=1$ near $\supp\chi$.There is a constant $C>0$ independent of $k$ such that 
 \begin{equation}\label{e-gue220411yyda}
 \norm{\chi_1A_k\chi u}_X\leq C\Bigr(\norm{\tilde\chi u}_{1,X}+k\norm{\tilde\chi u}_X\Bigr), 
 \end{equation}
 for every $u\in\cC^\infty_c(D,T^{*0,1}M')$, $k\gg1$. 
 \end{lemma} 
 
 \begin{proof}
 Let $\tau\in\cC^\infty(\R_+)$, $\tau=1$ on $[\frac{1}{2},+\infty[$, $\tau=0$ 
 on $(-\infty,\frac{1}{4}]$, $0\leq\tau\leq1$. Let $\hat D:=D\times\R$. 
 Let $\hat x=(x_1,\ldots,x_{2n-1},x_{2n})$ be local coordinates of $\hat D$, where $x_{2n}$ is the coordinate of $\R$. Let $\hat A: \cC^\infty_c(\hat D,T^{*0,1}M')\To\cC^\infty(\hat D,T^{*0,1}M')$ be the continuous operator with distribution kernel 
 \[\begin{split}
& \hat A(x,y)=(2\pi)^{-(2n)}\int e^{i\langle\,\hat x-\hat y,\hat\xi\,\rangle}b(\hat x,\hat y,\hat\xi)d\hat\xi,\\
 &b(\hat x,\hat y,\hat\xi):=\xi^{-(2n-1)}_{2n}a(x,y,\frac{\xi}{\xi_{2n}},\xi_{2n})\tau(\xi_{2n})\tau_1(x_{2n})\hat\tau_1(y_{2n}),\end{split}\]
 where $\hat\xi=(\xi_1,\ldots,\xi_{2n})$, $\tau_1, \hat\tau_1\in\cC^\infty_c(\R)$, $\tau_1\equiv1$ near $0$, $\hat\tau_1\equiv1$ near $\supp\tau_1$. 
 We can check that for every $\alpha_1, \alpha_2, \alpha_3\in\mathbb N^{2n}_0$ and every compact set $K\subset\hat D\times\hat D$, there is a constant $C>0$ such that 
 \begin{equation}\label{e-gue220411yydb}
 \abs{\pr^{\alpha_1}_{\hat x}\pr^{\alpha_2}_{\hat y}\pr^{\alpha_3}_{\hat\xi}\Bigr(b(\hat x,\hat y,\hat\xi)(1+\abs{\hat\xi})^{-1}\Bigr)}\leq C,
 \end{equation} 
 for every $(\hat x,\hat y,\hat\xi)\in K\times\R^{2n}$. From \eqref{e-gue220411yydb} and Calderon-Vaillancourt theorem (see~\cite[Chapter XVIII]{Hor85}), we deduce that 
 \begin{equation}\label{e-gue220411yydc}
 \mbox{$\hat A: W^\ell_{{\rm comp\,}}(\hat D,T^{*0,1}M')\To  W^{\ell-1}_{{\rm loc\,}}(\hat D,T^{*0,1}M')$ is continuous},
 \end{equation}
 for every $\ell\in\R$. Let $u\in\cC^\infty_c(D,T^{*0,1}M')$ and put $\hat u:=\tilde\tau_1(x_{2n})e^{ikx_{2n}}\chi(x)u(x)\in\cC^\infty_c(\hat D,T^{*0,1}M')$, 
 where $\tilde\tau_1\in\cC^\infty_c(\R)$, $\tilde\tau_1\equiv1$ near $\supp\hat\tau_1$. From Fourier inversion formula, it is straightforward to check that 
 \begin{equation}\label{e-gue220411ycda}
 \begin{split}
&\hat A(\hat u)(\hat x)=\tau_1(x_{2n})e^{ikx_{2n}}A_k(\chi u)(x)+(2\pi)^{-(2n)}(R_k\hat u)(\hat x), \\
&(R_k\hat u)(\hat x)\\
&=\int  e^{i\langle\,\hat x-\hat y,\hat\xi\,\rangle}\xi^{-(2n-1)}_{2n}a(x,y,\frac{\xi}{\xi_{2n}},\xi_{2n})\tau(\xi_{2n})\tau_1(x_{2n})(1-\hat\tau_1(y_{2n}))e^{iky_{2n}}u(y)d\hat yd\hat\xi.
 \end{split}
 \end{equation} 
 By using integration by parts with respect to $\xi_{2n}$ and $y_{2n}$, we see that for every $N\in\mathbb N$, there is a constant $C_N>0$ independent of $k$ and $u$ such that 
 \begin{equation}\label{e-gue220411ycdb}
 \norm{\chi_1R_k\hat u}_{\hat D}\leq C_Nk^{-N}\norm{\tilde\chi u}_{1,X},
 \end{equation}
where $\norm{\cdot}_{\hat D}$ denotes the $L^2$ norm on $\hat D$ induced by $dv_Xdx_{2n}$. From \eqref{e-gue220411yydc}, \eqref{e-gue220411ycda} and \eqref{e-gue220411ycdb}, we get \eqref{e-gue220411yyda}.  
   \end{proof} 
   
From Lemma~\ref{l-gue220411yyd}, we get 

 \begin{lemma}\label{l-gue220412yyd}
 Let $B_k={\rm Op\,}_k(b):\cC^\infty_c(D,T^{*0,1}M')\To\cC^\infty(D,T^{*0,1}M')$ with $b\in S^{2n-1}_{{\rm cl\,}}((1+\abs{\xi});D\times D\times\R^{2n-1},T^{*0,1}M'\boxtimes(T^{*0,1}M')^*)$. Let $\chi, \tilde\chi, \chi_1\in\cC^\infty_c(D)$, $\tilde\chi=1$ near $\supp\chi$.There is a constant $C>0$ independent of $k$ such that 
 \begin{equation}\label{e-gue220411yydaz}
 \norm{\chi_1B_k\chi u}_X\leq\frac{C}{k}\Bigr(\norm{\tilde\chi u}_{1,X}+k\norm{\tilde\chi u}_X\Bigr), 
 \end{equation}
 for every $u\in\cC^\infty_c(D,T^{*0,1}M')$, $k\gg1$. 
 \end{lemma} 

Now, we change $\tau_\delta$ to $\tau^2_\delta$. 
Recall that $\tau_\delta$ is given by \eqref{e-21-5-16-a2z}.  
%===
\begin{theorem}\label{t-gue220411yyd}
With the notations used above, let $\chi\in\cC^\infty_c(D)$. There is a constant $C_N>0$ independent of $k$ such that 
\begin{equation}\label{e-gue220411yydm}                                                                                                        
(\,\mathcal{N}^{(1)}_{\tau^2_{k\delta}}\chi^2u\,|\,u\,)_{X,k}\leq\frac{C}{k}\Bigr(\norm{u}_{1,X,k}\norm{u}_{X,k}+k\norm{u}^2_{X,k}+\frac{1}{k^2}\norm{u}^2_{1,X,k}\Bigr),
\end{equation}
for every $u\in\cC^\infty(X,T^{*0,1}M'\otimes L^k)$, $k\gg1$.
\end{theorem}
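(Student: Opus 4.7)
The plan is to prove \eqref{e-gue220411yydm} as a semi-classical sharp G{\aa}rding-type estimate exploiting the non-positivity of the principal symbol of $\widehat{\mathcal{N}}^{(1)}_{\tau^2_{k\delta}}$, combined with Lemma~\ref{l-gue220412yyd} to handle the lower-order remainders. The four steps are: (i) reduce $\mathcal{N}^{(1)}_{\tau^2_{k\delta}}\chi^2$ to its localized pseudodifferential model on $D$ via the $\tau^2_\delta$-analogue of Lemma~\ref{l-gue220410yyd}; (ii) commute one copy of $\chi$ past $\widehat{\mathcal{N}}^{(1)}_{\tau^2_{k\delta}}$ to symmetrize the pairing; (iii) factor the principal part as $-P^\ast_k P_k$ using the sign of the leading symbol; (iv) bound the commutator and the subleading remainder by Lemma~\ref{l-gue220412yyd}.

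The $\tau^2_\delta$-analogue of Lemma~\ref{l-gue220410yyd} (whose proof only uses that the cut-off is compactly supported in $(\delta/4,2\delta)$) yields $\mathcal{N}^{(1)}_{\tau^2_{k\delta}}\chi^2=\tilde\chi_1\widehat{\mathcal{N}}^{(1)}_{\tau^2_{k\delta}}\chi^2+\varepsilon^{(1)}_k$, with $\tilde\chi_1\in\cC^\infty_c(D)$ equal to $1$ near ${\rm supp\,}\chi$, $\varepsilon^{(1)}_k=O(k^{-\infty})$, and $\widehat{\mathcal{N}}^{(1)}_{\tau^2_{k\delta},s}={\rm Op\,}_k(a)$ for an $a\in S^{2n}_{\rm cl}((1+\abs{\xi}))$ of principal symbol $\sigma^0_a=\bigl(-i\varphi_1(x,\xi)+\tfrac{\pr\phi}{\pr\rho}(x)\bigr)\tau^2_\delta(\xi_{2n-1})$. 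By Theorem~\ref{21-5-19-thm1}, $i\varphi_1$ is real and $i\varphi_1-\abs{\tfrac{\pr\phi}{\pr\rho}}\geq C\abs{\xi}$ on the support of $\tau_\delta$, so $\sigma^0_a$ is real-valued and $-\sigma^0_a\geq C\abs{\xi}\,\tau^2_\delta(\xi_{2n-1})\geq 0$. Commuting one $\chi$ across $\widehat{\mathcal{N}}^{(1)}_{\tau^2_{k\delta}}$ gives
\[
(\widehat{\mathcal{N}}^{(1)}_{\tau^2_{k\delta}}\chi^2 u,u)_{X,k}=(\widehat{\mathcal{N}}^{(1)}_{\tau^2_{k\delta}}\chi u,\chi u)_{X,k}+([\widehat{\mathcal{N}}^{(1)}_{\tau^2_{k\delta}},\chi]\chi u,u)_{X,k},
\]
and the commutator lies in ${\rm Op\,}_k(S^{2n-1}_{\rm cl}((1+\abs{\xi})))$ since the semi-classical calculus drops one power of $k$. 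Cauchy--Schwarz and Lemma~\ref{l-gue220412yyd} bound the commutator pairing by $\tfrac{C}{k}\norm{u}_{1,X,k}\norm{u}_{X,k}+C\norm{u}^2_{X,k}$.

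For the remaining pairing, introduce the smooth, real, non-negative square root $p(x,\xi):=\bigl(i\varphi_1(x,\xi)-\tfrac{\pr\phi}{\pr\rho}(x)\bigr)^{1/2}\tau_\delta(\xi_{2n-1})$, supported where $\tau_\delta>0$, satisfying $p^2=-\sigma^0_a$ and lying in $S^0((1+\abs{\xi})^{1/2})$. Setting $P_k:={\rm Op\,}_k(k^n p)$, the adjoint and composition formulas of the semi-classical calculus give
\[
P_k^\ast P_k={\rm Op\,}_k(k^{2n}p^2)+R_k,\qquad R_k\in{\rm Op\,}_k\bigl(S^{2n-1}_{\rm cl}((1+\abs{\xi}))\bigr),
\]
and since $k^{2n}p^2=-k^{2n}\sigma^0_a$ matches (up to sign) the leading part of the symbol of $\widehat{\mathcal{N}}^{(1)}_{\tau^2_{k\delta},s}$, this yields $\widehat{\mathcal{N}}^{(1)}_{\tau^2_{k\delta},s}=-P_k^\ast P_k+S_k$ with $S_k\in{\rm Op\,}_k(S^{2n-1}_{\rm cl}((1+\abs{\xi})))$. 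Therefore
\[
(\widehat{\mathcal{N}}^{(1)}_{\tau^2_{k\delta}}\chi u,\chi u)_{X,k}=-\norm{P_k\chi u}^2_{X,k}+(S_k\chi u,\chi u)_{X,k}\leq(S_k\chi u,\chi u)_{X,k},
\]
and a second application of Lemma~\ref{l-gue220412yyd} bounds this last pairing by $\tfrac{C}{k}\norm{u}_{1,X,k}\norm{u}_{X,k}+C\norm{u}^2_{X,k}$. Summing the two contributions and absorbing the $O(k^{-\infty})$ error from $\varepsilon^{(1)}_k$ into the slack term $\tfrac{C}{k^3}\norm{u}^2_{1,X,k}$ yields \eqref{e-gue220411yydm}.

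The main obstacle is establishing the composition identity $P_k^\ast P_k={\rm Op\,}_k(k^{2n}p^2)+R_k$ with $R_k\in{\rm Op\,}_k(S^{2n-1}_{\rm cl}((1+\abs{\xi})))$ within the symbol classes of Definition~\ref{d-gue140826}: the symbol $p$ has only \emph{partial} compact $\xi$-support (in $\xi_{2n-1}$ only) and grows like $\abs{\xi}^{1/2}$ in the remaining $\xi$ directions, so both the adjoint and the composition must be carried out carefully, with the correction terms (which depend on derivatives of $p$ of various orders) tracked against the correct order function $(1+\abs{\xi})$. Once this symbolic calculus is in place, Lemma~\ref{l-gue220412yyd} applied to $S_k$ and to the commutator from step~(ii) completes the proof.
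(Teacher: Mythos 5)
Your proof is correct and follows essentially the same strategy as the paper: reduce $\mathcal{N}^{(1)}_{\tau^2_{k\delta}}$ to its localized pseudodifferential model via the $\tau^2_\delta$-version of Lemma~\ref{l-gue220410yyd}, factor the (real, sign-definite) principal symbol into $-P^*_kP_k$ form to get non-positivity, commute $\chi$, and bound the commutator and the one-order-lower remainder with Lemma~\ref{l-gue220412yyd}. The paper carries out the same computation in a slightly different order — it first runs the factorization via ``standard WKB'' to write $\chi_1\widehat{\mathcal{N}}^{(1)}_{\tau^2_{k\delta},s}\chi^2 + \chi_1B^*_kB_k\chi^2 = \chi_2\,{\rm Op}_k(g)$ with $g\in S^{2n-1}_{\rm cl}((1+\abs{\xi}))$ and only then splits $\chi_1B^*_kB_k\chi^2$ into $\chi_1\chi B^*_kB_k\chi + \chi_1[B^*_kB_k,\chi]\chi$ — whereas you commute $\chi$ first and then factor only the leading symbol, observing explicitly that only one step of the WKB hierarchy is needed since the remainder merely has to drop by one order. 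These are interchangeable and your version is if anything a bit cleaner on that point. The one place you should be slightly more careful (as the paper is) is in requiring the square-root quantization $P_k$ to be properly supported on $D$, so that $\chi\,P^*_kP_k\,\chi$ is well-defined and $-\norm{P_k\chi u}^2_{X,k}\leq 0$ is a pairing over $X$; this is a standard technicality and does not affect the argument.
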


\begin{proof}
From Lemma~\ref{l-gue220410yyd}, we have 
\begin{equation}\label{e-gue220411ycds}
\mathcal{N}^{(1)}_{\tau^2_{k\delta}}\chi=\tilde\chi_1\mathcal{N}^{(1)}_{\tau^2_{k\delta}}\chi+\varepsilon^{(1)}_k,
\end{equation} 
where $\varepsilon^{(1)}_k$ satisfies \eqref{e-gue220411yyd}, $\widehat{\mathcal{N}}^{(1)}_{\tau_{k\delta},s}={\rm Op\,}_k(a)$, 
$\sigma^0_a(x,y,\xi)=(-i\varphi_1(x,\xi)+\frac{\pr\phi}{\pr\rho}(x))\tau^2_\delta(\xi_{2n-1})$. From \eqref{e-gue220328yyd}, we can find 
$h(x,y,\xi)\in S^{0}_{{\rm cl\,}}((1+\abs{\xi});D\times D\times\R^{2n-1},T^{*0,1}M'\boxtimes(T^{*0,1}M')^*)$ such that $(h(x,y,\xi))^2=-\sigma^0_a(x,y,\xi)$. 
From this observation and standard WKB method, we can find $B_k={\rm Op\,}_k(b)$, $b\in S^{2n-\frac{1}{2}}_{{\rm cl\,}}((1+\abs{\xi});D\times D\times\R^{2n-1},T^{*0,1}M'\boxtimes(T^{*0,1}M')^*)$, $B_k$ is properly supported on $D$, such that 
\begin{equation}\label{e-gue220411ycdh}
\chi_1\widehat{\mathcal{N}}^{(1)}_{\tau_{k\delta},s}\chi^2+\chi_1B^*_kB_k\chi^2=\chi_2{\rm Op\,}_k(g), 
\end{equation}
$g\in S^{2n-1}_{{\rm cl\,}}((1+\abs{\xi});D\times D\times\R^{2n-1},T^{*0,1}M'\boxtimes(T^{*0,1}M')^*)$, ${\rm Op\,}_k(g)$ is properly supported on $D$, where $B^*_k$ is the formal adjoint of $B_k$ 
with respect to $(\,\cdot\,|\,\cdot\,)_X$, $\chi_2\in\cC^\infty_c(D)$, $\chi_2\equiv1$ near $\supp\chi_1$. Let $v\in\cC^\infty(D,T^{*0,1}M')$. We have 
\begin{equation}\label{e-gue220411ycdp}
\begin{split}
&(\,\chi_1\widehat{\mathcal{N}}^{(1)}_{\tau_{k\delta},s}\chi^2v\,|\,v)_X=-(\,\chi_1B^*_kB_k\chi^2v\,|\,v\,)_X+(\,\chi_2{\rm Op\,}_k(g)v\,|\,v\,)_X\\
&=-(\,\chi_1\chi B^*_kB_k\chi v\,|\,v\,)_X-(\,\chi_1[B^*_kB_k,\chi]\chi v\,|\,v\,)_X+(\,\chi_2{\rm Op\,}_k(g)v\,|\,v\,)_X\\
&=-\norm{B_k\chi v}^2_X-(\,\chi_1[B^*_kB_k,\chi]\chi v\,|\,v\,)_X+(\,\chi_2{\rm Op\,}_k(g)v\,|\,v\,)_X.
\end{split}
\end{equation}
By using integration by parts, it is not difficult to see that $\chi_1[B^*_kB_k,\chi]=\chi_1{\rm Op\,}_k(h)$, $h\in S^{2n-1}_{{\rm cl\,}}((1+\abs{\xi});D\times D\times\R^{2n-1},T^{*0,1}M'\boxtimes(T^{*0,1}M')^*)$, ${\rm Op\,}_k(h)$ is properly supported on $D$. From Lemma~\ref{l-gue220412yyd}, there is a constant $C>0$ independent of $k$ and $v$ such that 
\begin{equation}\label{e-gue220411ycdq}
\abs{(\,\chi_1[B^*_kB_k,\chi]\chi v\,|\,v\,)_X}+\abs{(\,\chi_2{\rm Op\,}_k(g)v\,|\,v\,)_X}\leq\frac{C}{k}\Bigr(\norm{\tilde\chi v}_{1,X}\norm{\tilde\chi v}_{X}
+k\norm{\tilde\chi v}^2_{X}\Bigr),
\end{equation}
where $\tilde\chi\in\cC^\infty_c(D)$, $\tilde\chi\equiv1$ near 
$\supp\chi\cup\supp\chi_2$. From \eqref{e-gue220411ycdp} and 
\eqref{e-gue220411ycdq}, we conclude that 
\begin{equation}\label{e-gue220411ycdr}
(\,\chi_1\widehat{\mathcal{N}}^{(1)}_{\tau_{k\delta},s}
\chi^2v\,|\,v)_X\leq\frac{C}{k}\Bigr(\norm{\tilde\chi v}_{1,X}\norm{\tilde\chi v}_{X}
+k\norm{\tilde\chi v}^2_{X}\Bigr),
\end{equation} 
for every $v\in\cC^\infty(D,T^{*0,1}M')$. 
From \eqref{e-gue220411ycds} and \eqref{e-gue220411ycdr}, 
we get \eqref{e-gue220411yydm}. The theorem follows. 
\end{proof}

From Theorem~\ref{t-gue220411yyd} and by using integration by parts, 
we get the following. 

\begin{theorem}\label{t-gue220411yydz}
With the notations used above, there is a constant $C>0$ independent of $k$ such that 
\begin{equation}\label{e-gue220411yydmz}                                                                                                        
(\,\mathcal{N}^{(1)}_{\tau^2_{k\delta}}u\,|\,u\,)_{X,k}\leq\frac{C}{k}\Bigr(\norm{u}_{1,X,k}\norm{u}_{X,k}+k\norm{u}^2_{X,k}+\frac{1}{k^2}\norm{u}^2_{1,X,k}\Bigr),
\end{equation}
for every $u\in\cC^\infty(X,T^{*0,1}M'\otimes L^k)$, $k\gg1$.
\end{theorem}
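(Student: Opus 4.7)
The plan is to upgrade the localized bound of Theorem~\ref{t-gue220411yyd} to a global bound on the compact boundary $X$ via a standard partition-of-unity argument subordinate to a finite BRT cover. First I would cover $X$ by finitely many BRT coordinate patches $D_1,\ldots,D_N$ (on each of which a rigid local holomorphic frame of $L$ is available) and pick a smooth square-partition of unity $\chi_j\in\cC^\infty_c(D_j)$ with $\sum_{j=1}^{N}\chi_j^2\equiv 1$ on $X$. Using $u=\sum_j\chi_j^2 u$ and linearity,
\begin{equation*}
(\,\mathcal{N}^{(1)}_{\tau^2_{k\delta}}u\,|\,u\,)_{X,k}=\sum_{j=1}^{N}(\,\mathcal{N}^{(1)}_{\tau^2_{k\delta}}\chi_j^2 u\,|\,u\,)_{X,k}.
\end{equation*}
Theorem~\ref{t-gue220411yyd}, applied to each summand with the cutoff $\chi=\chi_j$ on the patch $D_j$, yields
\begin{equation*}
(\,\mathcal{N}^{(1)}_{\tau^2_{k\delta}}\chi_j^2 u\,|\,u\,)_{X,k}\leq\frac{C_j}{k}\Bigl(\norm{u}_{1,X,k}\norm{u}_{X,k}+k\norm{u}^2_{X,k}+\frac{1}{k^2}\norm{u}^2_{1,X,k}\Bigr),
\end{equation*}
with $C_j>0$ independent of $k$; summing over the finitely many indices $j$ gives the desired bound with $C=\sum_{j=1}^{N}C_j$.

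The phrase \emph{by using integration by parts} in the author's hint refers to the step in which one distributes $\chi_j^2$ between the two arguments of the inner product. Writing
\begin{equation*}
(\,\mathcal{N}^{(1)}_{\tau^2_{k\delta}}\chi_j^2 u\,|\,u\,)_{X,k}=(\,\chi_j\mathcal{N}^{(1)}_{\tau^2_{k\delta}}\chi_j u\,|\,u\,)_{X,k}+(\,[\mathcal{N}^{(1)}_{\tau^2_{k\delta}},\chi_j]\,\chi_j u\,|\,u\,)_{X,k},
\end{equation*}
the commutator $[\mathcal{N}^{(1)}_{\tau^2_{k\delta}},\chi_j]$ is a $k$-semiclassical pseudodifferential operator of one classical order lower than $\mathcal{N}^{(1)}_{\tau^2_{k\delta}}$ itself, because the principal symbol $\sigma^0_a$ in \eqref{e-gue220410yydIII} commutes with multiplication by $\chi_j$. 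By Lemma~\ref{l-gue220412yyd} this commutator contributes a term of size $\tfrac{C}{k}(\norm{u}_{1,X,k}\norm{u}_{X,k}+k\norm{u}^2_{X,k})$, which is absorbed into the right-hand side of the claimed estimate.

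I do not anticipate any genuine obstacle: Theorem~\ref{t-gue220411yyd} has already carried out the delicate symbolic work, namely the construction of the operator $B_k$ with $\chi_1\widehat{\mathcal{N}}^{(1)}_{\tau_{k\delta},s}\chi^2+\chi_1B_k^*B_k\chi^2=\chi_2{\rm Op}_k(g)$ together with the semiclassical Calder\'on--Vaillancourt estimates of Lemmas~\ref{l-gue220411yyd}--\ref{l-gue220412yyd}. The remaining argument is mainly bookkeeping of finitely many patches and commutators, and the key point to verify is only that the $1/k$ gain in the main estimate is preserved across the finite sum over $j$ and the commutator remainder terms, which is automatic since $N$ is fixed and the commutator estimate itself produces an additional factor of $1/k$.
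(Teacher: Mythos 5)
Your first paragraph gives the correct proof and is exactly what the paper's one-line proof intends: take a finite square-partition of unity $\sum_j\chi_j^2\equiv1$ subordinate to BRT patches, write $(\,\mathcal{N}^{(1)}_{\tau^2_{k\delta}}u\,|\,u\,)_{X,k}=\sum_j(\,\mathcal{N}^{(1)}_{\tau^2_{k\delta}}\chi_j^2u\,|\,u\,)_{X,k}$, and sum the patchwise estimates of Theorem~\ref{t-gue220411yyd}, whose right-hand side already carries the global norms of $u$. The commutator speculation in your second paragraph is not needed to close the argument, since Theorem~\ref{t-gue220411yyd} bounds $(\,\mathcal{N}^{(1)}_{\tau^2_{k\delta}}\chi^2u\,|\,u\,)_{X,k}$ directly with $u$ unmodified in the second slot.
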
 

In the proof of our main result, we need the following result.

\begin{lemma}\label{l-gue220415ycd} 
There is a constant $C>0$ independent of $k$ such that 
\begin{equation}\label{e-gue220415ycdg}
\norm{u}_{1,X,k}\leq C\Bigr(\norm{\mathcal{N}^{(1)}_{\tau^2_{k\delta}}u}_{X,k}+\norm{Tu}_{X,k}+k\norm{u}_{X,k}\Bigr),
\end{equation} 
for every $u\in\cC^\infty(X,T^{*0,1}M'\otimes L^k)$, $k\gg1$. 
\end{lemma}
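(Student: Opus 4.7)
The plan is to combine the microlocal ellipticity of $\mathcal{N}^{(1)}_{\tau^2_{k\delta}}$ on the semiclassical spectral window for $-iT$ with direct control from $T$ and from $kI$ off the window. By a partition of unity on $X$ subordinate to a finite cover by BRT coordinate charts $\{D_j\}$ with rigid trivializations $\{s_j\}$ of $L$, it suffices to prove the estimate for $\chi u$ with $\chi\in\cC^\infty_c(D)$ supported in a single chart $D$ whose BRT coordinates $(x_1,\ldots,x_{2n-1})$ are chosen so that $T=\pr/\pr x_{2n-1}$. In this chart, Lemma~\ref{l-gue220410yyd} (applied with $\tau_\delta^2$ in place of $\tau_\delta$) gives $\mathcal{N}^{(1)}_{\tau^2_{k\delta}}\chi=\tilde\chi_1\widehat{\mathcal{N}}^{(1)}_{\tau^2_{k\delta}}\chi+\varepsilon^{(1)}_k$ with $\widehat{\mathcal{N}}^{(1)}_{\tau^2_{k\delta},s}={\rm Op\,}_k(a)$, $a\in S^{2n}_{\rm cl}((1+\abs{\xi}))$, and leading symbol $\sigma_a^0(x,y,\xi)=(-i\varphi_1(x,\xi)+\pr\phi/\pr\rho)\,\tau_\delta^2(\xi_{2n-1})$. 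By~\eqref{e-gue220328yyd} this symbol is elliptic of order $1$ on the support of $\tau_\delta^2(\xi_{2n-1})$, in the precise sense that $\abs{\sigma_a^0(x,y,\xi)}\geq C\abs{\xi}\,\tau_\delta^2(\xi_{2n-1})$.

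The key step is a parametrix construction on the window. Fix $\tilde\tau_\delta\in\cC^\infty_c((\delta/8,4\delta))$ with $\tilde\tau_\delta\equiv1$ on the support of $\tau_\delta^2$, and decompose $\chi u = F^{(1)}_{\tilde\tau_{k\delta}}\chi u+(I-F^{(1)}_{\tilde\tau_{k\delta}})\chi u$. On the elliptic region of $\sigma_a^0$, solve the symbol division problem iteratively to construct a semiclassical parametrix $Q_k={\rm Op\,}_k(q)$ of order $-1$ with $q\sim \tilde\tau_\delta(\xi_{2n-1})/(-i\varphi_1+\pr\phi/\pr\rho)$ plus lower-order corrections, so that $Q_k\widehat{\mathcal{N}}^{(1)}_{\tau^2_{k\delta},s}\equiv{\rm Op\,}_k(\tilde\tau_\delta(\xi_{2n-1}))\mod O(k^{-\infty})$. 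By the argument of Lemma~\ref{l-gue131209} applied to $\tilde\tau_\delta$, the operator ${\rm Op\,}_k(\tilde\tau_\delta(\xi_{2n-1}))$ coincides with $F^{(1)}_{\tilde\tau_{k\delta}}$ modulo $O(k^{-\infty})$, and the uniform-in-$k$ boundedness of $Q_k$ (via Lemma~\ref{l-gue220411yyd}-style bounds for a symbol of one lower order) then yields
\[
\norm{F^{(1)}_{\tilde\tau_{k\delta}}\chi u}_{1,X,k}\leq C\bigl(\norm{\mathcal{N}^{(1)}_{\tau^2_{k\delta}}u}_{X,k}+k\norm{u}_{X,k}\bigr).
\]

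For the complementary piece $v=(I-F^{(1)}_{\tilde\tau_{k\delta}})\chi u$, whose $-iT$-spectrum is contained in $\R\setminus(k\delta/8,4k\delta)$, the operator $-iT$ augmented by the $kI$ buffer is elliptic on this spectral region; combined with the commutation of the tangential derivatives $\pr/\pr x_\alpha$ with the $-iT$-spectral decomposition in BRT coordinates, a Gårding/Friedrichs-symmetrizer argument applied to the combined symbol $\abs{\xi_{2n-1}}^2+k^2$ produces $\norm{v}_{1,X,k}\leq C\bigl(\norm{Tu}_{X,k}+k\norm{u}_{X,k}\bigr)$. Adding the two inequalities, absorbing the commutator errors into $k\norm{u}_{X,k}$, and summing over the partition of unity finishes the proof. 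The main obstacle is the parametrix construction: the division must be iterated carefully to eliminate successive lower-order terms---for which the positivity $i\varphi_1-\abs{\pr\phi/\pr\rho}>C\abs{\xi}$ from~\eqref{e-gue220328yyd} is decisive---and one must verify that the resulting error operators act on the complement $v$ with $W^1$-output controlled by $\norm{Tv}_{X,k}+k\norm{v}_{X,k}$, which hinges on the $T$-invariance built into the BRT construction and the fact that $F^{(1)}_{\tilde\tau_{k\delta}}$ commutes with $T$.
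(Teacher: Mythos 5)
You take a genuinely different route from the paper. The paper does not decompose $\chi u$ into spectral pieces and does not build a parametrix for $\widehat{\mathcal{N}}^{(1)}_{\tau^2_{k\delta},s}$; instead it splits the \emph{operator} into the leading part $\widehat{\mathcal{N}}^{(1)}_{\tau^2_{k\delta},s,0}={\rm Op\,}_k(\sigma^0_a)$ and the subleading part $\widehat{\mathcal{N}}^{(1)}_{\tau^2_{k\delta},s,1}$, and reads off a direct quadratic estimate from the leading symbol: $k\abs{\sigma^0_a(x,y,\xi/k)}\geq C(\abs{\xi'}+k)$ for $\abs{\xi'}\geq C_1$ on the window, with $\xi'=(\xi_1,\ldots,\xi_{2n-2})$ \emph{excluding} the $T$-dual variable. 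This yields $\sum_{j\leq 2n-2}\norm{\pr_{x_j}(\chi\hat u)}^2_X\leq C_0\bigl(\norm{\tilde\chi_1\widehat{\mathcal{N}}^{(1)}_{\tau^2_{k\delta},s,0}(\chi\hat u)}^2_X+k^2\norm{u}^2_X\bigr)$; the direction $\pr_{x_{2n-1}}$ is supplied by $\norm{Tu}$, and $\widehat{\mathcal{N}}^{(1)}_{\tau^2_{k\delta},s,1}$ is absorbed by Lemma~\ref{l-gue220412yyd}. No inversion is attempted.

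Your argument has two concrete gaps. First, the parametrix identity is wrong. With $q\sim\tilde\tau_\delta(\xi_{2n-1})/(-i\varphi_1+\pr\phi/\pr\rho)$ and $\sigma^0_a=(-i\varphi_1+\pr\phi/\pr\rho)\,\tau^2_\delta(\xi_{2n-1})$ you get $q\,\sigma^0_a=\tilde\tau_\delta\,\tau^2_\delta=\tau^2_\delta$, hence $Q_k\widehat{\mathcal{N}}^{(1)}_{\tau^2_{k\delta},s}\equiv{\rm Op\,}_k(\tau^2_\delta)$, \emph{not} ${\rm Op\,}_k(\tilde\tau_\delta)$. Since $\sigma^0_a$ already carries the compactly supported cutoff $\tau^2_\delta(\xi_{2n-1})$, no choice of $q$ can produce $\tilde\tau_\delta$, which is nonzero where $\tau^2_\delta$ vanishes; your decomposition via $F^{(1)}_{\tilde\tau_{k\delta}}$ is therefore mismatched with what the parametrix actually returns, and the ``ring'' $(F^{(1)}_{\tilde\tau_{k\delta}}-F^{(1)}_{\tau^2_{k\delta}})\chi u$ is controlled by neither piece. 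Second, the off-window bound $\norm{(I-F^{(1)}_{\tilde\tau_{k\delta}})\chi u}_{1,X,k}\leq C(\norm{Tu}_{X,k}+k\norm{u}_{X,k})$ cannot hold as stated: $\norm{\cdot}_{1,X,k}$ contains all of $\pr_{x_1},\ldots,\pr_{x_{2n-1}}$, while a symmetrizer for $\abs{\xi_{2n-1}}^2+k^2$ gains only the $\xi_{2n-1}$-direction plus a factor $k$; a section with $\abs{\xi'}\gg k$ and bounded $\xi_{2n-1}$-frequency violates the claimed inequality. The directions $\pr_{x_j}$, $j\leq 2n-2$, have to be recovered from the ellipticity of $\widehat{\mathcal{N}}^{(1)}_{\tau^2_{k\delta},s,0}$ in $\xi'$ --- which is exactly what the paper's quadratic estimate exploits --- and not from $T$ and $kI$ alone.
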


\begin{proof}
Let $\chi\in\cC^\infty_c(D)$ and let $u\in\cC^\infty(X,T^{*0,1}M'\otimes L^k)$. From Lemma~\ref{l-gue220410yyd}, 
we have that for every $N>0$ there is a constant $C_N>0$ such that 
\begin{equation}\label{e-gue220415ycda}
\norm{\mathcal{N}^{(1)}_{\tau^2_{k\delta}}\chi u}_{X,k}\geq C_N\Bigr(\norm{\tilde\chi_1\widehat{\mathcal{N}}^{(1)}_{\tau^2_{k\delta}}\chi u}_{X,k}-k^{-N}\norm{u}_{1,X,k}\Bigr),
\end{equation} 
for every $u\in\cC^\infty(X,T^{*0,1}M'\otimes L^k)$. Let $u\in\cC^\infty(X,T^{*0,1}M'\otimes L^k)$. On $D$, write $u=s^k\otimes\tilde u$, $\tilde u\in\cC^\infty(D,T^{*0,1}M')$. 
Let $\hat u:=\tilde ue^{-k\phi}$. Let $\widehat{\mathcal{N}}^{(1)}_{\tau^2_{k\delta},s,0}:={\rm Op\,}_k(\sigma^0_a)$, where $a$ is as in \eqref{e-gue220410yydIII}. Then, 
\begin{equation}\label{e-gue220415ycdb}
\widehat{\mathcal{N}}^{(1)}_{\tau^2_{k\delta},s}-\widehat{\mathcal{N}}^{(1)}_{\tau^2_{k\delta},s,0}=:\widehat{\mathcal{N}}^{(1)}_{\tau^2_{k\delta},s,1}={\rm Op\,}_k(b), 
\end{equation}
$b\in S^{2n-1}_{{\rm cl\,}}((1+\abs{\xi}); D\times D\times\R^{2n-1}, T^{*0,1}M'\boxtimes(T^{*0,1}M')^*)$. Note that there exist
$C, C_1>0$ such that for every $(x,y)\in D\times D$, and $\xi$ with
$\abs{\xi'}\geq C_1$, where $\xi'=(\xi_1,\ldots,\xi_{2n-2})$,
$$k\Big|\sigma^0_a\Big(x,y,\frac{\xi}{k}\Big)\Big|\geq C(\abs{\xi'}+k).$$  
From this observation and some elementary computations, it is straightforward to see that there is a constant $C_0>0$ independent of $k$ and $u$ such that 
\begin{equation}\label{e-gue220415ycdc}
\sum^{2n-2}_{j=1}\norm{\pr_{x_j}(\chi\hat u)}^2_X\leq C_0\Bigr(\norm{\tilde\chi_1\widehat{\mathcal{N}}^{(1)}_{\tau^2_{k\delta},s,0}(\chi\hat u)}^2_X+k^2\norm{\tilde\chi_1u}^2_X\Bigr),
\end{equation}
where $\tilde\chi_1\in\cC^\infty_c(D)$, $\tilde\chi_1\equiv1$ near $\supp\chi$. From Lemma~\ref{l-gue220412yyd}, there is a constant $\tilde C_0>0$ independent of $k$ and $u$ such that 
\[\norm{\tilde\chi_1\widehat{\mathcal{N}}^{(1)}_{\tau^2_{k\delta},s,1}\chi\hat u}^2_X\leq\frac{\tilde C_0}{k^2}\Bigr(\norm{\hat\chi_1\hat u}^2_{1,X}+k^2\norm{\hat\chi_1\hat u}^2_X\Bigr),\]
where $\widehat{\mathcal{N}}^{(1)}_{\tau^2_{k\delta},s,1}$ is as in \eqref{e-gue220415ycdb} and $\hat\chi_1\in\cC^\infty_c(D)$, $\hat\chi_1\equiv1$ near $\supp\tilde\chi$. From this observation, \eqref{e-gue220415ycdb} and \eqref{e-gue220415ycdc}, 
there is a constant $\hat C_0>0$ independent of $k$ and $u$ such that 
\begin{equation}\label{e-gue220415ycdd}
\norm{\chi\hat u}^2_{1,X}\leq\hat C_0\Bigr(\norm{\hat\chi_1\widehat{\mathcal{N}}^{(1)}_{\tau^2_{k\delta},s}\chi\hat u}^2_X+\frac{1}{k^2}\norm{\hat\chi_1\hat u}^2_{1,X}
+\norm{T\hat\chi_1\hat u}^2_X+k^2\norm{\hat\chi_1\hat u}^2_X\Bigr).
\end{equation} 
From \eqref{e-gue220415ycda} and \eqref{e-gue220415ycdd}, we have 
\begin{equation}\label{e-gue220415ycde}
\norm{\chi u}^2_{1,X,k}\leq\hat C_1\Bigr(\norm{\mathcal{N}^{(1)}_{\tau^2_{k\delta}}\chi u}^2_{X,k}+\norm{Tu}^2_{X,k}+k^2\norm{u}^2_{X,k}+\frac{1}{k^2}\norm{u}^2_{1,X,k}\Bigr),
\end{equation}
where $\hat C_1>0$ is a constant independent of $k$ and $u$. From \eqref{e-gue220415ycde} and by using partition of unity, we get \eqref{e-gue220415ycdg}. 
\end{proof}

\subsection{Semi-classical behaviour of $P^{(0)}_kF^{(0)}_{\tau_{k\delta}}$}\label{s=gue220422yyd}

We will use the same notations as before. 
Let $s$ be a $\R$-equivariant holomorphic frame of $L$ 
over an open set $U\Subset M'$, $U\bigcap X\neq\emptyset$, 
$\abs{s}^2_{h^L}=e^{-2\phi}$, $\phi\in\cC^\infty(U,\R)$. 
Let $D:=U\cap X$. Let $x=(x_1,\ldots,x_{2n-1})$ 
be local coordinates of $D$ with $T=\frac{\pr}{\pr x_{2n-1}}$ on $D$. 
We extend $x_1,\ldots,x_{2n-1}$ to smooth functions on $U$ so that $\tilde x:=(x_1,\ldots,x_{2n-1},\rho)=(x,\rho)$ are local coordinates on $U$ and $T=\frac{\pr}{\pr x_{2n-1}}$ on $U$. 
Put
\begin{equation}\label{e-gue220422yyd}
\begin{split}
&B^{(0)}_{\tau_{k\delta}, s}(x, y):=\frac{k^{2n-1}}{(2\pi)^{2n-1}}\int_{\R^{2n-1}} e^{ik\langle x-y,~\xi\rangle}\tau^2_\delta(\xi_{2n-1})d\xi,\\
&B^{(0)}_{\tau_{k\delta}, M,s}(x, y):=\frac{k^{2n}}{(2\pi)^{2n}}\int_{\R^{2n}} e^{ik\langle\tilde x-\tilde y,~\tilde\xi\rangle}\tau^2_\delta(\xi_{2n-1})d\tilde\xi,
\end{split}
\end{equation}
where $\tilde\xi=(\xi_1,\ldots,\xi_{2n})$. 
%and let  
%$B^{(0)}_{\tau_{k\delta}}: \cC^\infty_c(D)\To\cC^\infty(D)$, $B^{(0)}_{\tau_{k\delta}}: \cC^\infty_c(D)\To\cC^\infty(D)$ be the continuous operators with distribution kernels 
%$B^{(0)}_{\tau_{k\delta}, s}(x, y)$ and $B^{(0)}_{\tau_{k\delta}, M,s}(x, y)$ respectively. The following has been obtained in~\cite[Lemma 5.4]{HHL17}

\begin{lemma}\label{l-gue220422yyd}
We have for all $\ell\in\Z$,
\[\begin{split}
&F^{(0)}_{\tau_{k\delta},s}-B^{(0)}_{\tau_{k\delta}, s}
=O(k^{-\infty}):W^\ell_{{\rm comp\,}}(D)\To W^\ell_{{\rm loc\,}}(D),\ \ \\
&F^{(0)}_{\tau_{k\delta},M,s}-B^{(0)}_{\tau_{k\delta}, M,s}
=O(k^{-\infty}):W^\ell_{{\rm comp\,}}(U\cap\ol M)
\To W^\ell_{{\rm loc\,}}(U\cap\ol M). 
\end{split}\] 
\end{lemma}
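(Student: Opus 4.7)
The plan is to prove both identities by adapting the argument from Lemma \ref{l-gue220330yyd} (which itself follows \cite[Lemma 5.4]{HHL17}), with $\tau_\delta$ replaced by $\tau_\delta^2$ and with the $(0,1)$ case replaced by the $(0,0)$ case. The geometric fact I exploit is that in the BRT coordinates on $D$, one has $T=\partial/\partial x_{2n-1}$, and since $s$ is rigid holomorphic the weight $\phi$ is $T$-invariant; hence the conjugation defining the localization commutes with $-iT$ restricted to $D$, so that in these coordinates $F^{(0)}_{\tau_{k\delta},s}$ acts formally as $\tau_{k\delta}^2(D_{x_{2n-1}})$, which is precisely the operator one would hope to compare to $B^{(0)}_{\tau_{k\delta},s}$.

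For the first identity, I would start from the torus decomposition \eqref{e-gue220330ycd} and write, exactly as in \eqref{e-gue131217},
\[
F^{(0)}_{\tau_{k\delta},s}u(x)=\sum_{(m_1,\ldots,m_d)\in\mathbb Z^d}\tau_\delta^2\!\Bigl(\tfrac{\sum_j m_j\beta_j}{k}\Bigr)e^{i(\sum_j m_j\beta_j)x_{2n-1}}\int_{T^d}e^{-i\sum_j m_j\theta_j}u((e^{i\theta_1},\ldots,e^{i\theta_d})\circ x')\,dT_d,
\]
for $u\in\cC^\infty_c(D)$ with $x'=(x_1,\ldots,x_{2n-2},0)$. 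Then I would introduce a cutoff $\gamma(y_{2n-1})\in\cC^\infty_c(\R)$ with $\gamma\equiv 1$ on a neighborhood of the relevant slice, and split the right-hand side into a main piece and a remainder $R_k$ via the Fourier-inversion identity \eqref{dm}, exactly as in the chain \eqref{e-gue131217I}–\eqref{e-gue131217VI}. The remainder $R_k$ is shown to be $O(k^{-\infty})\colon W^\ell_{\rm comp}(D)\to W^\ell_{\rm loc}(D)$ by repeated integration by parts in $\xi_{2n-1}$, exploiting that ${\rm supp\,}\tau_\delta\Subset(\delta/4,2\delta)$ is bounded away from $0$ and that $\xi_{2n-1}=\sum_j m_j\beta_j/k$ is quantized. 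The main piece, after applying the Poisson/Fourier summation step that replaces the discrete sum in $m$ by the $\xi$-integral, matches the kernel of $B^{(0)}_{\tau_{k\delta},s}$ in \eqref{e-gue220422yyd} up to a further $O(k^{-\infty})$ error coming from the $(1-\gamma)$-cutoff contribution, which is again controlled by integration by parts.

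For the second identity on $\ol M$, I would extend the BRT coordinates to $\tilde x=(x_1,\ldots,x_{2n-1},\rho)$ on $U$ in such a way that $T=\partial/\partial x_{2n-1}$ on all of $U$, which is possible because the torus action preserves the boundary and the defining function $\rho$ by the rigidity in Assumption \ref{a-gue170929I} (so that the $T$-orbits lie in level sets of $\rho$). The exact same torus decomposition then produces a formula for $F^{(0)}_{\tau_{k\delta},M,s}u(\tilde x)$, now parametrized by $\tilde x\in U\cap\ol M$ rather than $x\in D$. The Fourier computation proceeds in $2n$ variables instead of $2n-1$: because the amplitude $\tau_\delta^2(\xi_{2n-1})$ in $B^{(0)}_{\tau_{k\delta},M,s}$ depends only on $\xi_{2n-1}$, integrating out $\xi_1,\ldots,\xi_{2n-2},\xi_{2n}$ yields delta functions that collapse $B^{(0)}_{\tau_{k\delta},M,s}$ to the one-dimensional operator $\tau_{k\delta}^2(D_{x_{2n-1}})$ acting along the orbit direction, which is exactly what the torus decomposition returns for $F^{(0)}_{\tau_{k\delta},M,s}$ modulo the cutoff $\gamma(y_{2n-1})$.

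The main obstacle will be verifying that the $R_k$-type remainder for the $\ol M$ case is smoothing \emph{up to the boundary}, i.e.\ maps $W^\ell_{\rm comp}(U\cap\ol M)\to W^\ell_{\rm loc}(U\cap\ol M)$ and not merely on the open interior; this is because the Sobolev norms on $U\cap\ol M$ are defined by extension as in \eqref{e-gue220301yyd}. However, the kernel of $R_k$ is smooth jointly in $\tilde x$ and $\tilde y$ right up to $\rho=0$ (the oscillation lives entirely in the torus direction, which is tangent to $X$), and the integration-by-parts estimates in $\xi_{2n-1}$ produce bounds uniform in $\rho$. Extending the test function $u$ by its $\cC^\infty$ extension across $X$ and applying the $X$-case estimate slicewise then gives the required $O(k^{-\infty})$ bound on $\ol M$, completing the proof.
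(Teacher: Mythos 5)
The paper states this lemma without an explicit proof, relying implicitly on the analogous $(0,1)$-form statement in Lemma~\ref{l-gue131209} (cited from~\cite[Lemma~5.4]{HHL17}) and on the torus-decomposition computations carried out in the proof of Lemma~\ref{l-gue220330yyd}. Your argument reproduces exactly that structure — the Fourier decomposition \eqref{e-gue131217} along the torus, the cutoff/remainder split via \eqref{dm}, and the extended BRT coordinates with $T=\partial/\partial x_{2n-1}$ on all of $U$ (using $T\rho=0$) for the interior version, with uniformity in $\rho$ of the integration-by-parts bounds handling the $W^\ell(U\cap\overline M)$ norms — so it is correct and is the filling-in of the argument the paper leaves implicit.
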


We can repeat the proof of Lemma~\ref{l-gue220330yyd} and get
the following.

\begin{lemma}\label{l-gue220422yydI}
Let $\chi, \hat\chi\in\cC^\infty(\ol M)$ with 
$\supp\chi\cap\supp\hat\chi=\emptyset$. We have 
\[\hat\chi F^{(0)}_{\tau_{k\delta},M}\chi
=O(k^{-\infty}): W^\ell(\ol M)\To W^\ell(\ol M),\ \ \forall\ell\in\mathbb Z.\]
\end{lemma}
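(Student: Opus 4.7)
The argument mirrors that of Lemma~\ref{l-gue220330yyd}, with $X$ replaced by $\ol M$. Since the $\R$-action on $M'$ comes from a torus action $T^d$ preserving $\ol M$ and $T=\sum_{j=1}^d\beta_j T_j$, every $u\in L^2(M,L^k)$ admits a torus Fourier decomposition $u=\sum_{m\in\mathbb Z^d}\hat u_m$ with $\hat u_m(x)=\int_{T^d}e^{-i\langle m,\theta\rangle}u(e^{i\theta}\circ x)\,dT_d$. Because $-iT_M\hat u_m=\langle m,\beta\rangle\hat u_m$, one obtains the global formula
\[
(F^{(0)}_{\tau_{k\delta},M}u)(x)=\sum_{m\in\mathbb Z^d}\tau_\delta\!\left(\tfrac{\langle m,\beta\rangle}{k}\right)\int_{T^d}e^{-i\langle m,\theta\rangle}u(e^{i\theta}\circ x)\,dT_d.
\]

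The plan is to insert the oscillatory-integral identity $\tau_\delta(\sigma/k)=(2\pi)^{-1}\iint e^{it(\sigma-\xi)}\tau_\delta(\xi/k)\,d\xi\,dt$ together with a cutoff $\gamma\in\cC_c^\infty(\R)$ with $\gamma\equiv 1$ near $0$ and $\operatorname{supp}\gamma\subset(-\varepsilon_0,\varepsilon_0)$, and to write $F^{(0)}_{\tau_{k\delta},M}=B_k+R_k$, where $B_k$ carries $\gamma(t)$ and $R_k$ carries $1-\gamma(t)$. Poisson summation on $T^d$ collapses the $m$-sum and the $T^d$-average in $B_k$ to evaluation along the flow $\Phi^T_t$ of $T$, giving
\[
(B_k u)(x)=\frac{1}{2\pi}\iint\gamma(t)\,\tau_\delta(\xi/k)\,e^{it\xi}\,u(\Phi^T_{-t}(x))\,d\xi\,dt,
\]
which is the direct interior analogue of the operator $B_k$ constructed in Lemma~\ref{l-gue220330yyd} (there, BRT coordinates realized $\Phi^T_t$ as translation in $x_{2n-1}$).

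Two facts then conclude the proof. First, since $\operatorname{supp}\chi$ and $\operatorname{supp}\hat\chi$ are compact and disjoint in $\ol M$, they have positive distance, and the $T$-flow is uniformly continuous on $\ol M$; shrinking $\varepsilon_0$ a priori, $\Phi^T_{-t}(\operatorname{supp}\hat\chi)\cap\operatorname{supp}\chi=\emptyset$ for every $t\in\operatorname{supp}\gamma$, hence $\hat\chi\,B_k\,\chi\equiv 0$. Second, for $R_k$ the rescaling $\xi=k\eta$ turns the integrand into $(1-\gamma(t))\tau_\delta(\eta)e^{ikt\eta}u(\Phi^T_{-t}(x))$; repeated integration by parts via $(ikt)^{-1}\partial_\eta e^{ikt\eta}=e^{ikt\eta}$, valid on $|t|\geq\varepsilon_0/2$, produces arbitrary negative powers of $k$, so $R_k=O(k^{-\infty})$ in every Sobolev pairing. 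Combining, $\hat\chi F^{(0)}_{\tau_{k\delta},M}\chi=\hat\chi R_k\chi=O(k^{-\infty})$.

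The main obstacle I expect is promoting the $O(k^{-\infty})$ control on $R_k$ to arbitrary Sobolev order $\ell$ uniformly on $\ol M$. As in the proof of Lemma~\ref{l-gue220330yyd}, this is handled by commuting derivatives through the Fourier decomposition: torus invariance of the Hermitian metric $\langle\,\cdot\,|\,\cdot\,\rangle$ and of $h^L$ implies that the torus Fourier projections preserve Sobolev regularity, and a partition of unity in torus-equivariant trivializing charts reduces the estimate to the integration-by-parts bound already carried out in Lemma~\ref{l-gue220330yyd}.
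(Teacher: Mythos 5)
Your proof is correct and follows the same blueprint the paper invokes (``repeat the proof of Lemma~\ref{l-gue220330yyd}''), but you carry it out globally rather than chart-by-chart: you apply Poisson summation to the torus Fourier decomposition first, obtaining a closed convolution formula along the $T$-flow, and then the split into a near-time piece $B_k$ (which dies against disjoint supports once $\varepsilon_0$ is small compared to the positive distance between $\operatorname{supp}\chi$ and $\operatorname{supp}\hat\chi$) and a far-time piece $R_k$ (killed by integration by parts against $\tau_\delta$). This is exactly the same decomposition as in Lemma~\ref{l-gue220330yyd}, where $y_{2n-1}$ plays the role of your flow time $t$, but your version avoids patching local BRT trivializations together, which is a genuine simplification. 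Three small checks worth stating explicitly. First, $u(\Phi^T_t(x))$ for a section $u$ of $L^k$ tacitly uses the lifted torus action on $L^k$; your appeal to rigidity/torus-invariance of $h^L$ is what makes this and the accompanying measure invariance legitimate. Second, after the rescaling $\xi=k\eta$ and $N$ integrations by parts the $t$-integral carries a factor $|t|^{-N}$, so you must take $N\geq 2$ for absolute convergence of $\int_{|t|\geq\varepsilon_0/2}|t|^{-N}\,dt$; this is harmless since $N$ is free. Third, the passage to $W^\ell\to W^\ell$ for $\ell>0$ uses that the Jacobians $D\Phi^T_t$ are uniformly bounded in $\cC^\ell$ in $t\in\R$ (the flow ranges through the compact torus), so differentiating under the integral sign is controlled; your remark on commuting derivatives through the Fourier decomposition covers this. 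With those points, the argument goes through.
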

 
 Let $\tilde\tau_{\delta}\in\cC^\infty_c((\delta/4,2\delta))$
 with $\tilde\tau_\delta\equiv1$ near $\supp\tau_\delta$. 
 We can repeat the proof of Theorem~\ref{21-5-19-thm1} 
 with minor changes and deduce the following.

\begin{theorem}\label{t-gue220422yyd}
There exist $\varphi(\tilde x, y, \xi)\in S^0((1+\abs{\xi});((U\times U)\cap(\ol M\times X))\times \R^{2n-1})$ with 
\[\varphi(\tilde x, y, \xi)\sim\langle\,x-y\,,\,\xi\,\rangle+\rho\varphi_1(x,y,\xi)+\ldots+\rho^j\varphi_j(x,y,\xi)+\ldots,\]
$\varphi_j\in S^0((1+\abs{\xi}); D\times D\times \R^{2n-1})$, $j=1,2,\ldots$, $\varphi_1$ is as in \eqref{e-gue220426yyd} and 
\[\begin{split}
&g(\tilde x,y,\xi,k)\in S^{2n-1}_{{\rm cl\,}}(1;((U\times U)\cap(\ol M\times X))\times\R^{2n-1}),\\
&\mbox{$g(\tilde x,y,\xi,k)\sim\sum^{+\infty}_{j=0}k^{2n-1-j}g^j(\tilde x,y,\xi)$ in $S^{2n-1}_{{\rm cl\,}}(1;((U\times U)\cap(\ol M\times X))\times\R^{2n-1})$}\\
&g^j(x,y,\xi,k)\in S^{0}(1;D\times D\times\R^{2n-1}),\ \ j=0,1,\ldots,\\
&\mbox{$g(\tilde x,y,\xi,k)=g^j(\tilde x,y,\xi)=0$ if $\xi_{2n-1}\notin\supp\tilde\tau_\delta$, for every $j=0,1,\ldots$},\\
&g^0(\tilde x,y,\xi)=(2\pi)^{-(2n-1)}\tau_\delta(\xi_{2n-1})+O(\rho),
\end{split}\]
such that if $\tilde B^{(0)}_{\tau_{k\delta},s}:
\cC^\infty_c(D)\To\cC^\infty(U\cap\ol M)$ 
is the continuous operator with distribution kernel 
\[\tilde B^{(0)}_{\tau_{k\delta},s}(\tilde x,y)=
\int e^{ik\varphi(\tilde x,y,\xi)}g(\tilde x,y,\xi,k)d\xi,\]
then 
\[\begin{split}
&\Box^{(0)}_{s}\tilde B^{(0)}_{\tau_{k\delta}, s}\equiv0\mod O(k^{-\infty})\ \ \mbox{on $(U\times U)\cap(\ol M\times X)$},\\
&\tilde B^{(0)}_{\tau_{k\delta}, s}|_X=B^{(0)}_{\tau_{k\delta}, s}. 
\end{split}\]
\end{theorem}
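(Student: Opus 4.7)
The plan is to imitate the WKB construction in the proof of Theorem~\ref{21-5-19-thm1} but in the scalar setting where $\Box^{(0)}_s$ acts on functions, so that the formulas simplify considerably. First I would record the local form of $\Box^{(0)}_s$: Proposition~\ref{s2-pmsmilkI} for $q=0$ makes the $(e^{\ol j}\wedge)(e^{\ol t}\wedge)^*$ term vanish, hence
\[
\Box^{(0)}_s=\sum_{j=1}^n(-L_j+kL_j\phi)(L_{\ol j}+kL_{\ol j}\phi)+\varepsilon(\ol L+k\ol L\phi)+\varepsilon(-L+kL\phi)+g,
\]
and the expression \eqref{21-10-6-a1} for the principal part in real coordinates remains valid. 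Consequently, if one writes $\Box^{(0)}_s\bigl(e^{ik\varphi}g\bigr)=e^{ik\varphi}h$ with $h\sim k^{2n+1}h^0+k^{2n}h^1+\ldots$, the leading coefficient $h^0$ is given by the scalar analogue of \eqref{21-6-15-a2}.

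Second, I would set up the ansatz $\varphi(\tilde x,y,\xi)\sim\langle x-y,\xi\rangle+\sum_{j\ge 1}\rho^j\varphi_j(x,y,\xi)$, $g(\tilde x,y,\xi,k)\sim\sum_{j\ge 0}k^{2n-1-j}g^j(\tilde x,y,\xi)$ with $g^j\sim\sum_{\ell\ge 0}\rho^\ell g^j_\ell(x,y,\xi)$, all cut off by the factor $\tilde\tau_\delta(\xi_{2n-1})$ so that the support condition is automatic. Solving the eikonal equation $h^0=O(\rho^\infty)$ is formally identical to the argument for Theorem~\ref{21-5-19-thm1}: expanding in $\rho$ one obtains $r_0$ as in \eqref{e-gue220329yyda}, which one makes vanish by choosing $\varphi_1$ via the square root in \eqref{e-gue220426yyd}; then each higher coefficient $\varphi_{j+1}$ is uniquely determined from an equation of the form $2\varphi_1\varphi_{j+1}+(\text{known})=0$, which is solvable because $\varphi_1\neq 0$.

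Third, with $\varphi$ in hand I would solve the transport equations $h^j=O(\rho^\infty)$ recursively. The boundary condition $\tilde B^{(0)}_{\tau_{k\delta},s}|_X=B^{(0)}_{\tau_{k\delta},s}$ fixes $g^0_0(x,y,\xi)=(2\pi)^{-(2n-1)}\tau_\delta(\xi_{2n-1})$ (with the appropriate normalization of $\tau_\delta$ coming from \eqref{e-gue220422yyd}); each subsequent coefficient $g^j_\ell$ is recovered from a linear equation of the form $-2i\varphi_1\,g^j_\ell+(\text{known})=0$, again uniquely solvable since $i\varphi_1\neq 0$. Finally I would apply Borel's lemma to produce genuine elements $\varphi\in S^0((1+|\xi|);((U\times U)\cap(\ol M\times X))\times\R^{2n-1})$ and $g^j\in S^0(1;\cdot)$ realizing the formal expansions, and then resum $g\sim\sum k^{2n-1-j}g^j$ to an element of $S^{2n-1}_{\rm cl}$, which by construction satisfies $\Box^{(0)}_s\tilde B^{(0)}_{\tau_{k\delta},s}\equiv 0\mod O(k^{-\infty})$ on $(U\times U)\cap(\ol M\times X)$.

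The main obstacle is, as in Theorem~\ref{21-5-19-thm1}, the solvability of the eikonal equation: one must verify that the quantity under the square root in \eqref{e-gue220426yyd} is \emph{positive} on $D\times D\times[\delta/4,2\delta]$, so that $i\varphi_1$ is real and bounded away from zero (this will also give the analogue of the estimate \eqref{e-gue220328yyd}). Positivity uses the same two ingredients as before: the positivity of $\mathrm{Re}\,[C_{\alpha,\ol\beta}]$ (the real part of the metric coefficients of $T^{1,0}M'$) together with the running hypothesis \eqref{e-gue220517yyd} that $C_0+2(JT)(\phi)>0$ near $X$, which controls the sign of the $-\phi_1\,\partial\varphi_0/\partial x_{2n-1}$ contribution for $\xi_{2n-1}\in(\delta/4,2\delta)$ after a suitable choice of local frame that kills the pure first-order terms in $\phi_0$ at the base point. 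Once this is verified, the rest of the construction is a purely formal recursion and Borel summation, and the proof is complete.
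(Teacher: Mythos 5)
Your proposal is correct and follows essentially the same route as the paper: the paper itself only asserts that one can ``repeat the proof of Theorem~\ref{21-5-19-thm1} with minor changes,'' and your outline spells out exactly those changes (vanishing of the $(e^{\ol j}\wedge)(e^{\ol t}\wedge)^*$ term for $q=0$, the same eikonal and transport recursion driven by $\varphi_1\neq0$, the boundary condition fixing $g^0_0$, and the positivity under the square root coming from $\mathrm{Re}\,[C_{\alpha,\ol\beta}]>0$, the choice of a rigid frame killing $d_x\phi_0$ at the base point, and \eqref{e-gue220517yyd}). Your parenthetical note that the boundary normalization of $g^0$ must be matched to $B^{(0)}_{\tau_{k\delta},s}$ as defined in \eqref{e-gue220422yyd} (which uses $\tau_\delta^2$) correctly flags a harmless notational discrepancy in the statement.
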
 

Define
\begin{equation}\label{e-gue220422yyda}
\begin{split}
&B^{(0)}_{\tau_{k\delta}}:=e^{k\phi}s^kB^{(0)}_{\tau_{k\delta},s}(s^{-k}e^{-k\phi}): \cC^\infty_c(D,L^k)\To\cC^\infty(D,L^k),\\
&\tilde B^{(0)}_{\tau_{k\delta}}:=e^{k\phi}s^k\tilde B^{(0)}_{\tau_{k\delta},s}(s^{-k}e^{-k\phi}): \cC^\infty_c(D,L^k)\To\cC^\infty(U\cap\ol M,L^k),
\end{split}
\end{equation}
where $\tilde B^{(0)}_{\tau_{k\delta},s}$ is as in Theorem~\ref{t-gue220422yyd}. From Lemma~\ref{l-gue220314yydz}, Lemma~\ref{l-gue220422yyd} 
and Lemma~\ref{l-gue220422yydI}, we can repeat the proof of Theorem~\ref{t-gue220406yyd} with minor changes and deduce the following.

\begin{theorem}\label{t-gue220422yydI}
With the notations used above, let $\chi\in\cC^\infty_c(D)$, 
$\chi_1\in\cC^\infty_c(U\cap\ol M)$, $\chi_1\equiv1$ near $\supp\chi$. We have 
\begin{equation}\label{e-gue220422yydb}
P^{(0)}_kF^{(0)}_{\tau_{k\delta}}\chi-F^{(0)}_{\tilde\tau_{k\delta},M}\circ\chi_1\tilde B^{(0)}_{\tau_{k\delta}}\chi=O(k^{-\infty}): W^\ell(X)\To W^{\ell+\frac{1}{2}}(\ol M),\ \ \forall \ell\in\mathbb Z.
\end{equation}
\end{theorem}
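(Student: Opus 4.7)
The plan is to adapt the argument of Theorem~\ref{t-gue220406yyd} to the $(0,0)$ case, with an added spectral cut-off to access the polynomial-in-$k$ bounds of Lemma~\ref{l-gue220314yydz}. I begin by applying the Dirichlet Green identity \eqref{e-gue190418yydaz} to the parametrix $\chi_1\tilde B^{(0)}_{\tau_{k\delta}}\chi$:
\[
\chi_1\tilde B^{(0)}_{\tau_{k\delta}}\chi
=N^{(0)}_k\Box^{(0)}_{f,k}\chi_1\tilde B^{(0)}_{\tau_{k\delta}}\chi
+P^{(0)}_k\gamma\chi_1\tilde B^{(0)}_{\tau_{k\delta}}\chi.
\]
Because $\langle\,\cdot\,|\,\cdot\,\rangle$, $h^L$, $\rho$ and hence $\Box^{(0)}_{f,k}$ are all $\R$-invariant and $T$ is tangential to $X$, a uniqueness argument for the Dirichlet problem shows that $-iT_M$ commutes with $N^{(0)}_k$ and intertwines with $-iT$ through the Poisson operator. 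Consequently $F^{(0)}_{\tilde\tau_{k\delta},M}N^{(0)}_k=N^{(0)}_kF^{(0)}_{\tilde\tau_{k\delta},M}$ and $F^{(0)}_{\tilde\tau_{k\delta},M}P^{(0)}_k=P^{(0)}_kF^{(0)}_{\tilde\tau_{k\delta}}$, so applying $F^{(0)}_{\tilde\tau_{k\delta},M}$ to both sides yields
\[
F^{(0)}_{\tilde\tau_{k\delta},M}\chi_1\tilde B^{(0)}_{\tau_{k\delta}}\chi
=N^{(0)}_kF^{(0)}_{\tilde\tau_{k\delta},M}\Box^{(0)}_{f,k}\chi_1\tilde B^{(0)}_{\tau_{k\delta}}\chi
+P^{(0)}_kF^{(0)}_{\tilde\tau_{k\delta}}\gamma\chi_1\tilde B^{(0)}_{\tau_{k\delta}}\chi.
\]

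For the interior term I proceed as in \eqref{e-gue220409yyd}: Theorem~\ref{t-gue220422yyd} yields $\Box^{(0)}_{f,k}\tilde B^{(0)}_{\tau_{k\delta}}\chi\equiv 0\mod O(k^{-\infty})$ on $(U\times U)\cap(\ol M\times X)$, and the commutator $[\Box^{(0)}_{f,k},\chi_1]\tilde B^{(0)}_{\tau_{k\delta}}\chi$ is likewise $O(k^{-\infty})$ because $\chi_1\equiv 1$ near $\mathrm{supp}\,\chi$. Since $\tilde\tau_\delta$ is supported in $(\delta/4,2\delta)$, we can factor $F^{(0)}_{\tilde\tau_{k\delta},M}=F^{(0)}_{\tilde\tau_{k\delta},M}\circ F^{(0)}_{[\frac{k\delta}{4},2k\delta],M}$, so the polynomial bound in Lemma~\ref{l-gue220314yydz} for $N^{(0)}_kF^{(0)}_{[\frac{k\delta}{4},2k\delta],M}$ absorbs the smoothing remainder and places the interior term in $O(k^{-\infty}):W^\ell(X)\to W^{\ell+1/2}(\ol M)$.

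For the boundary term, restriction gives $\gamma\chi_1\tilde B^{(0)}_{\tau_{k\delta}}\chi=\hat\chi_1B^{(0)}_{\tau_{k\delta}}\chi$ with $\hat\chi_1:=\gamma\chi_1\equiv 1$ near $\mathrm{supp}\,\chi$. Combining Lemma~\ref{l-gue220422yyd} ($B^{(0)}_{\tau_{k\delta}}\chi-F^{(0)}_{\tau_{k\delta}}\chi=O(k^{-\infty})$) with the off-diagonal estimate $(1-\hat\chi_1)F^{(0)}_{\tau_{k\delta}}\chi=O(k^{-\infty})$ supplied by Lemma~\ref{l-gue220330yyd} at $q=0$, we obtain $\gamma\chi_1\tilde B^{(0)}_{\tau_{k\delta}}\chi=F^{(0)}_{\tau_{k\delta}}\chi+O(k^{-\infty})$. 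Since $\tilde\tau_\delta\equiv 1$ near $\mathrm{supp}\,\tau_\delta$, functional calculus gives $F^{(0)}_{\tilde\tau_{k\delta}}F^{(0)}_{\tau_{k\delta}}=F^{(0)}_{\tau_{k\delta}}$, and the polynomial bound $P^{(0)}_kF^{(0)}_{[\frac{k\delta}{4},2k\delta]}=O(k^{\hat n_\ell})$ from Lemma~\ref{l-gue220314yydz} absorbs the remainder, yielding
\[
P^{(0)}_kF^{(0)}_{\tilde\tau_{k\delta}}\gamma\chi_1\tilde B^{(0)}_{\tau_{k\delta}}\chi
=P^{(0)}_kF^{(0)}_{\tau_{k\delta}}\chi+O(k^{-\infty}).
\]
Assembling the two contributions produces \eqref{e-gue220422yydb}.

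The main obstacle is justifying the intertwining identities for the spectral cut-off with $N^{(0)}_k$ and $P^{(0)}_k$: these commutations underpin the whole strategy, since without them one cannot marry the polynomial $k$-bounds of Lemma~\ref{l-gue220314yydz} (available only after the projection $F^{(0)}_{[\frac{k\delta}{4},2k\delta],M}$ has been inserted) to the $O(k^{-\infty})$ smoothing remainders from Theorem~\ref{t-gue220422yyd}, Lemma~\ref{l-gue220422yyd} and Lemma~\ref{l-gue220330yyd}. The other delicate point is the careful bookkeeping of where the spectral projection lives on each side of the identity, so that the final statement retains the $W^\ell(X)\to W^{\ell+1/2}(\ol M)$ regularity inherited from the underlying Poisson operator.
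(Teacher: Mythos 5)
Your proof is correct and takes essentially the same route the paper has in mind: the paper states this theorem only as a consequence of repeating the proof of Theorem~\ref{t-gue220406yyd} with "minor changes" using Lemmas~\ref{l-gue220314yydz}, \ref{l-gue220422yyd} and \ref{l-gue220422yydI}, and the "minor changes" are precisely what you spell out, namely hitting the Green identity \eqref{e-gue190418yydaz} with the spectral cut-off $F^{(0)}_{\tilde\tau_{k\delta},M}$, using the $\R$-equivariance of $\Box^{(0)}_{f,k}$ and the tangentiality of $T$ to pass $F^{(0)}_{\tilde\tau_{k\delta},M}$ through $N^{(0)}_k$ and $P^{(0)}_k$ (which is what makes the polynomial bounds of Lemma~\ref{l-gue220314yydz} available), using $F^{(0)}_{\tilde\tau_{k\delta}}F^{(0)}_{\tau_{k\delta}}=F^{(0)}_{\tau_{k\delta}}$, and invoking Lemmas~\ref{l-gue220422yyd} and \ref{l-gue220422yydI} for the boundary approximation and off-diagonal decay.
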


Note that $g(\tilde x,y,\xi,k)=0$ if $\xi_{2n-1}\notin\tilde\tau_{\delta}$, where $g(\tilde x,y,\xi,k)$ is as in Theorem~\ref{t-gue220422yyd}. From this observation, Lemma~\ref{l-gue220422yyd}, Lemma~\ref{l-gue220422yydI} and by using 
integration by parts, we can check that 
\begin{equation}\label{e-gue220422yydc}
F^{(0)}_{\tilde\tau_{k\delta},M}\chi_1\tilde B^{(0)}_{\tau_{k\delta}}\chi-\chi_1\tilde B^{(0)}_{\tau_{k\delta}}\chi=O(k^{-\infty}): W^\ell(X,L^k)\To W^\ell(\ol M,L^k),\ \ \forall\ell\in\mathbb Z.
\end{equation}
From \eqref{e-gue220422yydb} and \eqref{e-gue220422yydc}, we obtain:

\begin{theorem}\label{t-gue220422yydII}
With the notations used above, let $\chi\in\cC^\infty_c(D)$, $\chi_1\in\cC^\infty_c(U\bigcap\ol M)$, $\chi_1\equiv1$ near $\supp\chi$. We have 
\begin{equation}\label{e-gue220422yydd}
P^{(0)}_kF^{(0)}_{\tau_{k\delta}}\chi-\chi_1\tilde B^{(0)}_{\tau_{k\delta}}\chi=O(k^{-\infty}): W^\ell(X)\To W^{\ell+\frac{1}{2}}(\ol M),\ \ \forall \ell\in\mathbb Z,
\end{equation}
where $\tilde B^{(0)}_{\tau_{k\delta}}$ is given by \eqref{e-gue220422yyda}. 
\end{theorem}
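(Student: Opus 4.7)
The plan is to obtain \eqref{e-gue220422yydd} by combining the two ingredients already prepared in the discussion immediately preceding the theorem, namely \eqref{e-gue220422yydb} (Theorem~\ref{t-gue220422yydI}) and \eqref{e-gue220422yydc}. Concretely, I would split
\[
P^{(0)}_kF^{(0)}_{\tau_{k\delta}}\chi-\chi_1\tilde B^{(0)}_{\tau_{k\delta}}\chi
=\bigl(P^{(0)}_kF^{(0)}_{\tau_{k\delta}}\chi-F^{(0)}_{\tilde\tau_{k\delta},M}\chi_1\tilde B^{(0)}_{\tau_{k\delta}}\chi\bigr)
+\bigl(F^{(0)}_{\tilde\tau_{k\delta},M}\chi_1\tilde B^{(0)}_{\tau_{k\delta}}\chi-\chi_1\tilde B^{(0)}_{\tau_{k\delta}}\chi\bigr),
\]
and show that each summand is $O(k^{-\infty})\colon W^\ell(X)\to W^{\ell+\frac12}(\ol M)$ for every $\ell\in\mathbb Z$.

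The first summand is disposed of immediately by Theorem~\ref{t-gue220422yydI}. For the second summand, the key point is the microlocal structure of $\tilde B^{(0)}_{\tau_{k\delta}}$ supplied by Theorem~\ref{t-gue220422yyd}: the amplitude $g(\tilde x,y,\xi,k)$ in the oscillatory representation of $\tilde B^{(0)}_{\tau_{k\delta},s}$ is identically zero whenever $\xi_{2n-1}\notin{\rm supp\,}\tilde\tau_\delta$, while by construction $\tilde\tau_\delta\equiv 1$ on a neighbourhood of ${\rm supp\,}\tau_\delta$, hence in particular on the whole set where $g$ does not vanish. First I would apply Lemma~\ref{l-gue220422yydI} to localize $F^{(0)}_{\tilde\tau_{k\delta},M}$, discarding its contribution away from a prescribed neighbourhood of ${\rm supp\,}\chi_1$ modulo $O(k^{-\infty})$. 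Next, using Lemma~\ref{l-gue220422yyd} I would replace the localized $F^{(0)}_{\tilde\tau_{k\delta},M}$ by the explicit oscillatory kernel $B^{(0)}_{\tilde\tau_{k\delta},M,s}$, conjugate with $e^{k\phi}s^k$, and compose with the oscillatory integral for $\tilde B^{(0)}_{\tau_{k\delta},s}$. Since $T=\partial/\partial x_{2n-1}$ in the chosen coordinates, the phase of the composition has critical locus at $\xi_{2n}=0$ and matches the $(2n-1)$-dimensional phase of $\tilde B^{(0)}_{\tau_{k\delta},s}$ after stationary-phase integration in $\xi_{2n}$ and $y_{2n}$; the resulting amplitude differs from $g(\tilde x,y,\xi,k)$ by a factor $(1-\tilde\tau_\delta(\xi_{2n-1}))$ up to lower order, and this factor vanishes identically on ${\rm supp\,}_\xi g$. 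Repeated integration by parts in $\xi_{2n-1}$ then yields the $O(k^{-\infty})$ bound, with the requisite $W^{\ell+\frac12}$-continuity inherited from \eqref{e-gue190313adz} and Lemma~\ref{l-gue220314yydz}.

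Adding the two estimates gives \eqref{e-gue220422yydd}. The only real obstacle is bookkeeping: one has to verify that the composition of the spectral cut-off $F^{(0)}_{\tilde\tau_{k\delta},M}$, which is defined globally on $\ol M$ by the functional calculus of the non-elliptic operator $-iT_M$, with the boundary-oscillatory operator $\chi_1\tilde B^{(0)}_{\tau_{k\delta}}\chi$ can be legitimately reduced to an oscillatory-integral composition up to the boundary. This is where Lemmas~\ref{l-gue220422yyd} and~\ref{l-gue220422yydI} do the essential work, so the proof is ultimately a rearrangement of results already established rather than a new estimate.
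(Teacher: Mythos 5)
Your decomposition is exactly the one the paper uses: the first summand is Theorem~\ref{t-gue220422yydI}, the second is estimate \eqref{e-gue220422yydc}, and the paper states Theorem~\ref{t-gue220422yydII} as an immediate consequence of those two. Your elaboration of \eqref{e-gue220422yydc} via Lemmas~\ref{l-gue220422yyd}, \ref{l-gue220422yydI}, the $\xi_{2n-1}$-support of the amplitude, and integration by parts is precisely the sketch the paper gives in the paragraph between those two theorems, so the proposal is correct and takes the same route.

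One small imprecision worth flagging: you argue that $\tilde\tau_\delta\equiv1$ "on the whole set where $g$ does not vanish", but Theorem~\ref{t-gue220422yyd} as stated only puts ${\rm supp}_{\xi_{2n-1}}g\subset{\rm supp}\,\tilde\tau_\delta$, which is strictly larger than $\{\tilde\tau_\delta\equiv1\}$; the deduction actually requires the sharper inclusion ${\rm supp}_{\xi_{2n-1}}g\subset{\rm supp}\,\tau_\delta$, which does hold because the WKB transport equations solve the coefficients $g^j_\ell$ pointwise in $\xi$ from the boundary datum (whose $\xi_{2n-1}$-support is ${\rm supp}\,\tau_\delta$) and hence do not enlarge the $\xi_{2n-1}$-support. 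This is an imprecision present in the paper as well, so it does not affect the verdict that your argument follows the paper's proof.
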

 
 Let $(P^{(0)}_k)^*: \cC^\infty(\ol M,L^k)\To\cC^\infty(X,L^k)$ be the adjoint of $P^{(0)}_k$. From \eqref{e-gue220422yydd}, we can check that 
for all $\ell\in\Z$ we have
\begin{equation}\label{e-gue220422yyde}
\chi (P^{(0)}_k)^*P^{(0)}_kF^{(0)}_{\tau^2_{k\delta}}\chi-(\chi_1\tilde B^{(0)}_{\tau_{k\delta}}\chi)^*(\chi_1\tilde B^{(0)}_{\tau_{k\delta}}\chi)
=O(k^{-\infty}): W^\ell(X,L^k)\To W^{\ell+1}(X,L^k),
 \end{equation}
where $(\chi_1\tilde B^{(0)}_{\tau_{k\delta}}\chi)^*: \cC^\infty(\ol M,L^k)\To\cC^\infty(X,L^k)$ is the adjoint of $\chi_1\tilde B^{(0)}_{\tau_{k\delta}}\chi$. 
From Theorem~\ref{t-gue220422yyd}, it is straightforward to check that 
\begin{equation}\label{e-gue220423yyd}
\begin{split}
&(\chi_1\tilde B^{(0)}_{\tau_{k\delta},s}\chi)^*(\chi_1\tilde B^{(0)}_{\tau_{k\delta},s}\chi)=\chi E_{k,s}\chi,\\
&E_{k,s}={\rm Op\,}_k(\alpha),\ \ \alpha\in S^{2n-2}_{{\rm cl\,}}((1+\abs{\xi})^{-1}; D\times D\times\R^{2n-1}),\\
&\mbox{$\alpha(x,y,\xi,k)=0$ if $\xi_{2n-1}\notin\supp\tilde\tau_\delta$},\\
&\sigma^0_\alpha(x,x,\xi)=(2\pi)^{-(2n-1)}\tau^2_\delta(\xi_{2n-1})\frac{1}{2i\varphi_1(x,x,\xi)},\ \ \forall (x,x,\xi)\in D\times D\times\R^{2n-1},
\end{split}
\end{equation}
where $\varphi_1$ is given by \eqref{e-gue220426yyd}. Let 
\begin{equation}\label{e-gue220423yydVz}
E_k:=(e^{k\phi}s^k)E_{k,s}(s^{-k}e^{-k\phi}): \cC^\infty_c(D,L^k)\To\cC^\infty(D,L^k).
\end{equation}
From \eqref{e-gue220422yyde} and \eqref{e-gue220423yyd}, we get 
\begin{equation}\label{e-gue220423yydI}
\chi (P^{(0)}_k)^*P^{(0)}_kF^{(0)}_{\tau^2_{k\delta}}\chi-\chi E_k\chi=O(k^{-\infty}): W^\ell(X,L^k)\To W^{\ell+1}(X,L^k),\ \ \forall\ell\in\mathbb Z.
\end{equation}
Let $\hat\chi\in\cC^\infty_c(U\cap\ol M)$ with 
$\supp\hat\chi\cap\supp\chi=\emptyset$. From \eqref{e-gue220422yydd} 
and off-diagonal expansion of $\tilde B^{(0)}_{\tau_{k\delta}}$, we get 
\begin{equation}\label{e-gue220423yydII}
\hat\chi P^{(0)}_kF^{(0)}_{\tau_{k\delta}}\chi=O(k^{-\infty}): W^\ell(X,L^k)\To W^{\ell+\frac{1}{2}}(\ol M,L^k),\ \ \forall\ell\in\mathbb Z
\end{equation}
and hence 
\begin{equation}\label{e-gue220423yydIII}
\chi F^{(0)}_{\tau_{k\delta}}(P^{(0)}_k)^*\hat\chi=O(k^{-\infty}): W^\ell(\ol M,L^k)\To W^{\ell+\frac{1}{2}}(X,L^k),\ \ \forall\ell\in\mathbb Z.
\end{equation}
From \eqref{e-gue220423yydII} and \eqref{e-gue220423yydIII}, we deduce that for every $\chi_1, \chi_2\in\cC^\infty(X)$ with 
\[\supp\chi_1\cap\supp\chi_2=\emptyset,\] we have 
\begin{equation}\label{e-gue220423yydIV}
\chi_1(P^{(0)}_k)^*P^{(0)}_kF^{(0)}_{\tau^2_{k\delta}}\chi_2=O(k^{-\infty}): W^\ell(X,L^k)\To W^{\ell+1}(X,L^k),\ \ \forall\ell\in\mathbb Z.
\end{equation}
From \eqref{e-gue220423yydI} and \eqref{e-gue220423yydIV}, we get 

\begin{lemma}\label{l-gue220423yyd} 
With the notations used above, let $\chi\in\cC^\infty_c(D)$, $\chi_1\in\cC^\infty_c(D)$, $\chi_1\equiv1$ near $\supp\chi$. We have 
\begin{equation}\label{e-gue220423yydV}
(P^{(0)}_k)^*P^{(0)}_kF^{(0)}_{\tau^2_{k\delta}}\chi=\chi_1E_k\chi+\delta_k,
\end{equation}
where $E_k$ is given by \eqref{e-gue220423yydVz} and $\delta_k$ is continuous: 
\[\delta_k=O(k^{-\infty}): W^\ell(X,L^k)\To W^{\ell+1}(X,L^k),\ \ \forall\ell\in\mathbb Z.\]
\end{lemma}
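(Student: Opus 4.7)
The plan is to deduce the lemma as a direct cutoff-decomposition consequence of the two relations \eqref{e-gue220423yydI} and \eqref{e-gue220423yydIV} that immediately precede its statement; no further analytic input will be needed.

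First I would split the identity operator on $X$ as $1=\chi_1+(1-\chi_1)$, which yields
\[
(P^{(0)}_k)^*P^{(0)}_kF^{(0)}_{\tau^2_{k\delta}}\chi
=\chi_1(P^{(0)}_k)^*P^{(0)}_kF^{(0)}_{\tau^2_{k\delta}}\chi
+(1-\chi_1)(P^{(0)}_k)^*P^{(0)}_kF^{(0)}_{\tau^2_{k\delta}}\chi.
\]
Since $\chi_1\equiv 1$ in a neighbourhood of ${\rm supp\,}\chi$, the smooth functions $1-\chi_1\in\cC^\infty(X)$ and $\chi\in\cC^\infty_c(D)\subset\cC^\infty(X)$ have disjoint supports, so \eqref{e-gue220423yydIV} disposes of the second summand and gives $(1-\chi_1)(P^{(0)}_k)^*P^{(0)}_kF^{(0)}_{\tau^2_{k\delta}}\chi=O(k^{-\infty}): W^\ell(X,L^k)\To W^{\ell+1}(X,L^k)$ for every $\ell\in\mathbb Z$.

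For the remaining, localised summand I would exploit the identity $\chi_1\chi=\chi$ (immediate from $\chi_1\equiv 1$ near ${\rm supp\,}\chi$) to factor
$\chi_1(P^{(0)}_k)^*P^{(0)}_kF^{(0)}_{\tau^2_{k\delta}}\chi=\bigl(\chi_1(P^{(0)}_k)^*P^{(0)}_kF^{(0)}_{\tau^2_{k\delta}}\chi_1\bigr)\chi$, and then invoke \eqref{e-gue220423yydI} with the generic cutoff there replaced by $\chi_1\in\cC^\infty_c(D)$. This substitution is legitimate because \eqref{e-gue220423yydI} is stated (via its underlying derivation from \eqref{e-gue220422yydd}--\eqref{e-gue220423yyd}) for arbitrary cutoffs in $\cC^\infty_c(D)$, with $E_k$ itself a fixed pseudodifferential operator on $D$ independent of the cutoff. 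The resulting identity $\chi_1(P^{(0)}_k)^*P^{(0)}_kF^{(0)}_{\tau^2_{k\delta}}\chi_1=\chi_1 E_k\chi_1+O(k^{-\infty})$, combined with another application of $\chi_1\chi=\chi$ after right-multiplication by $\chi$, delivers the main contribution $\chi_1 E_k\chi+O(k^{-\infty})$.

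Adding the two pieces produces the stated formula, with the combined error $\delta_k$ of type $O(k^{-\infty}): W^\ell(X,L^k)\To W^{\ell+1}(X,L^k)$ for every $\ell\in\mathbb Z$. No step here is a genuine obstacle: the only point requiring verification is that composing an $O(k^{-\infty})$ operator with the bounded multiplication by $\chi$ preserves the $O(k^{-\infty})$ property, which is routine. Thus the substantive analytic content is entirely absorbed into \eqref{e-gue220423yydI} and \eqref{e-gue220423yydIV}, and the lemma itself amounts to bookkeeping.
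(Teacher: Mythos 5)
Your proof is correct and fills in precisely the deduction the paper leaves implicit: the paper states the lemma as an immediate consequence of \eqref{e-gue220423yydI} and \eqref{e-gue220423yydIV} without showing the cutoff bookkeeping, and your decomposition $1=\chi_1+(1-\chi_1)$, together with the observation that \eqref{e-gue220423yydI} applies to $\chi_1$ itself (since its derivation is cutoff-uniform and $E_k$ is a fixed semiclassical pseudodifferential operator), is exactly the intended argument.
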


Let $\chi_2, \chi_3\in\cC^\infty_c(D)$, $\chi\equiv1$ near $\supp\chi_2$, $\chi_2\equiv1$ near $\supp\chi_3$. 
Let $\hat\tau_\delta\in\cC^\infty_c((\delta/4,2\delta))$, $0\leq\hat\tau_\delta\leq1$, $\hat\tau_\delta=1$ near $\supp\tau_\delta$, $\tilde\tau_\delta=1$ near 
$\supp\hat\tau_\delta$. From complex stationary phase formula~\cite{MS} and standard WKB method, we can find $H={\rm Op\,}_k(\beta)$, $\beta\in S^{2n}_{{\rm cl\,}}((1+\abs{\xi}); D\times D\times\R^{2n-1})$, $\beta(x,y,\xi,k)=0$ if $\xi_{2n-1}\notin\supp\tilde\tau_\delta$, $\sigma^0_\beta(x,x,\xi)=(2\pi)^{-(2n-1)}\hat\tau^2_\delta(\xi_{2n-1})(2i\varphi_1(x,x,\xi))$, for all $(x,x,\xi)\in D\times D\times\R^{2n-1}$, $\varphi_1$ is as in \eqref{e-gue220426yyd}, such that 
\begin{equation}\label{e-gue220423yydp}
(\chi_1E_{k,s}\chi)(\chi_2H_{k,s}\chi_3)-\chi_1B^{(0)}_{\tau^2_{k\delta},s}\chi_3=O(k^{-\infty}): W^\ell(X)\To W^\ell(X),\ \ \forall\ell\in\mathbb Z.
\end{equation}
Let $H_k:=e^{k\phi}s^kH_{k,s})(e^{-k\phi}s^{-k}): \cC^\infty_c(D,L^k)\To\cC^\infty(D,L^k)$. From Lemma~\ref{l-gue220422yyd}, \eqref{e-gue220423yydV} 
and \eqref{e-gue220423yydp}, we get $(P^{(0)}_k)^*P^{(0)}_kF^{(0)}_{\tau^2_{k\delta}}\chi_2H_k\chi_3=F^{(0)}_{\tau^2_{k\delta}}\chi_3+\hat\delta_k$, where 
$\hat\delta_k$ is continuous: 
\[\hat\delta_k=O(k^{-\infty}): W^\ell(X,L^k)\To W^{\ell}(X,L^k),\ \ \forall\ell\in\mathbb Z.\]
Thus, 
\begin{equation}\label{e-gue220425yyd}
F^{(0)}_{\tau^2_{k\delta}}\chi_2H_k\chi_3=((P^{(0)}_k)^*P^{(0)}_k)^{-1}F^{(0)}_{\tau^2_{k\delta}}\chi_3+(P^{(0)}_k)^*P^{(0)}_k)^{-1}F^{(0)}_{\tilde\tau_{k\delta}}\hat\delta_k.
\end{equation} 
Let $\tau^\dagger_\delta\in\cC^\infty_c((\delta/4,2\delta))$, $0\leq\tau^\dagger_\delta\leq1$, $\tau^\dagger_\delta\equiv1$ on $[\frac{\delta}{2},\delta]$, $\tau\equiv1$ near $\supp\tau^\dagger$. From \eqref{e-gue220425yyd}, we get 
\begin{equation}\label{e-gue220425yydI}
F^{(0)}_{\tau^\dagger_{k\delta}}\chi_2H_k\chi_3=((P^{(0)}_k)^*P^{(0)}_k)^{-1}F^{(0)}_{\tau^\dagger_{k\delta}}\chi_3+((P^{(0)}_k)^*P^{(0)}_k)^{-1}F^{(0)}_{\tau^\dagger_{k\delta}}\hat\delta_k.
\end{equation}
Let $X=\bigcup^N_{j=1}D_j$, $D_j\subset X$ open, $j=1,\ldots,N$. Let $\chi_j\in\cC^\infty_c(D_j)$, $j=1,\ldots,N$, $\sum^N_{j=1}\chi_j\equiv1$ on $X$. From \eqref{e-gue220425yydI} and by using partition of unity, we get 

\begin{equation}\label{e-gue220425yydII}
\sum^N_{j=1}F^{(0)}_{\tau^\dagger_{k\delta}}\tilde\chi_jH_{k,j}\chi_j=((P^{(0)}_k)^*P^{(0)}_k)^{-1}F^{(0)}_{\tau^\dagger_{k\delta}}(I+\Gamma_k), 
\end{equation}
where $\tilde\chi_j\in\cC^\infty(D_j)$, $\tilde\chi_j\equiv1$ near 
$\supp\chi_j$, $j=1,\ldots,N$, $H_{k,j}$ is as in \eqref{e-gue220425yydI} 
and $\Gamma_k$ is continuous: 
\[\Gamma_k=O(k^{-\infty}): W^\ell(X,L^k)\To W^\ell(X,L^k),\ \ 
\forall\ell\in\mathbb Z.\] 

\begin{lemma}\label{l-gue220425yyd}
For every $\ell\in\mathbb Z$, there is a constant $C_\ell>0$
independent of $k$ such that 
\[\norm{((P^{(0)}_k)^*P^{(0)}_k)^{-1}
F^{(0)}_{\tau^\dagger_{k\delta}}u}_{\ell-1,X,k}\leq 
C_\ell\Bigr(\norm{u}_{\ell,X,k}+k\norm{u}_{\ell-1,X,k}\Bigr),\]
for every $u\in W^\ell(X,L^k)$, $k\gg1$.
\end{lemma}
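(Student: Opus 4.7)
The plan is to use the identity \eqref{e-gue220425yydII} to express $\bigl((P^{(0)}_k)^*P^{(0)}_k\bigr)^{-1}F^{(0)}_{\tau^\dagger_{k\delta}}$ as a sum of local semi-classical pseudodifferential operators $H_{k,j}$, and then to invoke a Sobolev-mapping estimate for each $H_{k,j}$ that generalizes Lemma~\ref{l-gue220411yyd} to arbitrary Sobolev order $\ell\in\mathbb Z$.

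Since $\Gamma_k=O(k^{-\infty})$ on every $W^\ell(X,L^k)$, for $k$ large the operator $I+\Gamma_k$ is invertible on each Sobolev space with $(I+\Gamma_k)^{-1}=I+O(k^{-\infty})$. Composing \eqref{e-gue220425yydII} on the right by $(I+\Gamma_k)^{-1}$ yields
\[
\bigl((P^{(0)}_k)^*P^{(0)}_k\bigr)^{-1}F^{(0)}_{\tau^\dagger_{k\delta}}=\sum_{j=1}^{N}F^{(0)}_{\tau^\dagger_{k\delta}}\tilde\chi_j H_{k,j}\chi_j(I+\Gamma_k)^{-1}.
\]
Setting $v:=(I+\Gamma_k)^{-1}u$, one has $\norm{v}_{s,X,k}\leq C\norm{u}_{s,X,k}$ for every $s\in\R$ and $k$ large, so it suffices to bound $\norm{F^{(0)}_{\tau^\dagger_{k\delta}}\tilde\chi_j H_{k,j}\chi_j v}_{\ell-1,X,k}$ by $C(\norm{v}_{\ell,X,k}+k\norm{v}_{\ell-1,X,k})$ for each $j$.

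By Lemma~\ref{l-gue220422yyd}, $F^{(0)}_{\tau^\dagger_{k\delta}}$ agrees modulo $O(k^{-\infty})$ with a semi-classical pseudodifferential operator of symbol class $S^0_{{\rm cl\,}}(1)$, hence is uniformly bounded on every $W^{\ell-1}(X,L^k)$ in $k$; absorbing this operator together with the multiplication by $\tilde\chi_j$, the problem reduces to the local estimate
\[
\norm{H_{k,j,s_j}\chi_j\hat v}_{\ell-1,X}\leq C\bigl(\norm{\hat v}_{\ell,X}+k\norm{\hat v}_{\ell-1,X}\bigr),
\]
with $H_{k,j,s_j}={\rm Op}_k(\beta)$, $\beta\in S^{2n}_{{\rm cl\,}}((1+|\xi|);D_j\times D_j\times\R^{2n-1})$ cut off in $\xi_{2n-1}\in{\rm supp\,}\tilde\tau_\delta$. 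This is the general-$\ell$ analogue of Lemma~\ref{l-gue220411yyd} (the case $\ell=0$). I would derive it by writing
\[
\norm{H_{k,j,s_j}\chi_j\hat v}_{\ell-1,X}\leq \norm{H_{k,j,s_j}|D|^{\ell-1}\chi_j\hat v}_{X}+\norm{[|D|^{\ell-1},H_{k,j,s_j}]\chi_j\hat v}_{X},
\]
applying Lemma~\ref{l-gue220411yyd} to the first term, and bounding the commutator—a semi-classical pseudodifferential operator of symbol class $S^{2n-1}_{{\rm cl\,}}((1+|\xi|)^\ell)$, since the Poisson-bracket commutator gains one factor of $k^{-1}$—by the corresponding extension of Lemma~\ref{l-gue220412yyd} to general $\ell$.

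The main obstacle is verifying this generalization of Lemmas~\ref{l-gue220411yyd} and \ref{l-gue220412yyd}; both follow from the same ambient-extension strategy already used in the proof of Lemma~\ref{l-gue220411yyd}, namely extending $D_j$ to $\hat D_j=D_j\times\R$, introducing the classical symbol $b(\hat x,\hat y,\hat\xi)=\xi_{2n}^{-(2n-1)}\beta(x,y,\xi/\xi_{2n},\xi_{2n})\tau(\xi_{2n})\tau_1(x_{2n})\hat\tau_1(y_{2n})$ which lies in $S^{1}_{1,0}$ with seminorms uniform in $k$, invoking H\"ormander's Sobolev boundedness theorem to obtain the continuous mapping $W^{\ell}_{\rm comp}(\hat D_j)\to W^{\ell-1}_{\rm loc}(\hat D_j)$ with constants independent of $k$, and testing against $\tilde\tau_1(x_{2n})e^{ikx_{2n}}\chi_j(x)\hat v(x)$ while absorbing the oscillatory remainder by integration by parts in $\xi_{2n}$ and $y_{2n}$. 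Summing the resulting local estimates over $j=1,\ldots,N$ via partition of unity completes the proof.
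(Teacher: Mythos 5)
Your proposal follows essentially the same route as the paper: invert $I+\Gamma_k$ by Neumann series (uniformly on each $W^\ell$ since $\Gamma_k=O(k^{-\infty})$), use \eqref{e-gue220425yydII} to reduce to estimating $\sum_j F^{(0)}_{\tau^\dagger_{k\delta}}\tilde\chi_j H_{k,j}\chi_j$ on $W^{\ell-1}$, and then appeal to the ambient-extension argument from the proof of Lemma~\ref{l-gue220411yyd} at arbitrary Sobolev order—which is exactly what the paper does when it says ``From the proof of Lemma~\ref{l-gue220411yyd}.'' Your intermediate commutator decomposition with $|D|^{\ell-1}$ is an unnecessary detour (and the order bookkeeping you state for the commutator symbol is a bit off, since $|D|^{\ell-1}$ is a standard, not semi-classical, operator, so its composition with ${\rm Op}_k(\beta)$ does not follow the usual $k^{-1}$-gain of the semi-classical bracket on the nose); but you yourself observe that the extended Lemma~\ref{l-gue220411yyd} can be obtained directly by re-running the ambient-extension argument, whose Calder\'on--Vaillancourt step already yields $W^\ell_{\rm comp}\to W^{\ell-1}_{\rm loc}$ boundedness with $k$-uniform constants for every $\ell$, making the reduction to $\ell=0$ superfluous. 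With that simplification your argument matches the paper's.
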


\begin{proof}
For $k$ large, the Neumann series 
$\sum^\infty_{j=0}(-\Gamma_k)^j$ converges to $W^\ell(X,L^k)$ and there is a constant $C>1$ independent of $k$ 
such that 
\begin{equation}\label{e-gue220425yydIII}
\frac{1}{C}\norm{u}_{\ell,X,k}\leq\norm{\sum^\infty_{j=0}(-\Gamma_k)^ju}_{\ell,X,k}\leq C\norm{u}_{\ell,X,k},
\end{equation}
for every $u\in W^\ell(X,L^k)$. Let $u\in W^\ell(X,L^k)$. From the proof of Lemma~\ref{l-gue220411yyd}, we see that 
\begin{equation}\label{e-gue220425yydIV}
\norm{\sum^N_{j=1}F^{(0)}_{\tau^\dagger_{k\delta}}\tilde\chi_jH_{k,j}\chi_ju}_{\ell-1,X,k}\leq C\Bigr(\norm{u}_{\ell,X,k}+k\norm{u}_{\ell-1,X,k}\Bigr),
\end{equation}
where $C>0$ is a constant independent of $k$ and $u$. From \eqref{e-gue220425yydII}, \eqref{e-gue220425yydIII} and \eqref{e-gue220425yydIV}, we get 
\[\begin{split}
&\norm{((P^{(0)}_k)^*P^{(0)}_k)^{-1}F^{(0)}_{\tau^\dagger_{k\delta}}u}_{\ell-1,X,k}=\norm{((P^{(0)}_k)^*P^{(0)}_k)^{-1}F^{(0)}_{\tau^\dagger_{k\delta}}(I+\Gamma_k)\sum^\infty_{j=1}(-\Gamma_k)^ju}_{\ell-1,X,k}\\
&\leq C\Bigr(\norm{\sum^\infty_{j=0}(-\Gamma_k)^ju}_{\ell,X,k}+k\norm{\sum^\infty_{j=0}(-\Gamma_k)^ju}_{\ell-1,X,k}\Bigr)\\
&\leq\hat C\Bigr(\norm{u}_{\ell,X,k}+k\norm{u}_{\ell-1,X,k}\Bigr),
\end{split}\]
for all $u\in W^\ell(X,L^k)$, for all $k\gg1$, where $C>0$, $\hat C>0$ are constants independent of $k$. The lemma follows. 
\end{proof}

We return to the equation \eqref{e-gue220425yyd}. From Lemma~\ref{l-gue220422yyd} and Lemma~\ref{l-gue220422yydI}, we can check that 
\begin{equation}\label{e-gue220425yyda}
F^{(0)}_{\tau^2_{k\delta}}\chi_2H_k\chi_3-\tilde\chi_2\tilde H_k\chi_3=O(k^{-\infty}): W^\ell(X,L^k)\To W^\ell(X,L^k),\ \ \forall\ell\in\mathbb Z, 
\end{equation}
where $\tilde H_k:\cC^\infty_c(D,L^k)\To\cC^\infty(D,L^k)$ is the continuous operator with $\tilde H_{k,s}={\rm Op\,}_k(\tilde\beta)$, $\tilde\beta\in S^{2n}_{{\rm cl\,}}((1+\abs{\xi});D\times D\times\mathbb  R^{2n-1})$, $\tilde\beta(x,y,\xi,k)=0$ if $\xi_{2n-1}\notin\supp\tilde\tau_\delta$, $\sigma^0_{\tilde\beta}(x,x,\xi)=(2\pi)^{-(2n-1)}\tau^2_\delta(\xi_{2n-1})(2i\varphi_1(x,x,\xi))$ where $\tilde H_{k,s}$ denotes the localization of $\tilde H_k$ with respect to $s$. From Lemma~\ref{l-gue220425yyd}, we get \begin{equation}\label{e-gue220425yydb}
((P^{(0)}_k)^*P^{(0)}_k)^{-1}F^{(0)}_{\tilde\tau_{k\delta}}\hat\delta_k=O(k^{-\infty}): W^\ell(X,L^k)\To W^{\ell-1}(X,L^k),\ \ \forall\ell\in\mathbb Z. 
\end{equation}
From \eqref{e-gue220425yyd}, \eqref{e-gue220425yyda} and \eqref{e-gue220425yydb}, we get 

\begin{lemma}\label{l-gue220425yydI}
With the notations used above, let $\chi\in\cC^\infty_c(D)$. W ehave 
\begin{equation}\label{e-gue220425yydc}
((P^{(0)}_k)^*P^{(0)}_k)^{-1}F^{(0)}_{\tilde\tau_{k\delta}}\chi=\chi_1\tilde H_k\chi+\tilde\delta_k, 
\end{equation}
where $\chi_1\in\cC^\infty_c(D)$, $\chi_1\equiv1$ near $\supp\chi$, $\tilde H_k: \cC^\infty_c(D,L^k)\To\cC^\infty(D,L^k)$ is the continuous operator with $\tilde H_{k,s}={\rm Op\,}_k(\tilde\beta)$, $\tilde\beta\in S^{2n}_{{\rm cl\,}}((1+\abs{\xi});D\times D\times\mathbb  R^{2n-1})$, $\tilde\beta(x,y,\xi,k)=0$ if $\xi_{2n-1}\notin\supp\tilde\tau_\delta$, $\sigma^0_{\tilde\beta}(x,x,\xi)=(2\pi)^{-(2n-1)}\tilde\tau^2_\delta(\xi_{2n-1})(2i\varphi_1(x,x,\xi))$ where $\tilde H_{k,s}$ denotes the localization of $\tilde H_k$ with respect to $s$, $\varphi_1$ is as in \eqref{e-gue220426yyd} and $\tilde\delta_k$ is continuous: 
\[\tilde\delta_k=O(k^{-\infty}): W^\ell(X,L^k)\To W^{\ell-1}(X,L^k),\ \ \forall\ell\in\mathbb Z.\]
\end{lemma}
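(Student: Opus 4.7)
The plan is to read off the lemma by combining the three facts established just above: \eqref{e-gue220425yyd}, \eqref{e-gue220425yyda}, and \eqref{e-gue220425yydb}, with a single choice of nested cutoffs. Given $\chi\in\cC^\infty_c(D)$, I would pick $\chi_2,\tilde\chi_2\in\cC^\infty_c(D)$ such that $\chi_2\equiv 1$ near ${\rm supp\,}\chi$ and $\tilde\chi_2\equiv 1$ near ${\rm supp\,}\chi_2$, so that the triple $(\tilde\chi_2,\chi_2,\chi_3:=\chi)$ satisfies the nesting hypothesis preceding \eqref{e-gue220425yyd}. Rearranging \eqref{e-gue220425yyd} then yields
\[
((P^{(0)}_k)^*P^{(0)}_k)^{-1}F^{(0)}_{\tau^2_{k\delta}}\chi
= F^{(0)}_{\tau^2_{k\delta}}\chi_2 H_k\chi
- ((P^{(0)}_k)^*P^{(0)}_k)^{-1}F^{(0)}_{\tilde\tau_{k\delta}}\hat\delta_k.
\]

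By \eqref{e-gue220425yydb}, the last term on the right is $O(k^{-\infty}):W^\ell(X,L^k)\to W^{\ell-1}(X,L^k)$ for every $\ell$. By \eqref{e-gue220425yyda}, the first term on the right equals $\tilde\chi_2\tilde H_k\chi$ modulo $O(k^{-\infty}):W^\ell\to W^\ell$. Setting $\chi_1:=\tilde\chi_2$, this already produces the identity in the form
\[
((P^{(0)}_k)^*P^{(0)}_k)^{-1}F^{(0)}_{\tau^2_{k\delta}}\chi = \chi_1 \tilde H_k \chi + \tilde\delta_k,
\]
with $\tilde\delta_k$ of the required order $O(k^{-\infty}):W^\ell\to W^{\ell-1}$.

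To pass from $F^{(0)}_{\tau^2_{k\delta}}$ to $F^{(0)}_{\tilde\tau_{k\delta}}$ on the left-hand side, I would use that $\tilde\tau_\delta\equiv 1$ near ${\rm supp\,}\tau_\delta$ implies $F^{(0)}_{\tilde\tau_{k\delta}}\,F^{(0)}_{\tau^2_{k\delta}}=F^{(0)}_{\tau^2_{k\delta}}$ by the functional calculus, while $\tilde H_k$ already carries the matching frequency cutoff through the symbol condition $\tilde\beta(x,y,\xi,k)=0$ for $\xi_{2n-1}\notin{\rm supp\,}\tilde\tau_\delta$. The residual contribution $((P^{(0)}_k)^*P^{(0)}_k)^{-1}F^{(0)}_{\tilde\tau_{k\delta}-\tau^2_{k\delta}}\chi$ is then absorbed into $\tilde\delta_k$ by combining Lemma \ref{l-gue220425yyd} (which gives the polynomial bound for $((P^{(0)}_k)^*P^{(0)}_k)^{-1}F^{(0)}_{\tau^\dagger_{k\delta}}$) with the $O(k^{-\infty})$-smoothing of the parametrix remainder from Lemma \ref{l-gue220423yyd}.

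The principal technical obstacle is quantitative rather than conceptual: the single-derivative loss built into the statement of $\tilde\delta_k$ is precisely what Lemma \ref{l-gue220425yyd} permits, so one has to verify that no additional derivative is lost in the cutoff-swapping step. This is checked by tracing the $k$-dependence through each term, using that the symbolic smoothing $\hat\delta_k=O(k^{-\infty})$ beats the polynomial growth of $((P^{(0)}_k)^*P^{(0)}_k)^{-1}F^{(0)}_{\tilde\tau_{k\delta}}$, and that the cutoff bookkeeping is compatible with the properly supported oscillatory integral representation of $\tilde H_k$ inherited from Theorem \ref{t-gue220422yyd}.
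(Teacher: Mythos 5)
Your first three paragraphs reproduce exactly the paper's own proof: rearrange \eqref{e-gue220425yyd}, use \eqref{e-gue220425yydb} to absorb the term $((P^{(0)}_k)^*P^{(0)}_k)^{-1}F^{(0)}_{\tilde\tau_{k\delta}}\hat\delta_k$ as $O(k^{-\infty}):W^\ell\to W^{\ell-1}$, and use \eqref{e-gue220425yyda} to replace $F^{(0)}_{\tau^2_{k\delta}}\chi_2H_k\chi_3$ by $\tilde\chi_2\tilde H_k\chi_3$ up to $O(k^{-\infty}):W^\ell\to W^\ell$. With your choice of nested cutoffs this yields
$((P^{(0)}_k)^*P^{(0)}_k)^{-1}F^{(0)}_{\tau^2_{k\delta}}\chi=\chi_1\tilde H_k\chi+\tilde\delta_k$,
which is precisely where the paper's argument stops.

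Your final paragraph, attempting to pass from $F^{(0)}_{\tau^2_{k\delta}}$ to $F^{(0)}_{\tilde\tau_{k\delta}}$ on the left-hand side, contains a genuine error. The operator $((P^{(0)}_k)^*P^{(0)}_k)^{-1}F^{(0)}_{\tilde\tau_{k\delta}-\tau^2_{k\delta}}\chi$ is not $O(k^{-\infty})$: the function $\tilde\tau_\delta-\tau^2_\delta$ is a fixed, $k$-independent, nonvanishing bump (it is of order one on ${\rm supp\,}\tilde\tau_\delta\setminus{\rm supp\,}\tau_\delta$), so $F^{(0)}_{\tilde\tau_{k\delta}-\tau^2_{k\delta}}$ is a genuine spectral cutoff of the same strength as the ones already under study; Lemma \ref{l-gue220425yyd} only gives the polynomial bound $\norm{\cdot}_{\ell,X,k}+k\norm{\cdot}_{\ell-1,X,k}$, not smoothing, and Lemma \ref{l-gue220423yyd} concerns $\tau^2_\delta$, not the difference. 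The discrepancy you noticed between $\tau^2_\delta$ (in the derivation and in \eqref{e-gue220425yyda}) and $\tilde\tau_\delta$, $\tilde\tau^2_\delta$ (in the statement of the lemma) is real, but it should be read as a notational slip: the identity that actually follows from \eqref{e-gue220425yyd}--\eqref{e-gue220425yydb} carries $\tau^2_\delta$, and for the only downstream use (Lemma \ref{l-gue220504yyd}, where one post-composes with $S_{\tau^2_{k\delta}}$ and $F^{(0)}_{\tilde\tau_{k\delta}}S_{\tau^2_{k\delta}}=S_{\tau^2_{k\delta}}$) the two versions coincide. If one genuinely wanted the $\tilde\tau_\delta$ variant one would have to rerun the entire parametrix chain (Lemma \ref{l-gue220423yyd} through \eqref{e-gue220425yyd}) with the new cutoff, not estimate a residual.
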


\section{Bergman kernel asymptotics}\label{s-gue220430yyd}

In this section, we will prove our main results. 

\subsection{Weighted Szeg\H{o}  kernels on the boundary}
\label{s-gue220430yydI} 

We consider $X$ the boundary of $M$. $X$ is a compact CR manifold with natural CR 
structure \[T^{1,0}X:=\mathbb CTX\cap T^{1,0}M'.\] 
Recall that $L$ is a positive $\R$-equivariant
CR line bundle over $X$. 
We consider the 1-form on $M'$,
\begin{equation}\label{e-gue190312scdqII}
\omega_0=d\rho\circ J=i(\partial\rho-\ddbar\rho).
\end{equation}
Then $\omega_0|_X\in\cC^\infty(X,TX)$ and  
$\omega_0(T)=-1$ on $X$, 
$\langle\,\omega_0\,,\,u\,\rangle=0$, for all $u\in T^{1,0}X\oplus T^{0,1}X$. 
We recall the following definitions (see~\cite[Definition 2.11]{HHL17}) 

\begin{definition}\label{d-gue220507yyd}
The Levi form of $X$ is the Hermitian qandratic form 
$\mathcal{L}$ on $T^{1,0}X$  defined by 
\[\mathcal{L}_p(U,\ol V)=-\frac{1}{2i}d\omega_0(p)
(U,\overline V)=\partial\ddbar\rho(p)(U,\ol{V}),\ \ U, V\in T^{1,0}_pX,\ \ p\in X.\]
\end{definition}

\begin{definition}\label{d-gue220507yydI}
Let $s$ be a $\R$-equivariant CR frame of $L$ on an open set 
$D$ of $X$, and let $\phi$ be the corresponding weight, 
$\abs{s}^2_{h^L}=e^{-2\phi}$. 
The CR curvature of $(L,h^L)$ is the Hermitian quadratic 
form $R^L_X$ on $T^{1,0}X$ 
defined by 
\[R^L_{X,p}(U,\ol V)=d(\ddbar_b\phi-\pr_b\phi)(p)
(U,\ol{V}),\ \ U, V\in T^{1,0}_pX,\ \ p\in D.\]
\end{definition}

It was shown in~\cite[Proposition 4.2]{HM12} that the definition 
of $R^L_X$ is independent of the choices of $\R$-equivariant CR 
frames of $L$. 

We have the following relation between the CR curvature of $L$
and the restriction of the Chern curvature $R^L$ of $L$
over $M'$.

\begin{lemma}\label{L:RX}
For every $p\in X$ we have
\begin{equation}\label{e-gue220711yydIa}
R^L_{X,p}=R^L|_{T^{1,0}_pX}+2(JT)(\phi)(p)\mathcal{L}_p\,.
\end{equation}
\end{lemma}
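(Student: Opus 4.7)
The plan is to derive \eqref{e-gue220711yydIa} by first writing the Chern-curvature potential $(\ddbar-\pr)\phi$ on $M'$ in terms of its tangential analog on $X$ plus a normal contribution, and then differentiating. All computations take place in a neighborhood of a fixed point $p\in X$ using a local rigid holomorphic trivializing section $s$ of $L$ with $|s|^2_{h^L}=e^{-2\phi}$.

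Set $\beta:=(\ddbar-\pr)\phi\in\cC^\infty(M',\C T^*M')$ and $\alpha:=(\ddbar_b-\pr_b)\phi\in\cC^\infty(X,\C T^*X)$. A direct calculation gives $d\beta=2\pr\ddbar\phi=R^L$, and by Definition \ref{d-gue220507yydI} we have $R^L_X=d\alpha$. The key intermediate step is the identity
\begin{equation}\label{e-plan1}
\iota^*\beta=\alpha-i(JT)(\phi)\,\omega_0\quad\text{on }X,
\end{equation}
where $\iota:X\hookrightarrow M'$ is the inclusion. I verify \eqref{e-plan1} by testing both sides against the three summands of the decomposition $\C TX=T^{1,0}X\oplus T^{0,1}X\oplus\C T$. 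On $T^{1,0}X$ and $T^{0,1}X$, both $\beta$ and $\alpha$ return $-U\phi$ and $\ol V\phi$ respectively, while $\omega_0$ vanishes. On $T$, I use $\alpha(T)=0$ (since $\pr_b\phi$ and $\ddbar_b\phi$ annihilate directions outside $T^{1,0}X\oplus T^{0,1}X$), $\omega_0(T)=-1$, and the ambient splitting $T=T^{1,0}+T^{0,1}$ with $T^{1,0}=\tfrac{1}{2}(T-iJT)$, $T^{0,1}=\tfrac{1}{2}(T+iJT)$, which yields $\beta(T)=(T^{0,1}-T^{1,0})\phi=i(JT)(\phi)$.

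Applying $d$ to \eqref{e-plan1} and using $d\alpha=R^L_X$, $\iota^*d\beta=\iota^*R^L$, I obtain $R^L_X=\iota^*R^L+i(JT)(\phi)\,d\omega_0+i\,d((JT)(\phi))\wedge\omega_0$. Evaluating at $p$ on $U\wedge\ol V$ with $U,V\in T^{1,0}_pX$ kills the wedge term since $\omega_0(U)=\omega_0(\ol V)=0$, and Definition \ref{d-gue220507yyd} gives $d\omega_0(U,\ol V)=-2i\,\mathcal{L}_p(U,\ol V)$, whence $i(JT)(\phi)\,d\omega_0(U,\ol V)=2(JT)(\phi)(p)\,\mathcal{L}_p(U,\ol V)$, which is precisely \eqref{e-gue220711yydIa}. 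No serious obstacle is anticipated; the only point requiring care is tracking the $(1,0)/(0,1)$ type decomposition of $T$ (which is not itself of pure type) and the sign conventions in the definitions of $\omega_0$ and $\mathcal{L}$.
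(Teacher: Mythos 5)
Your proof is correct, and it follows a genuinely different route than the paper's. The paper proves Lemma~\ref{L:RX} by a local coordinate computation: it chooses holomorphic coordinates $z$ centered at $p$ in which $\rho(z)=x_{2n}+\sum_{j<n}\lambda_j|z_j|^2+O(|z|^3)$ and $\phi(z)=-(JT)(\phi)(p)\,x_{2n}+O(|z|^2)$, substitutes the equation of $X$ into $\phi$, and reads off \eqref{e-gue220711yydIa} from the resulting quadratic terms (leaving the final identification to the reader). You instead establish the intrinsic one-form identity $\iota^*\bigl((\ddbar-\pr)\phi\bigr)=(\ddbar_b-\pr_b)\phi-i(JT)(\phi)\,\omega_0$ by testing on the three summands of $\C TX=T^{1,0}X\oplus T^{0,1}X\oplus\C T$, then take exterior derivatives and evaluate on $U\wedge\ol V$. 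This is a clean, coordinate-free argument that makes all three contributions (restricted Chern curvature, $\mathcal{L}$-term, and the wedge term that drops out) explicit rather than buried in a Taylor expansion; the one delicate point, which you correctly handle, is that $T$ is not of pure type, so $\beta(T)$ is computed via the splitting $T=\tfrac12(T-iJT)+\tfrac12(T+iJT)$, giving $\beta(T)=i(JT)(\phi)$, matching $-i(JT)(\phi)\omega_0(T)$ on account of $\omega_0(T)=-1$. Both methods buy the same result; yours is more self-contained and verifiable, at the cost of a slightly longer write-up.
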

\begin{proof}
Let $p\in X$ and holomorphic 
coordinates $z=(z_1,\ldots,z_n)$ so that $z(p)=0$ and 
\[\begin{split}
&\rho(z)=x_{2n}+\sum^{n-1}_{j=1}\lambda_j\abs{z_j}^2+O(\abs{z}^3),\\
&\phi(z)=\beta x_{2n}+\phi_1(z),
    \end{split}\]
    where $\beta=-(JT)(\phi)(p)$ and $\phi_1(z)=O(\abs{z}^2)$. We have 
    \begin{equation}\label{e-gue220711yyda}
    \phi|_X=-\beta\sum^{n-1}_{j=1}\lambda_j\abs{z_j}^2|_X+\phi_1(z)|_X+O(\abs{z}^3).
    \end{equation} 
    From \eqref{e-gue220711yyda}, it is not difficult to see that 
    \eqref{e-gue220711yydIa} holds. 

\end{proof}

Let 
\(\ddbar_b:\cC^\infty(X,L^k)\To\Omega^{0,1}(X,L^k)\)
be the tangential Cauchy-Riemann operator with values in $L^k$. We 
extend $\ddbar_b$ to the $L^2$ space,
\[\ddbar_b:\Dom\ddbar_b\subset L^2(X,L^k)\To L^2_{(0,1)}(X,L^k),\]
where $\Dom\ddbar_b=\set{u\in L^2(X,L^k);\, 
\ddbar_bu\in L^2_{(0,1)}(X,L^k)}$. 
Let 
\[\ddbar^*_b: \Dom\ddbar^*_b\subset L^2_{(0,1)}(X,L^k)\To 
L^2(X,L^k)\] be the $L^2$ adjoint 
of $\ddbar_b$. Put 
\[
\begin{split}
\Box_b:&=\ddbar^*_b\,\ddbar_b: \Dom\Box_b\subset L^2(X,L^k)
\To L^2(X,L^k),\\ 
\Dom\Box_b&=\set{u\in L^2(X,L^k);\, 
u\in\Dom\ddbar_b, \ddbar_bu\in\Dom\ddbar^*_b}. 
\end{split}
\]
Let 
\[S_k: L^2(X,L^k)\To {\rm Ker\,}\Box_b\]
be the orthogonal projection with respect to $(\,\cdot\,|\,\cdot\,)_{X,k}$. 
As before, let $\tau_{\delta}\in\cC^\infty_c((\delta/4,2\delta))$, 
$\tau_{\delta}\equiv1$ on $[\frac{\delta}{2},\delta]$. Put 
\[S_{\tau_{k\delta}}:=S_k\circ F^{(0)}_{\tau_{k\delta}}: 
L^2(X,L^k)\To {\rm Ker\,}\Box_b.\]
Let $s$ be a $\R$-equivariant CR frame of $L$ on an open set 
$D$ of $X$, $\abs{s}^2_{h^L}=e^{-2\phi}$, and let $x=(x_1,\ldots,x_{2n-1})$ be 
canonical coordinates of $X$ defined on $D$ 
with $T=\frac{\partial}{\partial x_{2n-1}}$ on $D$. 
Let $S_{\tau_{k\delta},s}: \cC^\infty_c(D)\To\cC^\infty(D)$ 
be the localization of $S_{\tau_{k\delta}}$ with respect to $s$ and let $S_{\tau_{k\delta},s}(x,y)\in\cC^\infty(D\times D)$ be the distribution kernel of $S_{\tau_{k\delta},s}$. The following was established in~\cite[Theorem 1.1]{HLM},~\cite[Theorem 1.1]{HHL17}. 

\begin{theorem}\label{t-gue220430yyd}
 With the notations used above, fix $\delta>0$ be a small constant. Let $\chi_1, \chi\in\cC^\infty(D)$. We have 
 \begin{equation}\label{e-gue220502yyd}
 (\chi_1S_{\tau_{k\delta},s}\chi)(x,y)=\int_{\R}e^{ik\Phi(x,y,t)}g(x,y,t,k)dt+O(k^{-\infty}),
 \end{equation}
 where $\Phi(x,y,t)\in\cC^\infty(D\times D\times(\delta/4,2\delta))$ is a phase function such that for some constant $c>0$, we have 
 \begin{equation}\label{e-gue220430yyd}
 \begin{split}
&d_x\Phi(x,y,t)|_{x=y}=-2{\rm Im\,}\ddbar_b\phi(x)-t\omega_0(x),\ \ 
d_y\Phi(x,y,t)|_{x=y}=2{\rm Im\,}\ddbar_b\phi(x)+t\omega_0(x),\\
&{\rm Im\,}\Phi(x,y,t)\geq C\abs{x'-y'}^2,\ \ x'=(x_1,\ldots,x_{2n}), y'=(y_1,\ldots,y_{2n}),\\
&{\rm Im\,}\Phi(x,y,t)+\abs{\frac{\partial\Phi}{\pr t}(x,y,t)}\geq C\abs{x-y}^2,\\
&\mbox{$\Phi(x,y,t)=0$, $\frac{\partial\Phi}{\pr t}(x,y,t)=0$ if and only if $x=y$ },
 \end{split}
 \end{equation}
 for every $(x,y,t)\in D\times D\times(\delta/4,2\delta)$ and $g(x,y,t,k)\in S^n(1;D\times D\times(\delta/4,2\delta))\cap\cC^\infty_c(D\times D\times(\delta/4,2\delta))$ is a symbol with expansion 
 \begin{equation}\label{e-gue220430yydI}
 \begin{split}
 &\mbox{$g(x,y,t,k)\sim\sum^{+\infty}_{j=0}g_j(x,y,t)k^{n-j}$ in $S^n(1;D\times D\times(\delta/4,2\delta))$},\\
 &g_j(x,y,t)\in S^0(1;D\times D\times(\delta/4,2\delta))\cap\cC^\infty_c(D\times D\times(\delta/4,2\delta)),\ \ j=0,1,\ldots,\\
 &g_0(x,x,t)=(2\pi)^{-n}\chi_1(x)\chi(x)\abs{\det(R^L_{X,x}+2t\mathcal{L}_x)}\abs{\tau_\delta(t)}^2,\ \ \forall (x,y,t)\in D\times D\times(\delta/4,2\delta),
 \end{split}
 \end{equation}
 where $\abs{\det(R^L_{X,x}+2t\mathcal{L}_x)}=\abs{\lambda_1(x,t)}\ldots\abs{\lambda_{n-1}(x,t)}$, $\lambda_j(x,t)$, $j=1,\ldots,n-1$, are the eigenvalues of the Hermitian quadratic form $R^L_{X,x}+2t\mathcal{L}_x$ with respect to $\langle\,\cdot\,|\,\cdot\,\rangle$. 
\end{theorem}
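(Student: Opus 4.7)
The plan is to reduce the analysis of $S_{\tau_{k\delta}}$ to a local microlocal calculation on a BRT patch $D$, following the strategy developed in \cite{HLM,HHL17}. In canonical coordinates $x=(x_1,\ldots,x_{2n-1})$ with $T=\partial/\partial x_{2n-1}$, the operator $F^{(0)}_{\tau_{k\delta}}$ acts essentially as Fourier multiplication by $\tau_\delta(\xi_{2n-1}/k)$ in $x_{2n-1}$, so its Schwartz kernel is (modulo $O(k^{-\infty})$) an oscillatory integral with phase $\langle x-y,\xi\rangle$ localized to the band $\xi_{2n-1}\in(k\delta/4,2k\delta)$, by the same Fourier decomposition used in Lemma~\ref{l-gue220422yyd}. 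Thus the task reduces to describing $S_k$ microlocally on this spectral band.

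The next step is to exploit Assumption~\ref{a-gue220517yyd}. Since $R^L_X+2t\mathcal{L}_x$ is positive on $T^{1,0}X$ for $t\in[\delta/4,2\delta]$, a Kohn-type estimate on the spectral subspace $\bigoplus_{k\delta/4\le\alpha\le2k\delta}\cC^\infty_\alpha(X,L^k)$ yields a lower bound $(\Box_bu\,|\,u)_{X,k}\ge ck\|u\|_{X,k}^2$ for $u$ orthogonal to ${\rm Ker\,}\Box_b$ in this subspace; the argument parallels the proof of Lemma~\ref{l-gue220313yyd}, with the positivity of the CR curvature replacing the positivity of $R^L$ and the torus-action structure from \cite[Theorem 3.5]{HHL17} ensuring compatibility with the BRT decomposition. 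This partial spectral gap of order $k$ is exactly what is needed to construct a genuine parametrix for $S_k\circ F^{(0)}_{\tau_{k\delta}}$.

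The parametrix is built as a complex Fourier integral operator via the ansatz
\[
\tilde S(x,y)=\int_{\R}e^{ik\Phi(x,y,t)}g(x,y,t,k)\,dt,
\]
where $\Phi$ and $g\sim\sum_{j\ge0}k^{n-j}g_j$ are determined by WKB. The phase solves the eikonal system extracted from the requirement $\Box_b(e^{ik\Phi}g)=O(k^{-\infty})$, together with the normalization that $\tilde S$ reproduce $F^{(0)}_{\tau_{k\delta}}$ on ${\rm Ker\,}\Box_b$; the initial conditions at $x=y$ are forced by the Reeb direction and the weight $\phi$, giving $d_x\Phi|_{x=y}=-2\,{\rm Im\,}\ddbar_b\phi-t\omega_0$ and symmetrically for $d_y\Phi$. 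The twisted CR-positivity condition ensures that the Hessian of ${\rm Im\,}\Phi$ in the transverse variables $x'-y'$ is strictly positive, yielding ${\rm Im\,}\Phi\ge c|x'-y'|^2$. The transport equations determine the symbols $g_j$ recursively; Borel summation produces $g(x,y,t,k)$ in the claimed symbol class, and a stationary-phase computation against the model Szeg\H{o} kernel on the Heisenberg-type local model $\C^{n-1}\times\R$ pins down $g_0(x,x,t)=(2\pi)^{-n}\chi_1(x)\chi(x)|\!\det(R^L_{X,x}+2t\mathcal{L}_x)|\,|\tau_\delta(t)|^2$.

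The main obstacle is establishing the partial spectral gap and the positivity of ${\rm Im\,}\Phi$: both rest on the condition $R^L_X+2t\mathcal{L}_x>0$, which couples the line bundle curvature with the Reeb direction through the twisting by $t$. Once the gap and parametrix are in hand, comparing $\tilde S$ with the true $S_k\circ F^{(0)}_{\tau_{k\delta}}$ produces a residual $R_k$ that is annihilated by $\Box_b$ modulo $O(k^{-\infty})$ and lies microlocally in the prescribed band; applying the spectral gap to $(I-S_k)R_k$ forces $R_k\equiv0$ modulo $O(k^{-\infty})$, completing the asymptotic description \eqref{e-gue220502yyd}--\eqref{e-gue220430yydI}.
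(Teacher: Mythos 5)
The paper does not prove Theorem~\ref{t-gue220430yyd}; it is quoted from~\cite[Theorem 1.1]{HLM} and \cite[Theorem 1.1]{HHL17}, as is stated in the sentence immediately preceding the theorem. Your outline reconstructs the strategy of those references at a high level --- BRT localization, a partial spectral gap from the positivity of $R^L_X+2t\mathcal{L}$, a WKB parametrix solving eikonal and transport equations, Borel summation, and a comparison argument to eliminate the remainder --- and is consistent with that strategy, though it is a sketch of a long and technical argument rather than a complete proof.
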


We refer the reader to~\cite[Section 3.3]{HLM} for more properties of the phase function $\Phi(x,y,t)$ in Theorem~\ref{t-gue220430yyd}. Under the same notations used in 
Theorem~\ref{t-gue220430yyd}, from Malgrange preparation theorem, we can take $\Phi(x,y,t)$ so that 
\begin{equation}\label{e-gue220504ycdm}
\Phi(x,y,t)=x_{2n-1}t+\hat\Phi(x',y,t),
\end{equation}
where $\hat\Phi(x',y,t)\in\cC^\infty(D\times D\times(\delta/4,2\delta))$, $x'=(x_1,\ldots,x_{2n-2})$. From now on, we assume that $\Phi(x,y,t)$ has the form 
\eqref{e-gue220504ycdm}. 

We need the following off-diagonal expansion. 

\begin{theorem}\label{t-gue220501yyd}
Let $\hat\chi, \chi\in\cC^\infty(X)$ with $\supp\hat\chi\cap\supp\chi=\emptyset$. Then, 
\begin{equation}\label{e-gue220502yydb}
\hat\chi S_{\tau_{k\delta}}\chi\equiv0\mod O(k^{-\infty})\ \ \mbox{on $X\times X$}. 
\end{equation}
\end{theorem}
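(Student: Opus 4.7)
The plan is to localize the operator via a partition of unity into canonical coordinate patches, apply the oscillatory integral representation from Theorem~\ref{t-gue220430yyd} together with a non-stationary phase argument for ``nearby'' pairs, and handle ``distant'' pairs using the pseudolocality of $F^{(0)}_{\tau_{k\delta}}$ provided by Lemma~\ref{l-gue220330yyd}.

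Concretely, I would cover $X$ by finitely many canonical coordinate patches $\{D_j\}_{j=1}^N$ (Definition~\ref{D:BRT}), chosen fine enough that any two patches with non-empty intersection lie in a common canonical coordinate patch. Let $\{\psi_j\}$ be a subordinate partition of unity with $\psi_j\in\cC^\infty_c(D_j)$ and $\sum_j\psi_j\equiv1$. Then
$$\hat\chi\,S_{\tau_{k\delta}}\,\chi=\sum_{j,l=1}^N(\hat\chi\psi_j)\,S_{\tau_{k\delta}}\,(\chi\psi_l),$$
and it suffices to prove each summand is $O(k^{-\infty})$ on $X\times X$.

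For a pair $(j,l)$ with $\overline{D_j}\cap\overline{D_l}\neq\emptyset$, work in the common canonical coordinate patch $\tilde D\supset D_j\cup D_l$ with a local rigid CR trivializing section $s$ of $L$, and apply Theorem~\ref{t-gue220430yyd} to obtain
$$\bigl((\hat\chi\psi_j)\,S_{\tau_{k\delta},s}\,(\chi\psi_l)\bigr)(x,y)=\int_\R e^{ik\Phi(x,y,t)}\,g(x,y,t,k)\,dt+O(k^{-\infty}).$$
Since $\text{supp}\,\hat\chi\cap\text{supp}\,\chi=\emptyset$, there is $d>0$ with $|x-y|\geq d$ throughout the support of the kernel. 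The third line of \eqref{e-gue220430yyd} then yields $\text{Im}\,\Phi(x,y,t)+|\partial_t\Phi(x,y,t)|\geq Cd^2$. Introduce a cutoff $\sigma(t)\in\cC^\infty(\R)$ with $\sigma\equiv1$ on $\{|\partial_t\Phi|\leq Cd^2/3\}$ and $\sigma\equiv0$ on $\{|\partial_t\Phi|\geq 2Cd^2/3\}$, and split $g=\sigma g+(1-\sigma)g$. On $\text{supp}\,\sigma$ one has $\text{Im}\,\Phi\geq Cd^2/3$, so $|e^{ik\Phi}|\leq e^{-kCd^2/3}$ gives exponential decay. On $\text{supp}(1-\sigma)$ the operator $L:=(ik\partial_t\Phi)^{-1}\partial_t$ satisfies $Le^{ik\Phi}=e^{ik\Phi}$, and $N$-fold integration by parts produces a factor $k^{-N}$, yielding $O(k^{-\infty})$.

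For pairs with $\overline{D_j}\cap\overline{D_l}=\emptyset$, exploit $[S_k,F^{(0)}_{\tau_{k\delta}}]=0$ to write $S_{\tau_{k\delta}}=F^{(0)}_{\tilde\tau_{k\delta}}\,S_k\,F^{(0)}_{\tau_{k\delta}}$ with $\tilde\tau_\delta\in\cC^\infty_c((\delta/4,2\delta))$ satisfying $\tilde\tau_\delta\equiv1$ on $\text{supp}\,\tau_\delta$. Insert intermediate cutoffs $\tilde\psi_j,\tilde\psi_l$ supported in enlarged but still disjoint neighborhoods of $D_j,D_l$, with $\tilde\psi_j\equiv1$ near $\text{supp}(\hat\chi\psi_j)$ and $\tilde\psi_l\equiv1$ near $\text{supp}(\chi\psi_l)$. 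Factors of the form $(\hat\chi\psi_j)F^{(0)}_{\tilde\tau_{k\delta}}(1-\tilde\psi_j)$ and $(1-\tilde\psi_l)F^{(0)}_{\tau_{k\delta}}(\chi\psi_l)$ are $O(k^{-\infty})$ in every Sobolev operator norm by Lemma~\ref{l-gue220330yyd}, and composition with $S_k$ (bounded on $L^2$) and smooth multiplications preserves this rate. The main obstacle is precisely this distant-pair bookkeeping: ensuring that when $S_k$ is sandwiched between operators that are only $O(k^{-\infty})$ in Sobolev operator norm (rather than genuinely smoothing), the composition remains $O(k^{-\infty})$ on $X\times X$. This is resolved because Lemma~\ref{l-gue220330yyd} supplies such bounds for \emph{every} pair of Sobolev indices, while the factors involving $S_k$ and smooth multiplication are $L^2$-bounded (with at worst polynomial $k$-losses), so the decay $k^{-N}$ for arbitrarily large $N$ survives the composition. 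The technical heart of the proof is the non-stationary phase argument applied to the oscillatory integral from Theorem~\ref{t-gue220430yyd}.
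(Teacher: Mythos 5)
Your proposal has a genuine gap in the ``distant pairs'' step. After you write $S_{\tau_{k\delta}}=F^{(0)}_{\tilde\tau_{k\delta}}\,S_k\,F^{(0)}_{\tau_{k\delta}}$ and insert the intermediate cutoffs $\tilde\psi_j,\tilde\psi_l$, the decomposition produces error terms like $(\hat\chi\psi_j)F^{(0)}_{\tilde\tau_{k\delta}}(1-\tilde\psi_j)\,S_k\,F^{(0)}_{\tau_{k\delta}}(\chi\psi_l)$ and $(\hat\chi\psi_j)F^{(0)}_{\tilde\tau_{k\delta}}\tilde\psi_j\,S_k\,(1-\tilde\psi_l)F^{(0)}_{\tau_{k\delta}}(\chi\psi_l)$, which you handle, but it also leaves a main term
\[
(\hat\chi\psi_j)\,F^{(0)}_{\tilde\tau_{k\delta}}\,\tilde\psi_j\,S_k\,\tilde\psi_l\,F^{(0)}_{\tau_{k\delta}}\,(\chi\psi_l),
\]
with $\tilde\psi_j$ and $\tilde\psi_l$ disjointly supported. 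This term is not addressed by your argument: the pseudolocality of $F^{(0)}$ does nothing for it (the cutoffs now sit directly against $S_k$), and showing $\tilde\psi_j\,S_k\,\tilde\psi_l$ (or the sandwiched version) is negligible is precisely the kind of off-diagonal statement about the Szeg\H{o} projector that the theorem is trying to establish, so the plan is circular. Iterating the cutoff insertion does not help because $F^{(0)}_{\tilde\tau_{k\delta}}F^{(0)}_{\tau_{k\delta}}=F^{(0)}_{\tau_{k\delta}}$ stabilizes, and $\tilde\psi_j S_k\tilde\psi_l$ always reappears as the uncontrolled remainder. A secondary, more technical issue: even for the terms you do control, ``composition with $S_k$ (bounded on $L^2$)'' does not by itself give $O(k^{-\infty})$ on $\ol M\times\ol M$ — one needs Sobolev bounds with polynomial $k$-growth on the $S_k$-containing factor (available via the $\Box_b$-parametrix $H_k$ of \cite[Theorem 1.5]{Hsiao18}) or a duality/self-adjointness argument to upgrade to smoothness in both variables.

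The paper avoids the circularity entirely: it builds a global approximate Szeg\H{o} kernel $\hat S_k$ by gluing local parametrices $\hat S_{k,D_j}$ from \cite[Theorem 6.5]{Hsiao18} via a partition of unity. These local parametrices are properly supported oscillatory integrals with $\hat\chi_2\hat S_{k,s}\hat\chi_1=O(k^{-\infty})$ for disjoint $\hat\chi_1,\hat\chi_2$ \emph{by construction}, so the glued $\hat S_k$ inherits the off-diagonal decay globally. The crucial second ingredient, which your proposal has no substitute for, is the exact parametrix identity $H_k\Box_b+S_{\tau_{k\delta}}=F^{(0)}_{\tau_{k\delta}}$, together with $\Box_b\hat G_k+\hat S_k=F^{(0)}_{\tilde\tau_{k\delta}}+R_k$ with $R_k$ negligible. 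Composing these gives $S_{\tau_{k\delta}}\equiv F^{(0)}_{\tau_{k\delta}}\hat S_k\mod O(k^{-\infty})$ on $X\times X$, transferring the off-diagonal decay from $\hat S_k$ to $S_{\tau_{k\delta}}$ without ever needing to understand the off-diagonal behavior of $S_k$ directly. Your non-stationary phase argument for nearby pairs is fine (and is, at bottom, the reason the local parametrices $\hat S_{k,D_j}$ decay off-diagonal), but it is not a substitute for this global parametrix step.
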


\begin{proof}
Let $s$ be a $\R$-equivariant CR frame defined on an open set $D$ of $X$, 
$\abs{s}^2_{h^L}=e^{-2\phi}$, and let $x=(x_1,\ldots,x_{2n-1})$ be 
canonical coordinates of $X$ defined on $D$ 
with $T=\frac{\partial}{\partial x_{2n-1}}$ on $D$. Let $\chi_1\in\cC^\infty_c(D)$ and let $\tilde\tau_\delta\in\cC^\infty_c((\delta/4,2\delta))$, $0\leq\tilde\tau_\delta\leq1$, 
$\tilde\tau_\delta\equiv1$ near $\supp\tau_\delta$. From~\cite[Theorem 6.5]{Hsiao18}, we can find continuous 
operators $G_{k,s}, \hat S_{k,s}: \cC^\infty_c(D)\To\cC^\infty(D)$ such that $G_{k,s}$, $\hat S_{k,s}$ are properly supported on $D$,  the distribution kernel of $\hat S_{k,s}$ satisfies \eqref{e-gue220502yyd} and \begin{equation}\label{e-gue220502yydI}
\begin{split}
&\Box_{b,s}\hat S_{k,s}=O(k^{-\infty})\ \ \mbox{on $D\times D$},\\
&\Box_{b,s}G_{k,s}+\hat S_{k,s}=B^{(0)}_{\tilde\tau_{k\delta},s}\chi_1,
\end{split}
\end{equation}
where $\Box_{b,s}$ denotes the localization of $\Box_b$ with respect to $s$.
Note that 
\begin{equation}\label{e-gue220502yydII}
\hat\chi_2\hat S_{k,s}\hat\chi_1=O(k^{-\infty}),
\end{equation}
for every $\hat\chi_1, \hat\chi_2\in\cC^\infty_c(D)$, $\supp\hat\chi_1\cap\supp\hat\chi_2=\emptyset$. Let $\chi\in\cC^\infty_c(D)$, $\chi_1\equiv1$ near $\supp\chi$. Since $G_{k,s}$, $\hat S_{k,s}$ are properly supported on $D$, we can find $\tilde\chi\in\cC^\infty_c(D)$ 
such that $\hat S_{k,s}\chi=\tilde\chi\hat S_{k,s}\chi$, 
$G_{k,s}\chi=\tilde\chi G_{k,s}\chi$. From this observation and \eqref{e-gue220502yydI}, we get 
\begin{equation}\label{e-gue220502yydIII}
\Box_{b,s}(\tilde\chi G_{k,s}\chi)+\tilde\chi\hat S_{k,s}\chi=\tilde B^{(0)}_{\tilde\tau_{k\delta},s}\chi.
\end{equation}
Let 
\[\begin{split}
&G_{k,D}:=e^{k\phi}s^kG_{k,s}(s^{-k}e^{-k\phi}): \cC^\infty_c(D,L^k)\To\cC^\infty(D,L^k),\\
&\hat S_{k,D}:=e^{k\phi}s^k\hat S_{k,s}(s^{-k}e^{-k\phi}): \cC^\infty_c(D,L^k)\To\cC^\infty(D,L^k).
\end{split}
\]
From Lemma~\ref{l-gue220330yyd}, Lemma~\ref{l-gue220422yyd} and \eqref{e-gue220502yydIII}, we get 
\begin{equation}\label{e-gue220502yydIV}
\begin{split}
&\mbox{$\Box_b(\tilde\chi\hat S_{k,D}\chi)\equiv0\mod O(k^{-\infty})$ on $X\times X$},\\
&\Box_b(\tilde\chi G_{k,D}\chi)+\tilde\chi\hat S_{k,D}\chi=F^{(0)}_{\tilde\tau_{k\delta}}\chi+R_{k,D},
\end{split}
\end{equation}
where $R_{k,D}$ is continuous: 
\[R_{k,D}=O(k^{-\infty}):W^\ell(X,L^k)\To W^\ell(X,L^k),\]
for all $\ell\in\mathbb Z$. Assume that $X=\bigcup^N_{j=1}D_j$, where $D_j\subset X$ is an open set and there is a local CR frame $s_j$ defined on $D_j$, $\abs{s_j}^2_{h^L}=e^{-2\phi_j}$, $j=1,\ldots,N$. 
Let $\chi_j\in\cC^\infty_c(D_j)$, $j=1,\ldots,N$, $\sum^N_{j=1}\chi_j=1$ on $X$. Put $\hat S_k:=\sum^N_{j=1}\tilde\chi_j\hat S_{k,D_j}\chi_j$, $\hat G_k:=\sum^N_{j=1}\tilde\chi_j\hat G_{k,D_j}\chi_j$, 
where $\tilde\chi_j\in\cC^\infty_c(D_j)$ is as in the discussion before \eqref{e-gue220502yydIII}, $j=1,\ldots,N$. 
From \eqref{e-gue220502yydIV}, we have 
\begin{equation}\label{e-gue220502yydV}
\begin{split}
&\mbox{$\Box_b\hat S_{k}\equiv0\mod O(k^{-\infty})$ on $X\times X$},\\
&\Box_b\hat G_{k}+\hat S_{k}=F^{(0)}_{\tilde\tau_{k\delta}}+R_{k},
\end{split}
\end{equation}
where $R_{k}$ is continuous: 
\[R_{k}=O(k^{-\infty}):W^\ell(X,L^k)\To W^\ell(X,L^k),\]
for all $\ell\in\mathbb Z$. From \eqref{e-gue220502yydV}, we have $S_{\tau_{k\delta}}\hat S_k=S_{\tau_{k\delta}}+S_{\tau_{k\delta}}R_k$ and hence 
\begin{equation}\label{e-gue220502yydVI}
\mbox{$S_{\tau_{k\delta}}\hat S_k-S_{\tau_{k\delta}}\equiv0\mod O(k^{-\infty})$ on $X\times X$}.
\end{equation}
It was known that~\cite[Theorem 1.5]{Hsiao18} there is a continuous operator $H_k: L^2(X,L^k)\To\Dom\Box_b$ such that 
\[H_{k}=O(k^{n_\ell}):W^\ell(X,L^k)\To W^{\ell+1}(X,L^k),\]
for all $\ell\in\mathbb Z$, where $n_\ell\in\mathbb N$ is independent of $k$ and 
\begin{equation}\label{e-gue220502yydVII}
H_k\Box_b+S_{\tau_{k\delta}}=F^{(0)}_{\tau_{k\delta}}.
\end{equation}
From \eqref{e-gue220502yydVII}, we have 
\begin{equation}\label{e-gue220502yydVIII}
\mbox{$S_{\tau_{k\delta}}\hat S_k-F^{(0)}_{\tau_{k\delta}}\hat S_k\equiv0\mod O(k^{-\infty})$ on $X\times X$}.
\end{equation}
From \eqref{e-gue220502yydVI} and \eqref{e-gue220502yydVIII}, we get 
\begin{equation}\label{e-gue220502yyda}
\mbox{$S_{\tau_{k\delta}}-F^{(0)}_{\tau_{k\delta}}\hat S_k\equiv0\mod O(k^{-\infty})$ on$X\times X$}.
\end{equation} 
From Lemma~\ref{l-gue220330yyd} and the construction of $\hat S_k$, we see that $F^{(0)}_{\tau_{k\delta}}\hat S_k$ satisfies \eqref{e-gue220502yydb}. From this observation and \eqref{e-gue220502yyda}, we get \eqref{e-gue220502yydb}. 
\end{proof}

Let $A_k, B_k: \cC^\infty_c(D)\To\cC^\infty(D)$
be properly supported continuous operators with distribution kernels 
\[A_k(x,y)=\int_{\R}e^{ik\Phi(x,y,t)}a(x,y,t,k)dt,\ \ B_k(x,y)=\int_{\R}e^{ik\Phi(x,y,t)}b(x,y,t,k)dt,\] 
where $\Phi(x,y,t)\in\cC^\infty(D\times D\times(\delta/4,2\delta))$ is as in Theorem~\ref{t-gue220430yyd} and $a(x,y,t,k), b(x,y,t,k)\in S^n(1;D\times D\times(\delta/4,2\delta))\cap\cC^\infty_c(D\times D\times(\delta/4,2\delta))$. From complex stationary phase formula, we can repeat the procedure in~\cite[p.64--p.65]{Hsiao18} and get 
\begin{equation}\label{e-gue220502yydc}
(A_k\circ B_k)(x,y)=\int_{\R}e^{ik\Phi_1(x,y,t)}c(x,y,t,k)dt+O(k^{-\infty}), 
\end{equation}
where $\Phi_1(x,y,t)\in\cC^\infty(D\times D\times(\delta/4,2\delta))$, ${\rm Im\,}\Phi_1\geq0$, and for every $(x,x,t)\in D\times D\times(\delta/4,2\delta)$, 
we have
\[\Phi(x,x,t)=0,\ \ d_x\Phi_1(x,y,t)|_{x=y}=d_x\Phi(x,y,t)|_{x=y},\ \ d_y\Phi_1(x,y,t)|_{x=y}=d_y\Phi(x,y,t)|_{x=y}.\]
Recall that $\Phi(x,y,t)$ has the form \eqref{e-gue220504ycdm}. It is not difficult to see that we can take $\Phi_1(x,y,t)$ so that 
\begin{equation}\label{e-gue220504yydn}
\Phi_1(x,y,t)=x_{2n-1}t+\hat\Phi_1(x',y,t),\ \ \hat\Phi_1(x',y,t)\in\cC^\infty(D\times D\times(\delta/4,2\delta)).
\end{equation}
From now on, we assume that $\Phi_1(x,y,t)$ has the form \eqref{e-gue220504yydn}. In the proof of Theorem~\ref{t-gue220512yyd}, we need 

\begin{lemma}\label{l-gue220502yyd}
With the notations used above, $\Phi(x,y,t)-\Phi_1(x,y,t)$ vanishes to infinite order at $x=y$, for every $(x,x,t)\in D\times D\times(\delta/4,2\delta)$.
\end{lemma}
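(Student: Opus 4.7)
The plan is to show that both $\Phi$ and $\Phi_1$ satisfy the same characterization modulo phases vanishing to infinite order at the diagonal, and then to invoke a uniqueness statement for such phases. First I would recall from \cite{HLM,HHL17,Hsiao18} (specifically the discussion around~\cite[Section 3.3]{HLM}) that $\Phi(x,y,t)$ is determined uniquely, up to an additive term vanishing to infinite order at $x=y$, by: (i) $\Phi(x,x,t)=0$; (ii) the first-derivative data at $x=y$ recorded in \eqref{e-gue220430yyd}; (iii) the normal form \eqref{e-gue220504ycdm}, i.e.\ $\Phi=x_{2n-1}t+\hat\Phi(x',y,t)$; and (iv) an eikonal-type condition arising from $\Box_{b,s}(e^{ik\Phi}g)\equiv 0$ at $x=y$ to infinite order, equivalently $\overline Z_{j,s}\Phi|_{x=y}$ vanishes to infinite order for a suitable family of CR vector fields.

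Next I would perform the complex stationary phase computation explicitly. Writing
\[
(A_kB_k)(x,y)=\iiint e^{ik(\Phi(x,w,t_1)+\Phi(w,y,t_2))}a(x,w,t_1,k)b(w,y,t_2,k)\,dw\,dt_1\,dt_2
\]
and exploiting the normal form \eqref{e-gue220504ycdm}, the variable $t_1$ can be retained as the outer parameter $t$, while $(w,t_2)$ are eliminated by stationary phase. At $x=y$ the system of critical equations $d_w(\Phi(x,w,t)+\Phi(w,y,t_2))=0$, $\partial_{t_2}\Phi(w,y,t_2)=0$ has the unique critical point $w=x=y$, $t_2=t$, with critical value $\Phi(x,x,t)+\Phi(x,x,t)=0$. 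By the almost-analytic implicit function theorem this critical point extends to almost-analytic functions $w^\ast(x,y,t), t_2^\ast(x,y,t)$, and $\Phi_1$ is their critical value.

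Then I would check that $\Phi_1$ inherits the four conditions above: (i) from the critical value computation; (ii)-(iii) from \eqref{e-gue220504yydn} together with the first-derivative matching that is built into the stationary-phase recipe; and (iv), which is the heart of the matter, from the chain rule at the critical point combined with condition (iv) for $\Phi$. Concretely, if $\overline Z_{j,s}^x$ denotes the relevant CR derivative in the $x$-variable, then
\[
\overline Z_{j,s}^x\Phi_1(x,y,t)=\overline Z_{j,s}^x\Phi(x,w^\ast,t)+\sum_\alpha\frac{\partial w^{\ast}_\alpha}{\partial\bar z_j^x}\bigl(\partial_{w_\alpha}\Phi(x,w,t)+\partial_{w_\alpha}\Phi(w,y,t_2^\ast)\bigr)\big|_{w=w^\ast},
\]
and the bracketed factor vanishes at the critical point, so $\overline Z_{j,s}^x\Phi_1$ inherits the infinite-order vanishing from $\overline Z_{j,s}^x\Phi$ at $x=y$.

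Finally I would invoke uniqueness: conditions (i)--(iv) determine the Taylor expansion of a phase at $x=y$ to all orders. This is proved by induction on the order of the Taylor coefficients in $x-y$, where at each step the eikonal equation (iv), combined with the normal form (iii), pins down the new coefficient in terms of those already computed. Since $\Phi$ and $\Phi_1$ satisfy the same system and share coefficients of order $\leq 1$ on the diagonal, they must agree to infinite order at $x=y$, which is the assertion of the lemma. The main obstacle is the verification of condition (iv) for $\Phi_1$; once that step is in place, the rest reduces to a routine almost-analytic Taylor expansion argument.
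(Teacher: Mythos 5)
Your approach is genuinely different from the paper's. The paper does not try to characterize $\Phi$ intrinsically; instead it exploits the \emph{idempotence} of the Szeg\H{o} projector. Specifically, using Theorem~\ref{t-gue220501yyd}, it writes $(\chi_1S_{\tau_{k\delta},s}\tilde\chi_1)\circ(\chi_1S_{\tau_{k\delta},s}\chi)=\chi_1S_{\tau^2_{k\delta},s}\chi+O(k^{-\infty})$: the left-hand side is, by complex stationary phase, an oscillatory integral with phase $\Phi_1$, while the right-hand side has phase $\Phi$, and both have nonvanishing leading amplitudes $a_0(x_0,x_0,t_0)\neq0$, $b_0(x_0,x_0,t_0)\neq0$. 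After multiplying by $e^{-ikx_{2n-1}t_0}\chi_0(x_{2n-1})$ and integrating out $x_{2n-1}$ and $t$ (where the normal forms \eqref{e-gue220504ycdm}, \eqref{e-gue220504yydn} are essential), the paper reduces to two FIOs with scalar phases $\hat\Phi$, $\hat\Phi_1$ that agree modulo $O(k^{-\infty})$ and then applies the phase-comparison argument of~\cite[p.150]{HM17}. What this approach buys is that the only ``uniqueness'' fact needed is that already-existing comparison lemma for complex positive phases, which is readily available.

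Your route, by contrast, requires as a prerequisite a precise uniqueness theorem of the form ``conditions (i)--(iv) determine a positive phase to infinite order along the diagonal.'' That statement is not in the paper and is stronger than what~\cite[Section 3.3]{HLM} records; you would have to formulate and prove it, including the correct form of the eikonal condition (the natural equation is $q(x,d_x\Phi)=O(({\rm Im\,}\Phi)^\infty)$ for the principal symbol $q$ of $\Box_b$, which is subtly different from ``$\overline Z_{j,s}\Phi$ vanishes to infinite order at $x=y$''). The chain-rule verification of (iv) for $\Phi_1$ at the critical point is plausible, but since $w^*(x,y,t)$ and $t_2^*(x,y,t)$ are almost-analytic (complex) extensions, the composition $\overline Z^x_{j,s}\Phi(x,w^*,t)$ is only meaningful modulo errors of the size of $|{\rm Im\,}w^*|^\infty$, so the bookkeeping for ``infinite order at $x=y$'' needs more care than the sketch gives. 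In short: your argument is a feasible alternative, but it trades the paper's short reduction (idempotence plus an existing comparison lemma) for a self-contained but unproved uniqueness lemma; as written there is a real gap in establishing that lemma and in the almost-analytic details of step (iv).
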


\begin{proof}
Fix $x_0\in D$ and $t_0\in(\delta/4,2\delta)$. We may assume that $x(p)=0$ and $\tau_\delta(t_0)=1$. Let $\chi\in\cC^\infty_c(D)$, $\chi\equiv1$ near $0$ and let $\chi_1, \tilde\chi_1\in\cC^\infty_c(D)$, $\chi_1\equiv1$ near $\supp\chi$, $\tilde\chi_1\equiv1$ near $\supp\chi_1$. From Theorem~\ref{t-gue220501yyd}, we can check that 
\begin{equation}\label{e-gue220503yydII}
(\chi_1S_{\tau_{k\delta},s}\tilde\chi_1)\circ(\chi_1S_{\tau_{k\delta},s}\chi)=\chi_1S_{\tau^2_{k\delta},s}\chi+O(k^{-\infty}).
\end{equation} 
From Theorem~\ref{t-gue220430yyd}, we have 
\begin{equation}\label{e-gue220503yydIII}
\begin{split}
&(\chi_1S_{\tau_{k\delta},s}\tilde\chi_1)\circ(\chi_1S_{\tau_{k\delta},s}\chi)=\int_{\R}e^{ik\Phi_1(x,y,t)}a(x,y,t,k)dt+O(k^{-\infty}),\\
&a(x,y,t,k)\in S^n(1;D\times D\times(\delta/4,2\delta))\cap\cC^\infty_c(D\times D\times(\delta/4,2\delta)),\\
&\mbox{$a(x,y,t,k)\sim\sum^{+\infty}_{j=0}a_j(x,y,t)k^{n-j}$ in $S^n(1;D\times D\times(\delta/4,2\delta))$},\\
 &a_j(x,y,t)\in S^0(1;D\times D\times(\delta/4,2\delta))\cap\cC^\infty_c(D\times D\times(\delta/4,2\delta)),\ \ j=0,1,\ldots,\\
 &a_0(x_0,x_0,t_0)\neq0. 
\end{split}
\end{equation} 
From Theorem~\ref{t-gue220430yyd}, we have 
\begin{equation}\label{e-gue220503yydIV}
\begin{split}
&(\chi_1S_{\tau^2_{k\delta},s}\chi)(x,y)=\int_{\R}e^{ik\Phi(x,y,t)}b(x,y,t,k)dt+O(k^{-\infty}),\\
&b(x,y,t,k)\in S^n(1;D\times D\times(\delta/4,2\delta))\cap\cC^\infty_c(D\times D\times(\delta/4,2\delta)),\\
&\mbox{$b(x,y,t,k)\sim\sum^{+\infty}_{j=0}b_j(x,y,t)k^{n-j}$ in $S^n(1;D\times D\times(\delta/4,2\delta))$},\\
 &b_j(x,y,t)\in S^0(1;D\times D\times(\delta/4,2\delta))\cap\cC^\infty_c(D\times D\times(\delta/4,2\delta)),\ \ j=0,1,\ldots,\\
 &b_0(x_0,x_0,t_0)\neq0. 
\end{split}
\end{equation} 
Thus, 
\begin{equation}\label{e-gue220503yydV}
\int_{\R}e^{ik\Phi_1(x,y,t)}a(x,y,t,k)dt=
\int_{\R}e^{ik\Phi(x,y,t)}b(x,y,t,k)dt+O(k^{-\infty}).
\end{equation}
Without loss of generality, assume that $D=D'\times(-\varepsilon,\varepsilon)$, 
$D'$ is an open set of $0$ in $\R^{2n-2}$, $\varepsilon>0$. 
Let $\chi_0\in\cC^\infty_c((-\varepsilon,\varepsilon))$, $\chi_0(0)=1$. 
Consider the continuous operators: 
\[\begin{split}
\hat A_k: \cC^\infty_c(D)&\To\cC^\infty_c(D'),\\
u&\To\int e^{-ikx_{2n-1}t_0}\chi_0(x_{2n-1})
e^{ik\Phi_1(x,y,t)}a(x,y,t,k)u(y)dydtdx_{2n-1},
\end{split}\]
\[\begin{split}
\hat B_k: \cC^\infty_c(D)&\To\cC^\infty_c(D'),\\
u&\To\int e^{-ikx_{2n-1}t_0}\chi_0(x_{2n-1})e^{ik\Phi(x,y,t)}b(x,y,t,k)u(y)dydtdx_{2n-1}.\end{split}\]
Recall that $\Phi$ and $\Phi_1$ have the forms 
\eqref{e-gue220504ycdm} and \eqref{e-gue220504yydn}. 
From this observation and complex stationary phase formula, 
we get 
\[\hat A_k(x',y)=e^{ik\hat\Phi_1(x',y,t_0)}\alpha(x',y,k)+
O(k^{-\infty}),\ \ \hat B_k(x',y)=e^{ik\hat\Phi(x',y,t_0)}
\beta(x',y,k)+O(k^{-\infty}),\]
$\alpha, \beta\in S^{n-1}(1; D'\times D)$, $\alpha\sim\sum^{+\infty}_{j=0}\alpha_j(x',y)k^{n-1-j}$ in $S^{n-1}(1;D'\times D)$, 
$\beta\sim\sum^{+\infty}_{j=0}\beta_j(x',y)k^{n-1-j}$ in 
$S^{n-1}(1;D'\times D)$, $\alpha_j, \beta_j\in\cC^\infty(D'\times D)$, 
$j=0,1,\ldots$, 
$\alpha_0(0,0)\neq0$, $\beta_0(0,0)\neq0$, where $\hat A_k(x',y)$, 
$\hat B_k(x',y)$ denote the distribution kernels of $\hat A_k$ and 
$\hat B_k$ respectively. 
From \eqref{e-gue220503yydV}, we can check that 
$\hat A_k-\hat B_k=O(k^{-\infty})$. From this observation and 
$\alpha_0(0,0)\neq0$, $\beta_0(0,0)\neq0$, we can repeat the 
procedure in~\cite[p.150]{HM17} and deduce that 
$\hat\Phi(x',y,t_0)-\hat\Phi_1(x',y,t_0)$ vanishes 
to infinite order at $(x_0,x_0)$. The lemma folllows. 
\end{proof}

\subsection{Bergman kernel asymptotic expansion}\label{s-gue220503yyd}

In this section, we will prove our main results. Let $s$ 
be a $\R$-equivariant holomorphic frame of $L$ defined 
on an open set $U$ of $M'$ 
with $U\cap X=:D\neq\emptyset$. On $U$, let 
$\set{e^{\ol j}}^n_{j=1}$ be an orthonormal basis for 
$T^{*0,1}M'$ with $e^{\ol n}=\frac{\ddbar\rho}{\abs{\ddbar\rho}}$. 
Let $\set{L_{\ol j}}^n_{j=1}\subset T^{0,1}M'$ 
be the dual basis of $\set{e^{\ol j}}^n_{j=1}$. 
Then, $L_{\ol n}=\frac{\frac{\pr}{\pr\rho}+
iJ(\frac{\pr}{\pr\rho})}{\abs{\frac{\pr}{\pr\rho}+
iJ(\frac{\pr}{\pr\rho})}}$. On $\Omega^{0,1}(U,L^k)$, 
we have $\ddbar_k=\sum^n_{j=1}(e^{\ol j})\wedge L_{\ol j}+
P_0$ and 
\begin{equation}\label{e-gue220503yyda} 
\ddbar^*_{f,k}=\sum^n_{j=1}(e^{\ol j}\wedge)^*L^*_{\ol j}+P^*_0,
\end{equation}
where $L^*_{\ol j}$ is the adjoint of $L_{\ol j}$ 
with respect to $(\,\cdot\,|\,\cdot\,)_{M',k}$, $j=1,\ldots,n$, 
$P_0$ is a $k$-independent zero order differential operator, 
$P^*_0$ is the adjoint of $P_0$ with respect to 
$(\,\cdot\,|\,\cdot\,)_{M',k}$. From \eqref{e-gue220410yydI}, 
we can check that 
\begin{equation}\label{e-gue220503ycd}
\ddbar^*_{f,k}=\sum^{n-1}_{j=1}(e^{\ol j}\wedge)^*L^*_{\ol j}+
\Bigr(((\ddbar\rho)\wedge)^*\circ((R^{(1)})^*+iT)\Bigr)+kZ_1+Z_0,
\end{equation}
where $Z_1$ is a $k$-independent first order differential operator 
with $Z_1|_X=0$ and $Z_0$ is a $k$-independent zero order differential operator. 
Let $\tilde\tau_\delta, \tau_\delta\in\cC^\infty_c((\delta/4,2\delta))$, 
$0\leq\tilde\tau_\delta, \tau_\delta\leq1$, $\tilde\tau_\delta\equiv1$
near $\supp\tau_\delta$, $\tau_\delta\equiv1$ on $[\frac{\delta}{2},\delta]$. 
From \eqref{e-gue220410yydII} and \eqref{e-gue220503ycd}, we get 
\begin{equation}\label{e-gue220503ycdI}
\gamma\ol{\pr}^*_{f,k}P^{(1)}_kF^{(1)}_{\tilde\tau^2_{k\delta}}
=\sum^{n-1}_{j=1}(e^{\ol j})^*\circ L^*_j+
\Bigr(((\ddbar\rho)\wedge)^*\circ
(\mathcal{N}^{(1)}_{\tilde\tau^2_{k\delta}}+iT)\Bigr)+Z_0.
\end{equation}
Now, consider $\gamma\ddbar_kP^{(0)}_kS_{\tau^2_{k\delta}}$. From $\ddbar_bS_{\tau^2_{k\delta}}=0$, we can check that 
\[\gamma\ddbar_kP^{(0)}_kS_{\tau^2_{k\delta}}=2((\ddbar\rho)\wedge)\circ((\ddbar\rho)\wedge)^*\gamma\ddbar P^{(0)}_kS_{\tau^2_{k\delta}}.\]
From this observation and \eqref{e-gue220503ycdI}, we get 
\begin{equation}\label{e-gue220503ycdII}
\begin{split}
0&=2((\ddbar\rho)\wedge)\circ\gamma\ol{\pr}^*_{f,k}\,\ddbar P^{(0)}_kS_{\tau^2_{k\delta}}\\
&=2((\ddbar\rho)\wedge)\circ\gamma\ol{\pr}^*_{f,k}\,P^{(1)}_kF^{(1)}_{\tilde\tau^2_{k\delta}}\gamma\ddbar P^{(0)}_kS_{\tau^2_{k\delta}}\\
&=(\mathcal{N}^{(1)}_{\tilde\tau^2_{k\delta}}+iT+\hat Z_0)\gamma\ddbar P^{(0)}_kS_{\tau^2_{k\delta}},
\end{split}
\end{equation}
where $\hat Z_0$ is a $k$-independent zero order differential operator. We need 

\begin{lemma}\label{l-gue220503ycd}
There is a constant $C>0$ independent of $k$ such that 
\begin{equation}\label{e-gue220503ycdIII}
\norm{\gamma\ddbar_kP^{(0)}_kS_{\tau^2_{k\delta}}u}_{1,X,k}\leq C\Bigr(k\norm{\gamma\ddbar_kP^{(0)}_kS_{\tau^2_{k\delta}}u}_{X,k}+\norm{\gamma\ddbar P^{(0)}_kS_{\tau^2_{k\delta}}u}_{X,k}\Bigr),
\end{equation}
for every $u\in\cC^\infty(X,L^k)$. 
\end{lemma}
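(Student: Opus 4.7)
My plan is to set $v:=\gamma\ddbar_k P^{(0)}_k S_{\tau^2_{k\delta}}u$, apply the $W^{1}$-estimate of Lemma~\ref{l-gue220415ycd} to $v$, and convert the right hand side to $k\norm{v}_{X,k}$ using the exact identity \eqref{e-gue220503ycdII} together with a spectral localization argument for $-iT$. This yields the stronger bound $\norm{v}_{1,X,k}\le Ck\norm{v}_{X,k}$, from which \eqref{e-gue220503ycdIII} follows by absorbing the redundant summand.

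First I will note that Lemma~\ref{l-gue220415ycd}, although stated for $\tau_\delta$, admits a verbatim proof with $\tilde\tau_\delta$ in place of $\tau_\delta$, giving
\begin{equation*}
\norm{v}_{1,X,k}\le C_0\Bigl(\norm{\mathcal{N}^{(1)}_{\tilde\tau^2_{k\delta}}v}_{X,k}+\norm{Tv}_{X,k}+k\norm{v}_{X,k}\Bigr).
\end{equation*}
Next, \eqref{e-gue220503ycdII} reads $\mathcal{N}^{(1)}_{\tilde\tau^2_{k\delta}}v=-(iT+\hat Z_0)v$ with $\hat Z_0$ a $k$-independent zero order operator, so substitution and the triangle inequality collapse the display to
\begin{equation*}
\norm{v}_{1,X,k}\le C_1\Bigl(\norm{Tv}_{X,k}+k\norm{v}_{X,k}\Bigr).
\end{equation*}

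The decisive remaining step is to replace $\norm{Tv}_{X,k}$ by $k\norm{v}_{X,k}$. For this I will argue that $v$ lies in the closed span of those eigensections of $-iT$ on $L^2(X,T^{*0,1}M'\otimes L^k)$ whose eigenvalues fall in the support of $\tau_\delta^2(\,\cdot\,/k)\subset(k\delta/4,2k\delta)$. Since the $\R$-action is holomorphic, preserves $\rho$ and $M$, and lifts to $L$, each of the five building blocks $\gamma$, $\ddbar_k$, $P^{(0)}_k$, $S_k$, $F^{(0)}_{\tau^2_{k\delta}}$ is $\R$-invariant and thus commutes with $-iT$ on its natural domain, so the spectral support of $v$ is inherited from that of $F^{(0)}_{\tau^2_{k\delta}}u$. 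By the functional calculus for the self-adjoint operator $-iT$ on $X$ provided by Theorem~\ref{t-gue220315yyd}, one obtains $\norm{Tv}_{X,k}\le 2k\delta\,\norm{v}_{X,k}$, which combined with the previous display proves the claim.

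The hard part will be the rigorous verification of these commutations, especially that the Poisson operator $P^{(0)}_k$ and the Neumann inverse $N^{(0)}_k$ are genuinely $T$-equivariant in a way compatible with the $L^2$ functional calculus on both $\overline M$ and $X$, and that the boundary trace $\gamma$ intertwines the spectral decompositions of $-iT_M$ and $-iT$. This ultimately rests on the $\R$-invariance of $\rho$, the rigidity of $L$, and the torus extension of the $\R$-action noted after \eqref{e-gue220330ycd}; granted these, the spectral content of $v$ is correctly localized in the band $[k\delta/4,2k\delta]$ and the desired bound on $\norm{Tv}_{X,k}$ follows.
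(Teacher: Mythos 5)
Your proof is correct and follows essentially the same route as the paper's: set $v:=\gamma\ddbar_k P^{(0)}_k S_{\tau^2_{k\delta}}u$, derive the relation $\mathcal{N}^{(1)}_{\tilde\tau^2_{k\delta}}v=-(iT+\hat Z_0)v$ from \eqref{e-gue220503ycdII}, and then invoke the a priori estimate of Lemma~\ref{l-gue220415ycd} (applied with $\tilde\tau_\delta$ in place of $\tau_\delta$, which is indeed licit, since the proof of that lemma uses only that the cut-off is supported in $(\delta/4,2\delta)$ and the positivity \eqref{e-gue220328yyd} on $[\delta/4,2\delta]$). The paper's proof stops there, implicitly treating the right hand side as acceptable; you go one step further and use the $\R$-equivariance of $\gamma$, $\ddbar_k$, $P^{(0)}_k$, $S_k$, $F^{(0)}_{\tau^2_{k\delta}}$ to conclude that $v$ is spectrally supported in $[k\delta/4,2k\delta]$, hence $\norm{Tv}_{X,k}\le2\delta k\norm{v}_{X,k}$. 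This extra step is exactly the spectral-band localization that the paper in any case exploits in the proof of Theorem~\ref{t-gue220503ycd} (in the form $(-iTv\,|\,v)_{X,k}\ge\tfrac{k\delta}{4}\norm{v}^2_{X,k}$), so making it explicit here, together with the equivariance of the Dirichlet problem under $-iT_M$/$-iT$ that justifies it, only improves on the paper's two-line proof.
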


\begin{proof}
Let $u\in\cC^\infty(X,L^k)$. From \eqref{e-gue220503ycdII}, we have 
\begin{equation}\label{e-gue220503ycdIV}
\mathcal{N}^{(1)}_{\tilde\tau^2_{k\delta}}\gamma\ddbar_kP^{(0)}_kS_{\tau^2_{k\delta}}u=(-iT-\hat Z_0)\gamma\ddbar_kP^{(0)}_kS_{\tau^2_{k\delta}}u.
\end{equation}
From \eqref{e-gue220415ycdg} and \eqref{e-gue220503ycdIV}, we get \eqref{e-gue220503ycdIII}. 
\end{proof}

We can now prove 

\begin{theorem}\label{t-gue220503ycd}
There is a $k_0\in\mathbb N$ such that for all $k\geq k_0$, $\ddbar_kP^{(0)}_kS_{\tau^2_{k\delta}}=0$. 
\end{theorem}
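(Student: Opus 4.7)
The plan is to show that $v := \ddbar_k P^{(0)}_k S_{\tau^2_{k\delta}}u$ vanishes identically for every $u \in \cC^\infty(X,L^k)$ and every sufficiently large $k$, by reducing to the behaviour of its boundary trace $\phi := \gamma v$. First I note that $v$ is $\Box^{(1)}_{f,k}$-harmonic in $M$: one has $\ddbar_k v = \ddbar_k^2 P^{(0)}_k S_{\tau^2_{k\delta}}u = 0$, while $\ol{\pr}^*_{f,k} v = \Box^{(0)}_{f,k} P^{(0)}_k S_{\tau^2_{k\delta}}u = 0$ because the Poisson operator kills $\Box^{(0)}_{f,k}$. Hence by the Green formula \eqref{e-gue190418yyda} for the Dirichlet boundary problem for $\Box^{(1)}_{f,k}$, one has $v = P^{(1)}_k\phi$, so it will suffice to prove $\phi = 0$.

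Next I will exploit the already-proved identity \eqref{e-gue220503ycdII},
\[(\mathcal{N}^{(1)}_{\tilde\tau^2_{k\delta}} + iT + \hat Z_0)\phi = 0,\]
where $\hat Z_0$ is a $k$-independent zeroth-order operator. Because $L$ and $h^L$ are $\R$-invariant and $T$ is tangent to $X$, each of $S_{\tau^2_{k\delta}}$, $P^{(0)}_k$, $\ddbar_k$ and $\gamma$ commutes with $T$; consequently $\phi$ lies in the spectral subspace of $-iT$ supported in $\operatorname{supp}(\tau^2_{k\delta}) \subset (k\delta/4,\,2k\delta)$, so by self-adjointness of $-iT$ one obtains
\[\text{Re}(iT\phi,\phi)_{X,k} = -(-iT\phi,\phi)_{X,k} \leq -\tfrac{k\delta}{4}\|\phi\|^2_{X,k}.\]

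I will then pair the identity with $\phi$ in $L^2(X, T^{*0,1}M'\otimes L^k)$ and take real parts. Theorem~\ref{t-gue220411yydz} (whose proof applies verbatim with $\tilde\tau$ replacing $\tau$), combined with Lemma~\ref{l-gue220503ycd}, which supplies $\|\phi\|_{1,X,k} \leq Ck\|\phi\|_{X,k}$, yields
\[\text{Re}(\mathcal{N}^{(1)}_{\tilde\tau^2_{k\delta}}\phi, \phi)_{X,k} \leq C\|\phi\|^2_{X,k}\]
uniformly in $k$, while $|\text{Re}(\hat Z_0\phi,\phi)_{X,k}| \leq C\|\phi\|^2_{X,k}$ is immediate. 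Substituting into the real-part identity gives $\tfrac{k\delta}{4}\|\phi\|^2_{X,k} \leq C'\|\phi\|^2_{X,k}$, so $\phi = 0$ once $k > 4C'/\delta$, and then $v = P^{(1)}_k\phi = 0$.

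The main obstacle is that a priori the quadratic forms of both $\mathcal{N}^{(1)}_{\tilde\tau^2_{k\delta}}$ and $iT$ appear to be of size $k\|\phi\|^2$, so the identity \eqref{e-gue220503ycdII} looks like a balance of two large quantities. What rescues the argument is the square-root cancellation exploited in the proofs of Theorems~\ref{t-gue220411yyd}--\ref{t-gue220411yydz}, which shows that $\text{Re}(\mathcal{N}^{(1)}_{\tilde\tau^2_{k\delta}}\phi,\phi)$ is in fact bounded by a constant times $\|\phi\|^2_{X,k}$ rather than by $k\|\phi\|^2_{X,k}$. Once that estimate is in hand, only the algebraic manipulation above remains.
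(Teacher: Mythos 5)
Your proof is correct and follows essentially the same route as the paper: you pair the identity $(\mathcal{N}^{(1)}_{\tilde\tau^2_{k\delta}}+iT+\hat Z_0)\gamma\ddbar_kP^{(0)}_kS_{\tau^2_{k\delta}}u=0$ from \eqref{e-gue220503ycdII} against the trace, bound the $\mathcal{N}^{(1)}$ term using Theorem~\ref{t-gue220411yydz} together with the a priori estimate $\|\phi\|_{1,X,k}\lesssim k\|\phi\|_{X,k}$ from Lemma~\ref{l-gue220503ycd}, and play this off against the lower bound $\tfrac{k\delta}{4}\|\phi\|^2$ coming from the spectral localization of $-iT$. The one place you are more explicit than the paper is the closing step — noting that $\ddbar_kP^{(0)}_kS_{\tau^2_{k\delta}}u$ is $\Box^{(1)}_{f,k}$-harmonic so that vanishing of its trace implies vanishing in $\ol M$ via \eqref{e-gue190418yyda} — which the paper leaves implicit with "The theorem follows."
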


\begin{proof}
Let $u\in\cC^\infty(X,L^k)$ and put $v:=\gamma\ddbar_kP^{(0)}_kS_{\tau^2_{k\delta}}u\in\cC^\infty(X,T^{*0,1}M'\otimes L^k)$. 
From \eqref{e-gue220411yydmz} and \eqref{e-gue220503ycdII}, we have 
\begin{equation}\label{e-gue220504yyd}
\begin{split}
(\,-iTv\,|\,v\,)_{X,k}=(\,\mathcal{N}^{(1)}_{\tilde\tau^2_{k\delta}}v\,|\,v\,)_{X,k}+(\,\hat Z_0v\,|\,v\,)_{X,k}\\
\leq\frac{C}{k}\Bigr(\norm{v}_{1,X,k}\norm{v}_{X,k}+\norm{v}^2_{X,k}+k\norm{v}^2_{X,k}+\frac{1}{k^2}\norm{v}^2_{1,X,k}\Bigr),
\end{split}
\end{equation}
where $C>0$ is a constant independent of $k$ and $v$. From \eqref{e-gue220503ycdIII}, there is a constant $\hat C>0$ independent of $k$ and $v$ such that 
\begin{equation}\label{e-gue220504yydI}
\norm{v}_{1,X,k}\leq\hat C\Bigr(k\norm{v}_{X,k}+\norm{v}_{X,k}\Bigr).
\end{equation} 
From \eqref{e-gue220504yyd} and \eqref{e-gue220504yydI}, we get 
\begin{equation}\label{e-gue220504yydII}
(\,-iTv\,|\,v\,)_{X,k}\leq\tilde C\norm{v}^2_{X,k},
\end{equation}
where $\tilde C>0$ is a constant independent of $k$ and $v$. Since 
\[v\in\oplus_{\alpha\in{\rm Spec\,}(-iT), \alpha\in]\frac{k\delta}{4},2k\delta[}L^2_\alpha(X,L^k),\] 
we have 
\begin{equation}\label{e-gue220504yydIII}
(\,-iTv\,|\,v\,)_{X,k}\geq\frac{k\delta}{4}\norm{v}^2_{X,k}. 
\end{equation}
From \eqref{e-gue220504yydII} and \eqref{e-gue220504yydIII}, we conclude that if $\frac{k\delta}{4}>\tilde C$, then $v=0$. The theorem follows. 
\end{proof} 

\begin{proof}[Proof of Theorem~\ref{t-gue220513yyd}]
We take $\hat\tau_{k\delta}$ so that $\supp\hat\tau_{k\delta}\subset I_k$. From Theorem~\ref{t-gue220503ycd}, we obtain that 
\begin{equation}\label{e-gue220517ycds}
\int_X S_{\hat\tau^2_{k\delta}}(x,x)dv_X(x)\leq \dim H^0_{I_k}(\ol M,L^k)\leq\int _XS_{\tau^2_{k\delta}}(x,x)dv_X(x), 
\end{equation}
where $S_{\hat\tau^2_{k\delta}}(x,x):=S_{\hat\tau^2_{k\delta},s}(x,x)\in\cC^\infty(X)$, for every $\R$-equivariant CR frame, and similar for $S_{\tau^2_{k\delta}}(x,x)$.
From Theorem~\ref{t-gue220430yyd} and \eqref{e-gue220517ycds}, we get Theorem~\ref{t-gue220513yyd}. 
\end{proof}

From now on, we assume that $k$ is large enough so that $\ddbar_kP^{(0)}_kS_{\tau^2_{k\delta}}=0$.

Put 
\begin{equation}\label{e-gue220504yydIV}
A_{\tau^2_{k\delta}}:=S_{\tau^2_{k\delta}}((P^{(0)}_k)^*P^{(0)}_k)^{-1}: \cC^\infty(X,L^k)\To\cC^\infty(X,L^k).
\end{equation}  
From Lemma~\ref{l-gue220425yydI}, Theorem~\ref{t-gue220430yyd}, Theorem~\ref{t-gue220501yyd} and by using complex stationary phase formula, we get 

\begin{lemma}\label{l-gue220504yyd}
Let $\tilde\chi, \hat\chi\in\cC^\infty(X)$ with $\supp\tilde\chi\cap\supp\hat\chi=\emptyset$. Then, $\tilde\chi A_{\tau^2_{k\delta}}\hat\chi\equiv0\mod O(k^{-\infty})$ on $X\times X$. Let $s$ be a $\R$-equivariant CR frame of $L$ on an open set 
$D$ of $X$, $\abs{s}^2_{h^L}=e^{-2\phi}$. Let $A_{\tau^2_{k\delta},s}$ be the localization of $A_{\tau^2_{k\delta}}$ with respect to $s$ and let 
$A_{\tau^2_{k\delta},s}(x,y)\in\cC^\infty(D\times D)$ be the distribution kernel of $A_{\tau^2_{k\delta},s}$. Let $\chi, \chi_1\in\cC^\infty_c(D)$, $\chi_1\equiv1$ near $\supp\chi$. We have 
\begin{equation}\label{e-gue220505yyd}
(\chi_1A_{\tau^2_{k\delta},s}\chi)(x,y)=\int_{\R}e^{ik\Phi(x,y,t)}a(x,y,t,k)dt+O(k^{-\infty}),
\end{equation}
where $\Phi\in\cC^\infty(D\times D\times(\delta/4,2\delta))$ is as in Theorem~\ref{t-gue220430yyd}, 
\begin{equation}\label{e-gue220505yydI}
\begin{split}
&a(x,y,t,k)\in S^{n+1}(1;D\times D\times(\delta/4,2\delta))\cap\cC^\infty_c(D\times D\times(\delta/4,2\delta)),\\
&\mbox{$a(x,y,t,k)\sim\sum^{+\infty}_{j=0}a_j(x,y,t)k^{n+1-j}$ in $S^{n+1}(1;D\times D\times(\delta/4,2\delta))$},\\
 &a_j(x,y,t)\in S^0(1;D\times D\times(\delta/4,2\delta))\cap\cC^\infty_c(D\times D\times(\delta/4,2\delta)),\ \ j=0,1,\ldots,\\
 &a_0(x,x,t)=(2\pi)^{-n}\chi(x)\abs{\det(R^L_{X,x}+2t\mathcal{L}_x)}\abs{\tau^2_\delta(t)}^2(2i\varphi_1(x,x,d_x\Phi(x,x,t)),\\ 
 &\mbox{for all $(x,x,t)\in D\times D\times(\delta/4,2\delta)$}, 
\end{split}
\end{equation} 
where $\varphi_1$ is as in \eqref{e-gue220426yyd}. 
\end{lemma}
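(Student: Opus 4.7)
The plan is to reduce $A_{\tau^2_{k\delta}}=S_{\tau^2_{k\delta}}((P^{(0)}_k)^*P^{(0)}_k)^{-1}$ to a composition of the oscillatory integral for $S_{\tau^2_{k\delta}}$ from Theorem~\ref{t-gue220430yyd} with the semiclassical pseudodifferential operator $\tilde H_k$ furnished by Lemma~\ref{l-gue220425yydI}, and then to apply complex stationary phase. The crucial commutation input will be that $-iT$ commutes with $\Box_b$ (since $L$ is rigid) and with $P^{(0)}_k$, $(P^{(0)}_k)^*$ (since $T$ is tangent to $X$ and the $\R$-action lifts holomorphically to $L$); consequently $F^{(0)}_{\tilde\tau_{k\delta}}$ commutes with $S_k$ and with $((P^{(0)}_k)^*P^{(0)}_k)^{-1}$. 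Since $\tilde\tau_\delta\equiv 1$ near ${\rm supp\,}\tau_\delta$, spectral calculus gives $S_{\tau^2_{k\delta}}\circ F^{(0)}_{\tilde\tau_{k\delta}}=S_{\tau^2_{k\delta}}$, so
\[
A_{\tau^2_{k\delta}}\chi
=S_{\tau^2_{k\delta}}\bigl(((P^{(0)}_k)^*P^{(0)}_k)^{-1}
 F^{(0)}_{\tilde\tau_{k\delta}}\chi\bigr)
=S_{\tau^2_{k\delta}}(\chi_2\tilde H_k\chi)+O(k^{-\infty}),
\]
after invoking Lemma~\ref{l-gue220425yydI}, where $\chi_2\in\cC^\infty_c(D)$ satisfies $\chi_2\equiv 1$ near ${\rm supp\,}\chi$; the residual term $S_{\tau^2_{k\delta}}\tilde\delta_k$ is absorbed into $O(k^{-\infty})$ by the Sobolev mapping properties of $S_{\tau^2_{k\delta}}$.

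The off-diagonal statement $\tilde\chi A_{\tau^2_{k\delta}}\hat\chi\equiv 0\!\!\mod O(k^{-\infty})$ will then follow by a partition-of-unity argument: I cover ${\rm supp\,}\hat\chi$ by BRT coordinate patches $D_j$, write $\hat\chi=\sum_j\hat\chi_j$ with $\hat\chi_j\in\cC^\infty_c(D_j)$, and choose $\tilde H_k$ properly supported on each $D_j$. Each term $\tilde\chi S_{\tau^2_{k\delta}}(\chi_{2,j}\tilde H_k\hat\chi_j)$ then vanishes to $O(k^{-\infty})$ by Theorem~\ref{t-gue220501yyd} applied to $S_{\tau^2_{k\delta}}$ between the disjoint supports of $\tilde\chi$ and a neighborhood of ${\rm supp\,}\hat\chi_j$.

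For the on-diagonal expansion, I pick $\chi_1\in\cC^\infty_c(D)$ with $\chi_1\equiv 1$ near ${\rm supp\,}\chi_2$ and write $\chi_1 A_{\tau^2_{k\delta},s}\chi=(\chi_1 S_{\tau^2_{k\delta},s}\chi_2)(\tilde H_{k,s}\chi)+O(k^{-\infty})$. I then insert the oscillatory integral from Theorem~\ref{t-gue220430yyd} (phase $\Phi$, leading symbol $g_0(x,x,t)=(2\pi)^{-n}\chi_1(x)\chi_2(x)|\det(R^L_{X,x}+2t\mathcal{L}_x)||\tau_\delta^2(t)|^2$) and the ${\rm Op}_k$-representation of $\tilde H_{k,s}$ (with $\sigma^0_{\tilde\beta}(x,x,\xi)=(2\pi)^{-(2n-1)}\tilde\tau_\delta^2(\xi_{2n-1})\cdot 2i\varphi_1(x,x,\xi)$), and apply complex stationary phase in the $(z,\xi)$-variables to the composite phase $\Phi(x,z,t)+\langle z-y,\xi\rangle$. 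The critical point is $z^*=y$, $\xi^*=-d_y\Phi(x,y,t)$; its Hessian is block anti-diagonal with unit-modulus determinant, and the Melin--Sj\"ostrand composition formula (cf.~\cite{MS}) yields an integral $\int e^{ik\Phi(x,y,t)}a(x,y,t,k)\,dt$ with $a\in S^{n+1}_{\rm cl}$ and leading coefficient $a_0(x,y,t)=g_0(x,y,t)\cdot\sigma^0_{\tilde\beta}(y,y,-d_y\Phi(x,y,t))$ at the critical value. On the diagonal, the Malgrange normal form~\eqref{e-gue220504ycdm} gives $(d_x\Phi(x,x,t))_{2n-1}=t$, so $\tilde\tau_\delta^2(t)\equiv 1$ on ${\rm supp\,}\tau_\delta^2$; combined with $\chi_1\chi_2\equiv 1$ on ${\rm supp\,}\chi$, this produces the claimed formula for $a_0(x,x,t)$.

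The hardest part will be the complex stationary phase bookkeeping. Since ${\rm Im\,}\Phi$ vanishes only on the diagonal rather than being identically zero off it, real stationary phase does not apply, and I must use the almost-analytic extension framework to justify the asymptotic expansion, to track the precise cancellation of the ${\rm Op}_k$-prefactor against the Gaussian normalization factor, and to identify the leading coefficient at the (complex) critical point in the form compatible with the formula for $\varphi_1$ given in~\eqref{e-gue220426yydz}.
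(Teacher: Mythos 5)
Your proposal is correct and follows exactly the strategy the paper intends: commute $F^{(0)}_{\tilde\tau_{k\delta}}$ through $S_{\tau^2_{k\delta}}$ and $((P^{(0)}_k)^*P^{(0)}_k)^{-1}$ (all these operators commute with $-iT$ by rigidity), substitute the pseudodifferential parametrix $\tilde H_k$ from Lemma~\ref{l-gue220425yydI}, and then compose with the Szeg\H{o} kernel of Theorem~\ref{t-gue220430yyd} via Melin--Sj\"ostrand complex stationary phase, using Theorem~\ref{t-gue220501yyd} for the off-diagonal decay. These are precisely the ingredients the paper cites without writing out the composition, and your identification of the critical point $(z^*,\xi^*)=(y,-d_y\Phi(x,y,t))$, the unit Hessian determinant, and the leading amplitude $g_0\cdot\sigma^0_{\tilde\beta}$ is the implicit computation the authors have in mind.
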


We now consider 
\begin{equation}\label{e-gue220509yyd}
B_{\tau^2_{k\delta}}:=P^{(0)}_kA_{\tau^2_{k\delta}}(P^{(0)}_k)^*=P^{(0)}_kS_{\tau^2_{k\delta}}((P^{(0)}_k)^*P^{(0)}_k)^{-1}(P^{(0)}_k)^*: \cC^\infty(\ol M,L^k)\To\cC^\infty(\ol M,L^k). 
\end{equation}
From Lemma~\ref{l-gue220314yydz} and Lemma~\ref{l-gue220504yyd}, we can repeat the proof of Theorem~\ref{t-gue220406yyd} with minor changes and deduce 

\begin{lemma}\label{l-gue220505yyd}
Let $\tilde\chi, \hat\chi\in\cC^\infty(\ol M)$ with $\supp\tilde\chi\cap\supp\hat\chi=\emptyset$. Then, 
\begin{equation}\label{e-gue220510ycd}
\mbox{$\tilde\chi B_{\tau^2_{k\delta}}\hat\chi\equiv0\mod O(k^{-\infty})$ on $\ol M\times\ol M$}. 
\end{equation} 
Let $s$ be a $\R$-equivariant holomorphic frame of $L$ on an open set 
$U$ of $M'$, $\abs{s}^2_{h^L}=e^{-2\phi}$. If $U\cap X=\emptyset$. Then, 
\[\mbox{$B_{\tau^2_{k\delta}}\equiv0\mod O(k^{-\infty})$ on $(U\times U)\cap(\ol M\times\ol M)$}.\]
Suppose that $D:=U\cap X\neq\emptyset$. Let $x=(x_1,\ldots,x_{2n-1})$ be canonical coordinates of $X$ on $D$ with $T=\frac{\pr}{\pr x_{2n-1}}$ on $D$ and $\tilde x=(x_1,\ldots,x_{2n-1},x_{2n})$, $x_{2n}=\rho$, are local coordinates of $U$. Let $B_{\tau^2_{k\delta},s}$ be the localization of $B_{\tau^2_{k\delta}}$ with respect to $s$. 
Let $\chi, \chi_1\in\cC^\infty_c(U\cap\ol M)$, $\chi_1\equiv1$ near $\supp\chi$. Then, 
\begin{equation}\label{e-gue220505yydII}
\mbox{$(\chi_1B_{\tau^2_{k\delta},s}\chi)(\tilde x,\tilde y)\equiv\int_{\R}e^{ik\Psi(\tilde x,\tilde y,t)}\beta(\tilde x,\tilde y,t,k)dt\mod O(k^{-\infty})$ on $(U\times U)\cap(\ol M\times\ol M)$},
\end{equation}
where $(\chi_1B_{\tau^2_{k\delta},s}\chi)(\tilde x,\tilde y)$
is the distribution kernel of $\chi_1B_{\tau^2_{k\delta},s}\chi$, 
$\Psi(\tilde x,\tilde y, t)\in\cC^\infty(((U\times U)\cap(\ol M\times\ol M))
\times(\delta/4,2\delta))$, $\Psi|_{D\times D}=\Phi$, 
where $\Phi$ is as in Theorem~\ref{t-gue220430yyd},
\begin{equation}\label{e-gue220505yydIII}
\begin{split}
&\Psi(\tilde x,\tilde y, t)=\Phi(x,y,t)+x_{2n}\varphi_1(x,y,d_x\Phi(x,y,t))-y_{2n}\ol{\varphi_1(y,x,-d_y\ol\Phi(x,y,t))}\\
&\quad+O(\abs{(x_{2n},y_{2n})}^2),\\
&{\rm Im\,}\Psi(\tilde x,\tilde y, t)\geq C(\abs{x_{2n}}+\abs{y_{2n}}),
\end{split}
\end{equation}
for every $(\tilde x, \tilde y,t)\in((U\times U)\cap(\ol M\times\ol M))
\times(\delta/4,2\delta)$, where $C>0$ is a constant, 
\begin{equation}\label{e-gue220505yydIV}
\begin{split}
&\beta(\tilde x,\tilde y,t,k)\\
&\in S^{n+1}(1;((U\times U)\cap(\ol M\times\ol M))
\times(\delta/4,2\delta))\cap\cC^\infty_c(((U\times U)
\cap(\ol M\times\ol M))\times(\delta/4,2\delta)),\\
&\mbox{$\beta(\tilde x,\tilde y,t,k)\sim
\sum^{+\infty}_{j=0}\beta_j(\tilde x,\tilde y,t)k^{n+1-j}$ 
in $S^{n+1}(1;((U\times U)\cap(\ol M\times\ol M))
\times(\delta/4,2\delta))$},\\
 &\beta_j(\tilde x,\tilde y,t)\in S^0(1;((U\times U)
 \cap(\ol M\times\ol M))\times(\delta/4,2\delta))
 \cap\cC^\infty_c(((U\times U)\cap(\ol M\times\ol M))
 \times(\delta/4,2\delta)),\\ 
 &j=0,1,\ldots,
 \end{split}\end{equation}
 \begin{equation}\label{e-gue220505yydV}
 \beta_0(x,x,t)=(2\pi)^{-n}\chi(x)|\det(R^L_{X,x}+2t\mathcal{L}_x)|
 |\tau^2_\delta(t)|^2(2i\varphi_1(x,x,d_x\Phi(x,x,t)),
\end{equation} 
for all $(x,x,t)\in D\times D\times(\delta/4,2\delta)$, where $\varphi_1$ is as in \eqref{e-gue220426yyd}.
\end{lemma}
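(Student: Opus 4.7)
The plan is to decompose $B_{\tau^2_{k\delta}}=P^{(0)}_kA_{\tau^2_{k\delta}}(P^{(0)}_k)^*$ into an ``outer'' Poisson layer on each side and an ``inner'' boundary FIO, and then propagate Lemma~\ref{l-gue220504yyd} into the interior twice using the parametrix for the Dirichlet problem constructed in Theorem~\ref{t-gue220422yyd} (and its adjoint). Concretely, since $A_{\tau^2_{k\delta}}$ is spectrally cut off in $[\delta/4,2\delta]$ with respect to $-iT/k$, we may insert $F^{(0)}_{\tilde\tau_{k\delta}}$ on either side modulo $O(k^{-\infty})$, so that Theorem~\ref{t-gue220422yydII} applies to $P^{(0)}_kF^{(0)}_{\tilde\tau_{k\delta}}$ and, by taking adjoints, to $F^{(0)}_{\tilde\tau_{k\delta}}(P^{(0)}_k)^*$. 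Combining these with the oscillatory integral representation \eqref{e-gue220505yyd}--\eqref{e-gue220505yydI} of $A_{\tau^2_{k\delta}}$ will produce an oscillatory integral for $B_{\tau^2_{k\delta}}$ in both interior arguments.

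For the off-diagonal statement \eqref{e-gue220510ycd}, I would argue locally using the off-diagonal decay of $A_{\tau^2_{k\delta}}$ from Lemma~\ref{l-gue220504yyd} together with Theorem~\ref{t-gue220422yydII} (plus Lemma~\ref{l-gue220422yydI}): if $\tilde\chi,\hat\chi$ have disjoint supports in $\ol M$, then either one of them is supported away from $X$, in which case $\hat\chi(P^{(0)}_k)^*$ or $P^{(0)}_k\tilde\chi$ already falls into the regime of Theorem~\ref{t-gue220422yydII} (or its adjoint) and one reduces to the boundary traces, which are disjoint after shrinking slightly and hence handled by Lemma~\ref{l-gue220504yyd}; or both touch $X$ but their boundary traces have disjoint supports, again handled directly. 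The case $U\cap X=\emptyset$ is immediate from the same argument since $P^{(0)}_k$ lands in $\ker\Box^{(0)}_{f,k}$ and Theorem~\ref{t-gue220422yydII} gives that $P^{(0)}_kF^{(0)}_{\tau_{k\delta}}$ is $O(k^{-\infty})$ on such interior patches.

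For the local description \eqref{e-gue220505yydII}, one substitutes the FIO form of $P^{(0)}_kF^{(0)}_{\tilde\tau_{k\delta}}$ from Theorem~\ref{t-gue220422yyd} on the left and its adjoint on the right into the representation of $A_{\tau^2_{k\delta}}$. This yields a triple oscillatory integral with phase
\[
\varphi(\tilde x,y,\xi)+\Phi(y,z,t)-\overline{\varphi(\tilde y,z,\eta)}
\]
in variables $(\xi,t,\eta,y,z)\in\R^{2n-1}\times\R\times\R^{2n-1}\times D\times D$. Applying the complex stationary phase formula of Melin--Sj\"ostrand to the $(\xi,y)$ integrals first and then $(\eta,z)$ integrals (exactly as in the derivation of \eqref{e-gue220502yydc} and \eqref{e-gue220504yydn}) reduces it to a single integral over $t$ with a new phase $\Psi(\tilde x,\tilde y,t)$. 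Because $\varphi(\tilde x,y,\xi)=\langle x-y,\xi\rangle+\rho\,\varphi_1(x,y,\xi)+O(\rho^2)$ and similarly for the $(\tilde y,\eta)$ factor, and because the critical points in $(\xi,y)$ for the pairing with $\Phi$ land on $\xi=d_x\Phi(x,y,t)|_{y}$ (cf.\ Lemma~\ref{l-gue220502yyd}), the Taylor expansion of $\Psi$ in $(x_{2n},y_{2n})$ must take the form \eqref{e-gue220505yydIII}; the positivity ${\rm Im}\,\Psi\geq C(|x_{2n}|+|y_{2n}|)$ follows from \eqref{e-gue220328yyd} (with $i\varphi_1$ strictly positive for $\xi_{2n-1}\in{\rm supp}\,\tau_\delta$) together with the intrinsic positivity of ${\rm Im}\,\Phi$ on $X\times X$. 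Transport equations in the standard WKB fashion determine $\beta_j$, and the diagonal value of $\beta_0$ is computed by multiplying the leading symbols: the restriction of $\tilde B^{(0)}_{\tau_{k\delta}}$ is the identity on $X$, so $\beta_0|_{X}=a_0$, which gives \eqref{e-gue220505yydV}.

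The main technical obstacle will be verifying the Taylor expansion \eqref{e-gue220505yydIII}, and in particular the positivity estimate for ${\rm Im}\,\Psi$ up to the boundary. One must keep careful track of which branches of $\varphi_1$ appear when the stationary phase pulls $\xi$ to $d_x\Phi$ on the left and $\eta$ to $-d_y\overline{\Phi}$ on the right; the sign convention is fixed by \eqref{e-gue220328yyd}, so that $i\varphi_1(x,d_1\Phi)>0$ and $-i\overline{\varphi_1(y,-d_2\overline\Phi)}>0$. The linear terms in $x_{2n}$ and $y_{2n}$ in $\Psi$ then inherit strictly positive imaginary parts from these two evaluations of $\varphi_1$, and the quadratic error is absorbed by shrinking $U$, yielding the required positivity.
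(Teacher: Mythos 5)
Your proposal follows the paper's strategy exactly: the paper's own proof is the one-line remark ``From Lemma~\ref{l-gue220314yydz} and Lemma~\ref{l-gue220504yyd}, we can repeat the proof of Theorem~\ref{t-gue220406yyd} with minor changes'', and what you have written out is a correct and reasonably detailed expansion of that remark. Inserting the spectral cutoff $F^{(0)}_{\tilde\tau_{k\delta}}$ on both sides of $A_{\tau^2_{k\delta}}$ (legitimate since $T$ commutes with $P^{(0)}_k$, so $A_{\tau^2_{k\delta}}$ is already spectrally concentrated in $[\delta/4,2\delta]$), replacing $P^{(0)}_kF^{(0)}_{\tilde\tau_{k\delta}}$ by the parametrix $\tilde B^{(0)}_{\tau_{k\delta}}$ via Theorem~\ref{t-gue220422yydII}, taking adjoints for the right factor, and then applying Melin--Sj\"ostrand stationary phase in the intermediate boundary variables is precisely how \eqref{e-gue220505yydII}--\eqref{e-gue220505yydV} are obtained; your off-diagonal case analysis (one cutoff away from $X$ versus both touching $X$ with disjoint boundary traces) also matches.

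Two small imprecisions worth noting, neither fatal. First, you say ``the restriction of $\tilde B^{(0)}_{\tau_{k\delta}}$ is the identity on $X$, so $\beta_0|_X=a_0$''; the boundary restriction is not the identity operator but the frequency cutoff $B^{(0)}_{\tau_{k\delta}}$, whose symbol is $(2\pi)^{-(2n-1)}\tau_\delta(\xi_{2n-1})$. The point is rather that because the symbol of $A_{\tau^2_{k\delta}}$ is already supported where the inserted $\tilde\tau_\delta$ cutoffs are $\equiv1$, the extra factors from the two Poisson parametrices act as $1$ on the relevant frequency set, so the principal symbol after stationary phase is still $a_0(x,x,t)$ from Lemma~\ref{l-gue220504yyd}. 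Second, the appearance of $-d_y\overline\Phi$ (rather than $-d_y\Phi$) in the linear term in $y_{2n}$ comes from the almost-analytic extension of the conjugated phase $-\overline{\varphi(\tilde y,y',\eta)}$ when the complex critical point $\eta=-d_y\Phi(x,y,t)$ is substituted; you correctly flag that the branches of $\varphi_1$ must be tracked via \eqref{e-gue220328yyd}, but it would be worth making explicit that the $\overline{\Phi}$ arises from the almost-analytic extension of a barred symbol evaluated at a complex point, not simply from the naive critical-point equation.
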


As before, let 
\(\Pi_k: L^2(M,L^k)\To{\rm Ker\,}\ddbar_k\)
be the orthogonal projection (Bergman projection) and put 
\[\Pi_{\tau_{k\delta}}:=\Pi_k\circ F^{(0)}_{\tau_{k\delta},M}:
L^2(M,L^k)\To{\rm Ker\,}\ddbar_k.\]
Let $\tau^\dagger_\delta\in\cC^\infty((\delta/4,2\delta))$, 
$0\leq\tau^\dagger_\delta\leq1$, $\tau^\dagger_\delta\equiv1$ on 
$[\delta/2,\delta]$,
$\tau_\delta\equiv1$ near $\supp\tau^\dagger_\delta$. 
We consider $\Pi_{\tau^\dagger_{k\delta}}$. 
Since $\ddbar_b\gamma\Pi_{\tau^\dagger_{k\delta}}=0$ and 
\[\gamma\Pi_{\tau^\dagger_{k\delta}}
=((P^{(0)}_k)^*P^{(0)}_k)^{-1}(P^{(0)}_k)^*
\Pi_{\tau^\dagger_{k\delta}},\]
we have $B_{\tau^2_{k\delta}}\Pi_{\tau^\dagger_{k\delta}}
=\Pi_{\tau^\dagger_{k\delta}}$ and hence 
\begin{equation}\label{e-gue220505yydVI}
\Pi_{\tau^\dagger_{k\delta}}=
\Pi_{\tau^\dagger_{k\delta}}B^*_{\tau^2_{k\delta}}
=\Pi_{\tau^\dagger_{k\delta}}R_{\tau^2_{k\delta}}+
\Pi_{\tau^\dagger_{k\delta}}B_{\tau^2_{k\delta}},
\end{equation}
where $B^*_{\tau^2_{k\delta}}$ is the adjoint of 
$B_{\tau^2_{k\delta}}$ with respect to $(\,\cdot\,|\,\cdot\,)_{M,k}$ 
and $R_{\tau^2_{k\delta}}:=
B^*_{\tau^2_{k\delta}}-B_{\tau^2_{k\delta}}$. 
From Theorem~\ref{t-gue220503ycd}, we can check that 
\[\Pi_{\tau^\dagger_{k\delta}}B_{\tau^2_{k\delta}}=
F^{(0)}_{\tau^\dagger_{k\delta},M}B_{\tau^2_{k\delta}}.\]
From this observation and \eqref{e-gue220505yydVI}, we get 
\[\Pi_{\tau^\dagger_{k\delta}}(I-R_{\tau^2_{k\delta}})
=F^{(0)}_{\tau^\dagger_{k\delta},M}B_{\tau^2_{k\delta}}\]
and hence 
\begin{equation}\label{e-gue220505yydVII}
\Pi_{\tau^\dagger_{k\delta}}(I-R^N_{\tau^2_{k\delta}})
=F^{(0)}_{\tau^\dagger_{k\delta},M}
B_{\tau^2_{k\delta}}(I+R_{\tau^2_{k\delta}}+
\ldots+R^{N-1}_{\tau^2_{k\delta}}),
\end{equation}
for every $N\in\mathbb N$. 
By\eqref{e-gue220505yydVII} 
$\Pi_{\tau^\dagger_{k\delta}}$ satisfies \eqref{e-gue220510ycd}. 
Let $s$ be an $\R$-equivariant 
holomorphic frame of $L$ on an open set 
$U$ of $M'$, $\abs{s}^2_{h^L}=e^{-2\phi}$. 
If $U\cap X=\emptyset$. From Lemma~\ref{l-gue220505yyd} 
and \eqref{e-gue220505yydVII}, we conclude that
\[\mbox{$\Pi_{\tau^\dagger_{k\delta}}\equiv0\mod O(k^{-\infty})$
on $(U\times U)\cap(\ol M\times\ol M)$}.\]
Suppose that $D:=U\cap X\neq\emptyset$. 
Let $x=(x_1,\ldots,x_{2n-1})$ be canonical coordinates of 
$X$ on $D$ with $T=\frac{\pr}{\pr x_{2n-1}}$ on $D$ and 
$\tilde x=(x_1,\ldots,x_{2n-1},x_{2n})$, $x_{2n}=\rho$, 
are local coordinates of $U$. Lemma~\ref{l-gue220505yyd} implies 
\begin{equation}\label{e-gue220510ycda}
\mbox{$R_{\tau^2_{k\delta},s}(\tilde x,\tilde y)\equiv
\int_{\R}e^{ik\Psi(\tilde x,\tilde y,t)}r(\tilde x,\tilde y,t,k)dt
\mod O(k^{-\infty})$ on $(U\times U)\cap(\ol M\times\ol M)$},
\end{equation}
where $R_{\tau^2_{k\delta},s}$ is the localization of 
$R_{\tau^2_{k\delta}}$ with respect to $s$, 
$R_{\tau^2_{k\delta},s}(\tilde x,\tilde y)$ 
is the distribution kernel of $R_{\tau^2_{k\delta},s}$, 
$\Psi(\tilde x,\tilde y, t)\in\cC^\infty(((U\times U)\cap(\ol M\times\ol M))
\times(\delta/4,2\delta))$ is as in Lemma~\ref{l-gue220505yyd}, 
\begin{equation}\label{e-gue220505yydm}
\begin{split}
&r(\tilde x,\tilde y,t,k)\\
&\in S^{n+1}(1;((U\times U)\cap(\ol M\times\ol M))\times(\delta/4,2\delta))\cap\cC^\infty_c(((U\times U)\cap(\ol M\times\ol M))\times(\delta/4,2\delta)),\\
&\mbox{$r(\tilde x,\tilde y,t,k)\sim\sum^{+\infty}_{j=0}r_j(\tilde x,\tilde y,t)k^{n+1-j}$ in $S^{n+1}(1;((U\times U)\cap(\ol M\times\ol M))\times(\delta/4,2\delta))$},\\
 &r_j(\tilde x,\tilde y,t)\in S^0(1;((U\times U)\cap(\ol M\times\ol M))\times(\delta/4,2\delta)),\ \ j=0,1,\ldots,\\
 &\mbox{$r_j(\tilde x,\tilde y,t)=0$ if $t\notin\supp\tau_\delta$, $j=0,1,\ldots$},\\
 &\mbox{$r_0(x,x,t)=0$, for every $(x,x,t)\in D\times D\times(\delta/4,2\delta)$}.
 \end{split}\end{equation}
 For every $\ell\in\mathbb N_0$, let 
 $(F^{(0)}_{\tau^\dagger_{k\delta},M}B_{\tau^2_{k\delta}}R^\ell_{\tau^2_{k\delta}})_s$ be the localization of 
 $F^{(0)}_{\tau^\dagger_{k\delta},M}
 B_{\tau^2_{k\delta}}R^\ell_{\tau^2_{k\delta}}$ 
 with respect to $s$ and let $(F^{(0)}_{\tau^\dagger_{k\delta},M}
 B_{\tau^2_{k\delta}}R^\ell_{\tau^2_{k\delta}})_s(\tilde x,\tilde y)$ 
 be the distribution kernel of $(F^{(0)}_{\tau^\dagger_{k\delta},M}
 B_{\tau^2_{k\delta}}R^\ell_{\tau^2_{k\delta}})_s$. 
 From the complex stationary phase formula, \eqref{e-gue220422yydb}, 
 \eqref{e-gue220422yydc}, 
Lemma~\ref{l-gue220502yyd}, Lemma~\ref{l-gue220505yyd} 
and \eqref{e-gue220510ycda}, we have 
\begin{equation}\label{e-gue220510ycdb}
\mbox{$(F^{(0)}_{\tau^\dagger_{k\delta},M}
B_{\tau^2_{k\delta}}R^\ell_{\tau^2_{k\delta}})_s
(\tilde x,\tilde y)\equiv\int_{\R}e^{ik\Psi(\tilde x,\tilde y,t)}
\alpha^\ell(\tilde x,\tilde y,t,k)dt\mod O(k^{-\infty})$ 
on $(U\times U)\cap(\ol M\times\ol M)$},
\end{equation}
\begin{equation}\label{e-gue220510ycdbz}
\begin{split}
&\alpha^\ell(\tilde x,\tilde y,t,k)\\
&\in S^{n+1}(1;((U\times U)\cap(\ol M\times\ol M))\times(\delta/4,2\delta))\cap\cC^\infty_c(((U\times U)\cap(\ol M\times\ol M))\times(\delta/4,2\delta)),\\
&\mbox{$\alpha^\ell(\tilde x,\tilde y,t,k)\sim\sum^{+\infty}_{j=0}\alpha^\ell_j(\tilde x,\tilde y,t)k^{n+1-j}$ in $S^{n+1}(1;((U\times U)\cap(\ol M\times\ol M))\times(\delta/4,2\delta))$},\\
 &\alpha^\ell_j(\tilde x,\tilde y,t)\in S^0(1;((U\times U)\cap(\ol M\times\ol M))\times(\delta/4,2\delta)),\ \ j=0,1,\ldots,\\
 &\mbox{$\alpha^\ell_j(\tilde x,\tilde y,t)=0$ if $t\notin\supp\tau_\delta$, $j=0,1,\ldots$}. 
 \end{split}\end{equation}
Since $r_0(x,x,t)=0$, for every $(x,x,t)\in D\times D\times(\delta/4,2\delta)$, for every $\ell_1, \ell_2\in\mathbb N$, there is a $N_{\ell_1,\ell_2}>0$ 
such that for every $N\geq N_{\ell_1,\ell_2}$, we have 
\begin{equation}\label{e-gue220505yydn}
\norm{(\Pi_{\tau^\dagger_{k\delta}}R^N_{\tau^2_{k\delta}})_s(\tilde x,\tilde y)}_{\cC^{\ell_1}((U\times U)\cap(\ol M\times\ol M))}\leq Ck^{n+1-\ell_2},
\end{equation}
where $C>0$ is a constant independent of $k$, $(\Pi_{\tau^\dagger_{k\delta}}R^N_{\tau^2_{k\delta}})_s$ is the localization of $\Pi_{\tau^\dagger_{k\delta}}R^N_{\tau^2_{k\delta}}$ with respect to $s$, $(\Pi_{\tau^\dagger_{k\delta}}R^N_{\tau^2_{k\delta}})_s(\tilde x,\tilde y)$ is the distribution kernel of $(\Pi_{\tau^\dagger_{k\delta}}R^N_{\tau^2_{k\delta}})_s$. 

Since  $r_0(x,x,t)=0$, for every $(x,x,t)\in D\times D\times(\delta/4,2\delta)$, from complex stationary phase formula, we can check that for every $m\in\mathbb N$, there is a $N_m\in\mathbb N$, such that for all $N\geq N_m$, 
\begin{equation}\label{e-gue220505ycd}
\pr^{\alpha}_{\tilde x}\pr^\beta_{\tilde y}\alpha^N_j(\tilde x,\tilde y,t)|_{\tilde x=x, \tilde y=y}=0,
\end{equation}
for every $(x,x,t)\in D\times D\times(\delta/4,2\delta)$, for all $j=0,1,\ldots,m$, for all $\alpha, \beta\in\mathbb N^{2n}_0$, $\abs{\alpha}+\abs{\beta}\leq m$. From \eqref{e-gue220505ycd}, we can apply Borel construction and find 
$b_j(\tilde x,\tilde y,t)\in S^0(1;((U\times U)\cap(\ol M\times\ol M))\times(\delta/4,2\delta))$, $b_j(\tilde x,\tilde y,t)=0$ if $t\notin\supp\tau_\delta$, $j=0,1,\ldots$, such that for any $j=0,1\ldots$, every $\ell\in\mathbb N$, there is a $N_\ell\in\mathbb N$ such that for all $N\geq N_\ell$,  
\begin{equation}\label{e-gue220510ycdp}
\pr^{\alpha}_{\tilde x}\pr^\beta_{\tilde y}\Bigr(b_j(\tilde x,\tilde y,t)-\sum^N_{\mu=0}\alpha^\mu_j(\tilde x,\tilde y,t)\Bigr)|_{\tilde x=x, \tilde y=y}=0,
\end{equation}
for every $(x,x,t)\in D\times D\times(\delta/4,2\delta)$, for all $\alpha, \beta\in\mathbb N^{2n}_0$, $\abs{\alpha}+\abs{\beta}\leq\ell$. Let 
$b(\tilde x,\tilde y,t,k)\in S^{n+1}(1;((U\times U)\cap(\ol M\times\ol M))\times(\delta/4,2\delta))$ with
$b(\tilde x,\tilde y,t,k)=0$ if $t\notin\supp\tau_{\delta}$ and 
\[\mbox{$b(\tilde x,\tilde y,t,k)\sim
\sum^{+\infty}_{j=0}b_j(\tilde x,\tilde y,t)k^{n+1-j}$ 
in $S^{n+1}(1;((U\times U)\cap(\ol M\times\ol M))\times(\delta/4,2\delta))$}.\]
From \eqref{e-gue220505yydVII} and \eqref{e-gue220505yydn}, 
we deduce that 
\[\Pi_{\tau^\dagger_{k\delta},s}(\tilde x,\tilde y)\equiv
\int_{\R}e^{ik\Psi(\tilde x,\tilde y,t)}b(\tilde x,\tilde y,t,k)dt 
\mod O(k^{-\infty})\  \ \mbox{on $(U\times U)\cap(\ol M\times\ol M)$},\]
where $\Pi_{\tau^\dagger_{k\delta},s}$ is the localization of 
$\Pi_{\tau^\dagger_{k\delta}}$ with respect to $s$, 
$\Pi_{\tau^\dagger_{k\delta},s}(\tilde x,\tilde y)$ 
is the distribution kernel of $\Pi_{\tau^\dagger_{k\delta},s}$. 
Summing up, we get Theorem~\ref{t-gue220512yyd}. 

\section{Proof of Theorem~\ref{t-gue230718yyd}
on smooth extension up to the boundary}\label{s-gue230718yydI}

In this section, we will prove Theorem~\ref{t-gue230718yyd}. We will use the same notations 
and assumptions as Theorem~\ref{t-gue230718yyd}. Fix $j=1, 2$. Let $L_j$ be the trivial line bundle over $M_j$ with $\abs{1}^2_{h^{L_j}}=e^{-2\phi}$ and let $\Pi_{\tau_{k\delta},j}$ be the partial Bergman kernel as \eqref{e-gue220512yydb} with respect to the $\mathbb R$-action $\eta_j$, flat Hermitian metric on $\mathbb C^n$ and $e^{-2k\phi}$. Since $F$ is $\mathbb R$-equivariant, it is straightforward to check that 
\begin{equation}\label{e-gue230718yyda}
\Pi_{\tau_{k\delta},1}(x,y)=\Pi_{\tau_{k\delta},2}(F(x),F(y)){\rm det\,}F'(x)\ol{{\rm det\,}F'(y)}\ \ \mbox{on $M_1\times M_1$}
\end{equation}
and $\det F'(x)$ is $\R$-invariant. Since $\det F'(x)$ 
is $\R$-invariant, we can repeat the proof of~\cite[Lemmas 8.2, 8.3]{HHLS}
and deduce that ${\rm det\,}F'(x)\in\cC^\infty(\ol M_1)$, 
${\rm det\,}(F^{-1})'(x)\in\cC^\infty(\ol M_2)$. 
%===
\begin{lemma}\label{l-gue230723yyd}
Fix $q\in\pr M_2$. Let $z=(z_1,\ldots,z_n)$ be holomorphic coordinates 
defined near $q$ and let 
$s$ be a $\R$-equivariant holomorphic
frame of $L_2$ defined near $q$, 
$\abs{s}^2_{h^{L_2}}=e^{-2\phi}$ so that $z(q)=0$, 
$\phi(z)=\beta x_{2n}+O(\abs{z}^2)$, and
the defining function 
$\rho_2$ of $M_2$ has the form
\[\rho_2(z)=\sqrt{2}{\rm Im\,}z_n+
\sum^n_{j,\ell=1}\mu_{j,\ell}z_j\ol z_\ell+O(\abs{z}^3),\]
where $\beta\in\mathbb R$, $\mu_{j,\ell}\in\mathbb C$, 
$j, \ell=1,\ldots,n$. Then, for every $k\gg1$, we can find $a_{1,k},\ldots,a_{n,k}\in M_2$ 
so that for $k\gg1$, 
\begin{equation}\label{e-gue230723yyd}
\det\left(\frac{\pr}{\pr z_j}\Bigr(
\Pi_{\tau_{k\delta},2}(x,a_{\ell,k})\Bigr)\right)^{n}_{j,\ell=1}\Big|_{x=q}\neq0.
\end{equation}
\end{lemma}

\begin{proof}
We fix $\varepsilon>0$, $\varepsilon\ll1$. 
%Let $a_{0,k}:=(0,\ldots,0,\frac{-1}{k^{1+2\varepsilon}})$. Since $\rho_2(a_{0,k})<0$ if $k\gg1$, 
%we have $a_{0,k}\in M_2$, for $k\gg1$. From Theorem~\ref{t-gue220512yyd}, we can check that 
%\begin{equation}\label{e-gue230723yydI}
%\lim_{k\To+\infty}k^{-(n+1)}\Pi_{\tau_{k\delta},2}(q,a_{0,k})\neq0.
%\end{equation}
For every $j=1,\ldots,n$, let $a_{j,k}=(\alpha_1,\ldots,\alpha_{2n})$, $\alpha_u=0$ if $u\notin\set{2j-1,2n}$, $\alpha_{2j-1}=\frac{1}{k^{1+\varepsilon}}$, $\alpha_{2n}=-\frac{1}{k^{1+\frac{\varepsilon}{2}}}$. We can check that if $k\gg1$, then $a_{j,k}\in M_2$, $j=1,\ldots,n$.
%\begin{equation}\label{e-gue230723yyda}
%\lambda_j+t\mu_{j,j}>0, \ \ \mbox{for every $t\in(\frac{\delta}{4},2\delta)$, every $j=1,\ldots,n-1$}. 
%\end{equation}
From Theorem~\ref{t-gue220512yyd} and \eqref{e-gue220430yyd}, we can check that for all $j, \ell=1,\ldots,n-1$, 
\begin{equation}\label{e-gue230724yyd}
\begin{split}
&\lim_{k\To+\infty}k^{-n-1+\varepsilon}\frac{\pr}{\pr z_j}\Bigr(
\Pi_{\tau_{k\delta},2}(x,a_{\ell,k})\Bigr)|_{x=q}=0,\ \ \mbox{if $j\neq\ell$},\\
&\lim_{k\To+\infty}k^{-n-1+\varepsilon}\frac{\pr}{\pr z_j}\Bigr(
\Pi_{\tau_{k\delta},2}(x,a_{j,k})\Bigr)|_{x=q}\neq0,
\end{split}
\end{equation}
and 
\begin{equation}\label{e-gue230724yyda}
\lim_{k\To+\infty}k^{-n-2}\frac{\pr}{\pr z_n}\Bigr(
\Pi_{\tau_{k\delta},2}(x,a_{n,k})\Bigr)|_{x=q}\neq0.
\end{equation}
From \eqref{e-gue230724yyd} and \eqref{e-gue230724yyda}, we get \eqref{e-gue230723yyd}. 
\end{proof}

\begin{proof}[Proof of Theorem~\ref{t-gue230718yyd}]
Write $F(x)=(F_1(x),\ldots,F_n(x))\in\mathbb C^n$, $x\in M_1$. 
 We claim that 
 \begin{equation}\label{e-gue230718yydb}
\mbox{the matrix $\left(\frac{\pr F_j}{\pr z_\ell}\right)^n_{j,\ell=1}$ is bounded on $M_1$}.
 \end{equation}
Suppose that the claim \eqref{e-gue230718yydb} is not true.  Then, there are $\set{x_s}^{+\infty}_{s=1}\subset M_1$, $\lim_{s\To+\infty}x_s=p\in\pr M_1$, such that 
\begin{equation}\label{e-gue230725ycd}
\lim_{s\To+\infty}\abs{{\rm det\,}F'(x_s)}=+\infty.
\end{equation} 
Assume $\lim_{s\To+\infty}F(x_s)=q\in\pr M_2$. Let $z=(z_1,\ldots,z_n)$ be holomorphic coordinates defined near $q$ and let 
$s$ be a $\R$-equivariant holomorphic frame of $L_2$ defined near $q$, $\abs{s}^2_{h^{L_2}}=e^{-2\phi}$ so that $z(q)=0$, $\phi(z)=\beta x_{2n}+O(\abs{z}^2)$, 
\[\rho_2(z)=\sqrt{2}{\rm Im\,}z_n+\sum^n_{j,\ell=1}\mu_{j,\ell}z_j\ol z_\ell+O(\abs{z}^3),\]
where $\beta\in\mathbb R$, $\mu_{j,\ell}\in\mathbb C$, $j, \ell=1,\ldots,n$, $\rho_2$ denotes the defining function of $M_2$. From \eqref{e-gue230718yyda}, it is 
straightforward to check that 
\begin{equation}\label{e-gue230724yydh}
\begin{split}
&\left(\frac{\pr}{\pr z_j}\Bigr(\Pi_{\tau_{k\delta},1}(x,F^{-1}(a_{\ell,k}))({\rm det\,}F'(x))^{-1}\ol{({\rm det\,}F'(F^{-1}(a_{\ell,k})))^{-1}}\Bigr)\right)^n_{j,\ell=1}\\
&=\left(\frac{\pr F_\ell}{\pr z_j}(x)\right)^n_{j,\ell=1}\circ\left(\frac{\pr}{\pr z_j}\Bigr(\Pi_{\tau_{k\delta},2}(F(x),a_{\ell,k})\Bigr)\right)^n_{j,\ell=1},
\end{split}
\end{equation}
where $a_{\ell,k}\in M_2$, $\ell=1,\ldots,n$, are as in Lemma~\ref{l-gue230723yyd}. 
From \eqref{e-gue230725ycd} and \eqref{e-gue230724yydh}, we get 
\begin{equation}\label{e-gue230725ycdI}
{\rm det\,}\left(\frac{\pr}{\pr z_j}\Bigr(
\Pi_{\tau_{k\delta},2}(x,a_{\ell,k})\Bigr)\right)^{n}_{j,\ell=1}|_{x=q}=0. 
\end{equation}
From \eqref{e-gue230723yyd} and \eqref{e-gue230725ycdI}, we get a contradiction. 
The claim follows. 

Similarly, we can take derivatives of both side of \eqref{e-gue230724yydh} and repeat the argument above with minor changes and show that all derivatives of $F$ are bounded in $M_1$. 
Theorem~\ref{t-gue230718yyd} follows. 
\end{proof} 

\section{Application to pseudoconcave manifolds and examples}
\label{S:psconc}

Recall that a domain $M$ in a complex manifold $M'$
is said to have smooth boundary if there exists a smooth
function $\rho:M'\to\R$ (called defining function of $M$)
such that $M=\{p\in M':\rho(p)<0\}$ and $d\rho\neq0$
in a neighborhood of $\partial M$.
 
A relatively compact domain $M$ with smooth boundary
in a complex manifold $M'$ is called strictly pseudoconcave
if for some (and hence all) defining function $\rho:M'\to\R$
of $M$ the Levi form $\mathcal{L}_x(\rho)$ is negative definite
on the complex tangent space $\C T_x(\partial M)$ for all $x\in\partial M$.
If $M$ is strictly pseudoconcave we can find a defining function
$\rho$ such that $-\rho$ is strictly plurisubharmonic on a neighborhood $U$ of
$\partial M$ (see e.\,g.\ \cite[Proposition 2.14]{Ran}). 
In this case, for $c$ sufficiently close to zero, the domains
$M'_c:=\{p\in M':\rho(p)<c\}$ have smooth boundary and
are strictly pseudoconcave. Hence a neighborhood of $M$ is
1-concave and $M$ is a sublevel set of this 1-concave manifold.
Recall that a complex manifold $M'$ is called  
$1$-concave in the sense of Andreotti--Grauert \cite{AG} 
if there exists a smooth function $\varphi:X\longrightarrow (a,b\,]$
where  
$a=\inf\varphi\in\{-\infty\}\cup\R$, $b\in\R$, such that 
$M'_c:=\{\varphi>c\}\Subset M'$ for  
all $c\in (a,b\,]$ and $\imath\partial\overline{\partial}\varphi$ 
is strictly plurisubharmonic outside an exceptional compact set $K\Subset M'$. 

\begin{proof}[Proof of Corollary \ref{C:Moi}]
%We can assume that there exists  
%a 1-concave manifold $M'$ with exhaustion function $\varphi$
%such that 
%$M=M'_0=\{\varphi>0\}$ (hence one defining function of $M$
%is $-\varphi$). 
(a) 
Since $H^0_{I_k}(\ol M,L^k)\subset H^0(M,L^k)$ 
we have by \eqref{e-gue220513yydI}
$$\liminf_{k\to\infty}k^{-n}\dim H^0(M,L^k)>0.$$
Hence there exists $C_1>0$ and $k_0$ such that
for all $k\geq k_0$ we have $\dim H^0(M,L^k)\geq C_1k^n$.
On the other hand there exists $C_2>0$
such that $\dim H^0(M,L^k)\leq C_2k^{\varrho_k}$ 
for all $k$, where $\varrho_k$ is the maximal rank
of the Kodaira map $\Phi_k:M\dashrightarrow\mathbb{P}(H^0(M,L^k)^*)$.
It follows that for $k\geq k_0$ we have $\varrho_k=n$,
and hence the Kodaira dimension of $L\to M$ is maximal
and equals the dimension of $M$. 

(b) Since $\dim M\geq3$ and $M$ is 1-concave, it follows from 
\cite[Theorem 5.2]{M96} that $M$ admits a Moishezon compactification
$\widetilde{M}$.
\end{proof}

In the end of this section, we give some simple but non-trivial examples. For $z=(z_1,\ldots,z_N)\in\C^N$
we denote by $[z]=[z_1;\ldots;z_N]\in\C\mathbb{P}^{N-1}$
the class of $z$ in the projective space.
We denote by $|z|$ the Euclidean norm of $z$.
Let 
%\begin{equation}\label{e-gue220616ycdm}
%\begin{split}
%&M:=\{[z_1,z_2,z_3,\ldots,z_N]\in\mathbb C\mathbb P^{N-1};\, \\
%&\frac{1}{\abs{z_1}^2+\ldots+\abs{z_N}^2}\Bigr(-\lambda_1\abs{z_1}^2-\ldots-\lambda_\ell\abs{z_\ell}^2+\lambda_{\ell+1}\abs{z_{\ell+1}}^2+\ldots+\lambda_N\abs{z_N}^2\Bigr)
%\\
%&\quad+\varepsilon f([z_1,\ldots,z_N])<0\},\end{split}\end{equation}
%\begin{equation}
%\end{equation}
%

\begin{equation}\label{e-gue220616ycdm}
M:=\left\{[z]\in\mathbb C\mathbb P^{N-1};
\frac{1}{|z|^2}\Bigl(-\sum_{j=1}^\ell\lambda_j\abs{z_j}^2
+\sum_{j=\ell+1}^N\lambda_{j}\abs{z_j}^2\Bigr)
+\varepsilon f([z])<0\right\},
\end{equation}
where $\lambda_j>0$, $j=1,\ldots,N$, $\ell\in\mathbb N$, $\ell\geq1$, 
$\varepsilon>0$ is a small constant and 
$f\in\cC^\infty(\mathbb C\mathbb P^{N-1})$ is a non-negative 
real valued function with $f[z_1,z_2,z_3,\ldots,z_N]=0$ if 
$z_1=z_{\ell+1}$, $z_2=z_3=\ldots z_{\ell}=z_{\ell+2}=\ldots=z_N=0$, 
the support $f$ is in some small open neighborhood $U$ of 
$z_1=z_{\ell+1}$, $z_2=z_3=\ldots z_{\ell}=z_{\ell+2}=\ldots=z_N=0$. 
Moreover, we assume that 
$$f([z])=(1-\chi(\frac{|z_1|^2+\cdots+
|z_l|^2}{|z|^2}))g(|z_1|, |z_2|, \cdots, |z_l|),$$ where 
$\chi\in \cC_c^\infty(\mathbb R)$ and $\chi\equiv1$ in a neighborhood $0$. Here, 
$g\in\cC^\infty({\mathbb R^\ell\setminus\{0\}},\mathbb R)$, 
$g(x_1, x_2, \ldots, x_l)=0$ if $x_2=0$ and 
$g(\lambda x_1,\ldots,\lambda x_\ell)=g(x_1,\ldots,x_\ell)$, 
for all $\lambda>0$.
We take  $\varepsilon$ small enough so that the boundary $X$ 
is a smooth manifold. Moreover, when $\varepsilon$ is small, 
$X$ is a CR manifold and the Levi form of $X$ is non-degenerate of 
constant signature $(n_-,n_+)$, $n_-=\ell-1$, $n_+=N-\ell-1$. 
Let $(\mu_{\ell+1},\ldots,\mu_N)\in\R^{N-\ell}_+$. 
Suppose that $(\mu_{\ell+1},\ldots,\mu_N)$ are linear independent over 
$\mathbb Q$. $X$ admits a $\R$-action: 
\[\begin{split}
\R\times X&\To X,\\
(\eta,[z_1,\ldots,z_N])&\To\eta\circ[z_1,\ldots,z_N]:=
[z_1,\ldots,z_{\ell},e^{i\mu_{\ell+1}\eta}z_{\ell+1},\ldots,
e^{i\mu_N\eta}z_{N}]\in X.\end{split}\]
The $\R$-action is CR. We claim that the $\R$-action is transversal 
if $\varepsilon$ is small. 
Let $T$ be the vector field induced by the  $\R$-action. 
We can check that 
\[T=(i\mu_{\ell+1}z_{\ell+1}\frac{\pr}{\pr z_{\ell+1}}
-i\mu_{\ell+1}\ol{z_{\ell+1}}\frac{\pr}{\pr\ol{z_{\ell+1}}})
+\ldots+(i\mu_{N}z_{N}\frac{\pr}{\pr z_{N}}
-i\mu_{N}\ol{z_{N}}\frac{\pr}{\pr\ol {z_{N}}}).\]
Let $J$ the complex structure map of $T\mathbb C\mathbb P^{N-1}$. 
Let 
$$\rho([z_1,\ldots,z_N)):=\frac{1}{\abs{z}^2}
\Bigr(-\lambda_1\abs{z_1}^2-\ldots
-\lambda_\ell\abs{z_\ell}^2+\lambda_{\ell+1}\abs{z_{\ell+1}}^2
+\ldots+\lambda_N\abs{z_N}^2\Bigr)+\varepsilon f([z_1,\ldots,z_N]).$$
We can check that on $X$, 
\begin{equation}\label{e-gue220616ycdn}
d\rho(JT)\big|_X=
\frac{1}{\abs{z}^2}\Bigr(-2\mu_{\ell+1}
\lambda_{\ell+1}\abs{z_{\ell+1}}^2-
\ldots-2\mu_{N}\lambda_N\abs{z_{N}}^2\Bigr)\Big|_X<0.
\end{equation}
%From \eqref{e-gue220616ycdn}, we see that if $\varepsilon$ is small, $\langle\,d\rho\,,\,JR\,\rangle|_X\neq0$ and hence $R$ is transversal to $T^{1,0}X\oplus T^{0,1}X$. Thus, 
Thus, the  $\R$-action is transversal and CR. 

Let $U_j:=\set{[z_1,z_2,z_3,\ldots,z_N])\in
\mathbb C\mathbb P^{N-1};\, z_j\neq0}$, $j=1,\ldots,N$.
Since $f$ is non-negative, $(z_1,\ldots,z_\ell)\neq0$ if 
$[z_1,z_2,z_3,\ldots,z_N]\in\ol M$. 
Let 
\[M':=\set{[z_1,z_2,z_3,\ldots,z_N])\in\mathbb C\mathbb P^{N-1};\, 
(z_1,\ldots,z_\ell)\neq0}.\] Then, $\ol M\subset M'$. 
Let $L\To M'$ be the holomorphic line bundle given by the transition functions 
$s_k=\frac{z_k}{z_j}s_j$, where $s_j:=1$ on $U_j$, $j, k=1,\ldots,\ell$. 
Thus, $L$ is a $\R$-invaraint holomorphic line bundle over $M'$. 
Let $h^L$ be the $\R$-invariant Hermitian metric of $L$ given by $\abs{s_j}^2_{h^L}:=\abs{\frac{z_j}{z}}^2$, $j=1,\ldots,\ell$. 
For simplicity, we work on $U_1$ and let 
$(z_1,\ldots,z_{N-1})\in\mathbb C^{N-1}\To [1,z_2,z_3,\ldots,z_N]\in U_1$ 
be local holomorphic coordinates of $U_1$. On $U_1$, 
we can check that $\phi=\frac{1}{2}\log(1+\abs{z'}^2)$ and 
\begin{equation}\label{e-gue220713yyd}
JT(\phi)=-\frac{\sum^N_{j=\ell+1}\mu_j\abs{z_j}^2}{1+\abs{z'}^2}, 
\end{equation} 
where $z'=(z_2,\ldots,z_N)$. From \eqref{e-gue220713yyd}, 
we see that if $\lambda_j$, $j=1,\ldots,N$, are small enough, 
then conditions \eqref{e-gue220710yyd} hold.

%===

\end{document}